\documentclass[11pt]{article}
\usepackage[margin=1.25in]{geometry}  
\setlength{\parindent}{0em}
\setlength{\parskip}{.5em}

\usepackage{geometry}                
\usepackage{amsmath}
\usepackage{mathrsfs}
\usepackage{hyperref}
\usepackage{mathtools}
\usepackage{amsthm}
\usepackage{amssymb}
\usepackage{enumitem}
\usepackage{placeins}
\usepackage{graphicx}
\usepackage{epstopdf}
\usepackage{float}
\usepackage{caption}
\usepackage{subcaption}
\usepackage{xcolor}
\usepackage[square,numbers,sort&compress]{natbib} 
\usepackage{ stmaryrd }

\theoremstyle{plain}
\newtheorem{theorem}{Theorem}[section]
\newtheorem{corollary}[theorem]{Corollary}
 
\newtheorem{prop}[theorem]{Proposition}
\newtheorem{lemma}[theorem]{Lemma}

\theoremstyle{definition}

\newtheorem{remark}[theorem]{Remark}
\newtheorem{example}[theorem]{Example} 

\newcommand{\Z}{\mathbb{Z}}

\newcommand{\N}{\mathbb{N}}
\newcommand{\R}{\mathbb{R}}
\newcommand{\p}{\mathbb{P}}
\newcommand{\E}{\mathbb{E}}

\renewcommand*{\P}{\mathbb{P}}

\newcommand{\indic}{\mathbf{1}}

\newcommand{\fl}[1]{{\left\lfloor #1 \right\rfloor}}
\newcommand{\cl}[1]{{\left\lceil #1 \right\rceil}}
\newcommand{\sset}{\subset}
\newcommand{\lf}{\left}
\newcommand{\rg}{\right}

\newcommand{\mathand}{\;\text{and}\;}

\newcommand{\mathif}{\;\text{if}\;}

\newcommand{\ga}{\gamma}
\newcommand{\Ga}{\Gamma}
\newcommand{\ep}{\epsilon}

\newcommand{\de}{\delta}
\newcommand{\be}{\beta}

\newcommand{\sig}{\sigma}

\newcommand{\la}{\lambda}
\newcommand{\al}{\alpha}

\newcommand{\del}{\partial}

\newcommand{\cA}{\mathcal{A}}
\newcommand{\cB}{\mathcal{B}}
\newcommand{\cC}{\mathcal{C}}

\newcommand{\cF}{\mathcal{F}}
\newcommand{\cG}{\mathcal{G}}

\newcommand{\cP}{\mathcal{P}}

\newcommand{\fA}{\mathfrak{A}}
\newcommand{\fB}{\mathfrak{B}}

\newcommand{\fL}{\mathfrak{L}}
\newcommand{\fM}{\mathfrak{M}}

\newcommand{\sC}{\mathscr{C}}

\newcommand{\T}{\textsf{T}}
\renewcommand{\S}{\textsf{S}}

\usepackage{MnSymbol}

\newcommand{\eqd}{\stackrel{d}{=}}

\newcommand{\cvgd}{\stackrel{d}{\to}}

\newcommand{\X}{\times}

\newcommand{\cvgdown}{\downarrow}

\newcommand{\smin}{\setminus}

\DeclareMathOperator{\Ai}{Ai}
\newcommand{\bv}{\mathbf{v}}
\newcommand{\bw}{\mathbf{w}}
\newcommand{\bx}{\mathbf{x}}
\newcommand{\by}{\mathbf{y}}
\newcommand{\bz}{\mathbf{z}}

\newcommand{\bu}{\mathbf{u}}

\newcommand{\ba}{\mathbf{a}}
\newcommand{\bb}{\mathbf{b}}

\newcommand{\bfe}{\mathbf{e}}
\newcommand{\bm}{\mathbf{m}}
\newcommand{\bn}{\mathbf{n}}

\newcommand{\II}[1]{\llbracket #1 \rrbracket}

\newcommand{\NI}{\operatorname{NI}}
\title{Wiener densities for the Airy line ensemble}
\author{Duncan Dauvergne\footnote{Department of Mathematics, University of Toronto, 40 St. George St. Toronto, ON, Canada, M5S2E4, duncan.dauvergne@utoronto.ca \\
	\textit{MSC 2020 Subject Classifications:} 60K35. }}
\setcounter{tocdepth}{1}
\begin{document}
	\maketitle
	
	\begin{abstract}
		The parabolic Airy line ensemble $\mathfrak A$ is a central limit object in the KPZ universality class and related areas. On any compact set $K = \{1, \dots, k\} \times [a, a + t]$, the law of the recentered ensemble $\mathfrak A - \mathfrak A(a)$ has a density $X_K$ with respect to the law of $k$ independent Brownian motions. We show that 
		$$
		X_K(f) = \exp \left(-\textsf{S}(f) + o(\textsf{S}(f))\right)
		$$
		where $\textsf{S}$ is an explicit, tractable, non-negative function of $f$. We use this formula to show that $X_K$ is bounded above by a $K$-dependent constant, give a sharp estimate on the size of the set where $X_K < \epsilon$ as $\epsilon \to 0$, and prove a large deviation principle for $\mathfrak A$. We also give density estimates that take into account the relative positions of the Airy lines, and prove sharp two-point tail bounds that are stronger than those for Brownian motion. These estimates are a key input in the classification of geodesic networks in the directed landscape \cite{dauvergne2023geodesic}. The paper is essentially self-contained, requiring only tail bounds on the Airy point process and the Brownian Gibbs property as inputs.

	\end{abstract}


	\section{Introduction}

	The Airy line ensemble is a stationary random sequence of functions $\cA = \{\cA_i : \R\to \R, i \in \N\}$ satisfying $\cA_1 > \cA_2 > \dots$. It was first introduced by Pr\"ahofer and Spohn \cite{prahofer2002scale} via a determinantal formula, see \eqref{E:airy-kernel} below. Pr\"ahofer and Spohn showed that the top line $\cA_1$--known as the Airy (or Airy$_2$) process--describes the scaling limit at a fixed time for a certain one-dimensional random growth model in the KPZ (Kardar-Parisi-Zhang) universality class started from a point. The remaining lines describe the scaling limit of a richer multi-layer random growth model. Since this work, the Airy process and the Airy line ensemble have been shown to be universal limit objects in the KPZ universality class and other related branches of probability. Among other results, we now know:
	\begin{enumerate}[label=(\roman*)]
		\item The Airy process and the Airy line ensemble  appear as one-parameter scaling limits of classical solvable random metric and random growth models in the KPZ universality class (e.g. zero temperature models including exponential/geometric/Poisson/Brownian last passage percolation and tasep, see \cite{johansson2003discrete, prahofer2002scale, CH, dauvergne2019uniform}; positive temperature models including the KPZ equation and the KPZ line ensemble, see \cite{corwin2016kpz, quastel2023convergence, virag2020heat, wu2021tightness}). See \citep{ ferrari2010random, quastel2011introduction, corwin2012kardar, romik2015surprising, borodin2016lectures} for background on the KPZ universality class and related areas. 
		\item The richest scaling limit in the KPZ universality class--the directed landscape--can be described via a last passage problem across the Airy line ensemble, see \cite{DOV}. Moreover, for last passage models, convergence to the Airy line ensemble is the only integrable input required for proving convergence to the directed landscape, see \cite{dauvergne2021scaling}. 
		\item The Airy line ensemble is the limit as $n \to \infty$ at the edge of a system of $n$ non-intersecting Brownian motions, see \cite{adler2005pdes, CH}. This implies that it is the edge limit for the eigenvalue process in Hermitian-matrix valued Brownian motion (Dyson's Brownian motion with $\beta = 2$).
		\item The Airy line ensemble appears as the limit at the boundary between the frozen and liquid regions in general classes of random lozenge tilings, see \cite{aggarwal2021edge}, also \cite{johansson2005arctic, ferrari2003step, okounkov2007random, petrov2014asymptotics} for earlier convergence results and work on related models. The extended Airy kernel has also been found at the more delicate liquid-gas boundary in the two-periodic Aztec diamond \cite{beffara2018airy, beffara2022local}.
	\end{enumerate}

While the determinantal representation of the Airy line ensemble is useful for the definition, it can be difficult to work with in practice and does not give good intuition for the probabilistic structure of Airy line ensemble. It is more useful to view $\cA$ as a system of infinitely many non-intersecting Brownian motions (i.e. a limit of (iii) above).

This idea was made rigorous by Corwin and Hammond \cite{CH} by identifying a \textbf{Brownian Gibbs property} for the Airy line ensemble. To describe this property, we work with the \textbf{parabolic Airy line ensemble} $\fA_1 > \fA_2 > \dots$, where $\fA_i(t) = \cA_i(t) - t^2$. The Brownian Gibbs property states that inside any region $K = \{1, \dots k\} \X [a, b]$, conditionally on all values $\fA_i(t)$ for $(i, t) \notin K$, the parabolic Airy line ensemble on $K$ is simply given by a sequence of $k$ independent Brownian bridges of variance $2$ from $(a, \fA_i(a))$ to $(b, \fA_i(b))$ conditioned so that the entire ensemble remains non-intersecting \footnote{Here we say that a Brownian bridge has variance $v$ if its quadratic variation over any interval $[c, d]$ equals $v(d-c)$.}

This property is extremely useful for understanding the Airy line ensemble. For example, the Brownian Gibbs property is necessary to rigorously show that the determinantal formula of Pr\"ahofer and Spohn defines a system of non-intersecting paths. The Brownian Gibbs property also implies that that recentered Airy lines $\fA_i - \fA_i(0)$ are locally absolutely continuous with respect to Wiener measure of variance $2$. This gives information about the sample paths of $\fA$ that is extremely difficult to access with the determinantal formula alone. 

Given that recentered Airy lines are locally absolutely continuous with respect to Wiener measure, the next natural question is to ask exactly how this occurs. More precisely, we would like to explicitly describe the density (i.e. the Radon-Nikodym derivative) of Airy lines against Wiener measure. This is the goal of the present paper.

To state our main result, for $k \in \N$ and $t > 0$, let $\sC^k_0([0, t])$ denote the space of $k$-tuples of continuous functions $f = (f_1, \dots, f_k), f_i:[0, t] \to \R$ with $f(0) = 0$, equipped with the topology of uniform convergence. Let $\nu_{k, t}$ be the law on $\sC^k_0([0, t])$ of 
\begin{equation}
\label{E:fA1r0}
\fA_i(r) - \fA_i(0),\quad r \in [0, t], i \in \II{1, k},
\end{equation} 
and let $\mu_{k, t}$ be the law on $\sC^k_0([0, t])$ of a $k$-tuple of independent Brownian motions of variance $2$. In \eqref{E:fA1r0} and throughout the paper we write $\II{\ell, k} := [\ell, k] \cap \Z$. Let $X_{k, t}$ be the density of $\nu_{k, t}$ against $\mu_{k, t}$.

\begin{theorem}
	\label{T:tetris-theorem}
	Fix $k \in \N, t \ge 1$. Then there exists a function $\S:\sC^k_0([0, t]) \to [0, \infty)$ such that for $\mu_{k, t}$-a.e.\ $f \in \sC^k_0([0, t])$ we have
	\begin{equation}
	\label{E:Xktf}
	X_{k, t}(f) = \exp (-\S(f) + O_k(t^3 + \sqrt{t} \S(f)^{5/6}) ).
	\end{equation}
\end{theorem}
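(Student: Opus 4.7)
The strategy is to use the Brownian Gibbs property to express $X_{k,t}(f)$ as an expectation of a non-intersection probability against Airy-distributed boundary data, and then to extract $\exp(-\S(f))$ as the leading-order asymptotic of that expectation. First I would apply Brownian Gibbs on $K = \II{1,k}\times[0,t]$, conditioning on the boundary values $(\fA_i(0), \fA_i(t))_{i=1}^k$ together with the path $\fA_{k+1}|_{[0,t]}$. Conditionally, the top $k$ lines on $[0,t]$ are $k$ independent Brownian bridges of variance $2$ with the given endpoints, conditioned to be non-intersecting (pairwise and above $\fA_{k+1}$). Using that a bridge from $(0,a)$ to $(t,b)$ is an $a$-shift of a bridge from $(0,0)$ to $(t,b-a)$, and that a variance-$2$ Brownian motion from $0$ decomposes into an independent Gaussian endpoint at $f(t)$ together with a bridge to that endpoint, disintegrating both $\nu_{k,t}$ and $\mu_{k,t}$ along $f(t)$ gives a schematic formula
\begin{equation*}
X_{k,t}(f) = \mathbb{E}\left[ \frac{ p\big(\fA(0) + f(t) \,\big|\, \fA(0), \fA_{k+1}\big) }{ \prod_{i=1}^k \phi_{2t}(f_i(t)) } \cdot \frac{ \mathbf{1}_{ \mathrm{NI}(f + \fA(0),\, \fA_{k+1}) } }{ P\big(\mathrm{NI} \,\big|\, \fA(0),\, \fA(0) + f(t),\, \fA_{k+1}\big) } \right],
\end{equation*}
where the expectation is over $(\fA(0), \fA_{k+1})$, $p(\cdot \mid \cdot)$ is the conditional density of $\fA(t)$ at the indicated value, $\phi_{2t}$ is the variance-$2t$ Gaussian density, and $\mathrm{NI}$ denotes the non-intersection event. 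This reduces the problem to analyzing three explicit quantities: an Airy endpoint density, a Gaussian factor, and a Brownian-bridge non-intersection probability.

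Next I would define $\S(f)$ as the negative logarithm of the typical-boundary contribution to this expectation. For boundary data near its mean --- the Airy points $\fA_i(0)$ concentrated near their deterministic locations $\sim -c\, i^{2/3}$, and $\fA_{k+1}$ close to a parabolic profile --- the three ingredients are accessible via the Airy point process tail bounds (for the endpoint density and the parabolic corrections to $\fA_{k+1}$) and via Karlin--McGregor / reflection identities (for the non-intersection probability of $k$ bridges with specified endpoints). The resulting $\S(f)$ should decompose into an endpoint Gaussian cost penalizing large $f(t)$, pairwise ordering costs capturing the probability that adjacent shifted bridges $(f_i, f_{i+1})$ remain ordered, and a single top-line cost for $f_k$ staying above a parabolic surrogate of $\fA_{k+1}$. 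Tractability of $\S$ then follows from the explicit form of these pieces.

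The main obstacle is establishing the sharp error bound $O_k(t^3 + \sqrt{t}\,\S(f)^{5/6})$. The cubic $t^3$ term is the expected magnitude of the Airy fluctuations from their parabolic profile on a window of length $t$, and it should be controllable by the Airy point process tail bounds provided as an input together with the Brownian Gibbs property applied to a larger window. The hybrid term $\sqrt{t}\,\S(f)^{5/6}$ is the delicate part: it reflects a trade-off in which an atypical boundary partially offsets an atypical $\S(f)$, and the fractional exponent $5/6$ rather than $1$ signals that genuinely second-order control of the integrand under boundary perturbations is required. Here I expect to use monotone resampling or Gibbs-based coupling to dominate the actual non-intersection probability by a tractable surrogate (depending only on endpoint gaps and a reference $\fA_{k+1}$), then to expand the Gaussian factor and non-intersection exponent to quadratic order in the boundary perturbation and optimize; this optimization is what produces the $5/6$ exponent, and matching it to the claimed bound will be the technical heart of the proof.
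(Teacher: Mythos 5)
Your schematic formula — writing $X_{k,t}(f)$ as an expectation over boundary data of an indicator divided by a non-intersection acceptance probability — is essentially the identity \eqref{E:eta-defn} that the paper starts from, so you have identified the correct object. But the proposal does not engage with the central obstruction, which is that the denominator $\P_{0,t}(\fA^k(0),\fA^k(t),\fA_{k+1})$ is a non-intersection probability that can be arbitrarily close to zero, so it is not even a priori clear that the expectation of its reciprocal is finite. This is the dominant technical content of the paper: Section \ref{S:construction-L} constructs the surrogate ensemble $\fL^{t,k}$ precisely by showing $\E\big[\P_{0,t}(\fA^k(0),\fA^k(t),\fA_{k+1})^{-1}\big]<\infty$, and the mechanism is Lemma \ref{L:eta-eta-s}, which conditions on $\cF_s$ (the ensemble outside a larger window $[-s,s]$) to exchange $\E[1/P]$ for $1/\E[P]$. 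Your proposal of ``monotone resampling or Gibbs-based coupling to dominate the non-intersection probability by a tractable surrogate'' gestures at this but gives no concrete mechanism, and without the stepping-back conditioning the expectation is intractable. Moreover, the paper explicitly notes (Section \ref{SS:outline}) that the upper bound is \emph{not} proved by extracting the ``typical-boundary contribution'' directly from this formula, precisely because it is unclear that the tangent heuristic controls the density to the required precision.

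There is also a misidentification of the target. You predict that $\S(f)$ decomposes into a Gaussian endpoint cost for $f(t)$, plus pairwise ordering costs, plus a single top-line parabola cost; but the actual $\S(f)=\frac{2}{3}\sum_i([\T f_i(0)]^{3/2}+[\T f_i(t)]^{3/2})$ contains no Gaussian factor (that factor appears in both $\nu_{k,t}$ and $\mu_{k,t}$ and cancels), and the ordering and top-line contributions are merged into a single Tetris functional with $3/2$ powers coming from tangency to the parabola. Finally, the theorem is a two-sided estimate, and the paper proves the upper and lower bounds by disjoint arguments: the upper bound goes through the endpoint tail bounds for $\fL^{t,k}$ in Proposition \ref{P:two-point-bound-A} (where the $5/6$ exponent arises from the $\log$-factor in the inverse-acceptance tail of Corollary \ref{C:bounded-expectation}, not from a quadratic expansion of the integrand), while the lower bound in Section \ref{S:lower-bound} is a separate staircase Gibbs resampling restricted to a good event where the Airy boundary is well behaved. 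A single expansion around ``typical boundary data,'' as you propose, would not deliver both directions.
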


In Theorem \ref{T:tetris-theorem} and throughout the paper, we write $O_k(g)$ for a term that is bounded in absolute value by $g$ times a $k$-dependent constant. 

The function $\S$ is given as follows. Let $\sC^k([0, t])$ denote the space of $k$-tuples of continuous functions from $[0, t] \to \R$ with the topology of uniform convergence (without a condition on $f(0)$). For $f \in \sC^k([0, t])$, let $\T f \in \sC^k([0, t])$ be the minimal $k$-tuple of functions such that 
$$
\T f_1(r) \ge \T f_2(r) \ge \dots \ge \T f_k(r) \ge 0
$$
for all $r \in [0, t]$ and such that $\T f - \T f (0) = f - f(0)$. Here and throughout we write $\T f_i = (\T f)_i, \T f(0) = (\T f)(0)$, etc. We think of $\T f$ as the ``Tetris of $f$": the result of consecutively dropping the graphs of the functions $f_k, f_{k-1}, \dots, f_1$ onto a flat line, and letting them stack up on top of each other as in the game of Tetris, see Figure \ref{fig:tetris}. Then
$$
\S (f) = \frac{2}{3} \Big(\sum_{i=1}^k [\T f_i(0)]^{3/2} + [\T f_i(t)]^{3/2} \Big).
$$
\begin{remark}[$t$-dependence in Theorem \ref{T:tetris-theorem}] We impose the restriction that $t \ge 1$  throughout the paper in order to simplify our expression of the error term. Naturally, Theorem \ref{T:tetris-theorem} immediately implies that similar bounds hold for $t < 1$ by pushing forward the measures $\mu_{k, 1}, \nu_{k, 1}$ from $\sC^k_0([0, 1])$ to $\sC^k_0([0, t])$ under the projection $f \mapsto f|_{[0, t]}$. Note that \eqref{E:Xktf} also implies estimates on intervals of the form $[a, a + t]$ by the stationarity of $\cA(t) = \fA(t) + t^2$.

The $t^3$ factor in the error is optimal as $t \to \infty$. Because of the stationarity of the Airy line ensemble $\cA(r) = \fA(r) + r^2$ the law $\nu_{k, t}$ concentrates around functions that are relatively close to the parabola $f(r) = -r^2$. On the other hand, the probability that a Brownian motion follows this parabola up to time $t$ is easily checked to be $e^{-\Theta(t^3)}$.
\end{remark}

\begin{figure}[htb]
	\centering
	\begin{subfigure}[t]{2.5cm}
		\centering
		\includegraphics[width=4cm]{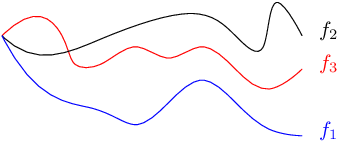}
		\caption{}
	\end{subfigure}
\qquad \qquad \qquad \qquad
	\begin{subfigure}[t]{2.5cm}
		\centering
		\includegraphics[width=4cm]{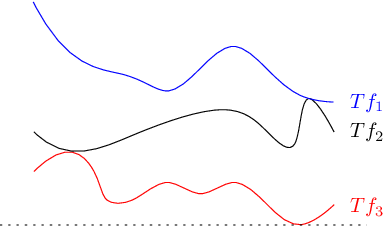}
		\caption{}
	\end{subfigure}
\hspace{5em}
	\caption{A function $f \in \sC^3_0([0, t])$ and its image under the map $\T$.}
	\label{fig:tetris}
\end{figure}

\subsection{More motivation and applications}
\label{S:quantitative}

The Brownian Gibbs property and the local Brownian structure of the Airy line ensemble have been repeatedly exploited to prove results about the Airy process, the Airy line ensemble, and more general limit objects in the KPZ universality class, e.g. see \cite{CH, hammond2017exponents} for some early examples and \cite{gangulyfractal} for a survey with more recent applications. In particular, the construction of the directed landscape \cite{DOV} relies heavily on Brownian Gibbs analysis from \cite{dauvergne2021bulk}. 

While it is quite useful to know that the Airy line ensemble is locally absolutely continuous with respect to Brownian motion, there are many situations when a more quantitative comparison is required. The search for a strong quantitative comparison was first undertaken by Hammond \cite{hammond2016brownian} and then continued by Calvert, Hegde, and Hammond \cite{calvert2019brownian}. The culmination of these two papers is the following estimate from \cite{calvert2019brownian}. For any $k \in \N$, $t > 0$, and any Borel set $A \sset \sC^1_0([0, t])$ we have
\begin{equation}
\label{E:CHH}
\frac{\P(\fA_k(\cdot) - \fA_k(0) \in A)}{\mu_{t}(A)} \le 2 \exp(d_{k, t} \log^{5/6}[\mu_t(A)^{-1}]),
\end{equation}
where $d_{k, t}$ is a $k, t$-dependent constant and $\mu_t = \mu_{1, t}$ is Wiener measure of variance $2$ on $\sC^1_0([0, t])$.

While \eqref{E:CHH} looks like a fairly technical statement, it has proven to be extremely useful. Indeed, since the growth in the right-hand side of \eqref{E:CHH} is slower than any power of $\mu_t(A)$, it shows that events for Brownian motion that occur with probability $\ep$ occur only with probability $\ep^{1-o(1)}$ for the $k$th Airy line. This is a key estimate for ruling out pathological behaviour in the Airy line ensemble, and has been used repeatedly for understanding limiting random geometry in the KPZ universality class, e.g. see \cite{bates2019hausdorff, ganguly2020stability, ganguly2022fractal, basu2019temporal, hammond2017exponents, hammond2020modulus, hammond2019patchwork, ganguly2020geometry, bhatia2022atypical, dauvergne2021disjoint, DSV}.

Theorem \ref{T:tetris-theorem} strengthens the estimate \eqref{E:CHH}. In fact, it implies something more surprising: namely, that the left-hand side of \eqref{E:CHH} is uniformly bounded over all $A$!

\begin{corollary}
	\label{C:bounded-above}
	For every $k \in \N, t \ge 1$, we have that $X_{k, t}(f) \le e^{O_k(t^3)}$ for $\mu_{k, t}$-a.e.\ $f \in \sC^k_0([0, t])$. In other words, $\nu_{k, t}(A) \le e^{O_k(t^3)} \mu_{k, t}(A)$ for any Borel set $A \sset \sC_0^k([0, t])$.
\end{corollary}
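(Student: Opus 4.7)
The plan is to deduce this corollary directly from Theorem~\ref{T:tetris-theorem} by a one-variable optimization. Since $\S(f) \ge 0$ by definition, absorbing the implicit constant into $C_k$ the theorem gives
\[
\log X_{k,t}(f) \;\le\; -\S(f) + C_k t^3 + C_k\sqrt{t}\,\S(f)^{5/6}
\]
for $\mu_{k,t}$-a.e.\ $f \in \sC^k_0([0,t])$. It therefore suffices to show that the one-variable function $h(s) := -s + C_k\sqrt{t}\,s^{5/6}$ is bounded above on $[0,\infty)$ by $O_k(t^3)$, since then $\log X_{k,t}(f) \le C_k t^3 + \sup_{s\ge 0} h(s) = O_k(t^3)$. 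The claim $\nu_{k,t}(A) \le e^{O_k(t^3)} \mu_{k,t}(A)$ then follows by integrating the pointwise density bound against $\mu_{k,t}$ over $A$.

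For the optimization, computing $h'(s) = -1 + \tfrac{5 C_k}{6}\sqrt{t}\,s^{-1/6}$, the unique critical point on $(0,\infty)$ is $s_* = \bigl(\tfrac{5 C_k}{6}\bigr)^{6} t^{3}$. Since $h(0)=0$ and $h(s)\to -\infty$ as $s \to \infty$, this critical point is the global maximum. The defining relation $s_*^{1/6} = \tfrac{5 C_k}{6}\sqrt{t}$ gives $C_k\sqrt{t}\,s_*^{5/6} = \tfrac{6}{5}\,s_*$, hence $h(s_*) = -s_* + \tfrac{6}{5}s_* = \tfrac{1}{5}s_* = O_k(t^3)$, completing the proof.

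I do not expect a serious obstacle here: the corollary is essentially a free consequence of the precise form of the error term in Theorem~\ref{T:tetris-theorem}. The key feature being exploited is that the positive part of the error grows like $\S(f)^{5/6}$ rather than $\S(f)$, so the linear term $-\S(f)$ eventually dominates; the exponent $5/6<1$ is thus doing essential work, and the same argument would go through for any error of the form $O_k(t^3 + t^{\alpha}\S(f)^{\beta})$ with $\beta<1$ (with the resulting constant depending on $\alpha,\beta$). The only subtlety worth noting is the $\mu_{k,t}$-a.e.\ qualifier, which is inherited directly from Theorem~\ref{T:tetris-theorem} and needs no additional work.
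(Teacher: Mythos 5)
Your proof is correct and matches the paper's approach: both reduce to a one-variable optimization of a function of the form $-s + c_k\sqrt{t}\,s^{5/6}$, whose maximum is $O_k(t^3)$. The paper rephrases the bound in terms of $u_f = \T f_1(0) + \T f_1(t)$ (using that $\S(f)$ is comparable to $u_f^{3/2}$ up to $k$-dependent constants) before optimizing, but this is a cosmetic difference; your direct optimization in $\S(f)$ is equivalent and in fact a bit cleaner.
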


It is worth noting that our proof of Theorem \ref{T:tetris-theorem} and Corollary \ref{C:bounded-above} is shorter than the proof of \eqref{E:CHH} from \cite{hammond2016brownian, calvert2019brownian}, and hence gives an easier route to this cornerstone result and its consequences.

Corollary \ref{C:bounded-above} bounds how \textit{large } $\nu_{k, t}(A)$ can be compared to $\mu_{k, t}(A)$. We can also use Theorem \ref{T:tetris-theorem} to get a sharp estimate on how \textit{small } $\nu_{k, t}(A)$ can be compared to $\mu_{k, t}(A)$.
\begin{corollary}
	\label{C:X-bounded-below}
Define the function $\Theta:\R^k \to \R$ by
$$
\Theta(\bx) = \frac{2}{3} \lf(\sum_{i=1}^k |x_i| \rg)^{3/2} + \frac{4}{3} \sum_{j=2}^k \lf(\sum_{i=j}^k |x_i| \rg)^{3/2},
$$
and let $\al_k$ be the maximum of $\Theta$ on the sphere $\|\bx\|_2 = 1$.
Then
\begin{equation}
\label{E:theta-bound}
\inf \Bigg\{\frac{\nu_{k, t}(A)}{\mu_{k, t}(A)} : \mu_{k, t}(A) \ge \ep \Bigg\} = \exp \lf(-\al_k [4t \log(\ep^{-1})]^{3/4} + O_k(t^3 + t \log^{5/8} (\ep^{-1})) \rg).
\end{equation}
\end{corollary}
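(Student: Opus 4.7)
The plan is to combine Theorem \ref{T:tetris-theorem} with a sharp Gaussian density estimate for the Tetris endpoints $(a, b) := (\T f(0), \T f(t))$ under $\mu_{k, t}$. Since $\S(f)$ depends only on $(a, b)$, the first step is to obtain this joint density. Iterating the reflection / running-minimum density for Brownian bridges through the layers of the Tetris stacking (the $k$th layer is a functional of $f_k$ alone, the $(k{-}1)$st of $f_{k-1}, f_k$, and so on) gives a density of the form
\[
p_{(a,b)}(a, b) = \frac{P(a,b)}{(4\pi t)^{k/2}}\exp\!\lf(-\frac{C(a,b)}{4t}\rg)
\]
on decreasing nonnegative $a, b$, where $P$ is a polynomial prefactor and
\[
C(a,b) = \sum_{i=1}^k (b_i - a_i)^2 + 4 a_k b_k + 2\sum_{i=1}^{k-1}(a_i - a_{i+1})(b_i - b_{i+1}).
\]
Tail-integrating over $\{\S(a, b) \ge s\}$ yields $\mu_{k,t}(\S(f) \ge s) = \exp(-s^{4/3}/(4t\,\al_k^{4/3}) + \text{lower order})$, conditional on the next step.

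The central algebraic identity is
\[
\max_{a, b}\;\frac{\S(a,b)}{C(a,b)^{3/4}} \;=\; \al_k,
\]
with $a, b$ ranging over decreasing nonnegative vectors in $\R^k$. The $\ge$ direction is by an explicit ``Tetris template'' construction: for every $v \in \R^k$ one builds $(a, b)$ by pairing indices (forcing $a_i = a_{i+1}$ or $b_i = b_{i+1}$ at each layer, according to the sign pattern of $v$) so that the cross-terms in $C$ collapse. One then checks directly that $C(a, b) = \|v\|_2^2$ and $\S(a, b) = \Theta(v)$. For instance, for $k = 2$ and $v = (v_1, -|v_2|)$ with $v_1 \ge 0$, the template is $a = (|v_2|, |v_2|)$, $b = (v_1 + |v_2|, 0)$; for $k = 3$ an analogous template is $(a, b) = ((A_1, A_2, A_2), (B_1, B_1, 0))$ with $A_2 = |v_3|$, $B_1 = v_2 + |v_3|$, $A_1 = B_1 - v_1$ (valid for $v_3 < 0$, $v_1 \le v_2$). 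The $\le$ direction is a Lagrange-multiplier analysis showing that interior critical points of $\S/C^{3/4}$ must lie on such templates, exploiting the block structure of $C$.

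For the upper bound on $\inf \nu_{k, t}(A)/\mu_{k, t}(A)$, pick $(a^*, b^*)$ attaining the maximum above, rescaled so $C(a^*, b^*) = 4t\log(\ep^{-1})$; then $\S(a^*, b^*) = \al_k [4t\log(\ep^{-1})]^{3/4}$. Take $A^* = \{f : \|\T f(0) - a^*\|_\infty + \|\T f(t) - b^*\|_\infty \le \de\}$ for suitably small $\de$. By the density estimate, $\mu_{k, t}(A^*) \ge \ep$, while Theorem \ref{T:tetris-theorem} and continuity of $\S$ in $(a, b)$ give $X_{k, t}(f) \le \exp(-\al_k[4t\log(\ep^{-1})]^{3/4} + O_k(t^3 + t\log^{5/8}(\ep^{-1})))$ on $A^*$. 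For the lower bound, given any $A$ with $\mu_{k, t}(A) \ge \ep$, set $s = \al_k[4t\log(2\ep^{-1})]^{3/4}$; the density estimate gives $\mu_{k, t}(\S > s) \le \ep/2$, hence $\mu_{k,t}(A \cap \{\S \le s\}) \ge \ep/2$, and applying Theorem \ref{T:tetris-theorem} inside the integral gives $\nu_{k, t}(A) \ge \frac{\ep}{2}\exp(-s - O_k(t^3 + \sqrt{t}\, s^{5/6}))$. The error $\sqrt{t}\, s^{5/6}$ is of order $t^{9/8}\log^{5/8}(\ep^{-1})$, which is absorbed into $O_k(t^3 + t\log^{5/8}(\ep^{-1}))$ by an elementary case split on whether $\log(\ep^{-1}) \lessgtr t^3$ (after enlarging the $k$-dependent constant).

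\textbf{Main obstacle.} The variational identity is the crux. Step 1 is a (laborious but) standard Brownian-bridge calculation and the application Steps are routine once Steps 1 and 2 are in hand; but identifying the correct Tetris templates and verifying that they classify all extrema of $\S/C^{3/4}$ requires a careful combinatorial case analysis on sign patterns of $v$, whose complexity grows with $k$.
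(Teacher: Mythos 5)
There is a genuine gap in Step~1, and it undermines the rest of the proposal.

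The proposal asserts that the joint law of the Tetris endpoints $(a, b) = (\T f(0), \T f(t))$ under $\mu_{k,t}$ has a density of the form $P(a,b)\,(4\pi t)^{-k/2}\exp(-C(a,b)/4t)$ with the explicit quadratic form $C$, obtained by ``iterating the reflection / running-minimum density''. This does not work for $k \ge 2$. Unwinding the Tetris construction with $f_i(0) = 0$, one finds $\T f_i(0) = I_k + \sum_{j=i}^{k-1} M_j$ where $I_k = -\inf f_k$ and $M_j = \sup_s(f_{j+1}(s) - f_j(s))$. These functionals are coupled across coordinates: $I_k$ and $M_{k-1}$ both depend on the whole path of $B_k$, and $M_j$, $M_{j-1}$ both depend on $B_j$. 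The reflection principle gives the joint density of $(\sup B_i, -\inf B_i, B_i(t))$ for a \emph{single} Brownian motion, but there is no iterated-reflection formula for the joint law of running maxima of the \emph{differences} $B_{j+1} - B_j$ together with $\inf B_k$, because the processes $-B_k, B_k - B_{k-1}, B_{k-1} - B_{k-2}, \dots$ share coordinates and are correlated. Already for $k = 2$ one can check that the proposed $C$ is inconsistent: the marginal exponent in $(I_2, M_1, B_2(t), B_2(t) - B_1(t))$ does not factor correctly, and the cross-terms are not what any reflection argument produces. Since the tail bound $\mu_{k,t}(\S > s)$, the $\mu_{k,t}(A^*) \ge \ep$ estimate, and the lower-bound step all feed off this purported density, Steps~2--3 have no foundation as written (the template computations in Step~2 verify consistency with the proposed $C$, but that is circular, not independent confirmation).

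The paper avoids the need for the exact Tetris density entirely. It replaces the coupled functionals by the one-sided bounds $\T B_i(0) \le Y_i(S, I) := \sum_{j>i} S_j + \sum_{j\ge i} I_j$ and $\T B_i(t) \le Y_i(S, I) + B_i(t)$, where $S_i = \sup B_i$ and $I_i = -\inf B_i$. The triples $(S_i, I_i, B_i(t))$ are independent across $i$, and their joint density is exactly computable (Lemma~\ref{L:Brownian-sup-inf-bound}). Because $\S$ is monotone increasing in the Tetris coordinates, the one-sided bound goes the right way for the lower bound on $\nu_{k,t}(A)/\mu_{k,t}(A)$, and the optimization (Lemma~\ref{L:opt-lemma}) is then carried out over $(\ba,\bb,\bx)$ representing $(S, I, B(t))$, with the constraint set $D_\beta = \{\sum_i (2a_i + 2b_i - |x_i|)^2 \le \beta^2\}$ coming directly from the per-coordinate densities. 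For the upper bound on the infimum, the paper does not need any density estimate at all: it just evaluates Theorem~\ref{T:tetris-theorem} on an explicit test set $A_\be$ built from an argmax $\by$ of $\Theta$. If you want to salvage your plan, the natural fix is to drop the exact-density claim, replace it with these one-sided inequalities plus the per-coordinate sup/inf/endpoint density, and re-run your variational identity in the $(S, I, B(t))$ variables rather than the Tetris variables.
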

Note that the decay on the right-hand side of \eqref{E:theta-bound} is slower than any power of $\ep$. Because of this, we expect that Corollary \ref{C:X-bounded-below} could be useful for showing that rare behaviour for Brownian motion also occurs in the Airy line ensemble and the directed landscape, just as \eqref{E:CHH} has been used previously for ruling out certain behaviour in these objects.

For general $k$, the minimum value of $\Theta$ does not have a nice closed form. However, when $k = 1$, then the minimum value is $2/3$ (i.e. $\Theta(x) = 2/3$ when $x = \pm 1$). The proof of Corollary \ref{C:X-bounded-below} also gives a sense of what types of sets will come close to achieving the infimum in \eqref{E:theta-bound}.

\begin{remark}
	\label{R:RN-context}
To give some sense of the fragility and strength of the estimate in Theorem \ref{T:tetris-theorem}, it is interesting to note that Corollary \ref{C:bounded-above} fails if we replace $\fA$ by the stationary Airy line ensemble $\cA(t) = \fA(t) + t^2$ (though Corollary \ref{C:X-bounded-below} will still hold). See Proposition \ref{P:RN-context-example} for details.
\end{remark}

As another application, we can quickly deduce a large deviation principle for $\fA$ by using Corollaries \ref{C:bounded-above} and \ref{C:X-bounded-below} and Schilder's theorem. Note that this does not require the full strength of the above results, only that $X_{k, t}, X_{k, t}^{-1} \in L^p(\mu_{k, t})$ for all $p \in (1, \infty)$.
\begin{corollary}
	\label{C:schilder-Airy}
	For every $\ep > 0$, let $\nu_{k, t}^\ep$ denote the pushforward on $\sC_0^k([0, t])$ of $\nu_{k, t}$ under the map $f \mapsto \ep f$. Also, for $f \in \sC_0^k([0,t])$, define the rate function
	$$
	I(f) = \frac{1}{4} \int_0^t \sum_{i=1}^k |f_i'(s)|^2 ds,
	$$
	if $f$ is absolutely continuous, and $\infty$ otherwise. Then for every open set $G \sset \sC_0^k([0,t])$ we have
	$$
	\liminf_{\ep \cvgdown 0} \ep^2 \log \nu_{k, t}^\ep(G) \ge - \inf_{f \in G} I(f)
	$$
	and for every closed set $F \sset \sC_0^k([0,t])$ we have
	$$
	\limsup_{\ep \cvgdown 0} \ep^2 \log \nu_{k, t}^\ep(F) \le - \inf_{f \in F} I(f).
	$$ 
\end{corollary}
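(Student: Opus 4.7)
The plan is to transfer the classical Schilder LDP from Brownian motion to the Airy line ensemble via H\"older's inequality, using integrability of both the density $X_{k,t}$ and its inverse. Classical Schilder applied to $\mu_{k,t}$, the law of $k$ independent variance-$2$ Brownian motions started at $0$, immediately yields the stated LDP for the rescaled measure $\mu_{k,t}^\ep$ with rate function $I$ (the $\tfrac{1}{4}$ accounting for variance $2$). So the task reduces to showing, for each conjugate pair $p, q \in (1, \infty)$ and every Borel $B \sset \sC_0^k([0,t])$, the two H\"older sandwich bounds
$$
\nu_{k,t}^\ep(B) \le \|X_{k,t}\|_{L^p(\mu_{k,t})}\, \mu_{k,t}^\ep(B)^{1/q} \quad \mathand \quad \mu_{k,t}^\ep(B) \le \|X_{k,t}^{-1}\|_{L^p(\nu_{k,t})}\, \nu_{k,t}^\ep(B)^{1/q}.
$$
Both follow directly from H\"older applied to $\nu_{k,t}^\ep(B) = \int_{\ep^{-1}B} X_{k,t}\, d\mu_{k,t}$ and, using the equivalence of $\mu_{k,t}$ and $\nu_{k,t}$, $\mu_{k,t}^\ep(B) = \int_{\ep^{-1}B} X_{k,t}^{-1}\, d\nu_{k,t}$.

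The second step is to verify the required integrability. Corollary \ref{C:bounded-above} gives $X_{k,t} \in L^\infty(\mu_{k,t})$ immediately. For $X_{k,t}^{-1}$, the idea is to feed the sublevel set $A_\delta = \{X_{k,t} < \delta\}$ into Corollary \ref{C:X-bounded-below}: by the very definition of $X_{k,t}$ we have $\nu_{k,t}(A_\delta) \le \delta\, \mu_{k,t}(A_\delta)$, so if $\mu_{k,t}(A_\delta) \ge \kappa$ then the corollary forces
$$
\delta \ge \exp\!\lf(-\al_k[4t\log(\kappa^{-1})]^{3/4} + O_k(t^3 + t \log^{5/8}(\kappa^{-1}))\rg).
$$
Inverting this yields the super-polynomial tail bound $\mu_{k,t}(X_{k,t} < \delta) \le \exp(-c_{k,t} \log(\delta^{-1})^{4/3})$ for small $\delta$, which gives $X_{k,t}^{-1} \in L^r(\mu_{k,t})$ for every finite $r$ and in turn $\|X_{k,t}^{-1}\|_{L^p(\nu_{k,t})}^p = \int X_{k,t}^{1-p}\, d\mu_{k,t} < \infty$ for every $p > 1$.

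Taking $\ep^2 \log$ of the two H\"older inequalities and applying the classical Schilder bounds to $\mu_{k,t}^\ep$ then yields, for each fixed $q > 1$,
$$
\limsup_{\ep \cvgdown 0} \ep^2 \log \nu_{k,t}^\ep(F) \le -\tfrac{1}{q}\inf_F I \quad \mathand \quad \liminf_{\ep \cvgdown 0} \ep^2 \log \nu_{k,t}^\ep(G) \ge -q\inf_G I,
$$
since the (finite) $\|X_{k,t}\|_{L^p(\mu_{k,t})}$ and $\|X_{k,t}^{-1}\|_{L^p(\nu_{k,t})}$ factors contribute only terms of order $\ep^2\log(\text{const})$, which vanish as $\ep \cvgdown 0$. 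Sending $q \cvgdown 1$ (so $p \cvgup \infty$) and using that the integrability holds for every finite $p$ finishes the argument. I expect the inverse moment bound on $X_{k,t}$ in the second step to be the main technical obstacle, but Corollary \ref{C:X-bounded-below} does all the heavy lifting; the remaining work is a routine H\"older-based transfer of an LDP across an equivalent measure.
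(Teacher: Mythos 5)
Your argument is correct. It is, however, a different (more standard, more abstract) route than what the paper writes. The paper's own proof plugs the explicit two-sided estimates from Corollaries \ref{C:bounded-above} and \ref{C:X-bounded-below} directly into the logarithm: for every Borel $A$,
$$
\log \mu^\ep_{k, t}(A) - c_k t^3 - d_k \log^{3/4} [\mu^\ep_{k, t}(A)^{-1}] \le \log \nu^\ep_{k, t}(A) \le \log \mu^\ep_{k, t}(A) + c_k t^3,
$$
multiplies through by $\ep^2$, and observes that the correction terms vanish (since $\log[\mu^\ep_{k,t}(A)^{-1}]$ grows like $\ep^{-2}$, the $\log^{3/4}$ term is of order $\ep^{-3/2}$, so $\ep^2 \log^{3/4}[\cdot] \to 0$). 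Your approach instead routes through the $L^p$-moments $\|X_{k,t}\|_{L^\infty(\mu_{k,t})}$ and $\|X_{k,t}^{-1}\|_{L^p(\nu_{k,t})}$ and a H\"older sandwich, sending $q \downarrow 1$ at the end. Both hinge on exactly the same two corollaries; the paper's route is shorter because it uses their explicit quantitative form, while yours is more robust in that it only requires the integrability of $X_{k,t}^{\pm 1}$ (indeed, the paper explicitly remarks, immediately before the statement, that $X_{k,t}, X_{k,t}^{-1} \in L^p(\mu_{k,t})$ for all $p$ would suffice, so you have in effect carried out the argument the paper alludes to but does not write down). Your derivation of the super-polynomial tail bound $\mu_{k,t}(X_{k,t}<\delta)\le\exp(-c_{k,t}\log(\delta^{-1})^{4/3})$ from Corollary \ref{C:X-bounded-below} via the sublevel sets $A_\delta$ is correct and is the nontrivial step; the rest is routine.
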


\subsection{A tool for Airy analysis}

While the Brownian Gibbs property is a powerful tool for analyzing the Airy line ensemble, it can nonetheless be difficult to analyze a collection of non-intersecting Brownian bridges on $\II{1, k} \X [0, t]$ conditioned to stay above a possibly arbitrary and unknown boundary. Indeed, at this point there is a fairly substantial bag of tricks designed precisely to handle these issues (e.g. monotonicity ideas \cite{CH}, the bridge representation \cite{dauvergne2021bulk}, the Wiener candidate and jump ensemble methods \cite{hammond2016brownian, calvert2019brownian}, tangent methods \cite{ganguly2022sharp}). 

One of the main contributions of this paper is to construct a line ensemble $\fL^{t, k}$ that is equal to the Airy line ensemble under a certain non-intersection conditioning, but is given by \textit{independent Brownian bridges} on the region $\II{1, k} \X [0, t]$.  The existence of the line ensemble $\fL^{t, k}$ implies Wiener density bounds that are similar to \eqref{E:CHH} and \eqref{C:bounded-above}, but crucially \textit{take into account the locations of the Airy endpoints}.

As our main theorem about the line ensemble $\fL^{t, k}$ has a somewhat technical statement, we first give an example of its use that only concerns a single Airy line.

\begin{example}
\label{Ex:one-line}
Fix $t > 0, k \in \N$. Then the law of process $\fA_k|_{[0, t]}$ has a bounded density against the law of a process $B + L$, where:
\begin{itemize}[nosep]
	\item $B:[0, t] \to \R$ is a Brownian bridge of variance $2$ with $B(0) = B(t) = 0$.
	\item $L:[0, t] \to \R$ is an affine function, independent of $B$, and there are $t, k$-dependent constants $c, d > 0$ such that for all $m > 0$ we have
	\begin{align}
	\label{E:one-pt-intro}
&\P(L(0) \vee L(t) > m) \le e^{- \frac{4}{3} m^{3/2} + c m^{5/4}}, \qquad \P(L(0) \wedge L(t) < - m) \le 2e^{-d m^3}, \\
\label{E:two-pt-intro}
\qquad &\P(|L(0) - L(t)| > m) \le e^{- \frac{m^2}{4t} - \frac{4k-2}{3} m^{3/2} + c m^{5/4} }.
	\end{align}
\end{itemize}
\end{example}

The one-point bounds on $L(0), L(t)$ in Example \ref{Ex:one-line} give the same stretched exponential tail behaviour as for the Tracy-Widom random variable $\fA(0)$. Moreover, the two-point bound on $L(0) - L(t)$ is actually stronger than the same bound for a variance $2$ Brownian motion! We now state the main theorem about the line ensemble $\fL^{t, k}$. 

\begin{theorem}
	\label{T:resampling-candidate}
Fix $t \ge 1, k \in \mathbb N$. Then there exists a random sequence of continuous functions $\fL = \fL^{t, k} = \{\fL^{t, k}_i : \R \to \R, i \in \N\}$ such that the following points hold:
\begin{enumerate}
	\item Almost surely, $\fL$ satisfies $\fL_i(r) > \fL_{i+1}(r)$ for all pairs $(i, r) \notin \II{1, k} \X (0, t)$.
	\item The ensemble $\fL$ has the following Gibbs property. For any set $S = \II{1, \ell} \X [a, b]$, conditional on the values of $\fL_i(r)$ for $(i, r) \notin S$, the distribution of $\fL_i(r), (i,r) \in S$ is given by $\ell$ independent Brownian bridges $B_1, \dots, B_k$ from $(a, \fL_i(a))$ to $(b, \fL_i(b))$ for $i \in \II{1, \ell}$, conditioned on the event $B_i(r) > B_{i+1}(r)$ whenever $(i, r) \notin \II{1, k} \X (0, t)$.
	
	In particular, the law of $\fL$ on $\II{1, k} \X [0, t]$ is simply given by $k$ independent Brownian bridges connecting $(0, \fL_i(0))$ to $(t, \fL_i(t))$.
	\item We have
	 $$
	 \P(\fL_1 > \fL_2 > \dots ) \ge e^{-O_k(t^3)},
	 $$
	 and conditional on the event $\{\fL_1 > \fL_2 > \dots\}$, the ensemble $\fL$ is equal in law to the parabolic Airy line ensemble $\fA$.
	\end{enumerate}
\end{theorem}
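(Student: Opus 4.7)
The plan is to construct $\fL$ by modifying $\fA$ in two steps: tilting the law of the data outside $K = \II{1, k} \times [0, t]$, and then resampling the interior of $K$ as independent Brownian bridges. Let $g = g(\fA|_{K^c})$ denote the probability, under $k$ independent variance-$2$ Brownian bridges from $(0, \fA_i(0))$ to $(t, \fA_i(t))$ for $i \in \II{1,k}$, that the bridges are pairwise non-intersecting and lie strictly above $\fA_{k+1}|_{[0,t]}$. By the Brownian Gibbs property of $\fA$, the conditional density of $\fA|_K$ given $\fA|_{K^c}$ against the product of independent-bridge measures is $\indic_{\text{NI}}/g$. I define $\fL$ by: first, sample $\fL|_{K^c}$ from the tilted measure $g^{-1}\, d\P_{\fA|_{K^c}}$, normalized by $C := \E_\fA[1/g]$; then, conditional on $\fL|_{K^c}$, sample $\fL|_K$ as $k$ independent Brownian bridges from $(0, \fL_i(0))$ to $(t, \fL_i(t))$ with no further conditioning.

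Property 1 is immediate: the tilt preserves the support of $\fA|_{K^c}$, so the outside lines remain strictly ordered, and endpoint matching gives ordering at the lateral boundary of $K$. For property 3(a), a direct Radon--Nikodym computation yields $d\P_\fA/d\P_\fL = C \cdot \indic_{\{\fL_1 > \fL_2 > \dots\}}$, so conditioning $\fL$ on full non-intersection recovers $\fA$; the same computation gives $\P(\fL_1 > \fL_2 > \dots) = 1/C$. The estimate in property 3 then reduces to $C = \E_\fA[1/g] \le e^{O_k(t^3)}$. I establish this using Airy point-process tail bounds to isolate a good event (endpoint gaps of order $1$ and $\fA_{k+1}$ roughly parabolic with $O(\sqrt{t})$ fluctuations) of probability at least a positive constant, on which $g \ge e^{-O_k(t^3)}$ by constructing explicit bridge trajectories near parabolas $r \mapsto -r^2 + c_i$ whose Cameron--Martin cost is $O(t^3)$; on the complementary event, the contribution to $\E_\fA[1/g]$ is controlled using quantitative tails of the Airy endpoints and of $\fA_{k+1}$.

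Property 2 is verified by case analysis on $S = \II{1, \ell} \times [a, b]$. For $S \sset K$, the Markov property of independent Brownian bridges directly gives the claimed conditional law (the conditioning at $(i,r) \notin \II{1,k} \times (0,t)$ within $S$ amounts only to endpoint ordering, which is automatic). For $S$ disjoint from $\II{1,k} \times (0,t)$, the tilt depends only on data internal to $K^c$, so the Brownian Gibbs property of $\fA$ transfers to $\fL$. The key case is $S$ overlapping $K$, such as $S = \II{1, \ell} \times [0, t]$ with $\ell > k$: marginalizing the Brownian Gibbs conditional density of $\fA$ over the top $k$ lines yields a density proportional to $g \cdot \indic_{\text{bot NI}}$ for the bottom $\ell - k$ lines, and the tilt by $g^{-1}$ precisely cancels this weight, leaving the conditional law as independent bridges conditioned only on the claimed non-intersection event. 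Independence of the resampled top bridges from the bottom data, given the outside endpoints, completes the Gibbs description.

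The chief technical obstacle is the upper bound $C \le e^{O_k(t^3)}$: pathological configurations of $\fA|_{K^c}$ --- nearly coincident endpoints, or upward spikes of $\fA_{k+1}$ --- can make $g$ extremely small, and the resulting tail of $1/g$ must be controlled using a combination of Airy point-process estimates and Brownian bridge non-intersection probabilities.
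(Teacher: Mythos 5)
Your construction of $\fL$ is exactly the paper's: tilt the law of $\fA|_{K^c}$ by the inverse acceptance probability $g^{-1}$, then resample the interior as independent Brownian bridges (the paper phrases it as first replacing the interior and then tilting, but these are the same since $g$ is $\fA|_{K^c}$-measurable). Your verification of properties 1 and 2 and the Radon--Nikodym computation for property 3 also track the paper closely.

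The gap is in the key estimate $C=\E_\fA[1/g]\le e^{O_k(t^3)}$. You treat this as a routine good-event/bad-event tail-bound computation, but this is precisely the point the paper flags as ``highly non-trivial,'' and the naive approach you sketch does not obviously close. The danger is the bad event where $\fA_{k+1}$ spikes well above the parabola in the interior of $[0,t]$ while the endpoint vectors $\fA^k(0),\fA^k(t)$ stay low: on such an event $g$ is exponentially small, of order $e^{-c h^2/t}$ for a spike of height $h$, and whether $\E[1/g]$ on this event is finite depends on a delicate comparison of constants between that Gaussian cost and the (correlated) cost for $\fA_{k+1}$ to spike while $\fA_k(0),\fA_k(t)$ remain low. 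The paper's resolution is the ``step back'' construction $\eta_s$ (Lemma \ref{L:eta-eta-s}): by applying the Brownian Gibbs property on the wider window $[-s,s]$, it proves the exact identity
\[
\E_{\cF_s}\!\left(\frac{1}{\P_{0,t}(\fA^k(0),\fA^k(t),\fA_{k+1})}\right)=\frac{1}{\E_{\cF_s}\!\left[\P_{0,t}(\fB^k_s(0),\fB^k_s(t),\fA_{k+1})\right]},
\]
converting the expectation of an inverse into the inverse of an expectation, which is then bounded directly via Lemma \ref{L:RHS-4-estimate} and the two-point tail Lemma \ref{L:two-point-estimate-weak}. This exchange is the essential idea you are missing, and the paper explicitly notes that the direct approach you propose is in spirit the Hammond/Calvert--Hegde route, which yields only the much weaker stretched-exponential bound \eqref{E:CHH}, not the sharp $e^{O_k(t^3)}$ constant claimed in Theorem \ref{T:resampling-candidate}.3. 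Without this device, your proof as written does not establish the theorem.
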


The process $L + B$ in Example \ref{Ex:one-line} is given by $\fL^{t, k}_k|_{[0, t]}$. The strong one- and two-point tail bounds in that example follow from more general tail bounds on 
$\fL^{t, k}$ proven in Sections \ref{S:tail-bounds} and \ref{S:upper-bound}. These tail bounds make the line ensemble
$\fL^{t, k}$ useful for analysis in practice.


\begin{remark}
	\label{R:27}
Theorem \ref{T:resampling-candidate} is a key input in the classification of geodesic networks in the directed landscape \cite{dauvergne2023geodesic}. In that work, we use Theorem \ref{T:resampling-candidate} to find an upper bound on the Hausdorff dimension of geodesic $k$-stars in the directed landscape.
\end{remark}

\begin{remark}
\label{R:multiple-patches}
Theorem \ref{T:resampling-candidate} allows us to relax the Brownian Gibbs property for $\fA$ by removing the non-intersection condition on the patch $\II{1, k} \X [0, t]$. We can also give a more general version of Theorem \ref{T:resampling-candidate} that removes the non-intersection condition on $\ell$ disjoint patches $\II{1, k} \X [a_1, a_1 + t], \dots, \II{1, k} \X [a_\ell, a_\ell + t]$ for a vector $\ba = (a_1, \dots, a_\ell)$. If we call the new line ensemble $\fL^\ba$, then 
$$
\P(\fL^\ba_1 > \fL^\ba_2 > \dots) \ge \exp(- e^{O_k (1 + \ell \log \ell)} t^3).
$$
Crucially, this bound does not depend on $\ba$. Moreover, the patches $\fL^\ba|_{\II{1, k} \X [a_i, a_i + t]}$ become asymptotically independent as $\min |a_i - a_{i-1}| \to \infty$. The line ensembles $\fL^\ba$ are useful for studying the Airy process and the Airy line ensemble on large spatial scales. See Theorem \ref{T:resampling-candidate-multiple} for details.
\end{remark}

\subsection{A heuristic for \eqref{E:Xktf}: Brownian motion avoiding a parabola}
	\label{SS:proof-heuristic}
Before giving a brief overview of the proof in Section \ref{SS:outline}, we give a heuristic for formula \eqref{E:Xktf}. For simplicity, we will only consider the case when $k = 1$.
A good starting point for trying to understand the behaviour of the top curve $\fA_1$ is the well-known pair of tail estimates
\begin{equation}
\label{E:Asm-bds}
\P(\fA_1(s) + s^2 > m) \sim e^{- (\frac{4}{3} + o(1)) m^{3/2}} \qquad \text{ and } \qquad \P(\fA_2(s) + s^2 < - m) \sim e^{- c m^{3}}.
\end{equation}
Similar estimates hold for all Airy lines, see Lemmas \ref{L:one-point-upper-bound}, \ref{L:one-point-lower-bound}.
The intuition from these estimates suggests first that $\fA_1$ typically adheres to the parabola $-s^2$, and secondly that it is much easier for $\fA_1$ to move up away from this parabola than it is for $\fA_1$ to push $\fA_2$ down below the parabola. Because of this, a toy model for $\fA_1$ can be given as follows. Consider a sequence of Brownian bridges $B_n$ from $(-n, -n^2)$ to $(n, -n^2)$ conditioned to stay above the parabola $f(s) = -s^2$, and take $n$ to $\infty$ to get a distributional limit $\cB$. The process $\cB$ has been studied in \cite{ferrari2005constrained} and can actually be described as a diffusion with an explicit drift involving Airy functions. However, for this discussion we will stick to softer intuition regarding $\cB$.

\begin{figure}[htb]
	\centering
		\includegraphics[width=10cm]{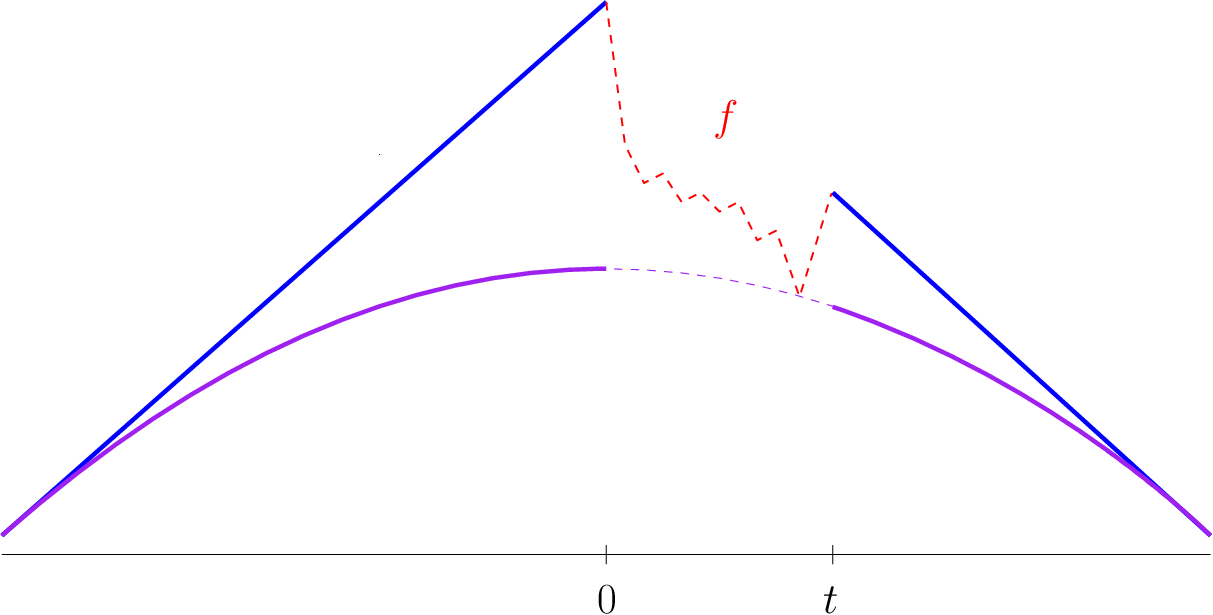}
	\caption{The easiest way for $\cB(r) - \cB(0), r \in [0, t]$ to equal a function $f \in \sC^1_0([0, t])$. The cost to lift $\cB$ far enough above of the parabola to achieve the function $f$ is the difference between the Dirichlet energies of the solid blue and purple curves. This energy difference is $\S(f)$ plus a lower order term in $t$.}
	\label{fig:parabola-avoidance}
\end{figure}

One way to check the reasonability of this toy model is to check the probability that $\cB(0) \sim m$. The best way for $\cB(0)$ to equal $m$ should be if $\cB$ only deviates from the parabola by following straight lines from $(-\la, -\la^2)$ to $(0, m)$ and back to $(\la, -\la^2)$ for some $\la > 0$. Computing the Dirichlet energy of this path against the Dirichlet energy of the parabola, and optimizing over $\la$ yields that our best guess  
for $\P(B(0) \sim m)$ should be $e^{-(\frac{4}{3} + o(1)) m^{3/2}}$, which agrees with the Tracy-Widom upper tail on $\fA_1(0)$. The optimal value is $\la = \sqrt{m}$; with this choice the straight lines from $(-\la, -\la^2)$ and $(\la, -\la^2)$ to $(0, m)$ are \textit{tangent} to the parabola.

Next, for $\cB(r) - \cB(0), r \in [0, t]$ to equal a function $f \in \sC^1_0([0, t])$, we must have $\cB(0) + f(r) \ge - r^2$ for all $r \in [0, t]$. From the definition of the tetris map $\T$, we can observe that
$$
\cB(0) \ge \T f (0) \quad \implies \quad \cB(0) + f(r) \ge - r^2 \; \forall r \in [0, t] \quad \implies \cB(0) \ge \T f (0) - t^2.
$$
Since it is costlier to raise $\cB$ up further away from the parabola, this implies that the optimal way to have $\cB(r) - \cB(0), r \in [0, t]$ equal to $f$ is to let $\cB(0)$ sit in the interval $[ \T f(0) - t^2,  \T f(0)]$ and similarly let $\cB(t)$ sit in the interval $[ \T f(t) - t^2,  \T f(t)]$. The cost of this, relative to the cost of simply having a Brownian motion equal to $f$ can again be computed by comparing the Dirichlet energy of straight lines against a parabola. Assuming $t^2$ is very small relative to the size of $\T f$, this amounts to computing the Dirichlet energy of the straight line from $(-\sqrt{\T f(0)}, - \T f(0))$ to $(0, \T f(0))$  plus the Dirichlet energy of the straight line from $(t, \T f(t))$ to $(\sqrt{\T f(t)}, - \T f(t))$ minus the Dirichlet energy of the parabola from $(-\sqrt{\T f(0)}, - \T f(0))$ to $(\sqrt{\T f(t)}, - \T f(t))$. This is
$$
\frac{2}{3} ([\T f(0)]^{3/2} + [\T f(t)]^{3/2}) + O(t \T f(t)),
$$
which is $\S(f) + O(t \T f(t))$. The assumption that $t^2$ is small relative to the size of $\T f$ guarantees that the error term here is lower order. See Figure \ref{fig:parabola-avoidance} for an illustration. The intuition behind the expression \eqref{E:Xktf} for more than one line is similar, except now we need to stack the $k$ functions $f_1, \dots, f_k$ on top of each other, \textit{and} on top of the parabola in order to achieve the ordering $\fA_1 > \fA_2 > \dots$.

Understanding probabilistic objects built from non-crossing paths via a tangent picture is not a new idea. Indeed, tangent methods were proposed by Colomo and Sportiello \cite{colomo2016arctic} for studying limit shapes of the six-vertex model, and made rigorous by Aggarwal \cite{aggarwal2020arctic}. In the context of the KPZ and Airy line ensembles, tangent methods were used by Ganguly and Hegde \cite{ganguly2022sharp} to pinpoint the upper tail one-point asymptotics in \eqref{E:Asm-bds} and related upper tail two-point asymptotics using only minimal assumptions.

\subsection{Outline of the proof and the paper}
\label{SS:outline}
A priori, it is not clear why the hydrodynamic tangent heuristic should give fine control over the Wiener density. 
As a result, our proof of the upper bound in Theorem \ref{T:tetris-theorem} does not proceed by directly trying to prove this heuristic. 
Rather, we first prove Theorem \ref{T:resampling-candidate}, which implies that the only contribution to the Wiener density $X_{k, t}(f)$ comes from the joint distribution of the endpoints $\fL(0), \fL(t)$ (here $\fL$ is as in Theorem \ref{T:resampling-candidate}). To prove Theorem \ref{T:resampling-candidate}, by the Brownian Gibbs property we can abstractly define a $\sigma$-finite measure $\mu_\fL$ on the space $\sC^\N(\R)$ of sequences of continuous functions $f = \{f_i : \R \to \R, i \in \N\}$ such that if $\mu_\fL(\sC^\N(\R))$ were finite, then after scaling it to a probability measure, it would give the law of a line ensemble $\fL$ satisfying Theorem \ref{T:resampling-candidate}. The key difficulty is in showing that $\mu_\fL(\sC^\N(\R)) < \infty$: here we use a refinement of some ideas from \cite{dauvergne2021bulk} and \cite{hammond2016brownian}, along with new insights about how the Brownian Gibbs property on different domains controls $\mu_\fL(\sC^\N(\R))$. The proof of Theorem \ref{T:resampling-candidate} is contained in Section \ref{S:construction-L}.

In Section \ref{S:tail-bounds}, we analyze the joint distribution of $\fL(0), \fL(t)$ by again appealing to one-point bounds, using monotonicity arguments, and applying an induction argument to pass from control of $(\fL_i(0), \fL_i(t))$ for any fixed $i$ to joint control of $(\fL_i(0), \fL_i(t)), i \in \II{1, k}$. It is only because the upper tail one-point bound in \eqref{E:Asm-bds} sharply matches the bound coming from the tangent heuristic that the tangent picture emerges. In Section \ref{S:upper-bound}, we combine these tail bounds with Theorem \ref{T:resampling-candidate} to establish that LHS$\le$RHS in \eqref{E:Xktf}.

 The lower bound LHS$\ge$RHS is easier, since we only need to work on a part of the probability space where the $(k+1)$st line does not deviate too far from the parabola. It closely resembles a rigorous version of the heuristic discussed in Section \ref{SS:proof-heuristic}, and is shown in Section \ref{S:lower-bound}. A final section (Section \ref{S:proofs-corollaries}) proves the corollaries.
 
 We have striven to keep the paper self-contained, relying only on standard tail bounds on the Airy points $\fA_1(0), \fA_2(0), \dots$, the Brownian Gibbs property, and basic monotonicity properties for non-intersecting Brownian bridges as inputs.
%
%
%

%
%
%
%


\subsection*{Conventions throughout the paper}
In all sections of the paper after Section \ref{S:prelim}, $t \ge 1$ and $k \in \N$ are fixed. We will often suppress dependence of objects on $k$ and $t$ for notational convenience (e.g. we write $\fL$ instead of $\fL^{k, t}$). We write $c, c', c'' > 0$ for large constants and $d, d' > 0$ for small constants. Our constants may change from line to line within proofs and we use a subscript $k$ (i.e. $c_k$) if they depend on $k$. Constants will not depend on any other parameters. 
All Brownian objects in the paper (e.g. Brownian bridge, Wiener measure) will have variance $2$, as is the convention for the Airy line ensemble. For a set $I \sset \R$, we write $I^k_>$ (respectively $I^k_\ge$) for all $\bx \in I^k$ with $x_1 > \dots > x_k$ (respectively $x_1 \ge \dots \ge x_k$). For a sequence of functions $f = (f_1, \dots, f_k)$ and $\ell \le k$, we write $f^\ell = (f_1, \dots, f_\ell)$ for the first $\ell$ coordinates of $f$. Finally, we use the shorthand $\E_\cF(\cdot) = \E(\cdot \mid \cF)$ for conditional expectation with respect to a $\sig$-algebra $\cF$.

	\section{Preliminaries}
	\label{S:prelim}
	
	In this section, we introduce a few basic preliminaries about the Airy line ensemble and other non-intersecting collections of Brownian motions. 
	
	Let $I$ be a closed interval in $\R$, let $k \in \N$, and define $\sC^k(I)$ denote the space of $k$-tuples of continuous functions $f = (f_1, \dots, f_k)$ from $I$ to $\R$ with the \textbf{compact topology} 
	$$
	f_n \to f \quad \text{ if } \quad \sup_{x \in K} |f_{n, i}(x) - f_i(x)| \to 0 \quad \text{   for all compact sets } K \sset I, i \in \II{1, k}.
	$$
	We also let $\sC^\N(I)$ be the space of all sequences of continuous functions $f = (f_1, f_2, \dots)$ from $I$ to $\R$, again equipped the with compact topology (we replace $\II{1, k}$ with $\N$ in the definition above), and we write $\sC(I) = \sC^1(I)$.
	
	We can also think of $f \in \sC^k(I)$ as a function $f:\II{1, k} \X I \to \R$ with the topology of uniform convergence on compact subsets of $\II{1, k} \X I$, and with this in mind, we will often write $f|_U$ for the restriction of $f$ to a subset of $\II{1, k} \X I$. For $f \in \sC^k(I)$ with $\ell \le k$ we write $f^\ell = (f_1, \dots, f_\ell)$ for the first $\ell$ coordinates of $f$.

	The \textbf{parabolic Airy line ensemble} $\fA$ is a random element of $\sC^\N(\R)$ (a.k.a.\ a line ensemble) satisfying 
	$$
	\fA_1 > \fA_2 > \dots
	$$
	almost surely. The process $\cA_i(t) = \fA_i(t) + t^2$ is the usual \textbf{Airy line ensemble} and is stationary under shifts and time reversal. For any $r \in \R$ we have
	\begin{equation}
	\label{E:cAs}
	\cA(\cdot) \eqd \cA(- \; \cdot)\eqd \cA(\cdot + r),
	\end{equation}
	where the equality in distribution is as random element of $\sC^\N(\R)$.
	Throughout the paper we will typically use the parabolic version $\fA$, which has a more natural Gibbs resampling property. The Airy line ensemble $\cA$ is uniquely characterized by a determinantal formula. Indeed, for every finite set of times $\{t_1, \dots, t_k\} \sset \R$, the set of locations set $\{(\cA_i(t_j), t_j) : i \in \N, j \in \II{1, k}\}$ is a determinantal point process on $\R \X \{t_1, \dots, t_k\}$ with with kernel
	\begin{equation}
	\label{E:airy-kernel}
	K((x, s) ; (y, r)) =
	\begin{cases}
	\int_{0}^\infty d \lambda e^{-\lambda(s - r)}\Ai(x + \la)\Ai(y + \la), \qquad & \mathif s \ge t, \\
	-\int_{-\infty}^0 d \lambda e^{-\lambda(s -r)}\Ai(x + \la)\Ai(y + \la), \qquad & \mathif s < r,
	\end{cases}
	\end{equation}
	where $\Ai(\cdot)$ is the Airy function. 
	
	\subsection{Tail bounds}
	The Airy points $\fA_i(0), i \in \N$ satisfy well-known tail bounds. A fairly sharp upper tail bound is a quick consequence of the determinantal formula.
	
	\begin{lemma}
		\label{L:one-point-upper-bound}
		For all $k \in \N$ there exists $c_k > 0$ such that for $\mathbf{m} \in [0, \infty)^k_\ge$  we have
		$$
		\P(\fA_i(0) > m_i \quad \text{ for all } i \in \II{1, k}) \le c_k \exp\Big(-\frac{4}{3} \sum_{i=1}^k m_i^{3/2}\Big).
		$$
	\end{lemma}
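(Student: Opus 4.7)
The plan is to use the determinantal structure of the Airy point process at a single time, combined with Hadamard's inequality and standard Airy function asymptotics, to reduce the $k$-point bound to a product of one-point tail integrals.

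Setting $s = r = 0$ in \eqref{E:airy-kernel} shows that the point process $\{\fA_i(0) : i \in \N\}$ is determinantal on $\R$ with kernel $K_{\Ai}(x, y) = \int_0^\infty \Ai(x + \la) \Ai(y + \la)\, d\la$. Writing $E = \{\fA_i(0) > m_i \text{ for all } i \in \II{1, k}\}$, observe that on $E$ the tuple $(\fA_1(0), \dots, \fA_k(0))$ is a tuple of $k$ distinct points of the process satisfying $\fA_i(0) > m_i$, so
$$
\indic_E \;\le\; \sum_{(y_1, \dots, y_k) \text{ distinct}} \prod_{i=1}^k \indic(y_i > m_i).
$$
Taking expectations and using the $k$-th joint intensity formula for determinantal point processes gives
$$
\P(E) \;\le\; \int_{m_1}^\infty \cdots \int_{m_k}^\infty \det\!\bigl( K_{\Ai}(y_i, y_j) \bigr)_{i,j=1}^k \, dy_1 \cdots dy_k.
$$

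Next, the matrix $\bigl(K_{\Ai}(y_i, y_j)\bigr)_{i,j=1}^k$ is a Gram matrix in $L^2(0, \infty)$ via the vectors $\phi_i(\la) = \Ai(y_i + \la)$, hence positive semi-definite. Hadamard's inequality yields $\det \le \prod_i K_{\Ai}(y_i, y_i)$, so the $k$-fold integral factors:
$$
\P(E) \;\le\; \prod_{i=1}^k \int_{m_i}^\infty K_{\Ai}(y, y) \, dy.
$$

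It remains to estimate each one-dimensional integral. Using the standard uniform asymptotic $\Ai(x) \le C\, x^{-1/4} e^{-\frac{2}{3} x^{3/2}}$ for $x \ge 1$ and a short integration-by-parts argument, one obtains $K_{\Ai}(y, y) = \int_0^\infty \Ai(y + \la)^2 d\la \le C' (1 + y)^{-1} e^{-\frac{4}{3} y^{3/2}}$ for $y \ge 0$, from which
$$
\int_m^\infty K_{\Ai}(y, y) \, dy \;\le\; c\, e^{-\frac{4}{3} m^{3/2}} \qquad \text{for all } m \ge 0,
$$
the constant $c$ being enlarged to absorb the bounded contribution when $m$ is close to $0$. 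Multiplying over $i = 1, \dots, k$ yields the claim with $c_k = c^k$. The only real work is the Airy asymptotic estimate in the last display; this is routine but requires mild care to make the bound uniform down to $m = 0$, which is precisely why a $k$-dependent prefactor is permitted.
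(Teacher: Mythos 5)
Your proof is correct, and the overall strategy mirrors the paper's: pass to the single-time determinantal kernel, reduce the tail probability to a $k$-fold integral of the $k$-point intensity $\det(K(y_i,y_j))$, factor that determinant into a product over $i$, and finish with Airy asymptotics. The one place where you genuinely diverge from the paper is the determinant bound. The paper uses the pointwise off-diagonal estimate $|K(x,0;y,0)| \le c' e^{-\frac{2}{3}(x^{3/2}+y^{3/2})}$ together with the Leibniz expansion, giving $\det \le k!\,(c')^k e^{-\frac{4}{3}\sum x_i^{3/2}}$; you instead observe that $(K_{\Ai}(y_i,y_j))$ is a Gram matrix in $L^2(0,\infty)$ and apply Hadamard's inequality for positive semidefinite matrices, $\det \le \prod_i K_{\Ai}(y_i,y_i)$, which is slightly cleaner (it avoids the $k!$) and pushes the work into estimating the diagonal $K_{\Ai}(y,y)=\int_0^\infty \Ai(y+\la)^2\,d\la$. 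Both give the same result up to a $k$-dependent constant, which is all that is claimed. (Minor remark: you should be a little careful that the ``sum over distinct $k$-tuples'' inequality you invoke is exactly the defining property of the $k$-point joint intensity of a simple point process; this is standard, but it is where the paper's $\ep\to0$ density argument sits, and it is worth saying explicitly if this were written out fully.)
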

	
	\begin{proof}
		This is a simple calculation involving the Airy kernel at a single time. Let $\Lambda = \{\fA_i(0), i \in \N\}.$ Then for $\mathbf{m} \in \R^k_>$ and $\ep > 0$ we have
		$$
		\P(\fA_i(0) \in [m_i- \ep, m_i + \ep] \text{ for all } i \in \II{1, k}) \le \E \lf(\prod_{i=1}^k \#(\Lambda \cap [m_i- \ep, m_i + \ep]) \rg).
		$$
		As long as $\ep > 0$ is small enough, the right hand side above equals
		$$
		\int_{m_k - \ep}^{m_k + \ep} \cdots \int_{m_1 - \ep}^{m_1 + \ep}\det_{1 \le i, j \le n} K(x_i,0; x_j, 0) dx_1 \dots d x_k,
		$$ 
		where $K$ is the Airy kernel. Taking $\ep \to 0$, we see that $(\fA_1(0), \dots, \fA_k(0))$ has a Lebesgue density that at $\bx \in \R^k_\ge$ is bounded above by
		$$
		\det_{1 \le i, j \le n} K(x_i, 0;x_j, 0).
		$$ 
		Using the formula \eqref{E:airy-kernel} and the standard bound $\operatorname{Ai}(x) \le c \exp(-\tfrac{2}{3} x^{3/2})$ for a universal constant $c$ (see formula 10.4.59 in \cite{abramowitz1972handbook}), we have that
		$$
		K((x, 0), (y, 0)) \le c' \exp(-\tfrac{2}{3} (x^{3/2} +y^{3/2})).
		$$
		and so
		$$
		\det_{1 \le i, j \le n} K(x_i, 0;x_j, 0) \le k! c' \exp(-\tfrac{4}{3} \sum_{i=1}^k x_i^{3/2}).
		$$ 
		Integrating this over the set of all $\bx \in \R^k_>$ with $x_i \ge m_i$ for all $i$ yields the result.
	\end{proof}

\begin{lemma}
	\label{L:one-point-lower-bound}
	For every $k \in \N$ there exists $d_k > 0$ such that for all $m > 0$ we have
	\begin{equation}
	\label{E:cAk0}
	\P(\cA_k(0) < - m) \le 2 \exp(-d_k m^{3}).
	\end{equation}

\end{lemma}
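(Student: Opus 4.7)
My plan is to combine the determinantal structure of the Airy point process at a single time (as used in the proof of Lemma~\ref{L:one-point-upper-bound}) with the classical Tracy--Widom lower tail for the top Airy point, reducing the $k$-th point tail to the $k=1$ case.

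Let $N_m = \#\{i \in \N : \cA_i(0) > -m\}$, so that $\{\cA_k(0) < -m\} = \{N_m \le k-1\}$. By the standard Bernoulli representation for determinantal point processes (Hough--Krishnapur--Peres--Vir\'ag), $N_m$ is equal in distribution to $\sum_j \xi_j$, where the $\xi_j$ are independent Bernoulli random variables with parameters $\la_j$ equal to the eigenvalues of the trace-class operator $K|_{(-m, \infty)}$, with $K$ the single-time Airy kernel from \eqref{E:airy-kernel}. Consequently
$$
\P(N_m = j) = \prod_i (1 - \la_i) \cdot e_j\!\lf(\la_i/(1-\la_i)\rg),
$$
where $e_j$ is the $j$-th elementary symmetric polynomial. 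For $k = 1$ this gives $\P(\cA_1(0) \le -m) = \det(I - K|_{(-m, \infty)})$, and the classical Tracy--Widom lower tail yields $\P(\cA_1(0) \le -m) \le C \exp(-m^3/12)$ for $m \ge 1$.

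For $k \ge 2$, the bound $e_j(x_1, x_2, \dots) \le (\sum_i x_i)^j/j!$ (valid for $x_i \ge 0$) gives $\P(N_m = j) \le \det(I - K|_{(-m,\infty)}) \cdot T^j/j!$, where $T = \sum_i \la_i/(1-\la_i) = \text{tr}(K(I-K)^{-1}|_{(-m,\infty)})$. Summing over $j \in \{0, \dots, k-1\}$,
$$
\P(\cA_k(0) < -m) \le C_k \exp(-m^3/12) \cdot (T^{k-1} \vee 1).
$$
The key quantitative input is the bound $T \le \exp(O(m^{3/2}))$. Combined with the Tracy--Widom estimate, this yields $\P(\cA_k(0) < -m) \le C_k \exp(-m^3/12 + O_k(m^{3/2})) \le 2 \exp(-d_k m^3)$ for any $d_k < 1/12$ once $m$ is sufficiently large; for bounded $m$ the bound holds trivially after adjusting constants.

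The main obstacle is proving $T \le \exp(Cm^{3/2})$, which reduces via $\sum \la_i = \E N_m = O(m^{3/2})$ (coming from the Airy density $\rho(x) \sim \sqrt{-x}/\pi$ as $x \to -\infty$) to a lower bound $1 - \la_1 \ge \exp(-Cm^{3/2})$ on the spectral gap of the top eigenvalue. A variational argument using the integral representation $K = \int_0^\infty |\Ai_\la\rangle\langle\Ai_\la|\,d\la$ (with $\Ai_\la(x) = \Ai(x+\la)$) and the unitarity of the Airy transform expresses $1 - \la_1$ as the infimum over unit-norm $\phi \in L^2((-m, \infty))$ of $\int_{-\infty}^0 |\langle \Ai_\la, \phi\rangle|^2\,d\la$, and the required bound follows from a quantitative ``uncertainty principle'' for the Airy transform, exploiting the $e^{-\frac{2}{3}x^{3/2}}$ decay of $\Ai(x)$ as $x \to \infty$.
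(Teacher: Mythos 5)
Your approach is genuinely different from the paper's. The paper's proof of Lemma~\ref{L:one-point-lower-bound} is a two-line citation: $k=1$ is the classical Tracy--Widom lower tail from \cite{tracy1994level}, and $k>1$ is read off from Theorem 3.1 of \cite{dauvergne2021bulk} by taking a distributional limit. Your route instead uses the determinantal structure directly: the Bernoulli representation reduces the $k$-th point tail to the $k=1$ tail (the Fredholm determinant $\det(I-K|_{(-m,\infty)})$) multiplied by a polynomial in $T=\sum_j \lambda_j/(1-\lambda_j)$. The outer skeleton is correct --- with $\sum_j\lambda_j = \E N_m = O(m^{3/2})$ and a spectral-gap bound $1-\lambda_1 \ge e^{-Cm^{3/2}}$ you would indeed get $T\le e^{O(m^{3/2})}$, which combined with the Tracy--Widom $\exp(-m^3/12)$ bound gives the claim.

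The problem is that the spectral-gap estimate $1-\lambda_1 \ge e^{-Cm^{3/2}}$, which you correctly flag as the ``main obstacle,'' is asserted rather than proved, and it is where all the work lies. Rewriting it via the Airy transform as a claim that every unit $\phi\in L^2((-m,\infty))$ has $\int_{-\infty}^0 |\langle \Ai(\cdot+\lambda),\phi\rangle|^2\,d\lambda \ge e^{-Cm^{3/2}}$ does not make it easier: this is a sharp quantitative statement about the near-degeneracy of the projections $\mathbf 1_{(-m,\infty)}$ and $K$, and the one-sentence appeal to the superexponential decay of $\Ai$ on the positive axis does not produce a lower bound (that decay controls things from above, not below; the relevant mechanism has to come from oscillation of $\Ai$ on the negative axis). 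Note also that the aggregate fact $\prod_j(1-\lambda_j)=e^{-m^3/12+o(m^3)}$, which you take as input, constrains the product but not the individual factor $1-\lambda_1$, so the Tracy--Widom lower tail alone cannot be bootstrapped into the gap bound. As written, the proposal replaces the lemma by a harder, unproved operator-theoretic estimate, so there is a genuine gap here. Given how cheap the paper's citation route is, it would be worth either proving the spectral-gap estimate in full or falling back to an available reference for $k>1$.
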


\begin{proof}
	For $k = 1$ this is well-known tail bound goes back to \cite{tracy1994level}. For $k > 1$, this is a limiting version of Theorem 3.1 of \cite{dauvergne2021bulk}, equation (5). More precisely, that theorem states that for a sequence of random variables $\cA^n_k(0) \cvgd \cA_k(0)$ as $n \to \infty$, that \eqref{E:cAk0} holds for all large enough $n$.
\end{proof}

Later on in the paper, we will need a quick way to move between two-point tail bounds and a maximum bound on a stochastic process. This is accomplished through the following lemma.

\begin{lemma}
	\label{L:mod-cont}
	Let $t > 0$, $W \in \sC([0, t])$ and suppose that there exist constants $\al, \be > 0$ such that for all $m > 0$ and all $s, s + r \in [0, t]$, we have the estimate
	$$
	\P(|W(s) - W(s+r)| > m \sqrt{r}) \le \al \exp(-\be m^2).
	$$
	Then for $m > 20 \sqrt{\beta}$ we have
	$$
	\P(\sup_{s, s + r \in [0, t]} |W(s) - W(s+r)| \ge m \sqrt{t}) \le 2 \al \exp(-\beta m^2/64).
	$$
\end{lemma}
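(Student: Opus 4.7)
The plan is a dyadic chaining bound for the modulus of continuity of $W$, a standard technique once a sub-Gaussian increment bound is in hand. First, by continuity of $W$, the supremum in the conclusion equals $\sup_{u, v \in [0, t]} |W(u) - W(v)|$ and may be computed over the countable dyadic set $\bigcup_{n \ge 0} G_n$, where $G_n = \{i t/2^n : 0 \le i \le 2^n\}$.

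For each $n \ge 0$ I would define the level-$n$ maximum increment
$$
D_n = \max_{0 \le i < 2^n} \bigl| W\bigl((i+1) t/2^n\bigr) - W\bigl(i t /2^n\bigr) \bigr|,
$$
and establish the telescoping bound $\sup_{u, v \in [0, t]} |W(u) - W(v)| \le 2 \sum_{n=0}^\infty D_n$. This follows from binary expansion: for $u \in G_n$, writing $u/t = \sum_{j=0}^n \epsilon_j 2^{-j}$ with $\epsilon_j \in \{0, 1\}$, the partial sums trace a path from $0$ to $u$ whose $j$-th step is $0$ or $t 2^{-j}$ and whose $W$-increment is bounded by $D_j$; the factor of two is from the triangle inequality $|W(u) - W(v)| \le |W(u) - W(0)| + |W(v) - W(0)|$.

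It then remains to apply a union bound. The hypothesis applied to an increment of length $r = t 2^{-n}$, combined over the $2^n$ level-$n$ increments, gives $\P(D_n > x_n) \le 2^n \alpha \exp(-\beta x_n^2 \cdot 2^n/t)$. I would choose the thresholds geometrically --- for instance $x_n = (m \sqrt t/8)(3/4)^n$, which satisfies $2 \sum_n x_n = m \sqrt t$ (so that $\{\forall n,\ D_n \le x_n\}$ implies $\sup |W(u) - W(v)| \le m \sqrt t$) and produces the level-$n$ exponent $\beta m^2 (9/8)^n/64$. Under the hypothesis $m > 20 \sqrt \beta$, $\beta m^2$ is large enough that the exponential growth of the level-$n$ exponent eventually dominates the $2^n$ prefactor, leaving the $n = 0$ term dominant; summing then gives $\sum_n 2^n \alpha \exp(-\beta m^2 (9/8)^n/64) \le 2 \alpha \exp(-\beta m^2/64)$. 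The only real ``obstacle'' is arithmetic bookkeeping: selecting the threshold sequence so that the budget constraint on $\sum x_n$ and the target leading exponent $\beta m^2/64$ are simultaneously met, and verifying that the tail of the union-bound sum is absorbed into the factor of $2\alpha$. No ideas beyond dyadic chaining are required.
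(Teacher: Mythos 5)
Your proposal is correct and uses essentially the same dyadic chaining argument as the paper: the paper's decomposition $W = \sum_k H_k$ (differences of consecutive piecewise-linear dyadic interpolants) is a repackaging of your telescoping bound $\sup|W(u)-W(v)| \le 2\sum_n D_n$, and the paper's random cutoff $M = \max_k\bigl(t^{-1/2}2^{k/2}\|H_k\|_\infty - \sqrt{k/\beta}\bigr)$ versus your deterministic geometric thresholds $x_n = (m\sqrt{t}/8)(3/4)^n$ are interchangeable bookkeeping devices that both close the union bound. One remark on the arithmetic you flag as the ``only obstacle'': with ratio $3/4$, the bound $\sum_n 2^n \exp\bigl(-(\beta m^2/64)(9/8)^n\bigr) \le 2\exp(-\beta m^2/64)$ requires $\beta m^2/64$ to exceed an absolute constant (about $16\log 2$), i.e.\ $m \gtrsim 1/\sqrt{\beta}$; the hypothesis $m > 20\sqrt{\beta}$ as printed does \emph{not} guarantee this when $\beta < 1$. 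This is not really a defect in your proposal, because the paper's own proof (via $2\|W\|_\infty \le \sqrt{t}(4M + 10/\sqrt{\beta})$) likewise needs $m \gtrsim 1/\sqrt{\beta}$, so the stated $\sqrt{\beta}$ in the lemma's threshold is evidently a typo for $1/\sqrt{\beta}$. Reading it that way, your construction closes, possibly after nudging the constant $20$ up to around $27$, which is harmless for how the lemma is used.
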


\begin{proof}[Proof of Lemma \ref{L:mod-cont}]
	The proof is a standard chaining argument. We may assume $W(0) = 0$, and focus on bounding $2\|W\|_\infty$.
	For every $k =0, 1, \dots$, let $G_k$ be the function with $G_k(s) = W(s)$ for $s = it/(2^k)$ for some $i \in \II{0, 2^k}$, and such that $G_k$ is linear on each of the intervals $[it/(2^k), (i+1)t/(2^k)]$. For $k = 0, 1, \dots$, let $H_k = G_k - G_{k-1}$, where $G_{-1}:= 0$. We can write $W = \sum_{k=0}^\infty H_k$. Also, 
	$$
	\|H_k\|_\infty \le \sup_{i \in \II{1, 2^k}} |W(it2^{-k}) - W((i-1)t 2^{-k})|.
	$$ 
	and hence we have the union bound
	$$
	\P(\|H_k\|_\infty > m \sqrt{t} 2^{-k/2} ) \le \sum_{i=1}^{2^k} \P(|W(it2^{-k}) - W((i-1)t 2^{-k})| > m \sqrt{t} 2^{-k/2}) \le 2^k \al \exp(-\be m^2). 
	$$
	Therefore letting
	$$
	M =  \max_{k \ge 0} (t^{-1/2} 2^{k/2}\|H_k\|_\infty - \sqrt{k/\beta}),
	$$
	we have
	$
	\P(M > m) \le 2\al \exp(-\be m^2).
	$
	Next, 
	\begin{align*}
	2\|W\|_\infty \le 2\sum_{k=0}^\infty \|H_k\|_\infty \le 2\sqrt{t} \sum_{k=0}^\infty (M + \sqrt{k/\beta}) 2^{-k/2} \le \sqrt{t} (4M + 10/\sqrt{\beta}),
	\end{align*}
	which yields the result after converting the tail bound on $M$.
\end{proof}
\subsection{Gibbs resampling}
\label{SS:gibbs-resampling}

As discussed in the introduction, the Airy line ensemble satisfies a resampling property called the Brownian Gibbs property. Here we introduce notation that will help us work with the Brownian Gibbs property, and state a few useful lemmas.

First, for an interval $I \sset \R$, an interval $J \sset I$ and an $\R \cup \{-\infty\}$-valued function $g$ whose domain contains $J$, let
$$
\NI(g, J) = \{f \in \sC^k(I) : f_1(s) > f_2(s) > \dots > f_k(s) > g(s) \qquad \text{ for all } s \in J\}.
$$
Note that the value of $k$ and the interval $I$ is not explicit in the notation $\NI(g, J)$. These will always be clear from context (i.e. by specifying the initial space $\sC^k(I)$). We will use the shorthand $\NI(g) = \NI(g, I)$. 
Next, we say that a $k$-tuple of \textbf{independent Brownian bridges} $B = (B_1, \dots, B_k) \in \sC^k([s, t])$ goes from $(s, \bx)$ to $(t, \by)$ for $\bx, \by \in \R^k$ if each of the $B_i$ is a Brownian bridge with $B_i(s) = x, B_i(t) = y$. Note that our bridges will always have variance $2$. We write
\begin{equation*}
\P_{s, t}(\bx, \by, f, J)
\end{equation*}
for the probability that a $k$-tuple of independent Brownian bridges from $(s, \bx)$ to $(t, \by)$ is in the non-intersection set $\NI(f, J)$. Again, we omit $J$ from all notation if $J = [s, t]$. We say that the $k$-tuple of Brownian bridges is \textbf{non-intersecting} if it is conditioned on the event $\NI(-\infty)$.

The Brownian Gibbs property for $\fA$ can now be more formally stated as follows:
\begin{itemize}
	\item[] For a set $S =\II{1, k} \X [a, b]$, let $\cF_{k, a, b}$ denote the $\sig$-algebra generated by $\{\fA_i(x) : (i, x) \notin S\}$.  Then the conditional distribution of $\fA|_S$ given $\cF_{k, a, b}$ is equal to the law of $k$ independent Brownian bridges from $(a, \fA^k(a))$ to $(b, \fA^k(b))$ conditioned on the event $\NI(\fA_{k+1})$.
\end{itemize}
In the statement above, recall that $\fA^k(a) = (\fA_1(a), \dots, \fA_k(a))$ and note that when applying the Gibbs property, the bridges are always drawn independently of the endpoint vectors $\fA^k(a), \fA^k(b)$ and of the lower boundary $\fA_{k+1}$.

We record two key lemmas that will allow us to work with the Brownian Gibbs property. The first is a crucial monotonicity property.

\begin{lemma}
	\label{L:CH-monotonicity}
Let $[s, t], J$ be closed intervals in $\R$ with $J \sset [s, t]$, let $\bx^1 \le \bx^2, \by^1 \le \by^2 \in \R^k_>$ where $\le$ is the coordinate-wise partial order, and let $g_1 \le g_2$ be bounded Borel measurable functions from $[s,t] \to \R \cup \{-\infty\}$. For $i = 1, 2$ be $B^i$ be a $k$-tuple of Brownian bridges from $(s, \bx^i)$ to $(t, \by^i)$, conditioned on the event $\NI(g_i, J)$. Then there exists a coupling such that $B^1_j(r) \le B^2_j(r)$ for all $r \in [s, t], j \in \II{1, k}$.
\end{lemma}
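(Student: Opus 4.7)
The plan is to reduce the continuous-time monotonicity to a finite-dimensional coupling of truncated Gaussians, and then propagate monotonicity through heat-bath (Glauber) dynamics on the finite problem. This is the standard route to such results and mirrors the proof of the analogous statement in \cite{CH}.

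First I would reduce to continuous boundaries $g_i$ by approximating each from below by continuous bounded functions $g_i^m$ with $g_1^m \le g_2^m$ and $g_i^m \uparrow g_i$ pointwise, then passing to the limit in the Radon--Nikodym derivatives of the conditioned laws against the unconditioned bridge laws by monotone convergence. Next, fix a countable dense $\{q_1, q_2, \dots\} \subset J$, let $Q_n = \{q_1, \dots, q_n\}$, and introduce the finite-point event $\NI_n(g, Q_n) = \{f : f_1(q) > \cdots > f_k(q) > g(q) \text{ for all } q \in Q_n\}$. Continuity of paths and of $g$ gives $\NI(g_i, J) = \bigcap_n \NI_n(g_i, Q_n)$ up to a null boundary effect that is absorbed by a small strict-inequality perturbation of the endpoints. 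Hence it suffices to construct a monotone coupling of bridges $B^{i,n}$ from $(s, \bx^i)$ to $(t, \by^i)$ conditioned on $\NI_n(g_i, Q_n)$, and then pass $n \to \infty$ using tightness.

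For the finite-dimensional problem I would introduce heat-bath dynamics on the mesh values $(B_i(q_j))_{i \in \II{1,k}, j \in \II{1,n}}$ that at each step uniformly picks a coordinate $(i, q_j)$ and resamples it from its conditional law given all other values. The Brownian bridge structure makes this conditional law a Gaussian with mean equal to the linear interpolation between the adjacent mesh times (with variance $2(q_j - q_{j-1})(q_{j+1} - q_j)/(q_{j+1} - q_{j-1})$), truncated to the interval $(\ell, u)$ with $u = B_{i-1}(q_j)$ (or $+\infty$ if $i = 1$) and $\ell = B_{i+1}(q_j)$ if $i < k$, or $\ell = \max(B_k(q_j), g(q_j))$ if $i = k$. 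I would then couple two copies of this chain---one for each boundary data $(\bx^i, \by^i, g_i)$---by the inverse-CDF method: a single $U \sim \mathrm{Unif}[0,1]$ generates the resampled value in both chains. Because the quantile function of a truncated Gaussian is monotone in its mean and in both truncation endpoints, if $B^1 \le B^2$ pointwise before a resampling step then this ordering persists after it. Starting both chains at ordered initial configurations (e.g.\ the affine interpolations of the endpoint vectors, which are ordered by hypothesis) and running to stationarity then yields the required monotone coupling at the discrete level.

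The main obstacle I anticipate is bookkeeping rather than concept. The delicate points are: (i) verifying the conditional-law convergence in the reduction step, particularly the degenerate case where entries of $\bx^i$ or $\by^i$ coincide (requiring a regularisation $\bx^i \mapsto \bx^i + \ep(k, k-1, \dots, 1)$ with $\ep \cvgdown 0$ at the end, and similarly for $\by^i$); and (ii) verifying ergodicity of the finite heat-bath chain with strictly positive transition kernels, so that time averages recover the stationary distribution. The core monotonicity mechanism---the pathwise monotone quantile coupling of truncated Gaussians with monotonically varying means and truncation endpoints---is elementary.
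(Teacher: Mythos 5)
The paper does not prove this lemma itself; it cites Lemmas 2.6 and 2.7 of Corwin--Hammond \cite{CH} together with the observation that the argument extends verbatim to $J \subsetneq [s,t]$, and your proposal correctly reconstructs precisely that argument (finite-mesh discretization, heat-bath resampling, monotone quantile coupling of truncated Gaussians, tightness to pass to the continuum limit). The one soft spot is the opening reduction: a bounded Borel $g_i$ cannot in general be approximated from below by continuous functions increasing pointwise to $g_i$ (that requires lower semicontinuity), so you should instead run the heat-bath and quantile-coupling argument directly with the Borel $g_i$ at the discretized level, where only the finitely many values $g_i(q_j)$ enter and no continuity is used, and then identify $\bigcap_n \NI_n(g_i,Q_n)$ with $\NI(g_i,J)$ by the strict-inequality perturbation you already mention together with path continuity.
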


Lemma \ref{L:CH-monotonicity} is proven as Lemmas 2.6 and 2.7 in \cite{CH} in the case when $[s, t] = J$. The proof goes through verbatim in the slight extension when $J$ is a strict subset of $[s, t]$.

The next lemma is a lower bound on the probability of Brownian bridges being non-intersecting. This bound will allow us to compare regular Brownian bridge probabilities with non-intersecting bridge probabilities.

\begin{lemma}
	\label{L:nonint-probability}
	Let $\bx, \by \in \R^k_>$ be vectors with 
	$$
	\min_{i \in \II{1, k-1}} (x_i - x_{i+1}) \wedge (y_i - y_{i+1}) \ge \al
	$$
	for some $\al > 0$. Let $t > 0$. Then setting $\ep = \al^2/t$ we have
	$$
	\P_{0, t}(\bx, \by, - \infty) \ge e^{- k^3 \ep/6}(\ep/2)^{k(k-1)/2}.
	$$
\end{lemma}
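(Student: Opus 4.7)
The plan is to use a monotone coupling to reduce to the symmetric evenly-spaced configuration $\bx = \by = ((k-1)\al, (k-2)\al, \dots, 0)$, and then to evaluate the non-intersection probability explicitly via the Karlin--McGregor formula and a Cauchy--Binet expansion.

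For the reduction, set $\bx' := (x_k + (k-i)\al)_{i=1}^k$ and $\by' := (y_k + (k-i)\al)_{i=1}^k$; the gap hypothesis gives $x_i \ge x'_i$ and $y_i \ge y'_i$ for every $i$. Couple independent Brownian bridges $B'_i$ from $(0, x'_i)$ to $(t, y'_i)$ with $B_i(s) := B'_i(s) + (x_i - x'_i)(1 - s/t) + (y_i - y'_i)(s/t)$, which are themselves independent Brownian bridges from $(0, x_i)$ to $(t, y_i)$. The deterministic difference $(B_i - B_{i+1}) - (B'_i - B'_{i+1})$ is linear in $s$ with endpoint values $(x_i - x_{i+1}) - \al \ge 0$ and $(y_i - y_{i+1}) - \al \ge 0$, hence is nonnegative on $[0,t]$, so $B_i - B_{i+1} \ge B'_i - B'_{i+1}$. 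Thus non-intersection for $B'$ implies non-intersection for $B$, giving $\P_{0,t}(\bx, \by, -\infty) \ge \P_{0,t}(\bx', \by', -\infty)$. Since $\bx' - \by'$ is a constant vector, a further common linear shift (which preserves non-intersection) reduces to $\bx = \by = ((k-1)\al, \dots, 0)$.

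For this symmetric configuration the Karlin--McGregor formula, applied with the variance-$2$ Gaussian kernel $p_t(x, y) = (4\pi t)^{-1/2} e^{-(x-y)^2/(4t)}$, yields
$$
\P_{0,t}(\bx, \bx, -\infty) = \frac{\det[p_t(x_i, x_j)]_{i,j=1}^k}{\prod_i p_t(x_i, x_i)} = \det\bigl[e^{-\ep(i-j)^2/4}\bigr]_{i,j=1}^k.
$$
Setting $u_i := (k-i)\sqrt{\ep/2}$, each entry factors as $e^{-u_i^2/2}\, e^{u_i u_j}\, e^{-u_j^2/2}$, so
$$
\P_{0,t}(\bx, \bx, -\infty) = e^{-\sum_i u_i^2}\, \det\bigl[e^{u_i u_j}\bigr]_{i,j=1}^k.
$$

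Expanding $e^{u_i u_j} = \sum_{n \ge 0} u_i^n u_j^n/n!$ and applying Cauchy--Binet gives
$$
\det\bigl[e^{u_i u_j}\bigr] = \sum_{0 \le n_1 < \dots < n_k} \frac{\bigl(\det[u_i^{n_b}]_{i,b}\bigr)^2}{\prod_b n_b!},
$$
a sum of nonnegative squares. Retaining only the leading term $(n_b) = (0, 1, \dots, k-1)$ yields the Vandermonde lower bound $V(u)^2/\prod_{b=0}^{k-1} b!$ with $V(u) := \prod_{i<j}(u_i - u_j) = (\ep/2)^{k(k-1)/4}\prod_{i<j}(j-i)$. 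The classical identity $\prod_{i<j}(j-i) = \prod_{b=0}^{k-1} b!$ (both sides equal $\prod_{a=1}^{k-1} a^{k-a}$) then gives $V(u)^2/\prod_{b=0}^{k-1} b! \ge (\ep/2)^{k(k-1)/2}$, while $\sum_i u_i^2 = (\ep/2)(k-1)k(2k-1)/6 \le k^3 \ep/6$. Combining these estimates produces the claimed bound. The only substantive step is the monotonicity coupling, a minor variant of Lemma \ref{L:CH-monotonicity}; the remaining manipulations are elementary.
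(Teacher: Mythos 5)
Your proof is correct, and in a couple of places it is tighter than the paper's own argument. The paper follows the same broad template---reduce to an equally spaced configuration, apply Karlin--McGregor, bound the determinant---but it simply asserts the monotonicity $\P_{0,t}(\bx,\by,-\infty)\ge\P_{0,t}(\al\iota,\al\iota,-\infty)$ without justification, whereas your explicit affine coupling (which forces each consecutive difference $B_i-B_{i+1}$ to dominate $B'_i-B'_{i+1}$ pointwise) fills that gap cleanly and is a genuinely useful detail. The two proofs then diverge in how they handle the Karlin--McGregor determinant: the paper uses the observation that for arithmetic-progression $\bx$ the matrix $\bigl(e^{x_ix_j/(2t)}\bigr)$ is of Vandermonde form $z_j^{m_i}$ with $z_j=e^{\al^2(k+1-j)/(2t)}$, and bounds $z_i-z_j$ below via $e^b-e^a\ge b-a$, while you expand $\det\bigl[e^{u_iu_j}\bigr]$ by Cauchy--Binet into a sum of nonnegative squares of minors weighted by inverse factorials and drop all but the lowest term. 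Your route is more robust in that it does not require the arithmetic-progression structure, although that structure is exactly what makes the paper's direct factorization available. Finally, one quantitative point in your favour: the paper centers at $\bx=\al(k,k-1,\dots,1)$, for which $\sum x_i^2/(2t)=\ep\,k(k+1)(2k+1)/12$, and this exceeds $k^3\ep/6$ since $(k+1)(2k+1)>2k^2$, so the paper's final displayed inequality does not literally hold with that normalization; your choice $\bx=\al(k-1,\dots,0)$ gives $\sum u_i^2=\ep(k-1)k(2k-1)/12\le k^3\ep/6$, which matches the stated constant exactly. So you have incidentally repaired a small slip in the paper's constants.
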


This lemma is fairly standard and follows from the Karlin-McGregor formula. For example, it is very similar to Proposition 3.1.1 in \cite{hammond2016brownian}, and we follow the same steps used in the proof of that proposition.

\begin{proof}
First, $\P_{0, t}(\bx, \by, - \infty) \ge \P_{0, t}(\al \iota, \al \iota, -\infty)$ where $\iota = (k, k-1, \dots, 1)$. Therefore letting $\bx = \al \iota$, by the Karlin-McGregor formula, after simplification we can write
	\begin{align*}
	P_{0, t}(\al \iota, \al \iota, -\infty) &= \exp(- \frac{1}{2t} \sum_{i=1}^k x_i^2) \det (e^{x_i x_j/(2t)})_{1 \le i, j \le k}.
	\end{align*}
	See the middle of p. 48 in \cite{hammond2016brownian} for this version of the Karlin-McGregor formula. The determinant is a Vandermonde, and satisfies
	\begin{equation*}
	\det (e^{x_i x_j t^{-1}})_{1 \le i, j \le k} = \prod_{1 \le i < j \le k} (e^{\al^2 j (2t)^{-1}} - e^{\al^2 i (2t)^{-1}}) \ge \prod_{1 \le i < j \le k} \al^2 (2t)^{-1}(j-i) \ge (\al^2 (2t)^{-1})^{k(k-1)/2}. 
	\end{equation*}
	Finally
	$$
	\exp(- (2t)^{-1} \sum_{i=1}^k x_i^2) \ge \exp(- k^3 \al^2 t^{-1}/6),
	$$
	yielding the result.
\end{proof}

	\section{Constructing $\fL^{t, k}$}
	\label{S:construction-L}
	
In this section, we construct the line ensemble $\fL = \fL^{t, k}$ and prove Theorem \ref{T:resampling-candidate}. Throughout the section, we fix $t \ge 1, k \in \N$.

Let $\fA$ be the parabolic Airy line ensemble. We will define a $\sig$-finite measure $\eta = \eta^{t, k}$ on $\sC^\N(\R)$ according to the following procedure. 
\begin{enumerate}
	\item First, define a line ensemble $\fB$ by letting $\fB_i(r) = \fA_i(r)$ for $(i, r) \notin \II{1, k} \X [0, t]$, and let $\fB^k|_{[0, t]}$ be given by $k$ independent Brownian bridges from $(0, \fA^k(0))$ to $(t, \fA^k(t))$. These bridges are independent of each other and of $\fA$. Let $\tilde \eta$ denote the law of $\fB$.
	\item Let $\eta$ be the measure which is absolutely continuous with respect to $\tilde \eta$ with density given by
	\begin{equation}
	\label{E:eta-defn}
	\frac{d \eta}{d \tilde \eta}(f) = \frac{1}{\P_{0, t}(f^k(0), f^k(t), f_{k+1})}
	\end{equation}
	for $f \in \sC^\N(\R)$. This density exists for $\tilde \eta$-a.e.\ $f$ since $\fB^k(0) = \fA^k(0)$, $\fB^k(t) = \fA^k(t)$, $\fB_{k+1} = \fA_{k+1}$ and almost surely, $\fA_1(0) > \dots > \fA_{k+1}(0)$ and $\fA_1(t) > \dots > \fA_{k+1}(t)$.
\end{enumerate}
We would like to say that $\eta$ is a finite measure, and hence can be normalized to a probability measure. This normalized measure will be the law of the line ensemble $\fL = \fL^{t, k}$. Once we know this, Theorem \ref{T:resampling-candidate} will follow easily. However, it is highly non-trivial to show that $\eta$ is finite. The bulk of this section is devoted to proving this. Indeed, by \eqref{E:eta-defn} we can observe that
\begin{equation}
\label{E:CNR}
\eta(\sC^\N(\R)) = \E \lf(\frac{1}{\P_{0, t}(\fA^k(0), \fA^k(t), \fA_{k+1})}\rg)
\end{equation}
and so concretely we just need to show that the \textbf{inverse acceptance probability} 
$$
[\P_{0, t}(\fA^k(0), \fA^k(t), \fA_{k+1})]^{-1}
$$has finite expectation.
In order to do this, we will study alternate constructions of the measure $\eta$. Fix $s > t$, and define a measure $\eta_s = \eta_s^{t, k}$ by the following procedure:  
\begin{enumerate}
	\item Define a line ensemble $\fB_s$ by letting $\fB_{s,i}(r) = \fA_i(r)$ for $(i, r) \notin \II{1, k} \X [-s, s]$, and let $\fB^k_s|_{[-s, s]}$ have the conditional law given $\fA$ outside of $\II{1, k} \X[-s, s]$ of $k$ independent Brownian bridges from $(-s, \fA^k(-s))$ to $(s, \fA^k(s))$ conditioned on the event
	$$
\NI(\fA_{k+1}, [-s, 0] \cup [t, s]).
	$$
	Let $\tilde \eta_s$ denote the law of $\fB_s$.
	\item Let $\eta_s$ be the measure which is absolutely continuous with respect to $\tilde \eta_s$ with density given by
	\begin{equation}
	\label{E:eta-s-defn}
	\frac{d \eta_s}{d \tilde \eta_s}(f) = \frac{\P_{-s, s}(f^k(-s), f^k(s), f_{k+1}, [-s, 0] \cup [t, s])}{\P_{-s, s}(f^k(-s), f^k(s), f_{k+1})}
	\end{equation}
	for $f \in \sC^\N(\R)$.
\end{enumerate}

\begin{lemma}
	\label{L:eta-eta-s}
	For every $s > t$, we have $\eta = \eta_s$, and letting $\cF_s$ be the $\sig$-algebra generated by the Airy line ensemble $\fA$ outside of the set $\II{1, k} \X [-s, s]$, we have
	\begin{equation}
	\label{E:big-eqn}
		\E_{\cF_s} \lf(\frac{1}{\P_{0, t}(\fA^k(0), \fA^k(t), \fA_{k+1})} \rg) = \lf(\frac{1}{\E_{\cF_s} [\P_{0, t}(\fB^k_s(0), \fB^k_s(t), \fA_{k+1})]} \rg).
	\end{equation}
\end{lemma}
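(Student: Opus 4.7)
The plan is to prove both claims fiber-by-fiber after conditioning on $\cF_s$, exploiting the Markov (independence) structure of Brownian bridges on the three subintervals $[-s,0]$, $[0,t]$, $[t,s]$. Fix a realization of $\fA$ outside $\II{1,k}\X[-s,s]$, so in particular $\fA^k(\pm s)$ and $\fA_{k+1}|_{[-s,s]}$ are determined. Let $\pi$ denote the law of $k$ independent Brownian bridges on $[-s,s]$ from $\fA^k(-s)$ to $\fA^k(s)$, and define the three events
\begin{align*}
A = \NI(\fA_{k+1},[-s,s]), \qquad B = \NI(\fA_{k+1},[-s,0]\cup[t,s]), \qquad C = \NI(\fA_{k+1},[0,t]),
\end{align*}
so $A = B\cap C$. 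Write $a=\pi(A)$, $b=\pi(B)$, and let $r_A(x,y), r_B(x,y), r_C(x,y)$ denote the conditional probabilities given $f^k(0)=x$, $f^k(t)=y$. By independence of bridges on disjoint subintervals given the intermediate endpoints, $r_A = r_B r_C$.

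By Brownian Gibbs, the conditional law of $\fA^k|_{[-s,s]}$ given $\cF_s$ is $\pi(\,\cdot\mid A)$. The construction of $\tilde\eta$ then says that the conditional law of $\fB^k|_{[-s,s]}$ is obtained from $\pi(\,\cdot\mid A)$ by resampling the inside piece on $[0,t]$ from independent bridges between the retained endpoints. By the Markov property, this has the following joint description: draw $(f^k(0),f^k(t))$ with density proportional to $\rho(x,y)\, r_A(x,y)/a$ (where $\rho$ is the $\pi$-marginal endpoint density); conditional on the endpoints, draw the outside pieces from $\pi(\,\cdot\mid B, x, y)$ and the inside piece from unconditioned bridges. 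On the other hand, by definition the conditional law of $\fB^k_s|_{[-s,s]}$ under $\tilde\eta_s$ is $\pi(\,\cdot\mid B)$, which admits the same description except that the endpoints are drawn with density proportional to $\rho(x,y)\, r_B(x,y)/b$.

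The two conditional laws therefore agree in their bridge-conditional-on-endpoints structure, and differ only in the endpoint marginal. Computing the ratio yields
\begin{equation*}
\frac{d(\tilde\eta\mid\cF_s)}{d(\tilde\eta_s\mid\cF_s)}(f) \;=\; \frac{r_A(f^k(0),f^k(t))/a}{r_B(f^k(0),f^k(t))/b} \;=\; \P_{0,t}(f^k(0),f^k(t),f_{k+1})\cdot \frac{b}{a}.
\end{equation*}
The factor $b/a$ depends only on $\cF_s$, and equals $d\eta_s/d\tilde\eta_s$, while $d\eta/d\tilde\eta=1/\P_{0,t}$. Multiplying these three Radon-Nikodym derivatives shows $d(\eta\mid\cF_s)/d(\eta_s\mid\cF_s)\equiv 1$, so $\eta=\eta_s$.

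For identity \eqref{E:big-eqn}, the same decomposition gives both sides directly. Under $\fA$ given $\cF_s$, the endpoints $(\fA^k(0),\fA^k(t))$ have density proportional to $\rho r_A = \rho r_B r_C$, so
\begin{equation*}
\E_{\cF_s}\!\left[\tfrac{1}{\P_{0,t}(\fA^k(0),\fA^k(t),\fA_{k+1})}\right] \;=\; \int \tfrac{\rho r_B r_C}{a}\cdot \tfrac{1}{r_C}\, dx\, dy \;=\; b/a.
\end{equation*}
Under $\tilde\eta_s$ given $\cF_s$, the endpoints $(\fB^k_s(0),\fB^k_s(t))$ have density proportional to $\rho r_B$, giving $\E_{\cF_s}[\P_{0,t}(\fB^k_s(0),\fB^k_s(t),\fA_{k+1})] = \int \rho r_B r_C/b \, dx\, dy = a/b$, whose reciprocal is again $b/a$. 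The main obstacle is purely bookkeeping: one must correctly track how the weighting by $1/\P_{0,t}$ interacts with the Brownian Gibbs resampling; the independence of bridges given endpoints is the essential structural input that makes the cancellation work.
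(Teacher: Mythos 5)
Your argument is correct and follows essentially the same route as the paper: both compute the Radon--Nikodym derivative of the conditional law of $\fB$ against that of $\fB_s$ given $\cF_s$, arrive at the same factorization $\P_{0,t}(f(0),f(t),\fA_{k+1})\cdot b/a$ (the paper's equation \eqref{E:Pssfss}), and then use the definitions \eqref{E:eta-defn}, \eqref{E:eta-s-defn} to cancel. The only organizational difference is that you phrase the comparison via the Markov factorization into endpoint density times conditional bridge structure (using $r_A=r_B r_C$), whereas the paper works with pathwise densities against the common bridge reference measure directly; your version also computes both sides of \eqref{E:big-eqn} explicitly, which makes the final step slightly more transparent.
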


Equation \eqref{E:big-eqn} exchanges the expectation of an inverse for the inverse of an expectation, which is much easier to bound. 

The idea that the inverse acceptance probability $[\P_{0, t}(\fA^k(0), \fA^k(t), \fA_{k+1})]^{-1}$ can be controlled by an inverse of a conditional expectation by `stepping back' onto the interval $[-s, s]$ and applying the Brownian Gibbs property on $[-s, s]$ is inspired by the Wiener candidate method of Brownian Gibbs resampling introduced by Hammond \cite{hammond2016brownian}, see also \cite{calvert2019brownian}. 

\begin{proof}
We first compute the density of $\fB$ against $\fB_s$ (more precisely, the density of $\tilde \eta_s$ against $\tilde \eta$). First, letting $U = \II{1, k} \X [-s, s]$ we have that $\fB = \fB_s = \fA$ outside of $U$. Therefore the density of $\fB$ against $\fB_s$ is the same as the density of $\fB|_{U}$ against $\fB_s|_{U}$.

 Conditional on $\cF_s$, all of $\fB_s|_{U}, \fB|_{U},$ and $\fA|_U$ have densities with respect to the law of $k$ independent Brownian bridges from $(-s, \fA^k(-s))$ to $(s, \fA^k(s))$. The density of $\fB_s|_U$ is given by
\begin{equation}
\label{E:indic-fNI}
\frac{\indic(f \in \NI(\fA_{k+1}, [-s, 0] \cup [t, s]))}{\P_{-s, s}(f(-s), f(s), \fA_{k+1}, [-s, 0] \cup [t, s])}
\end{equation}
for $f \in \sC^k([-s, s])$. The density of $\fA|_U$ is 
\begin{equation}
\label{E:indic-f}
\frac{\indic(f \in \NI(\fA_{k+1}, [-s, s]))}{\P_{-s, s}(f(-s), f(s), \fA_{k+1})}
\end{equation}
for $f \in \sC^k([-s, s])$. To find the density of $\fB|_U$, first observe that this density only depends on $f|_{[-s, 0] \cup [t, s]}$, since conditional on $\fB^k|_{[-s, 0] \cup [t, s]}$, the law of $\fB^k|_{[0, t]}$ is that of $k$ independent Brownian bridges. We obtain $\fB$ from $\fA$ by replacing $\fA$ with independent Brownian bridges on $[0, t]$ from $(0, \fA^k(0))$ to $(t, \fA^k(t))$, and so the density of $\fB|_U$ is zero unless $f \in \NI(\fA_{k+1}, [-s, 0] \cup [t, s])$. Moreover, given any such $f$ we can find $\tilde f \in \NI(\fA_{k+1}, [-s, s])$ with $f|_{[-s, 0] \cup [t, s]} = \tilde f|_{[-s, 0] \cup [t, s]}$. The density of $\fB|_U$ at $f$ equals the density of $\fB|_U$ at $\tilde f$.

To find the density of $\fB|_U$ at $\tilde f$, observe that the density of $\fA$ on $\II{1, k} \X [0, t]$ with respect to independent Brownian bridges between these points is given (again by the Brownian Gibbs property) by
\begin{equation}
\label{E:indic-g}
\frac{\indic(g \in \NI(\fA_{k+1}, [0, t]))}{\P_{0, t}(g(0), g(t), \fA_{k+1})}
\end{equation}
for $g \in \sC^k([0, t])$. Therefore the density of $\fB|_U$ at $\tilde f$ (and hence $f$) is simply the product of \eqref{E:indic-f} with the inverse of \eqref{E:indic-g} at $g = \tilde f|_{[0, t]}$, which is non-zero since $\tilde f \in \NI(\fA_{k+1}, [-s, s])$. Putting everything together, we get that the density of $\fB$ at a general $f \in \sC^k([-s, s])$ equals 
\begin{equation}
\label{E:indic-fg}
\frac{\indic(f \in \NI(\fA_{k+1}, [-s, 0] \cup [t, s]) \P_{0, t}(f(0), f(t), \fA_{k+1})}{\P_{-s, s}(f(-s), f(s), \fA_{k+1})}.
\end{equation}
The ratio of \eqref{E:indic-fg} and \eqref{E:indic-fNI} gives the density of $\fB|_U$ against $\fB_s|_U$, conditional on $\cF_s$:
\begin{equation}
\label{E:Pssfss}
\lf(\frac{\P_{-s, s}(\fA^k(-s), \fA^k(s), \fA_{k+1}, [-s, 0] \cup [t, s])}{\P_{-s, s}(\fA^k(-s), \fA^k(s), \fA_{k+1})} \rg) \X \P_{0, t}(f(0), f(t), \fA_{k+1}),
\end{equation}
for $f \in \sC^k([-s, s])$.
Using this, and the definitions \eqref{E:eta-defn}, \eqref{E:eta-s-defn} of $\eta, \eta_s$ from $\tilde \eta, \tilde \eta_s$ gives that $\eta = \eta_s$. For the equality \eqref{E:big-eqn}, using that the first factor in \eqref{E:Pssfss} is $\cF_s$-measurable, we have that
\begin{align*}
	\E_{\cF_s} \lf(\frac{1}{\P_{0, t}(\fA^k(0), \fA^k(t), \fA_{k+1})}  \rg) &= \E_{\cF_s} \lf(\frac{1}{\P_{0, t}(\fB^k(0), \fB^k(t), \fA_{k+1})} \rg) \\
	&=  \frac{\P_{-s, s}(\fA^k(-s), \fA^k(s), \fA_{k+1})}{\P_{-s, s}(\fA^k(-s), \fA^k(s), \fA_{k+1}, [-s, 0] \cup [t, s])},
\end{align*}
which equals $1/\E_{\cF_s} [\P_{0, t}(\fB^{k}_s(0), \fB^{k}_s(t), \fA_{k+1})]$.
\end{proof}

The goal of the next few lemmas is to estimate the right-hand side of \eqref{E:big-eqn}. The first lemma gives a bound on the right-hand side of \eqref{E:big-eqn} in terms of a few simple $\cF_s$-measurable random variables.

\begin{lemma}
	\label{L:RHS-4-estimate}
	Fix $s \ge 2t$ and define $\cF_{s}$-measurable random variables
	\begin{align*}
	D &= D(k, t)= \sqrt{t} + \max_{r, r' \in [0, t]} |\fA_{k+1}(r) - \fA_{k+1}(r')|, \\
M &= M(k, s) = \sqrt{s} + \max_{r, r' \in [-s, s]} |\fA_{k+1}(r) - \fA_{k+1}(r')| + \max_{i \in \II{1, k}} |\fA_i(s) - \fA_i(-s)|.
	\end{align*}
	Then with $\fB_s$ as in the previous lemma, we have
	$$
	\E_{\cF_s} [\P_{0, t}(\fB^{k}_s(0), \fB^{k}_s(t), \fA_{k+1})] \ge \exp\lf(-\frac{c k^3(D^2 + MD)}{s} - c k s \rg).
	$$
\end{lemma}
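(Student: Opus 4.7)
My plan starts from the identity \eqref{E:big-eqn} of Lemma \ref{L:eta-eta-s}, namely
$$\E_{\cF_s}\bigl[\P_{0, t}(\fB^k_s(0), \fB^k_s(t), \fA_{k+1})\bigr] = \frac{\P_{-s, s}(\fA^k(-s), \fA^k(s), \fA_{k+1})}{\P_{-s, s}(\fA^k(-s), \fA^k(s), \fA_{k+1}, [-s, 0] \cup [t, s])}.$$
Since the denominator is a probability it is at most $1$, so it suffices to lower bound the numerator: the probability that $k$ independent variance-$2$ Brownian bridges $W_1, \dots, W_k$ from $(-s, \fA^k(-s))$ to $(s, \fA^k(s))$ lie in $\NI(\fA_{k+1}, [-s, s])$.

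To produce this lower bound I would build a favorable ``corridor'' event. Let $\bar m = \max_{[-s, s]} \fA_{k+1}$ and $L_i(r)$ be the linear interpolation of $\fA_i(\pm s)$. Pick a height scale $H$ of order $D$ (to be optimized). Define piecewise-linear reference paths $\phi_i:[-s, s] \to \R$ with $\phi_i(\pm s) = \fA_i(\pm s)$ that on the interior $[-s+1, s-1]$ take the value $\max(L_i(r), \bar m) + (k-i+1) H$, interpolating linearly on the two length-$1$ fringes. Let $E$ be the event $\{\sup_{r \in [-s, s]} |W_i(r) - \phi_i(r)| \le H/4 \mathforall i\}$. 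On $E$, adjacent reference heights differ by $H$, so $W_i - W_{i+1} \ge H/2$, and $\phi_k \ge \bar m + H$ implies $W_k \ge \bar m + 3H/4 > \fA_{k+1}$; hence $E \subset \NI(\fA_{k+1}, [-s, s])$.

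The probability $\P(E)$ factorizes at the fringe times $\pm s \mp 1$ by the Markov property into a ``cruise'' contribution on $[-s+1, s-1]$ and two ``fringe'' contributions, each of length $1$. The cruise contribution is a standard Brownian-bridge tube probability, of order $\exp(-cks/H^2)$ by small-ball estimates. Each fringe contribution decomposes further into a Gaussian cost for each bridge to connect $\fA_i(\pm s)$ to the cruise height (of order $\exp(-c \sum_i |\fA_i(\pm s) - (L_i(\pm s) \text{ side of cruise}) - (k-i+1)H|^2)$, with total displacement bounded by $k H + M$ using the bound on $\fA_{k+1}$-oscillation and $|\fA_i(s) - \fA_i(-s)|$ inside $M$), and a non-intersection cost on the fringe (of order $\exp(-c k^3 H^2)$ by Lemma \ref{L:nonint-probability}). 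Combining and optimizing $H$ (concretely, $H$ of order $D$ so that the cruise-height jump $H$ is large enough to dominate the local oscillation $D$ of $\fA_{k+1}$ on $[0, t]$ via Lemma \ref{L:CH-monotonicity}), and expanding $(kH + M)^2 = k^2 H^2 + 2kHM + M^2$, gives the announced bound $\exp(-cks - ck^3(D^2 + MD)/s)$ after absorbing cross-terms and constants.

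The main obstacle is the fringe analysis and the careful balancing of scales. The reference paths $\phi_i$ must rise from the fixed Airy endpoints $\fA_i(\pm s)$ to the cruise configuration while (i) the tube event $E$ still forces the bridges to remain in $\NI(\fA_{k+1})$ on the fringe, and (ii) the Gaussian cost remains within the target exponent. Monotonicity (Lemma \ref{L:CH-monotonicity}) is used throughout to reduce the ``above $\fA_{k+1}$'' constraint to an ``above constant $\bar m$'' constraint on sub-regions, and Lemma \ref{L:nonint-probability} applied to the fringe and to unit-length sub-pieces of the cruise supplies the non-intersection factors that account for the $-cks$ term. The $MD$ cross-term emerges naturally from $(kH + M)^2$ when $H \sim D$.
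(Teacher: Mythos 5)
Your starting identity from Lemma \ref{L:eta-eta-s} is correct, but the step ``the denominator is a probability so it is at most $1$, hence it suffices to lower bound the numerator'' is where the argument breaks, and it breaks in an essential way.

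The numerator $\P_{-s,s}(\fA^k(-s), \fA^k(s), \fA_{k+1})$ cannot be bounded below by a function of $M, D, k, s$ alone, for two separate reasons. First, it depends on the gaps $\fA_i(\pm s) - \fA_{i+1}(\pm s)$, which are not controlled by $M$: on the ($\cF_s$-measurable) event that some gap equals a tiny $\delta$, the Karlin--McGregor/Lemma~\ref{L:nonint-probability} cost makes the numerator of order $\delta^{c_k}$, which is smaller than any expression in $M, D, k, s$. This also invalidates your claim that the fringe non-intersection cost is $\exp(-ck^3H^2)$: Lemma~\ref{L:nonint-probability} needs a uniform lower bound $\al$ on the gaps at \emph{both} ends of the bridge, and at $\pm s$ you must use $\al = \min_i \bigl(\fA_i(\pm s) - \fA_{i+1}(\pm s)\bigr)$, which is not controlled. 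Second, the fringe Gaussian cost for lifting each $W_i$ from $\fA_i(\pm s)$ to the cruise height $\bar m + (k-i+1)H$ contains a term $\exp(-c M^2/s)$ (or $\exp(-cM^2)$ with unit-length fringes, which is worse), because $\bar m - \fA_i(\pm s)$ can be of order $M$. You cannot ``absorb'' $M^2/s$ into $k^3(D^2 + MD)/s + ks$: typically $M \sim s^2 \gg D \sim t^2$, so $M^2/s$ dominates every term in the target exponent, and this larger exponent would also wreck the downstream application in Corollary~\ref{C:bounded-expectation}, which crucially needs the decay rate $\ep^{d_k\sqrt{s/t}}$ to grow with $s$.

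The denominator $\P_{-s,s}(\fA^k(-s), \fA^k(s), \fA_{k+1}, [-s,0]\cup[t,s])$ carries \emph{both} of these small factors as well --- the bridges still must separate near $\pm s$, and still must clear $\fA_{k+1}$ on the long pieces $[-s,0]\cup[t,s]$, which forces the $\exp(-cM^2/s)$ lift --- so they cancel in the ratio. The paper's proof keeps this cancellation by never splitting the ratio: it works directly with the conditional law $\mu_\fB$ of $\bigl(\fB_s^k(0), \fB_s^k(t)\bigr)$, i.e., with bridges already conditioned on $\NI(\fA_{k+1},[-s,0]\cup[t,s])$. The separation and the $M$-scale lift are then ``already paid for''; the only additional cost is a common shift of size $\sim D + k$ (enough to clear the $[0,t]$ oscillation of $\fA_{k+1}$ with a $\Theta(1)$ gap), whose Radon--Nikodym price against $\mu_\fB$ is computed in Lemma~\ref{L:bridge-shift-calc}. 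That price is bilinear: shift size ($\sim D$) times endpoint deviation ($\sim M$), all over $s$. This is exactly where the $D^2/s$ and $MD/s$ (with no $M^2/s$) in the statement come from. A proof of Lemma~\ref{L:RHS-4-estimate} along the lines you propose would therefore need to lower-bound the full ratio --- or equivalently, argue on the conditional ensemble $\fB_s$ --- rather than bounding the two avoidance probabilities separately.
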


We separate out one of the main calculations for Lemma \ref{L:RHS-4-estimate}, as it will also be required later in the paper. 

\begin{lemma}
	\label{L:bridge-shift-calc}
	Let $s \ge 2t$, and $\bx, \by \in \R^k_>$. Let $g \in \sC([-s, s])$ be such that $g(-s) < x_k, g(s) < y_k$. 
	
	Let $B$ be a $k$-tuple on independent Brownian bridges from $(-s, \bx)$ to $(s, \by)$, conditioned on the event
	$$
	\NI(g, [-s, 0] \cup [t, s]).
	$$ 
	Define $\iota = (k, \dots, 1), {\bf 1} = (1, 1, \dots, 1) \in \R^k$, and for $\al, \be \ge 0$ define $f^{\al, \be} \in \sC^k([-s, s])$ by letting 
	$$
	f^{\al, \be}(-s) = 0, \quad f^{\al, \be}(0) = f^{\al, \be}(t) = \al \iota + \be {\bf 1}, \quad f^{\al, \be}(s) = 0,
	$$
	and so that $f^{\al, \be}$ is linear on each of the pieces $[-s, 0], [0, t], [t, s]$.
	
	Then for $f \in \NI(g, [-s, 0] \cup [t, s])$ we have that
	\begin{equation}
	\label{E:RN-B-bd}
	\frac{\P(B = f + f^{\al, \be})}{\P(B = f)} \ge \exp \lf(-\frac{k^3(\al + \be)^2}{s} - \frac{k(\al + \be)\sum_{i=1}^k [(f_i(0) - x_i)^+ + (f_i(t) - y_i)^+]}{s} \rg). 
	\end{equation} 
\end{lemma}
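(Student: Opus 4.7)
The plan is the following. First, I would verify that the shifted path $f + f^{\al,\be}$ itself lies in the conditioning event $\NI(g, [-s,0]\cup[t,s])$ and has the correct endpoints. Since $f^{\al,\be}$ vanishes at $\pm s$, the shifted path still goes from $(-s,\bx)$ to $(s,\by)$. For the interlacing, each coordinate $f^{\al,\be}_i$ is piecewise-linear with slopes $c_i/s$ on $[-s,0]$, $0$ on $[0,t]$, and $-c_i/(s-t)$ on $[t,s]$, where $c_i := \al(k-i+1) + \be$ is nonnegative and strictly decreasing in $i$. Hence $f^{\al,\be}_1(r) \ge \dots \ge f^{\al,\be}_k(r) \ge 0$ for every $r\in[-s,s]$, so adding $f^{\al,\be}$ to $f$ preserves both the strict ordering $f_1 > \dots > f_k$ and the lower boundary condition $f_k > g$ on the two non-intersection intervals. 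Consequently, the normalizing constant of the conditioned law cancels in the ratio \eqref{E:RN-B-bd}, and it suffices to bound the corresponding ratio for unconditioned independent variance-$2$ Brownian bridges from $(-s,\bx)$ to $(s,\by)$.

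Next, I would invoke the Cameron--Martin formula. Because $f^{\al,\be}_i$ is piecewise linear with knots at $-s,0,t,s$ and vanishes at $\pm s$, it lies in the Cameron--Martin space, so the density ratio for each independent Brownian bridge of variance $2$ is
\[
\frac{\P(B_i = f_i + f^{\al,\be}_i)}{\P(B_i = f_i)} = \exp\left(-\frac{1}{2}\int_{-s}^s f_i'(r)(f^{\al,\be}_i)'(r)\,dr - \frac{1}{4}\int_{-s}^s ((f^{\al,\be}_i)'(r))^2\,dr\right).
\]
Taking the product over $i \in \II{1,k}$ reduces the lemma to estimating two explicit integrals.

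For the quadratic term, the piecewise-constant slopes give
\[
\sum_{i=1}^k \int_{-s}^s ((f^{\al,\be}_i)')^2\,dr = \sum_{i=1}^k c_i^2\left(\frac{1}{s} + \frac{1}{s-t}\right) \le \frac{3}{s}\sum_{i=1}^k c_i^2 \le \frac{3k^3(\al+\be)^2}{s},
\]
using the hypothesis $s \ge 2t$ (so $1/(s-t) \le 2/s$) together with $c_i \le \al k + \be \le k(\al+\be)$. This produces the first exponent in \eqref{E:RN-B-bd} with room to spare. For the cross term, direct integration against the piecewise-constant slopes and the endpoint matching $f_i(-s)=x_i$, $f_i(s)=y_i$ give
\[
\int_{-s}^s f_i'(r)(f^{\al,\be}_i)'(r)\,dr = \frac{c_i}{s}(f_i(0)-x_i) + \frac{c_i}{s-t}(f_i(t)-y_i).
\]
Dropping the favourable (nonpositive) contributions, replacing each remaining factor by its positive part, and applying the same bounds $c_i \le k(\al+\be)$ and $1/(s-t) \le 2/s$ delivers the second exponent in \eqref{E:RN-B-bd}.

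The main subtle step is actually not the calculation but Step 1: it is crucial that the shift vector $\al\iota + \be\mathbf{1}$ has weakly decreasing entries, since otherwise adding $f^{\al,\be}$ to $f$ could violate the interlacing $f_i > f_{i+1}$ on $[-s,0]\cup[t,s]$ and force the conditioned density of $f + f^{\al,\be}$ to be zero. Once the monotone structure of $\iota$ is exploited to preserve the conditioning event, the rest is routine Cameron--Martin bookkeeping combined with the coarse bounds on $c_i$ and $1/(s-t)$.
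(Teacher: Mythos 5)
Your proof is correct and follows essentially the same route as the paper: you observe that adding $f^{\al,\be}$ preserves the non-intersection event (so the conditioning normalization cancels), reduce to comparing the unconditioned bridge densities, and then compute that density ratio and bound its exponent using $s - t \ge s/2$ and $c_i \le k(\al+\be)$. The only cosmetic difference is that you phrase the density ratio via the Cameron--Martin stochastic-integral formula (note that $\int f_i'(r)(f^{\al,\be}_i)'(r)\,dr$ is informal notation since $f_i$ is not differentiable, but since $(f^{\al,\be}_i)'$ is piecewise constant it correctly resolves to the increments $\frac{c_i}{s}(f_i(0)-x_i) + \frac{c_i}{s-t}(f_i(t)-y_i)$ that you write), whereas the paper writes out the finite-dimensional Gaussian density of the bridge marginals at $0$ and $t$ directly; the two are identical.
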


The more formal way to interpret \eqref{E:RN-B-bd} is as a bound on the density of the law $\mu_B'$ of $B - f^{\al, \be}$ against the law $\mu_B$ of $B$, on the set $\NI(g, [-s, 0] \cup [t, s])$ where $\mu_B'$ is absolutely continuous with respect to $\mu_B$.

\begin{proof}
	Let $\nu$ be the law of $k$ independent Brownian bridges from $(-s, \bx)$ to $(s, \by)$. Observe that $\mu_B, \mu_B'$ are absolutely continuous with respect to $\nu$ with densities
	\begin{align}
	\nonumber
	&\frac{d \mu_B}{d \nu}(f) = \frac{1}Z \indic(f \in \NI(g, [-s, 0] \cup [t, s])), \\
	\nonumber
	&\frac{d \mu_B'}{d \nu}(f) = \frac{1}{Z} \indic(f + f^{\al, \be} \in \NI(g, [-s, 0] \cup [t, s])) \\
	\label{E:RN-nus}
	&\X\exp \lf(-\frac{2 (f(0) -\bx) \cdot f^{\al, \be}(0) + \|f^{\al, \be}(0)\|^2}{4s} -\frac{2 (f(t) -\by) \cdot f^{\al, \be}(t) + \|f^{\al, \be}(t)\|^2}{4(s-t)}\rg).
	\end{align}
	where $Z = \P_{-s, s}(\bx, \by, g, [-s, 0] \cup [t, s])$ is a normalizing factor. Now, if $f$ is in the set $\NI(g, [-s, 0] \cup [t, s])$, then so is $f + f^{\al, \be}$. Hence the right-hand side of \eqref{E:RN-B-bd} is bounded below by the exponential factor \eqref{E:RN-nus}. We can bound \eqref{E:RN-nus} below by using that $s - t \ge s/2$, and that $0 \le f^{\al, \be} \le k(\al + \be) {\bf 1}$, which yields the desired bound.
\end{proof}

\begin{proof}[Proof of Lemma \ref{L:RHS-4-estimate}]
	All statements in the proof are conditional on $\cF_{s}$. Define the $\cF_s$-measurable vector 
	$$
	\bz = (D + k, D + (k-1), \dots, D + 1)
	$$
and the $\cF_s$-measurable set
\begin{equation}
\label{E:x-y}
O = \{(\bx, \by) \in \R^k_> \X \R^k_> : x_k > \fA_{k+1}(0), y_k > \fA_{k+1}(t) \}.
\end{equation} 
By the definition of $D$, for $(\bx, \by) \in O$ we have 
\begin{equation}
\label{E:P0t}
\P_{0, t}(\bx + \bz, \by + \bz, \fA_{k+1}) \ge \P\Big (\sup_{0 \le r \le t} |B(r)| \le 1/2\Big)^k,
\end{equation}
where $B$ is a Brownian bridge from $(0,0)$ to $(0, t)$. By a standard bound, the right hand side above is bounded below by $e^{-c k t}$. Therefore letting $\mu_\fB$ denote the conditional law of $(\fB^{k}_s(0), \fB^{k}_s(t))$ given $\cF_s$, to complete the proof it suffices to find a set $A \sset O$ such that 
$
\mu_\fB(A + (\bz, \bz))
$
is large. Fix $\Delta > 0$ and let $A_\Delta$ be the $\cF_s$-measurable subset of $(\bx, \by) \in O$  where 
$$
x_i \le \fA_i(-s) + \Delta, \qquad \qquad y_i \le \fA_i(s) + \Delta 
$$
for all $i \in \II{1, k}$. Then by Lemma \ref{L:bridge-shift-calc} with $\al = 1, \be = D$, we have
\begin{align}
\nonumber
&\mu_\fB(A_\Delta+ (\bz, \bz)) \\
\nonumber
&\ge \mu_\fB(A_\Delta) \sup_{(\bx, \by) \in A} \exp \lf(-\frac{k^3(\sqrt{t} + D)^2}{s} - \frac{k(\sqrt{t} + D)\sum_{i=1}^k ((x_i - \fA_i(-s))^+ + (y_i - \fA_i(-s))^+)}{s} \rg) \\
\label{E:mufB}
&\ge \mu_\fB(A_\Delta) \exp \lf(-\frac{4k^3 D^2}{s} - \frac{4k^2 D\Delta}{s} \rg).
\end{align}
In the final line we have used that $\sqrt{t} + D \le 2D$.
It remains to find $\Delta$ where $\mu_\fB(A_\Delta)$ is large.
Define vectors $\bw^s, \bw^{-s}$ where 
$$
w^{\pm s}_i = \fA_i(\pm s) + M + k + 1 - i.
$$
 By Lemma \ref{L:CH-monotonicity}, on the interval $[-s, s]$, the $k$-tuple $(\fB_{s, 1}, \dots, \fB_{s, k})$ is stochastically dominated by $k$ independent Brownian bridges $B = (B_1, \dots, B_k)$ from $(-s, \bw^{-s})$ to $(s, \bw^s)$ conditioned on the event
 $$
 \NI(\fA_{k+1}, [-s, 0] \cup [t, s]).
 $$
 Now, let $L \in \sC^k([-s, s])$ be the function whose $i$th coordinate $L_i$ is the linear function satisfying $L_i(\pm s) = w_i^{\pm s}$. Observe that by the construction of $\bw^{\pm s}$, we have $f \in \NI(\fA_{k+1}, [-s, 0] \cup [t, s])$ for any sequence of bridges $f$ from $(-s, \bw^{-s})$ to $(s, \bw^s)$ with $\| f - L \|_\infty < 1/2$. The probability that this condition holds for a sequence of independent Brownian bridges is bounded below by $\exp(- c k s)$ (similarly to the bound on \eqref{E:P0t}). Therefore
 \begin{align*}
 \P(B_i(r) \le M + k - i + 3/2 + \fA_i(s)& \vee \fA_i(-s) \quad \forall i \in \II{1, k}, r = 0, t) \\
 &\ge \P(\|B - L\|_\infty < 1)  \ge \exp(- c k s).
 \end{align*}
 Observing that 
 $$
 M + k - i + 3/2 + \fA_i(s) \vee \fA_i(-s) - \fA_i(\pm s) \le 2M + 3k
 $$
 for all $i$, we can conclude that 
 $$
 \mu_\fB(A_{2M + 3k}) \ge \P((B(0), B(t)) \in A_{2M + 3k}) \ge \exp(- c k s). 
 $$
 Combining this with the bound on \eqref{E:P0t} and \eqref{E:mufB} and simplifying yields the result.
\end{proof}

It remains to prove tails bounds on $M, D$. The key is a two-point estimate for the Airy line ensemble. In Section \ref{SS:two-time-bound} will prove a highly refined two-point estimate directly for $\fL$, whose proof goes through verbatim for $\fA$. However, the full power of such an estimate is not necessary and we make do with the following lemma, whose proof is much less involved.

\begin{lemma}
	\label{L:two-point-estimate-weak}
		For every $k \in \N$ there exists $c_k > 0$ such that for all $s \in \R, r \in (0, \infty), a > k^2 r^2$ we have
		\begin{equation}
		\label{E:lemma61}
		\begin{split}
		\p(|\fA_k(s) + s^2 - &\fA_k(s + r) - (s + r)^2| > a) \\
		\le &\;c_k\exp\Big(- \frac{a^2}{4r} - \frac{k a^{3/2}}{8} + 3 a^{3/2} + \frac{a}{\sqrt{r}} + 2 k^2 |\log (\sqrt{a}/r)|  \Big).
		\end{split}
		\end{equation}
\end{lemma}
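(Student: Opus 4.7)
The plan is to reduce to $s=0$ via the stationarity of $\cA$ (equation \eqref{E:cAs}), then to invoke the Brownian Gibbs property on $\II{1, k} \X [-T, T+r]$ for $T = \Theta(\sqrt{a})$. Conditional on the boundary $\sig$-algebra $\cF$ --- comprising the endpoints $\fA^k(-T), \fA^k(T+r)$ together with $\fA_{k+1}|_{[-T, T+r]}$ --- the top $k$ lines on this window are $k$ independent variance-$2$ Brownian bridges conditioned on non-intersection and on staying above $\fA_{k+1}$. The two-time difference $\cA_k(0) - \cA_k(r) = \fA_k(0) - \fA_k(r) - r^2$ becomes an internal functional of these bridges, to which classical Brownian-bridge tail estimates can be applied.

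Next I would define a good event $G \in \cF$ on which the boundary data is benign: first, $\fA_i(\pm T), \fA_i(T+r) \le U + (k-i)$ for all $i \in \II{1, k}$, with $U$ chosen as the specific multiple of $a$ (namely $(3/32)^{2/3} a$) for which Lemma \ref{L:one-point-upper-bound} yields $\P(G_1^c) \le c_k \exp(-k a^{3/2}/8)$; and second, $\fA_{k+1}(r') \ge -V$ uniformly on $[-T, T+r]$ for some $V = o(a)$. The second bound is obtained by combining Lemma \ref{L:one-point-lower-bound} at a grid of $N = O(\sqrt{a}/\delta)$ points in $[-T, T+r]$ with an oscillation estimate on $\fA_{k+1}$ between consecutive grid points; the logarithmic correction $2k^2|\log(\sqrt{a}/r)|$ in the target bound accommodates the count of grid points.

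On $G$ and conditional on $\cF$, I would use the Bayes inequality $\P(A \mid C) \le \P(A)/\P(C)$, where $A = \{|\fA_k(0) - \fA_k(r) - r^2| > a\}$ for $k$ unconditioned independent Brownian bridges from $(-T, \fA^k(-T))$ to $(T+r, \fA^k(T+r))$ and $C$ is the joint event of non-intersection and lying above $\fA_{k+1}$. The marginal law of the $k$-th bridge is Gaussian, with increment $B_k(0) - B_k(r)$ of variance $2r \cdot 2T/(2T+r) \le 2r$ and mean bounded by $O(r\sqrt{a})$ on $G$, so the standard Gaussian tail gives $\P(A) \le 2\exp(-a^2/(4r) + O(a^{3/2}) + O(a/\sqrt{r}))$ after absorbing the mean correction and the $T$-dependent variance adjustment. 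For $\P(C)$, Lemma \ref{L:nonint-probability} with spacing $\alpha = 1$ and length $2T + r$ gives a lower bound of the form $\exp(-O_k(k^2 |\log(\sqrt{a}/r)|))$; the additional requirement that the bridges lie above $\fA_{k+1}$ is handled by Lemma \ref{L:CH-monotonicity}, by comparing with bridges shifted sufficiently above $-V$.

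The main obstacle is establishing the uniform lower bound on $\fA_{k+1}$ in a self-contained way, since the statement being proved is itself a two-point bound on $\fA_k$ and one cannot invoke it for $\fA_{k+1}$. I would circumvent this by applying the Brownian Gibbs property for $\II{1, k+1}$ on short sub-intervals of length $\delta$: conditional on boundary data, $\fA_{k+1}$ on such a sub-interval is a single Brownian bridge conditioned to stay above $\fA_{k+2}$, which by Lemma \ref{L:CH-monotonicity} stochastically dominates an unconditioned Brownian bridge whose modulus of continuity is classical. Assembling the conditional bound on $G$ with $\P(G^c)$ and optimizing the parameters $T \sim \sqrt{a}$, $V$, and $\delta$ then produces the claimed exponential rate.
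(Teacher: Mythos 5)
Your proposal takes a genuinely different route from the paper's, built around a symmetric window $[-T, T+r]$, a good event, and the Bayes inequality $\P(A\mid C) \le \P(A)/\P(C)$, but it contains a real gap that you yourself flag as the main obstacle, and the proposed circumvention does not close it. First, there is a directional slip: to lower-bound the acceptance probability $\P(C)$ you need \emph{upper} control on $\sup_{[-T,T+r]}(\fA_{k+1}(\cdot)+(\cdot)^2)$ so that the bridges can clear the floor, not the lower bound $\fA_{k+1}\ge -V$ that you state. Second, and more seriously, the mechanism you propose for uniform control of $\fA_{k+1}$ does not work. Conditional on boundary data on a short sub-interval, $\fA_{k+1}$ is a single Brownian bridge conditioned to lie \emph{between} $\fA_k$ (a ceiling) and $\fA_{k+2}$ (a floor). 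Lemma \ref{L:CH-monotonicity} gives monotonicity in the floor only; there is no monotone coupling from this two-sided conditioning to an unconditioned bridge. Removing the floor lowers the process and removing the ceiling raises it, so neither removal gives what you claim. If one instead uses the $(k+1)$-curve Gibbs property (so that only the floor $\fA_{k+2}$ appears) and drops it, the resulting stochastic lower bound is the bottom curve of $k+1$ non-intersecting Brownian bridges, which goes lower than a single unconditioned bridge, not higher. The modulus-of-continuity bound on $\fA_{k+1}$ you need is thus not supplied by the tools you cite and in fact looks dangerously close to a weaker version of the very two-point estimate being proved.

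The paper avoids this issue entirely, and the contrast is instructive. It resamples on the one-sided window $[0,\lambda]$ and compares $\fA^k|_{[0,\lambda]}$ (conditional on $\cF_\lambda$) against $k$ bridges whose endpoints are shifted \emph{down} by $\bv = \sqrt r\,(1/k, 2/k, \dots, 1)$ and whose floor $\fA_{k+1}$ is removed altogether; both operations only push curves down by Lemma \ref{L:CH-monotonicity}, which is exactly the direction needed to bound $\P(\fA_k(0) - \fA_k(r) > a + r^2 \mid \cF_\lambda)$. The downward shift manufactures spacing $\sqrt r/k$ so that Lemma \ref{L:nonint-probability} strips the non-intersection conditioning at cost $\exp(k^2\log(k\lambda/r))$, after which the increment $\tilde B_k(0)-\tilde B_k(r)$ is exactly Gaussian given $\cF_\lambda$ with mean $rL_\lambda/\lambda + r\lambda$, where $L_\lambda = \fA_k(0) - \fA_k(\lambda) - \lambda^2$. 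Rather than a good-event decomposition, the paper then integrates the Gaussian tail against the law of $L_\lambda$ (whose tails come from Lemmas \ref{L:one-point-upper-bound} and \ref{L:one-point-lower-bound} alone) and optimizes at $\lambda = k^{-1}\sqrt a$. The decisive dividend is that $\fA_{k+1}$ on the interior of the window is never touched: the argument needs only one-point bounds at the window endpoints, so the circularity you worry about simply never arises. If you want to salvage the two-sided good-event scheme you would need to supply the uniform control on $\fA_{k+1}$ by a separate self-contained argument, which the paper shows is unnecessary.
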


The proof for Lemma \ref{L:two-point-estimate-weak} is similar to the proof of Lemma 6.1 in \cite{dauvergne2021bulk}, except here we keep closer track of the error terms. Somewhat remarkably, this already gives a stronger tail bound than for Brownian motion when $k$ is large enough. We will need the more nuanced method in Section \ref{SS:two-time-bound} to give a stronger tail bound than Brownian motion for all $k$.

\begin{proof}
	By stationarity, it suffices to prove the lemma when $s = 0$. Moreover, since $\fA(\cdot) = \fA(-\; \cdot)$, it suffices to prove the bound with the absolute value removed. For $r > 0$, define $$
	L_r = \fA_k(0) - \fA_k(r) - r^2.
	$$
Let $\la > r$ be a parameter that we will optimize over. Let $\cF_\la$ be the $\sig$-algebra generated by $\fA$ outside of the set $\II{1, k} \X [0, \la]$. Let $\bv = \sqrt{r}(1/k, 2/k \dots, 1)$, and let $B$ be a $k$-tuple of non-intersecting Brownian bridges from $(0, \fA_{k}(0) - \bv)$ to $(\la, \fA_{k}(\la) - \bv)$. By Lemma \ref{L:CH-monotonicity}, given $\cF_\la$, the Airy lines $\fA|_{\II{1, k} \X [0, \la]}$ stochastically dominate $B$, so
\begin{align*}
\P(L_r > a \mid \cF_\la) &\le \P(\fA_k(0) - B_k(r) > r^2 + a \mid \cF_\la) = \P(B_k(0) - B_k(r) > r^2 - \sqrt{r} + a \mid \cF_\la).
\end{align*}
Now let $\tilde B$ be a collection of 
$k$-tuple of independent Brownian bridges from $(0, \fA_k(0) - \bv)$ to $(\la, \fA_k(\la) - \bv)$ without a non-intersection condition. By Lemma \ref{L:nonint-probability} with $t = \la, \al = \sqrt{r}/k$,
\begin{align}
\nonumber
\P(B_k&(0) - B_k(r) > r^2 - \sqrt{r} + a \mid \cF_\la) \\
\nonumber
&\le \exp \lf(-\frac{kr}{6 \la} + \frac{k(k-1)\log (k^2 \la/(2r))}{2}  \rg) \P(\tilde B_k(0) - \tilde B_k(r) > r^2 - \sqrt{r} + a \mid \cF_\la) \\
\label{E:Fla}
&\le \exp \lf(k^2 \log (k \la/r) \rg) \P(\tilde B_k(0) - \tilde B_k(r) > a - \sqrt{r} \mid \cF_\la).
\end{align}
Given $\cF_\la$, the increment $\tilde B_k(0) - \tilde B_k(r)$ is normal with variance $2r(1-r/\la)$ and mean $r L_\la/\la + r \la$. 

Therefore we have 
\begin{align}
\nonumber
\P(\tilde B_k(0) - \tilde B_k(r) > a - \sqrt{r} \mid \cF_\la) &\le c \exp\lf(- \frac{(a - \sqrt{r} - r L_\la/\la - r \la)^2}{4(1-r/\la)r} \rg) \\
\nonumber
&\le \exp\lf(- \frac{(a^2 - 2a(\sqrt{r} + r L_\la/\la + r\la))(1 + r/\la)}{4r} \rg) \\
\label{E:Lla}
&\le \exp\lf(- \frac{a^2}{4r} - \frac{a^2}{4\la} + \frac{a}{\sqrt{r}} + a (L_\la)^+/\la + a \la\rg).
\end{align}
In the final inequality, we have used that $1 + r/\la \le 2$.
Now, by Lemmas \ref{L:one-point-upper-bound} and \ref{L:one-point-lower-bound}, for all $m > 0$,
\begin{align*}
\P(L_\la > m) &\le \P(\fA_k(0) > m - m^{2/3}) + \cP(\fA_k(0) < -m^{2/3}) \\
&\le c_k e^{-\frac{4k}{3}(m - m^{2/3})^{3/2}} + 2 e^{- d_k m^2}.
\end{align*}
Therefore there exists $c_k' > 0$ such that $L_\la$ is stochastically dominated by a random variable with Lebesgue density bounded above by $c_k'e^{-\frac{4k-1}{3}m^{3/2}}$ and so
$$
\E e^{a L_\la/\la} \le c_k' \int_0^\infty e^{a m/\la -\frac{4k-1}{3}m^{3/2}} dm.
$$
For any $\al, \be > 0$, if $f(x) = \al x - \be x^{3/2}$ and $u = 4 \al^2/(9 \be^2)$ is the argmax of $f$, then
$$
\int_0^\infty e^{f(x)} dx \le e^{f(u)}(2 + \int_{-\infty}^\infty e^{- |f'(u + 1) \wedge f'(u-1)| x} dx) \le \frac{4 e^{f(u)}}{|f'(u + 1) \wedge f'(u-1)|} \le \frac{4 e^{f(u)}}{\al}.
$$
Using this inequality and simplifying the result gives that
$
\E e^{a L_\la/\la} \le c_k'' (\la/a) e^{4 a^3 \la^{-3}/(3(4k-1)^2)},
$
and so the expectation of \eqref{E:Lla} is bounded above by
\begin{equation}
\label{E:a24t}
c_k'' \exp\lf(- \frac{a^2}{4r} - \frac{a^2}{4\la} + \frac{a}{\sqrt{r}} + \frac{4 a^3}{3\la^3(4k-1)^2} + a \la + \log(\la/a)\rg).
\end{equation}
A quick inspection reveals that the minimum value of \eqref{E:a24t} is taken when $\la = O(k^{-1} \sqrt{a})$. Plugging in $\la = \sqrt{a}k^{-1}$ (which is bounded below by $r$ since $a > k^2 r^2$) and simplifying gives that \eqref{E:a24t} is bounded above by
$$
c_k''\exp\lf(- \frac{a^2}{4r} - \frac{k a^{3/2}}{8} + 3 a^{3/2} + \frac{a}{\sqrt{r}} \rg).
$$
Combining this with \eqref{E:Fla} yields the result.
\end{proof}

We can deduce tail bounds on $M$ and $D$ immediately from Lemma \ref{L:two-point-estimate-weak}.

	\begin{lemma}
	\label{L:MD-bounds}
	For every $k \in \N$, there exists $c_k > 0$ such that the following bounds hold for all $t \ge 1, s \ge 2t$. Letting $D = D (k, t)$ and $M = M(k, s)$ we have
	\begin{align*}
	\P(D > a) \le \exp\Big(c_k t^3 - \frac{a^2}{320 t}\Big), \qquad 
	\P(M > a) \le \exp\Big(c_k s^3 - \frac{a^2}{320 s}\Big).
	\end{align*}
\end{lemma}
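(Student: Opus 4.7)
The plan is to combine the two-point estimate of Lemma~\ref{L:two-point-estimate-weak} with the chaining lemma \ref{L:mod-cont}, applied to the line $\fA_{k+1}$ (for the oscillation terms inside $D$ and $M$) and to $\fA_i$, $i \in \II{1, k}$ (for the single increments $|\fA_i(s) - \fA_i(-s)|$ inside $M$). Passing between the parabolic and stationary versions is harmless: $\fA_j(r) - \fA_j(r') = \cA_j(r) - \cA_j(r') - (r^2 - (r')^2)$, and the deterministic parabolic correction is bounded by $2t^2$ on $[0,t]$ (resp.\ by $2s^2$ on $[-s,s]$), which is absorbed into the slack factor $e^{c_k t^3}$ (resp.\ $e^{c_k s^3}$) once $c_k$ is taken large. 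For the single increments one has $|\fA_i(s) - \fA_i(-s)| = |\cA_i(s) - \cA_i(-s)|$, with the two parabolas cancelling exactly.

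For the bound on $D$, I apply Lemma~\ref{L:two-point-estimate-weak} to $\cA_{k+1}$ with deviation $a = m\sqrt{r}$ and $s, s+r \in [0, t]$. The leading piece of the exponent is $-a^2/(4r) = -m^2/4$; the remaining terms $-\tfrac{k+1}{8} a^{3/2} + 3 a^{3/2} + a/\sqrt{r} + 2(k+1)^2 |\log(\sqrt{a}/r)|$ are controlled by $m^2/5$ once $m$ exceeds a threshold of order $t^{3/2}$, using $r \le t$ to bound $a^{3/2} = m^{3/2} r^{3/4} \le m^{3/2} t^{3/4}$; the same threshold also secures the hypothesis $a > (k+1)^2 r^2$. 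This gives a two-point bound $\P(|\cA_{k+1}(s) - \cA_{k+1}(s+r)| > m\sqrt{r}) \le \alpha_k e^{-m^2/5}$ that feeds into Lemma~\ref{L:mod-cont} to produce
$$
\P\Bigl(\sup_{r,r'\in[0,t]} |\cA_{k+1}(r) - \cA_{k+1}(r')| > m\sqrt{t}\Bigr) \le 2\alpha_k e^{-m^2/320}.
$$
Setting $a = m\sqrt{t}$ and incorporating the $\sqrt{t}$ offset and parabolic correction in $D$ yields $\P(D > a) \le \exp(c_k t^3 - a^2/(320 t))$ when $m$ is past the threshold; for smaller $a$ the inequality is trivial after enlarging $c_k$, since then $a^2/(320 t) = O_k(t^3)$.

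For $M$, the same chaining argument applied on $[-s, s]$ controls $\sup_{r,r'\in[-s,s]} |\fA_{k+1}(r) - \fA_{k+1}(r')|$ with $s$ in place of $t$. For each $i \in \II{1, k}$, Lemma~\ref{L:two-point-estimate-weak} with index $i$ and $r = 2s$ directly bounds $\P(|\fA_i(s) - \fA_i(-s)| > a/3)$ by the required Gaussian tail, and a union bound over the at most $k$ values of $i$ costs only a $k$-dependent factor. Splitting $\{M > a\}$ into the three corresponding sub-events (each with deviation $\gtrsim a/3$) and summing produces the claimed bound. The main obstacle is purely bookkeeping: one must verify that the four subdominant exponent terms in Lemma~\ref{L:two-point-estimate-weak} are simultaneously controlled by the Gaussian piece past the chosen threshold, and check that the factor $1/64$ of exponent loss from the chaining combines with the available slack to give exactly $a^2/(320 t)$ in the final statement.
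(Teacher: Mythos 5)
Your proposal is correct and follows the paper's own route precisely: apply Lemma~\ref{L:two-point-estimate-weak} to verify the two-point hypothesis of the chaining Lemma~\ref{L:mod-cont} with $\beta = 1/5$ and $\alpha = e^{c_k t^3}$ (the large $\alpha$ absorbing the $m \lesssim t^{3/2}$ regime where the two-point lemma's hypothesis $a > k^2 r^2$ or its subdominant-term control fail), then read off the tail of $D$, and do the same on $[-s,s]$ for $M$ with the single increments $\fA_i(s)-\fA_i(-s)$ handled directly by the two-point bound. The paper's proof is a two-line pointer to those same two lemmas, so your write-up is just a more explicit version of the intended argument.
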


\begin{proof}
To prove the bound on $D$, we apply Lemma \ref{L:mod-cont} to the process $\fA_k(r), r \in [0, t]$. Set $\cA_i(s) = \fA_i(s) + s^2$. For $r < r + \ep \in [0, t]$ and $a > c_k' t^{3/2}$, we have the estimate
\begin{align*}
\P(|\fA_k(r) - \fA_k(r + \ep)| > a\sqrt{\ep}) \le \P(|\cA_k(r) - \cA_k(r + \ep)| > a\sqrt{\ep} + t \ep) \le c_k \exp(-a^2/5).
\end{align*}
Here we use Lemma \ref{L:two-point-estimate-weak} in the second bound, along with the lower bound on $a$ to simplify the result. This estimate ensures that $\fA$ on $[0, t]$ satisfies Lemma \ref{L:mod-cont} with $\beta = 1/5$ and $\al = \exp(c_k t^3)$; the large value of $\al$ deals with the case when $a \le c_k' t^{3/2}$. Appealing to that lemma yields the result. The proof of the bound on $M$ is essentially the same.
\end{proof}

We can finally give a concrete bound on $\E [\P_{0, t}(\fB^{k}_s(0), \fB^{k}_s(t), \fA_{k+1} \mid \cF_{s})]$, which in turn shows that the measure $\eta$ is finite.

\begin{corollary}
	\label{C:bounded-expectation}
	There exists $d_k \in (0, 1)$ such that for all $s \ge 2 t, \ep > 0$ we have
	\begin{equation}
	\label{E:m-bound}
	\E_{\cF_s} [\P_{0, t}(\fB^k_s(0), \fB^k_s(t), \fA_{k+1})] \le \ep
	\end{equation}
	with probability at most 
	$$
	e^{O_k(s^3)} \exp \lf(- d_k \al \sqrt{s/t}\rg),
	$$
	where $\alpha = \log(\ep^{-1})$. In particular, when $s = 4d_k^{-2}t$, this bound simplifies to 
	$e^{O_k(t^3)} \ep^2$, and so 
	\begin{equation}
	\label{E:eta-CNR}
	\eta(\sC^\N(\R)) = \E \lf(\frac{1}{\E_{\cF_s} [\P_{0, t}(\fB^k_s(0), \fB^k_s(t), \fA_{k+1})]}\rg) \le e^{O_k(t^3)}.
	\end{equation}
\end{corollary}

\begin{proof}
Without loss of generality, we may assume $\al \ge c_k s^{5/2} t^{1/2}$ for some large constant $c_k$, since otherwise the bound holds trivially. Let $D = D(k, t), M = M(k, s)$ and write $D' = t^{-1/2} D - c_k' t^{3/2}, M' = s^{-1/2} M - c_k' s^{3/2}$. Then by Lemma \ref{L:MD-bounds}, for $X = D' \vee M'$ and $a > 0$ we have
$
\P(X > a) \le e^{- d_k' a^2}.
$
Moreover, by Lemma \ref{L:RHS-4-estimate}, we have
\begin{align*}
	\E_{\cF_s} [\P_{0, t}(\fB^k_s(0), \fB^k_s(t), \fA_{k+1})] &\ge \exp\lf(-c_k' (X^2 \sqrt{t/s} + s \sqrt{t} X + s t^2) \rg).
\end{align*}
Therefore the probability of the event \eqref{E:m-bound} is at most
\begin{align*}
\P(c_k' (X^2 \sqrt{t/s} + s \sqrt{t} X + s t^2) \ge \al) &\le \P(X(X + s^{3/2}) \ge c_k^{\prime -1} \sqrt{s/t} \al/2) \\
&\le \P(X^2 \ge c_k^{-1} \sqrt{s/t} \al/4) + \P(X \ge c_k^{-1} \al/(4 s \sqrt{t})) \\
&\le 2 \exp \lf(- d_k (\al \sqrt{s/t}) \wedge (\al^2/(s^2 t))\rg) \\
&= 2 \exp \lf(- d_k \al \sqrt{s/t}\rg)
\end{align*}
where we have used the lower bound on $\al$ and the tail bound on $X$ to achieve the bound.
The equality in \eqref{E:eta-CNR} follows from \eqref{E:CNR} and Lemma \ref{L:eta-eta-s}, and the inequality is immediate from the simplified form of the bound on \eqref{E:m-bound} when $s = 4 d_k^{-2} t$. 
\end{proof}

By Corollary \ref{C:bounded-expectation}, $\eta$ is a finite measure, and so the normalized measure $\eta' = \tfrac{1}{\eta(\sC^\N(\R))}\eta$ is a probability measure. Let $\fL = \fL^{t, k}$ denote a line ensemble whose law is equal to $\eta'$. It is easy to check that $\fL$ satisfies Theorem \ref{T:resampling-candidate}.

\begin{proof}[Proof of Theorem \ref{T:resampling-candidate}]
Part $1$ is immediate from the definition of $\eta$. Next, by the definition \eqref{E:eta-defn}, restricting $\eta$ to the set 
$$
\sC^\N_>(\R) = \{f \in \sC^\N(\R) : f_1 > f_2 > \dots\}
$$
gives a probability measure. By the Brownian Gibbs property, this measure is the law of $\fA$. Therefore conditioning $\fL$ to lie in $\sC^\N_>(\R)$ gives a parabolic Airy line ensemble. The bound $\P(\fL \in \sC^\N_>(\R)) \ge e^{-O_k(t^3)}$ follows from the bound on $\eta(\sC^\N(\R))$ in Corollary \ref{C:bounded-expectation}. This gives Theorem \ref{T:resampling-candidate}.3.

For part $2$, first observe that if $\fL$ satisfies the desired Gibbs property on all sets $S = \II{1, \ell} \X [a, b]$ where $a < 0 < t < b$ and $\ell > k$, then it satisfies the Gibbs property everywhere. Fix such a set $S$. Recalling the definition of $\fB$ from the construction of $\eta$, and let $\fA$ be coupled to $\fB$ as in that construction. We first compute the conditional density of $\P_{\sig(\fB|_{S^c})}(\fB|_S \in \cdot)$ against the law of $\ell$ independent Brownian bridges $B$ from $(a, \fB^\ell(a))$ to $(b, \fB^\ell(b))$.
Let $S_1 = \II{1, k} \X [0, t]$ and $S_2 = S \smin S_1$. First, $\fB|_{S_2} = \fA|_{S_2}$ so by the Brownian Gibbs property for $\fA$ we have 
\begin{align}
\label{E:B-fB}
\frac{\P_{\sig(\fB|_{S^c})}(\fB|_{S_2} = f)}{\P_{\sig(\fB|_{S^c})}(B|_{S_2} = f)} = \frac{\indic(f_i(r) > f_{i+1}(r) \text{ for all } (i, r) \in S_2)\P_{0, t}(f^k(0), f^k(t), f_{k+1})}{Z}.
\end{align}
where in the indicator above we say that $f_{\ell+1} = \fB_{\ell+1}$, and $Z$ is the $\sig(\fB|_{S^c})$-measurable normalization factor $\P_{a, b}(\fB^\ell(a), \fB^\ell(b), \fB_{k+1})$.
 Next, given $\fB|_{S_2 \cup S^c}$, the distribution of $\fB|_{S_1}$ is that of $k$ independent Brownian bridges from $(0, \fB^k(0))$ to $(t, \fB^k(t))$. As the same is true of the unconditioned bridges $B$, the density
$$
\frac{\P_{\sig(\fB|_{S^c})}(\fB|_{S} = f)}{\P_{\sig(\fB|_{S^c})}(B|_{S} = f)} 
$$
is the same as the right-hand side of \eqref{E:B-fB}. Therefore by \eqref{E:eta-defn}, if $B'$ consists of $\ell$ independent Brownian bridges $B$ from $(a, \fL^\ell(a))$ to $(b, \fL^\ell(b))$ then
\begin{align*}
\frac{\P_{\sig(\fL|_{S^c})}(\fL|_{S} = f)}{\P_{\sig(\fL|_{S^c})}(B|_{S} = f)}  = \frac{\indic(f_i(r) > f_{i+1}(r) \text{ for all } (i, r) \in S_2)}{Z'},
\end{align*}
where $Z'$ is a new $\sig(\fL|_{S^c})$- measurable normalization factor. This is the desired Gibbs property.
\end{proof}

\begin{remark}
It is interesting to note that Theorem \ref{T:resampling-candidate} already implies that on the set $\II{1, k} \X[0, t]$, if $L \in \sC^k([0, t])$ is the function whose coordinates $L_i$ are linear functions given by the conditions $L_i(0) = \fA_i(0), L_i(t) = \fA_i(t)$, then $[\fA - L]|_{\II{1, k} \X [0, t]}$ has a uniformly bounded density against the law of $k$ independent Brownian bridges.
\end{remark}

\subsection{The line ensembles $\fL^{t, k, \ba}$}
Next, we construct the more general line ensembles introduced in Remark \ref{R:multiple-patches}. 

\begin{theorem}
	\label{T:resampling-candidate-multiple}
	Fix $t \ge 1, k, \ell \in \mathbb N$ and $\ba \in \R^\ell$ such that $a_j + t < a_{j+1}$ for all $j \in \II{1, \ell - 1}$. Define $U(\ba) = \bigcup_{j=1}^\ell (\II{1, k} \X (a_j, a_j + t))$.
	
	Then there exists a random sequence of continuous functions $\fL^\ba = \fL^{t, k, \ba} = \{\fL^\ba_i : \R \to \R, i \in \N\}$ such that the following points hold:
	\begin{enumerate}
		\item Almost surely, $\fL^\ba$ satisfies $\fL^\ba_i(r) > \fL^\ba_{i+1}(r)$ for all pairs $(i, r) \notin U(\ba)$.
		\item The ensemble $\fL^\ba$ has the following Gibbs property. Let $m \in \N$ and $a < b \in \R$ and set $S = \II{1, m} \X [a, b]$. Then conditional on the values of $\fL^\ba_i(r)$ for $(i, r) \notin S$, the distribution of $\fL^\ba_i(r), (i,r) \in S$ is given by $m$ independent Brownian bridges $B_1, \dots, B_m$ from $(a, \fL^\ba_i(a))$ to $(b, \fL^\ba_i(b))$ for $i \in \II{1, m}$, conditioned on the event $B_i(r) > B_{i+1}(r)$ whenever $(i, r) \notin U(\ba)$.
		\item We have
		\begin{equation}
		\label{E:fLa1}
\P(\fL^\ba_1 > \fL^\ba_2 > \dots ) \ge \exp(-e^{O_k (1 + \ell \log \ell)} t^3)),
		\end{equation}
		and conditional on the event $\{\fL^\ba_1 > \fL^\ba_2 > \dots\}$, the ensemble $\fL^\ba$ is equal in law to the parabolic Airy line ensemble $\fA$.
	\end{enumerate}
\end{theorem}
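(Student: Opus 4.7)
The plan is to mirror the construction of $\fL$ in Section \ref{S:construction-L}. I would define a $\sig$-finite measure $\eta^\ba$ on $\sC^\N(\R)$ as follows: let $\fB^\ba$ equal $\fA$ outside $U(\ba)$ and consist of $k$ independent Brownian bridges on each patch $\II{1,k} \X [a_j, a_j+t]$ between the corresponding Airy endpoint values (bridges on different patches being mutually independent); let $\tilde \eta^\ba$ denote its law; and set
\[
\frac{d \eta^\ba}{d \tilde \eta^\ba}(f) \;=\; \prod_{j=1}^\ell \frac{1}{\P_{a_j, a_j + t}(f^k(a_j),\, f^k(a_j + t),\, f_{k+1})}.
\]
Granted the total-mass bound $\eta^\ba(\sC^\N(\R)) \le e^{O_k(\ell^5 t^3)}$, the normalized probability measure is the law of $\fL^\ba$, and parts $1$ and $3$ follow exactly as in the proof of Theorem \ref{T:resampling-candidate}: part $1$ is immediate from the construction of $\fB^\ba$, while restricting $\eta^\ba$ to the non-intersecting configurations recovers the Airy law by the Brownian Gibbs property applied on each patch separately, making the probability bound \eqref{E:fLa1} equivalent to the total-mass bound. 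Part $2$ is verified by the same density-ratio calculation used to prove Theorem \ref{T:resampling-candidate}, with each of the $\ell$ patch-wise acceptance factors $\P_j$ in the density of $\eta^\ba$ playing the role of the single $\P_{0,t}$-factor in the single-patch computation.

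For the total-mass bound I would adapt the Wiener candidate argument. Fix $s \ge 2t$, set $I_s = [a_1 - s,\, a_\ell + t + s]$, and define an auxiliary measure $\eta_s^\ba$ via a line ensemble $\fB_s^\ba$ that equals $\fA$ outside $\II{1,k} \X I_s$ and, on $\II{1,k} \X I_s$, consists of $k$ independent Brownian bridges from $(a_1 - s, \fA^k(a_1 - s))$ to $(a_\ell + t + s, \fA^k(a_\ell + t + s))$ conditioned on $\NI(\fA_{k+1},\, I_s \smin U(\ba))$, with density given by the natural analog of \eqref{E:eta-s-defn}. The density-ratio computation in the proof of Lemma \ref{L:eta-eta-s} carries through verbatim with the product $\prod_{j=1}^\ell \P_j(\cdot)$ replacing the single acceptance factor, yielding $\eta^\ba = \eta_s^\ba$ and the key identity
\[
\E_{\cF_s}\Bigl[\prod_{j=1}^\ell \P_j(\fA)^{-1}\Bigr] \;=\; \Bigl(\E_{\cF_s}\Bigl[\prod_{j=1}^\ell \P_j(\fB_s^\ba)\Bigr]\Bigr)^{-1},
\]
where $\cF_s = \sig(\fA|_{\II{1,k} \X I_s^c})$ and $\P_j(f) = \P_{a_j,a_j+t}(f^k(a_j), f^k(a_j+t), f_{k+1})$.

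The remaining step, and the main obstacle, is to lower bound $\E_{\cF_s}[\prod_j \P_j(\fB_s^\ba)]$ uniformly in the spacings $a_{j+1} - a_j$. I would generalize Lemma \ref{L:RHS-4-estimate} by simultaneously shifting all $k$ Brownian bridges on $I_s$ by a piecewise-linear function $f^{\al,\be}$ that equals $\al \iota + \be {\bf 1}$ on every patch and vanishes at the endpoints of $I_s$, thereby forcing all $\ell$ patch-wise acceptance probabilities above $e^{-c_k t}$ at once. The analog of Lemma \ref{L:bridge-shift-calc} bounds the Radon-Nikodym cost of this shift by $\exp(-c_k \ell (\al+\be)^2 / |I_s| - \cdots)$, to which one adds a Karlin-McGregor penalty of order $e^{-c_k \ell s}$ for the $k$ bridges to remain ordered on $I_s \smin U(\ba)$. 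The essential cancellation is that the analog of the statistic $M$ from Lemma \ref{L:RHS-4-estimate}, now computed over $I_s$, has a Gaussian tail on scale $\sqrt{|I_s|}$ by the analog of Lemma \ref{L:MD-bounds}, and $|I_s|$ appears in the denominator of the shift cost, so the full range $a_\ell - a_1$ cancels after taking expectations. Optimizing $s$ to balance the Karlin-McGregor penalty against the shift and tail contributions yields the desired bound $\eta^\ba(\sC^\N(\R)) \le e^{O_k(\ell^5 t^3)}$; the $\ell^5$ arises from the combined effect of the $\ell$-fold amplification of the $D$-statistics across patches, the $\ell$-fold Karlin-McGregor cost, and the quadratic scaling of the shift penalty in the total patch count.
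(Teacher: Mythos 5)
Your construction of $\fL^\ba$ via the density $\prod_j \P_j^{-1}$ against $\tilde\eta^\ba$, together with the verification of parts 1 and 2, is sound and essentially matches the paper (the paper first describes $\fL^\ba$ even more cleanly as $\fL^{t, k, a_\ell + t}$ conditioned on non-intersection outside $U(\ba)$, and then introduces the $\prod_j \P_j^{-1}$ density for the normalization bound). The crucial divergence is in how you propose to bound the total mass $Z = \E\bigl[\prod_{j=1}^\ell \P_j(\fA)^{-1}\bigr]$, and there your argument has a genuine gap.

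The paper's proof of $Z \le e^{O_k(\ell^5 t^3)}$ is a two-line reduction that never looks at the joint behaviour of the patches: by H\"older's inequality $Z \le \bigl(\prod_j \E[\P_j(\fA)^{-\ell}]\bigr)^{1/\ell}$, by stationarity of $\cA$ each factor equals $\E[\P_{0,t}(\fA)^{-\ell}]$, and this single-patch $\ell$-th moment is then bounded via Corollary \ref{C:bounded-expectation} with $s = O_k(\ell^2 t)$. Uniformity in the spacings $a_{j+1}-a_j$ is automatic because the patches have been decoupled before any geometry enters.

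You instead extend the Wiener candidate argument to $I_s = [a_1 - s,\, a_\ell + t + s]$ and assert that ``the full range $a_\ell - a_1$ cancels after taking expectations.'' That claim is unsupported and appears to be false as stated. Writing $T = |I_s|$, the statistic $M$ over $I_s$ does not have a Gaussian tail on scale $\sqrt{T}$ in the sense you need: because $\fA_{k+1}$ clings to the parabola $-r^2$, the increment $\max_{r,r' \in I_s}|\fA_{k+1}(r) - \fA_{k+1}(r')|$ is typically of order $T^2$, and Lemma \ref{L:MD-bounds} reflects this — the bound is $\exp\bigl(c_k s^3 - a^2/(320s)\bigr)$, with Gaussian decay only beyond the $\Theta(s^2)$ centre. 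With $M \sim T^2$, the shift cost $M^2/T$ is of order $T^3$ and $MD/T$ is of order $T \cdot D$, both of which grow without bound as $a_\ell - a_1 \to \infty$; neither cancels against the $\exp(-a^2/cT)$ tail the way you describe. The analogue of the $e^{-cks}$ term (the cost of forcing the $k$ candidate bridges to track a line within $O(1)$ over the full conditioning window) would also become $e^{-ckT}$, again with no visible cancellation. To make your route work you would have to replace $M$ with a quantity that measures only the local fluctuation near each patch and near the two ends of $I_s$ — not the global parabolic sweep over all of $I_s$ — and re-derive Lemmas \ref{L:bridge-shift-calc} and \ref{L:RHS-4-estimate} in a form whose error is controlled by the number of patches $\ell$ rather than the diameter $T$. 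That is a substantial additional argument, and your sketch does not supply it. The H\"older / stationarity route avoids all of it.
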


To prove Theorem \ref{T:resampling-candidate-multiple} we require a multi-interval extension of Lemma \ref{L:eta-eta-s} and Corollary \ref{C:bounded-expectation}.
Setting some notation, for a disjoint union of compact intervals $A = \bigcup_{i=1}^m [c_i, d_i]$, let $\cF_A$ be the $\sig$-algebra generated by $\{\fA_i(x) : (i, x) \notin \II{1, k} \times A\}$. Also, for a disjoint union of compact intervals $B = \sum_{i=1}^m [a_i, b_i]$ with $[a_i, b_i] \subset [c_i, d_i]$ for all $i$, we define a line ensemble $\fB_{B, A}$ analogously to how we defined $\fB_s$ prior to Lemma \ref{L:eta-eta-s}. 

More precisely, let $\fB_{B, A, i}(r) = \fA_i(r)$ for $(i, r) \notin \II{1, k} \times A$, and let $\fB_{B, A}^k|_{[c_i, d_i]}, i = 1, \dots, m$ have the conditional law given $\fA$ outside of $\II{1, k} \X A$ of $m$ independent collections of $k$ independent Brownian bridges from $(c_i, \fA^k(c_i))$ to $(d_i, \fA^k(d_i))$ conditioned on the event
$$
\NI(\fA_{k+1}, A \setminus B).
$$
In particular $\fB_{B, B}$ is simply given by $\fA$ off of $\II{1, k} \times B$, connected by independent Brownian bridges on $\II{1, k} \times B$.
For a line ensemble $f$ it will also be useful to introduce the random variable
$$
Z(f, B) := \prod_{i=1}^m \P_{a_i, b_i}(f^k(a_i), f^k(b_i), \fA_{k+1}).
$$
\begin{lemma}
	\label{L:eta-eta-multi}
For any $A, B$ as above, we have
	\begin{equation}
		\label{E:big-eqn-multi}
		\E_{\cF_A} \frac{1}{Z(\fB_{B, B}, B)} = \frac{1}{\E_{\cF_A} Z(\fB_{B, A}, B)}.
	\end{equation}
Moreover, there exists a $k$-dependent constant $d_k' > 0$ such that if $T = \max_i (d_i - c_i)$, and $\de \in (0, d_k' m^{-2})$ is such that $a_i = c_i/2 + d_i/2, b_i = c_i(1/2 - \de) + d_i(1/2 + \de)$ for all $i$, then
\begin{equation}
	\label{E:PEFA}
\E [Z(\fB_{B, B}, B)^{-1}] = \E [\E_{\cF_A} Z(\fB_{B, A}, B)]^{-1} \le e^{O_k(T^3)}.
\end{equation}
\end{lemma}
\begin{proof}
	We first compute the conditional density given $\cF_A$ of $\fB_{B, B}^k|_A$ against $\fB_{B, A}^k|_A$. Conditional on $\cF_A$, all of $\fB_{B, A}^k|_A, \fB_{B, B}^k|_A, \fA^k|_A$ have densities with respect to the law of $m$ independent collections of $k$ independent Brownian bridges from $(c_i, \fA^k(c_i))$ to $(d_i, \fA^k(d_i)), i = 1, \dots, m$. The density of $\fB_{B, A}^k|_A$ at a continuous function $f:A \to \R^k$ is proportional to
	\begin{equation}
		\label{E:ffff}
		\indic(f \in \NI(\fA_{k+1}, A \smin B)),
	\end{equation}
	whereas the density of $\fA|_U$ is proportional to
	\begin{equation}
		\label{E:squig-1}
		\indic(f \in \NI(\fA_{k+1}, A)).
	\end{equation}
	We obtain $\fB|_{B, B}$ from $\fA$ by replacing $\fA$ with independent Brownian bridges on $\II{1, k} \times B$. The density of $\fA$ on $\II{1, k} \times B$ with respect to independent Brownian bridges between the boundary points equals
	\begin{equation}
		\label{E:indic-g-squig}
		\frac{\indic(g \in \NI(\fA_{k+1}, B))}{Z(g, B)}
	\end{equation}
	for $g:B \to \R^k$. Therefore the density of $\fB_{B, B}^k|_A$ is proportional to the product of \eqref{E:squig-1} with the inverse of \eqref{E:indic-g-squig} at the function $g = f|_B$ as long as \eqref{E:indic-g-squig} is non-zero. This is
	\begin{equation}
		\label{E:indic-fg-fg}
		\indic(f \in \NI(\fA_{k+1}, A \smin B)) Z(f, B).
	\end{equation}
	The same formula holds when \eqref{E:indic-g-squig} evaluated at $f|_B$ equals $0$ since the density of $\fB_{B, B}^k|_A$ against independent Brownian bridges only depends on $f|_{A \smin B}$, as in the proof of Lemma \ref{L:eta-eta-s}.
Examining \eqref{E:indic-fg-fg} and \eqref{E:ffff}, we see that conditional on $\cF_A$, $\fB_{B, B}|_U$ has a density against $\fB_{B, A}|_U$ which is proportional to $Z(f, B)$. Including the normalization constant, we see that this conditional density equals
$$
\frac{Z(f, B)}{\E_{\cF_A} Z(\fB_{B, A}, B)}.
$$
Equation \eqref{E:big-eqn-multi} then follows from a change-of-variables. 

The first equality in \eqref{E:PEFA} is immediate from \eqref{E:big-eqn-multi}. For the second equality, observe that 
\begin{align*}
\E_{\cF_A} Z(\fB_{B, A}, B) &= \E_{\cF_A} \prod_{i=1}^m \P_{a_i, b_i} (\fB_{B, A}^k(a_i), \fB_{B, A}^k(b_i), \fA_{k+1}) \\
&=\prod_{i=1}^m \E_{\cF_A} \P_{a_i, b_i} (\fB_{B, A}^k(a_i), \fB_{B, A}^k(b_i), \fA_{k+1}) \\
&=\prod_{i=1}^m \E_{\cF_{[c_i, d_i]}} \P_{a_i, b_i} (\fB_{[a_i, b_i], [c_i, d_i]}^k(a_i), \fB_{[a_i, b_i], [c_i, d_i]}^k(b_i), \fA_{k+1}).
\end{align*}
Here the second and third equalities use that conditional on $\cF_A$ the functions $\fB_{B, A}^k|_{[a_i, b_i]}$ are independent, and individually equal in law to $\fB_{[a_i, b_i], [c_i, d_i]}^k|_{[a_i, b_i]}$ conditional on the finer $\sig$-algebra $\cF_{[c_i, d_i]}$. Now, by stationarity of $\fA$ and the relationships between $a_i, b_i, c_i, d_i$,  each of the random variables in the above product can be controlled by Corollary \ref{C:bounded-expectation}. Applying this corollary gives 
\begin{align*}
\P(\E_{\cF_A} Z(\fB_{B, A}, B) < \ep) &\le \sum_{i=1}^m \P(\E_{\cF_{[a_i, b_i]}} \P_{a_i, b_i} (\fB_{[a_i, b_i], [c_i, d_i]}(a_i), \fB_{[a_i, b_i], [c_i, d_i]}(b_i), \fA_{k+1}) < \ep^{1/m}) \\
&\le m e^{O_k(T^3)} \exp \lf(- d_k \log (\ep^{-1/m})/\sqrt{\de} \rg) \le e^{O_k(T^3)} \ep^2.
\end{align*}
Here the final bound uses that $\de < d_k' m^{-2}$. The bound \eqref{E:PEFA} follows.
\end{proof}

We also require a simple covering lemma.

\begin{lemma}
\label{L:covering-lemma}
Fix $t > 0, \ell \in \N, \de \in (0, 1/4)$ and consider $\ba \in \R^\ell$ with $b_j := a_j + t < a_{j+1}$ for all $j \in \II{1, \ell - 1}$. Then there exists a collection of disjoint intervals $\mathcal C = \{[c_1, d_1], \dots, [c_m, d_m]\}$ such that
\begin{itemize}[nosep]
	\item $\max_i d_i - c_i \le (5/\de)^\ell t$.
	\item For all $j \in \II{1, \ell}$, $[a_j, b_j] \subset [c_i/2 + d_i/2, c_i(1/2-\de) + d_i(1/2 + \de)]$ for some $i \in \II{1, m}$.
\end{itemize} 
We call $\cC$ a $\de$-\textbf{admissible cover} for $\ba$.
\end{lemma}

\begin{proof}
We prove the lemma by induction. In the base case when $\ell = 1$, we set $c_1 = a_1 - t/(2\de), d_1 = a_1 + t/(2\de)$. Now fix $\ell \ge 2$, and suppose the lemma holds for all $j \le \ell$. Suppose first that there exists some $j \in \II{1, \ell - 1}$ such that $a_{j+1} - a_j > 2(5/\de)^{j \vee (\ell-j)} t$. In this case, consider admissible covers $\cC_1, \cC_2$ of $(a_1, \dots, a_j)$ and $(a_{j+1}, \dots, a_\ell)$. By possibly removing redundant intervals, we may also assume that every $[c, d] \in \cC_1 \cup \cC_2$ contains at least one interval $[a_j, b_j]$. Then for any $[c, d] \in \cC_1, [c', d'] \in \cC_2$ we have
$$
d < a_j + (5/\de)^j t < a_{j+1} - (5/\de)^{\ell-j} t < c',
$$
where the first and third inequalities above use the first two bullet points in the lemma. Hence the intervals in $\cC = \cC_1 \cup \cC_2$ are all disjoint, and hence $\cC$ is an admissible cover for $\ba$. If $a_{j+1} - a_j \le 2(5/\de)^{j \vee (\ell-j)} t$ for all $j$, then 
$$
b_\ell - a_1 \le t + 2t \sum_{j=1}^{\ell-1} (5/\de)^{j \vee (\ell-j)} \le \de (5/\de)^j t,
$$
and so the single interval $[a_1 - (5/\de)^\ell t/2, a_1 + (5/\de)^\ell t / 2]$ gives an admissible cover.
\end{proof}

\begin{proof}[Proof of Theorem \ref{T:resampling-candidate-multiple}]
First, by shift invariance of $t \mapsto \fA(t) + t^2$ we may assume $a_1 = 0$. 
In this case, we can construct $\fL^{\ba}$ by taking the line ensemble $\fL^{a_\ell + t, k}$ from Theorem \ref{T:resampling-candidate} and conditioning on the following avoidance event $N_1$:
\begin{equation}
\label{E:ka}
\fL^{a_\ell + t, k}_i(r) > \fL^{a_\ell + t, k}_{i+1}(r) \qquad  \text{ for all } (i, r) \notin U(\ba).
\end{equation}
This line ensemble satisfies properties $1, 2$ by virtue of the corresponding properties for $\fL^{a_\ell + t, k}$. It remains to obtain the lower bound required for property $3$.
For this, let $\de = d_k' \ell^{-2}$, where $d_k'$ is a small $k$-dependent constant, and using Lemma \ref{L:covering-lemma} let $\mathcal C = \{[c_1, d_1], \dots, [c_m, d_m]\}$ be a $\de$-admissible cover for $\ba$. Define disjoint intervals $[a_1, b_1'], \dots, [a_m', b_m']$ where $a_i = (c_i + d_i)/2$ and $b_i' = a_i + \de(c_i - d_i)$. Then every interval $[a_i, a_i + t]$ is contained in one of the intervals $[a_i', b_i']$, each of which is a subset of $[c_i, d_i]$. Let $A = \bigcup_{i=1}^m [c_i, d_i]$ and $B = \bigcup_{i=1}^m [a_i', b_i']$. 

Now define a line ensemble $\hat \fL^\ba$ by taking the line ensemble $\fL^{a_\ell + t, k}$ and conditioning on the weaker avoidance event $N_2$:
$$
\fL^{k, a_\ell + t}_i(r) > \fL^{k, a_\ell + t}_{i+1}(r) \qquad  \text{ for all } (i, r) \notin \II{1, k} \times B.
$$
Since $N_1 \subset N_2$ and the full avoidance event in \eqref{E:fLa1} is a subset of $N_1$, we have that
\begin{equation}
\label{E:new-31}
\P(\fL^\ba_1 > \fL^\ba_2 > \dots ) \ge \P(\hat \fL^\ba_1 > \hat \fL^\ba_2 > \dots),
\end{equation}
so it suffices to lower bound the right-hand side above.

Using the notation introduced prior to Lemma \ref{L:eta-eta-multi}, let $\tilde \eta$ denote the law of $\fB_{B, B}$, and let $\hat \eta$ denote the law of $\hat \fL^\ba$. The relationship between the measures $\tilde \eta, \hat \eta$ introduced here is almost the same as the relationship between the measures $\tilde \eta, \eta$ introduced prior to Lemma \ref{L:eta-eta-s}, except that here we have already normalized $\hat \eta$ to a probability measure. Indeed, $\hat \eta$ is absolutely continuous with respect to $\tilde \eta$ with density given by
\begin{equation*}
\left.
\frac{d \hat \eta}{d \tilde \eta}(f) = \frac{1}{Z(f, B)} \middle/ \E \frac{1}{Z(\fB_{B, B}, B)} \right.
\end{equation*}
On the other hand, the probability on the right-hand side of \eqref{E:new-31} equals $\E Z(\hat \fL^\ba, B)$, which by the above formula equals $1/\E Z(\fB_{B, B}, B)^{-1}$. By Lemma \ref{L:eta-eta-multi}, this is bounded below by $e^{-O_k(T^3)},
$
where $T = \max_i d_i - c_i$. Finally, the first bullet in Lemma \ref{L:covering-lemma} and the definition of $\de$ gives that $T \le t e^{O_k(1 + \ell \log \ell)}$, yielding the result.
\end{proof}

The claimed asymptotic independence in Remark \ref{R:multiple-patches} of the patches $\fL^\ba|_{\II{1, k} \X [a_i, a_i + t]}$ as $\min |a_i - a_{i-1}| \to \infty$ follows from the strong mixing of the Airy line ensemble shown in \cite{corwin2014ergodicity}.

\section{Tail bounds for $\fL^{t, k}$}
\label{S:tail-bounds}

In this section, we prove one- and two-time tail bounds on $\fL = \fL^{t, k}$ (including the bounds in Example \ref{Ex:one-line}). For readability, we have not aimed to prove tail bounds on the more general ensembles $\fL^{t, k, \ba}$ alongside the tail bounds for $\fL$, though these can be obtained with similar methods.

As in the previous section, we fix $k \in \N$ and $t \ge 1$ throughout the section. These bounds will be used in Section \ref{S:upper-bound} to move from Theorem \ref{T:resampling-candidate} to the upper bound in Theorem \ref{T:tetris-theorem}.

\subsection{The one-time bound}
\label{SS:one-time-bound}
The single-time bound is easier, so we start there. The idea is to first prove tail bounds on the line ensembles $\fB_s$ introduced for Lemma \ref{L:eta-eta-s}, and then deduce tail bounds on $\fL$ through the definition \eqref{E:eta-s-defn} and the bounds in Corollary \ref{C:bounded-expectation}. 

\begin{lemma}
	\label{L:one-point-bound-fB}
	Let $\bm \in [0, \infty)^k_\ge$ and define the set
	$$
	A(\bm, r) = \{f \in \sC^\N(\R) : f_i(r) > m_i + r^2 \quad \text{ for all } i \in \II{1, k} \}.
	$$
	Then for $r \notin (0, t)$ we have
	$$
	\P(\fB_s \in A(\bm, r)) \le \exp\Big(-\frac{4}{3} \sum_{i=1}^k m_i^{3/2} + c_k \log (\|\bm\|_2 + s)\Big).
	$$
\end{lemma}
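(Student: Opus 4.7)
\textbf{Proof plan for Lemma \ref{L:one-point-bound-fB}.} The target bound should follow from the corresponding one-point bound for the Airy line ensemble: by Lemma \ref{L:one-point-upper-bound} together with stationarity $\cA(r) \eqd \cA(0)$ of the non-parabolic ensemble, we have $\P(\fA_i(r) > m_i + r^2\ \forall i \in \II{1,k}) \le c_k \exp(-\tfrac{4}{3} \sum_i m_i^{3/2})$. My plan is therefore to compare $\fB_s$ with $\fA$ at time $r$ (for $r \notin (0, t)$) and reduce to this Airy estimate.

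When $r \notin [-s, s]$ the comparison is trivial since $\fB_s(r) = \fA(r)$ by construction. The interesting case is $r \in [-s, s] \smin (0, t)$. Condition on $\cF^* = \sig(\fA|_{(\II{1,k}\X[-s,s])^c})$. Then both $\fA^k|_{[-s,s]}$ and $\fB_s^k|_{[-s,s]}$ are laws of $k$ independent Brownian bridges from $(-s, \fA^k(-s))$ to $(s, \fA^k(s))$, differing only in the non-intersection conditioning: $\fA^k$ is conditioned on $\NI(\fA_{k+1}, [-s, s])$, while $\fB_s^k$ is conditioned on the weaker $\NI(\fA_{k+1}, [-s, 0] \cup [t, s])$. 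By the Brownian Gibbs property of $\fA$, the law of $\fA^k|_{[-s,s]}$ is precisely the law of $\fB_s^k|_{[-s,s]}$ further conditioned on the additional event $E = \NI(\fA_{k+1}, (0, t))$.

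The key step is then to establish the positive correlation $\P_{\fB_s}(A(\bm, r) \mid E, \cF^*) \ge \P_{\fB_s}(A(\bm, r) \mid \cF^*)$. Both events are upward in the componentwise ordering on paths (on $[-s, 0] \cup [t, s]$ the bridges retain their strict order, so requiring $f_i(r) > m_i + r^2$ is genuinely increasing in the full path). After further conditioning on $\fB_s^k(0)$ and $\fB_s^k(t)$, the three intervals $[-s, 0]$, $[0, t]$, $[t, s]$ become independent Brownian bridge pieces, on each of which the law is log-concave; an FKG-type argument combined with Lemma \ref{L:CH-monotonicity} (monotonicity in endpoints) delivers the correlation after integrating the time-$0$ and time-$t$ marginals back out. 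Once this is in hand, positive correlation gives $\P(\fB_s \in A(\bm, r) \mid \cF^*) \le \P(\fA \in A(\bm, r) \mid \cF^*)$, and taking expectations yields the claim via the Airy one-point bound.

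The logarithmic slack $c_k \log(\|\bm\|_2 + s)$ in the statement is presumably absorbed from technical losses in realizing this plan rigorously: for instance from discretizing the endpoint values $\fA^k(\pm s)$ and the lower boundary $\fA_{k+1}$ on a polynomial grid (size governed by $\|\bm\|_2$ and $s$) and union-bounding over realizations of $\cF^*$, using the tail bounds from Lemmas \ref{L:one-point-upper-bound}, \ref{L:one-point-lower-bound} and \ref{L:two-point-estimate-weak}. The main obstacle is making the positive-correlation/FKG step honest: maintaining the strict ordering of bridges on $(0, t)$ is not a straightforwardly monotone constraint under simultaneous shifts of all bridges, so one must condition on the intermediate values at $0$ and $t$ first and then carefully apply Lemma \ref{L:CH-monotonicity} to couple the conditional and unconditional bridge measures.
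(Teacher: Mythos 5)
The crux of your proposal is the positive-correlation inequality $\P(\fB_s \in A \mid E, \cF^*) \ge \P(\fB_s \in A \mid \cF^*)$, where $E = \NI(\fA_{k+1}, (0,t))$ is the event that the $k$ bridges remain mutually ordered and above $\fA_{k+1}$ on $(0,t)$. Your setup is correct up to that point — conditionally on $\cF^*$, $\fA^k|_{[-s,s]}$ really is $\fB_s^k|_{[-s,s]}$ further conditioned on $E$ — but the correlation step is a genuine gap, not a technicality. For $k \ge 2$ the mutual-ordering part of $E$ is not increasing in the componentwise order (raising a lower line can destroy the ordering), so $E$ is not an increasing event and FKG does not apply. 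Conditioning on the time-$0$ and time-$t$ marginals first does not rescue this: given $(\fB^k_s(0), \fB^k_s(t))$, the increasing event $A(\bm,r)$ (which lives on $[-s,0]\cup[t,s]$) and the event $E$ (which lives on $(0,t)$) become conditionally \emph{independent}, so all the dependence is in the reweighting of the endpoint law. That reweighting multiplies the endpoint density by $\P_{0,t}(\cdot,\cdot,\fA_{k+1})$, which favors \emph{spread-out} endpoint vectors rather than uniformly raised ones; there is no stochastic domination, and Lemma \ref{L:CH-monotonicity} has nothing to act on. A further red flag: if your inequality held, the lemma would follow with no $c_k\log(\|\bm\|_2+s)$ slack at all, which is stronger than what the paper claims — good evidence that the inequality is simply false.

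The paper's route is different and avoids any correlation argument. It introduces an intermediate ensemble $\fM_s$ that differs from $\fB_s$ only by additionally imposing the \emph{mutual ordering} (but not the $\fA_{k+1}$-avoidance) of the $k$ lines on $[0,t]$. Lemma \ref{L:CH-monotonicity} then gives $\fM_s \le_{\mathrm{st}} \fA$, transferring the Airy one-point bound to $\fM_s$. The comparison of $\fB_s$ to $\fM_s$ is not by correlation but by an explicit Radon--Nikodym computation: the density $d\fM_s/d\fB_s$ is proportional to $\P_{0,t}(g(0),g(t),-\infty)$, and Lemma \ref{L:bridge-shift-calc} shows that after a small upward shift $g \mapsto g + f^{\ep,0}$ (which preserves $A(\bm,r)$) this density is at least $e^{-c_k\log Y_g}$, with the residual event $\{Y_\fB > \la\}$ handled by one-point and modulus-of-continuity tails (Lemmas \ref{L:one-point-upper-bound}, \ref{L:one-point-lower-bound}, \ref{L:MD-bounds}). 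The $\log$ term in the statement is exactly the cost of this shift/RN-derivative argument, not a discretization artifact of an FKG proof. To repair your plan you would need to separate the increasing part of $E$ (staying above $\fA_{k+1}$) from the non-increasing ordering part, and it is the latter that forces a quantitative density estimate rather than a correlation inequality.
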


\begin{proof}
	First, the bound is immediate from Lemma \ref{L:one-point-upper-bound} when $|r| \ge s$ since $\fA(r) = \fB_s(r)$ when $r \ge s$. 
The case when $|r| < s$ is more involved.
	Let 
	$$
	S = \II{1, k} \X [-s, s], \qquad S_1 = \II{1, k} \X ([-s, 0] \cup [t, s]), \qquad S_2 = \II{1, k} \X [0, t].
	$$
Define a line ensemble $\fM_s$ by letting $\fM_{s,i}(r) = \fA_i(r)$ for $(i, r) \notin S$, and let $\fM_{s}|_{S}$ have the conditional law given $\fA|_{S^c}$ of $k$ independent Brownian bridges from $(-s, \fA^k(-s))$ to $(s, \fA^k(s))$ conditioned on the event
$$
\NI(\fA_{k+1}, [-s, 0] \cup [t, s]) \cap \NI(-\infty, [0, t]).
$$
In words, $\fM_s$ is defined similarly to $\fB_s$, except with the extra condition that $\fM_{s,1} > \dots > \fM_{s,k}$ on the interval $[0, t]$. Then by Lemma \ref{L:CH-monotonicity}, $\fM_s$ is stochastically dominated by $\fA$, so by Lemma \ref{L:one-point-upper-bound}, we have
\begin{equation}
\label{E:fMI}
\P(\fM_i(r) + r^2 > m_i \quad \text{ for all } i \in \II{1, k}) \le c_k \exp\Big(-\frac{4}{3} \sum_{i=1}^k m_i^{3/2}\Big)
\end{equation}
for all $r \in \R$, $\bm \in [0, \infty)_>^k$. Our goal is to effectively compare $\fM^k_s|_{S_2}$ to $\fB^k_s|_{S_2}$ conditionally on the $\sig$-algebra $\cF_s := \sig(\fA|_{(\II{1, k} \X [-s, s])^c})$ in order to apply this bound. By Lemma \ref{L:bridge-shift-calc} (and with notation as in that lemma), for any $g \in \NI(\fA_{k+1}, [-s, 0] \cup [t, s])$ we have
$$
\frac{\P_{\cF_s}(\fB_s|_{S_2} = [g + f^{\ep, 0}]|_{S_2} )}{\P_{\cF_s}(\fB_s|_{S_2} = g|_{S_2})} \ge \exp \lf(-\frac{k^3\ep^2}{s} - \frac{k^2 \ep (g_1(0) \vee g_1(t) - \fA_{k+1}(-s) \wedge \fA_{k+1}(s))^+}{s} \rg). 
$$
On the other hand, the conditional laws of $\fB_s|_{S_2}$ and $\fM_s|_{S_2}$ given $\cF_s$ are mutually absolutely continuous with density
\begin{align*}
\frac{\P_{\cF_s}(\fM_s|_{S_2} = g)}{\P_{\cF_s}(\fB_s|_{S_2} = g)} &= \frac{\P_{0, t}(g(0), g(t), -\infty) \P_{\cF_s}(\fA^k|_{[-s, s]} \in \NI(\fA_{k+1}, [-s, 0] \cup [t, s]))}{\P_{\cF_s}(\fA^k|_{[-s, s]} \in \NI(\fA_{k+1}, [-s, 0] \cup [t, s]) \cap \NI(-\infty, [0, t]))} \\
&\ge \P_{0, t}(g(0), g(t), -\infty).
\end{align*}
In particular, if $g_i(r) \ge g_{i+1}(r) + \ep$ for all $i \in \II{1, k-1}, r \in  \{0, t\}$ for some $\ep < 1/2$, then by Lemma \ref{L:nonint-probability}, this is bounded below by
$
e^{-c_k \log (t/\ep)}.
$
Putting this together with the previous bound implies that for $g \in \NI(\fA_{k+1}, [-s, 0] \cup [t, s])$, setting
$$
Y_g = 2s + (g_1(0) \vee g_1(t) - \fA_{k+1}(-s) \wedge \fA_{k+1}(s))^+,
$$ 
and $\ep = s/Y_g$, we have
\begin{equation}
\label{E:fMs}
\frac{\P_{\cF_s}(\fM_s|_{S_2} = [g + f^{\ep, 0}]|_{S_2} )}{\P_{\cF_s}(\fB_s|_{S_2} = g|_{S_2})} \ge \exp\lf(-c_k \log Y_g \rg).
\end{equation}
Therefore for every $\bm \in [0, \infty)^k_\ge, r \in [-s, 0] \cup [t, s],$ and $\la > 0$ we have
$$
\P_{\cF_s}(\fB_s \in A(\bm, r)) \le e^{c_k \log \la} \P_{\cF_s}(\fM_s \in A(\bm, r)) + \P_{\cF_s}(Y_\fB > \la).
$$
Upon taking expectations of both sides, we get that
$$
\P (\fB \in A(\bm, r)) \le e^{c_k \log \la} \P(\fM \in A(\bm, r)) + \P(Y_\fB > \la).
$$ 
It remains to bound $\P(Y_\fB > \la)$ and then optimize over $\la$. Define vectors $\bw^s, \bw^{-s}$ where 
$$
w^{\pm s}_i = \fA_i(\pm s) + \sup_{r \in [-s, s]} |\fA_{k+1}(r) - \fA_{k+1}(s)| + (k + 1 - i)s.
$$
By Lemma \ref{L:CH-monotonicity}, on the interval $[-s, s]$ $\fB^k_s$ is stochastically dominated by $k$ independent Brownian bridges $B = (B_1, \dots, B_k)$ from $(-s, \bw^{-s})$ to $(s, \bw^s)$ conditioned on the event
$$
\NI(\fA_{k+1}, [-s, 0] \cup [t, s]).
$$
Now, by the construction of $\bw^{\pm s}$, the probability that $k$ independent Brownian bridges $B = (B_1, \dots, B_k)$ from $(-s, \bw^{-s})$ to $(s, \bw^s)$ satisfy $B \in \NI(\fA_{k+1}, [-s, 0] \cup [t, s])$ is bounded below by $e^{-c_k}$. Therefore
\begin{align*}
\P(Y_\fB > \la) &\le \P(Y_B > \la) \\
&\le \P(\fA_1(s) \vee \fA_1(-s) - \fA_{k+1}(-s) \wedge \fA_{k+1}(s) + \sup_{r \in [-s, s]} |\fA_{k+1}(r) - \fA_{k+1}(s)| \ge \la/2 - 3k s) \\
&+ e^{c_k} \P(\sup_{r \in [-s, s]} W(s) > \la/2),
\end{align*}
where $W$ is an independent Brownian bridge from $(-s, 0)$ to $(-s, 0)$. We can bound the first of these terms with Lemma \ref{L:one-point-upper-bound}, Lemma \ref{L:one-point-lower-bound}, and Lemma \ref{L:MD-bounds}. The second of these terms can be bounded with a standard Brownian bridge bound. Altogether, we get that both of terms are of size $e^{-d_k \la^{3/2}}$ as long as $\la > c_k s^3$. Therefore setting $\la = c_k (s^3 + \|\bm\|_2)$ and applying the bound \eqref{E:fMI}, we get  
$$
e^{c_k \log \la} \P(\fM \in A(\bm, r)) + \P(M_\fB > \la) \le \exp\Big(c_k \log (\|\bm\|_2 + s)  -\frac{4}{3} \sum_{i=1}^k m_i^{3/2}\Big).
$$
which gives the desired result.
\end{proof}

We can translate Lemma \ref{L:one-point-bound-fB} into a one-time bound on $\fL$.

\begin{lemma}
\label{L:L-bound} For any $r \notin [0, t]$ and $\bm \in [0, \infty)^k_\ge$ we have
$$
\P(\fL \in A(\bm, r)) \le \exp\Big(-\frac{4}{3} \sum_{i=1}^k m_i^{3/2} + c_k \sqrt{t} \Big(\sum_{i=1}^k m_i^{3/2}\Big)^{5/6}\Big).
$$
	\end{lemma}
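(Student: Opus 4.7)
The plan is to translate the tail bound on $\fB_s$ from Lemma \ref{L:one-point-bound-fB} into one for $\fL$ using the Radon--Nikodym identity $\frac{d\eta_s}{d\tilde\eta_s}(\fB_s) = G^{-1}$ (which comes out of the computation \eqref{E:Pssfss} in the proof of Lemma \ref{L:eta-eta-s}), where $G := \E_{\cF_s}[\P_{0,t}(\fB^k_s(0), \fB^k_s(t), \fA_{k+1})]$ is the $\cF_s$-measurable quantity controlled by Corollary \ref{C:bounded-expectation}. Since the restriction of $\eta$ to $\sC^\N_>(\R)$ recovers the law of $\fA$, we have $Z := \eta(\sC^\N(\R)) \ge 1$, so
\begin{align*}
\P(\fL \in A(\bm, r)) &\le \eta(A(\bm, r)) = \E\bigl[\indic(\fB_s \in A(\bm, r))\,G^{-1}\bigr] \\
&\le \P(\fB_s \in A(\bm, r))^{1/p}\cdot \E[G^{-q}]^{1/q}
\end{align*}
by H\"older's inequality with conjugate exponents $p, q$.

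Writing $S := \sum_{i=1}^k m_i^{3/2}$, I first note that if $S \le \tilde c_k t^3$ for a suitable constant $\tilde c_k$ then $c_k \sqrt{t}\, S^{5/6} \ge \tfrac{4}{3} S$ and the desired bound is trivial, so I may assume $S \ge \tilde c_k t^3$ with $\tilde c_k$ large enough that the eventual choice $s := S^{1/3}$ satisfies $s \ge 2t$ (so that Corollary \ref{C:bounded-expectation} is applicable). Combining $\P(G \le \ep) \le e^{O_k(s^{5/2}\sqrt{t})} \ep^{d_k \sqrt{s/t}}$ with the layer-cake identity $\E[G^{-q}] = q\int_0^\infty u^{-q-1} \P(G \le u)\,du$, split at $u = 1$, yields
$$
\E[G^{-q}]^{1/q} \le e^{O_k(s^{5/2}\sqrt{t}/q)} \quad \text{for any } q < d_k \sqrt{s/t}.
$$

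It remains to pick parameters. Take $q = d_k \sqrt{s/t}/2 = d_k S^{1/6}/(2\sqrt{t})$, so that $1/p = 1 - 1/q$ and
$$
\tfrac{4S}{3p} = \tfrac{4S}{3} - O_k\bigl(S\sqrt{t/s}\bigr) = \tfrac{4S}{3} - O_k\bigl(\sqrt{t}\, S^{5/6}\bigr),
$$
while the moment exponent becomes $O_k(s^{5/2}\sqrt{t}/q) = O_k(s^2 t) = O_k(S^{2/3} t)$, which is absorbed into $O_k(\sqrt{t}\, S^{5/6})$ because $S \ge t^3$; the logarithmic term $c_k \log(\|\bm\|_2 + s) = O(\log S)$ from Lemma \ref{L:one-point-bound-fB} is negligible. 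The main obstacle is exactly this balancing: both the H\"older loss $S/q$ and the moment-integration loss $s^{5/2}\sqrt{t}/q$ depend on $s$ through $q \asymp \sqrt{s/t}$, and only the specific choice $s = S^{1/3}$ (together with the reduction to $S \ge t^3$, which ensures the moment loss is not dominant) collapses all error terms simultaneously to the target size $\sqrt{t}\, S^{5/6}$.
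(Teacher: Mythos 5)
Your proposal is correct and follows essentially the same route as the paper: rewrite $\P(\fL\in A)$ via the $\eta_s$/$\tilde\eta_s$ density, identify that density (evaluated at $\fB_s$) with $G^{-1}$ from the computation in Lemma~\ref{L:eta-eta-s}, apply H\"older and the tail bound of Corollary~\ref{C:bounded-expectation}, and optimize at $s \asymp (\sum m_i^{3/2})^{1/3}$. The paper states the H\"older/moment step rather tersely (with what appear to be two typos, $d\eta_s$ for $d\tilde\eta_s$ and $\sqrt{s/t}$ for $\sqrt{t/s}$ in the exponent, both corrected in the subsequent display); your version makes the layer-cake computation for $\E[G^{-q}]$ and the choice $q=d_k\sqrt{s/t}/2$ explicit, and correctly notes that the reduction to $S\ge\tilde c_k t^3$ is what lets the moment-integration loss $O_k(tS^{2/3})$ be absorbed into $O_k(\sqrt{t}S^{5/6})$.
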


\begin{proof}
	First, we may assume $\sum_{i=1}^k m_i^{3/2} > c_k' t^3$ for a large $c_k'$ since otherwise the bound is trivial. We can choose $c_k'$ arbitrarily large to wash out other constants in the proof; this only affects the constant $c_k$ in the lemma statement.
We use the notation introduced prior to Lemma \ref{L:eta-eta-s}. For every $s > t$, we have that
\begin{equation}
	\label{E:L-A-M}
\P(\fL \in A(\bm, r)) = \frac{\eta_s(A(\bm, r))}{\eta(\sC^\N(\R))} = \frac{1}{\eta(\sC^\N(\R))} \int_{A(\bm, r)} \frac{d \eta_s}{d \tilde \eta_s}(f) d \tilde \eta_s(f).
\end{equation}
Now, the density $\frac{d \eta_s}{d \tilde \eta_s}$ can be thought of as an $\R$-valued random variable whose domain is the Borel space $\sC^\N(\R)$ with probability measure $\tilde \eta_s$. When thought of this way, it is equal in distribution to
$$
X :=\frac{1}{\E_{\cF_s} [\P_{0, t}(\fB_s^k(0), \fB_s^k(t), \fA_{k+1})]}. 
$$
We use this along with Corollary \ref{C:bounded-expectation} to bound \eqref{E:L-A-M}. Let $S(y) = \P(X_s > y)$ be the survival function for $X$, set $S^{-1}(x) = \inf \{y \in \R: S(y) < x\}$, and let $\beta = \eta_s (A(\bm, r))$. Let $\tilde S(y) = \exp(c_k s^3 - d_k \log y \sqrt{s/t})$ be the upper bound for $S$ from Corollary \ref{C:bounded-expectation}. Then
\begin{align*}
\int_{A(\bm, r)} \frac{d \eta_s}{d \tilde \eta_s}(f) d \tilde \eta_s(f) &\le \int_{S^{-1}(\beta)}^\infty S(y) dy \le \int_{\tilde S^{-1}(\beta)}^\infty \tilde S(y) dy \\
&= \exp(c_k d_k^{-1} s^{5/2} t^{1/2})\frac{\beta^{1 - d_k^{-1}\sqrt{t/s}}}{1 - d_k\sqrt{s/t}}
\end{align*}
Note that the final equality is an explicit computation, i.e. the constants $c_k, d_k$ that appear are those in the definition of $\tilde S$.
%
%
%
%
Therefore by the bound on $\beta$ in Lemma \ref{L:one-point-bound-fB}, we have that 
$$
\P(\fL \in A(\bm, r)) \le \exp\Bigg(c_k d_k^{-1} s^{5/2} \sqrt{t} -(1 -  d_k^{-1} \sqrt{t/s}) \Bigg[ \frac{4}{3} \sum_{i=1}^k m_i^{3/2} - c_k \log (\|\bm\|_2 + s) \Bigg] \Bigg).
$$
We can now minimize over $s$. This occurs roughly when $s = (\sum_{i=1}^k m_i^{3/2})^{1/3}$ (which is greater than $t$ by our assumption), which gives the desired bound after simplification, assuming $c_k'$ was chosen sufficiently large.
\end{proof}

\subsection{The two-time bound}
\label{SS:two-time-bound}

The proof of a sharp two-time bound is a much more refined version of the argument used Lemma \ref{L:two-point-estimate-weak} that takes into account the parabola. The argument is also applied to the ensemble $\fL$ rather than directly to $\fA$.  The first step is to achieve a lower bound on the probability that $\fL_{k+1}$ is small. This is achieved via the following lemma.

\begin{lemma}
	\label{L:RN-lines-bd}
Let $S = \II{k+1, \infty} \X \R$. Then the law of $\fL|_S$ is absolutely continuous with respect to the law of $\fA|_S$ and for any Borel set $A$ with $\P(\fA|_S \in A) = \ep > 0$ have
$$
\P(\fL|_S \in A) \le \ep e^{c_k \sqrt{t} (\log^{5/6}(\ep^{-1})))}
$$
\end{lemma}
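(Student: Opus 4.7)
My plan is to adapt the proof of Lemma \ref{L:L-bound}, replacing the upper-tail event $A(\bm, r)$ by an arbitrary Borel set. Set $A' := \{f \in \sC^\N(\R) : f|_S \in A\}$, so that $\P(\fL|_S \in A) = \P(\fL \in A')$. Using the construction of $\fL$ from $\eta$ and the identity $\eta = \eta_s$ of Lemma \ref{L:eta-eta-s}, for any $s \ge 2t$,
$$
\P(\fL \in A') \;=\; \frac{1}{\eta(\sC^\N(\R))} \int_{A'} \frac{d\eta_s}{d\tilde\eta_s}(f)\, d\tilde\eta_s(f) \;=\; \frac{1}{\eta(\sC^\N(\R))}\, \E\!\big[X_s\, \indic_{\fB_s \in A'}\big],
$$
where $X_s := \frac{d\eta_s}{d\tilde\eta_s}(\fB_s)$. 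Since $\fB_s|_S = \fA|_S$ by construction, absolute continuity of the law of $\fL|_S$ with respect to that of $\fA|_S$ follows immediately: if $\P(\fA|_S \in A) = 0$, the integrand vanishes $\tilde\eta_s$-almost surely.

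Three structural facts drive the quantitative bound. First, $X_s$ is $\cF_s$-measurable (by \eqref{E:eta-s-defn} it depends only on $\fA^k(\pm s)$ and $\fA_{k+1}$) and by Lemma \ref{L:eta-eta-s} equals $1/\E_{\cF_s}[\P_{0,t}(\fB^k_s(0), \fB^k_s(t), \fA_{k+1})]$. Second, $S \subset (\II{1,k}\X[-s,s])^c$, so $\{\fB_s \in A'\} = \{\fA|_S \in A\}$ is also $\cF_s$-measurable and has $\tilde\eta_s$-probability exactly $\ep$. Third, Corollary \ref{C:bounded-expectation} gives the tail bound $\P(X_s \ge \la) \le e^{O_k(s^{5/2}\sqrt{t})}\, \la^{-d_k\sqrt{s/t}}$. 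A standard layer-cake decomposition then yields
$$
\E\!\big[X_s\, \indic_{\fB_s \in A'}\big] \;\le\; \la_* \ep + c\, e^{O_k(s^{5/2}\sqrt{t})}\, \la_*^{1 - d_k\sqrt{s/t}},
$$
and balancing $\la_*^{d_k\sqrt{s/t}} = e^{O_k(s^{5/2}\sqrt{t})}/\ep$ gives $\P(\fL|_S \in A) \le \ep\, \exp\!\big(O_k(s^2 t) + O_k(\sqrt{t/s})\log(\ep^{-1})\big)$.

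To obtain the specific exponent in the statement, I would assume $\log(\ep^{-1}) \ge c_k t^3$ (otherwise the claimed bound already exceeds $1$ and is trivial) and choose $s = \log^{1/3}(\ep^{-1})$, which then satisfies $s \ge 2t$. With this choice, $s^2 t = t\log^{2/3}(\ep^{-1})$ is dominated by $\sqrt{t/s}\log(\ep^{-1}) = \sqrt{t}\log^{5/6}(\ep^{-1})$, so the exponent collapses to $c_k \sqrt{t}\log^{5/6}(\ep^{-1})$ as required. I do not foresee a serious obstacle: this is essentially Lemma \ref{L:L-bound} with the upper-tail event replaced by arbitrary $A$, and the only genuinely new ingredient is the measurability observation that $\fB_s|_S = \fA|_S$ places $\indic_{\fB_s \in A'}$ inside $\cF_s$, which is what permits the clean layer-cake factorization with prefactor $\tilde\eta_s(A') = \ep$.
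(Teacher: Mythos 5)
Your proposal is correct and follows essentially the same route as the paper: express $\P(\fL|_S \in A)$ via the density $d\eta_s/d\tilde\eta_s$ against $\tilde\eta_s$, use that $\fB_s|_S = \fA|_S$ so $\tilde\eta_s(A') = \ep$, apply the tail bound from Corollary \ref{C:bounded-expectation}, and optimize at $s \sim \log^{1/3}(\ep^{-1})$ (after reducing to the non-trivial regime $\log(\ep^{-1}) \gtrsim_k t^3$). The only cosmetic difference is that you phrase the cut as a layer-cake/truncation argument while the paper's proof (via the proof of Lemma \ref{L:L-bound}) uses a H\"older-type bound; these give the same estimate $\ep^{1 - c_k\sqrt{t/s}} e^{O_k(s^{5/2}\sqrt{t})}$, and the subsequent optimization is identical. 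One small remark: the $\cF_s$-measurability of the indicator is not actually what the argument uses---what drives the prefactor $\ep$ is simply that $\fB_s|_S = \fA|_S$, which forces $\tilde\eta_s(A') = \P(\fA|_S \in A)$; the layer-cake (or H\"older) step works regardless of whether the indicator and the density are measurable with respect to the same $\sigma$-algebra.
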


\begin{proof}
First, we may assume $\log^{1/3}(\ep^{-1}) > c_k t$ for a large constant $c_k$, since otherwise the bound is trivial.
Letting $\fB = \fB_s$ for some $s > c_k t$ we have $\fB_s|_S = \fA|_S$. Moreover, as in the proof of Lemma \ref{L:L-bound} we have
$$
\P(\fL|_S \in A) \le [\P(\fB_s \in A)]^{1 - c_k' \sqrt{t/s}} e^{c_k s^{5/2} \sqrt{t}},
$$
and minimizing at $s = c_k (\log \P(\fB_s \in A)^{-1})^{1/3}$ (which is greater than $c_k t$ by our lower bound on $\ep$), gives the result.
\end{proof}

\begin{corollary}
	\label{C:RN-lines-mod-cont}
Fix an interval $[-\la, \la]$ for $\la \ge 2$. Then for $m > c_k t$ we have
$$
\P(\min_{r \in [-\la, \la]} \fL_{k+1}(r) + r^2 \le - m) \le \la e^{-d_k m^3}.
$$

\end{corollary}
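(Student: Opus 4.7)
The plan is to establish the bound for the parabolic Airy line ensemble $\fA$ first, and then transfer to $\fL$ via Lemma \ref{L:RN-lines-bd}. The event $\{\min_{r \in [-\la, \la]} f_{k+1}(r) + r^2 \le -m\}$ depends only on $f_{k+1}$, so it is determined by $f|_S$ for $S = \II{k+1, \infty} \X \R$. If I can prove
$$
\P\Big(\min_{r \in [-\la, \la]} \cA_{k+1}(r) \le -m\Big) \le \la e^{-C_k m^3}
$$
for all $m > c_k$, then taking $\ep$ to be this bound in Lemma \ref{L:RN-lines-bd}, we have $\log(\ep^{-1}) \le C_k m^3$, so the correction factor is bounded by $e^{c_k' \sqrt{t}\, m^{5/2}}$. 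For $m > c_k'' t$, this is absorbed into $e^{-(C_k/2) m^3}$, yielding the claimed bound on $\fL$.

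To prove the bound for $\fA$, I would cover $[-\la, \la]$ with $\lceil 2\la m \rceil$ intervals of length $1/m$, and union bound. On each sub-interval $[j/m, (j+1)/m]$,
$$
\Big\{\min_{r \in [j/m, (j+1)/m]} \cA_{k+1}(r) \le -m\Big\} \sset \Big\{\cA_{k+1}(j/m) \le -m/2\Big\} \cup \Big\{\sup_{r \in [j/m, (j+1)/m]} |\cA_{k+1}(r) - \cA_{k+1}(j/m)| \ge m/2\Big\}.
$$
The one-point event is bounded by $2 e^{-d_{k+1} m^3/8}$ via Lemma \ref{L:one-point-lower-bound} and the stationarity \eqref{E:cAs} of $\cA$.

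The oscillation event is the main obstacle: I need an $m^3$ (not just $m^2$) tail over a length-$1/m$ interval. This is exactly the scale at which the Brownian contribution $a^2/(4r)$ in Lemma \ref{L:two-point-estimate-weak} produces $m^3$: with $r = 1/m$ and $a = m/2$, we get $a^2/(4r) = m^3/16$, dominating the $O(a^{3/2})$, $O(a/\sqrt{r})$ and logarithmic error terms when $m$ is large. To upgrade the two-point estimate to a supremum bound I would use a dyadic chaining argument in the style of Lemma \ref{L:mod-cont}, splitting the increment $\cA_{k+1}(r) - \cA_{k+1}(j/m)$ into dyadic pieces at scales $r_K = 2^{-K}/m$ for $K \ge 0$ and allocating amplitudes $a_K \sim m\, 2^{-K/2}$ so that $\sum a_K \le m/2$. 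For each $K$, applying Lemma \ref{L:two-point-estimate-weak} with these parameters produces $a_K^2/(4 r_K) = \Theta(m^3)$ independently of $K$, so a union bound over the $2^K$ subintervals costs only a factor $2^K$, and the sum over $K$ contributes another manageable logarithmic factor — giving an overall oscillation bound of $e^{-d_k m^3 + O(m^{3/2})}$.

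Combining the two contributions and summing over the $O(\la m)$ sub-intervals yields
$$
\P\Big(\min_{r \in [-\la, \la]} \cA_{k+1}(r) \le -m\Big) \le \la m \cdot \big(2 e^{-d_{k+1} m^3/8} + e^{-d_k m^3 + O(m^{3/2})}\big) \le \la e^{-C_k m^3}
$$
for all $m$ exceeding a $k$-dependent constant, where the extra factor of $m$ is absorbed into the exponential. Applying Lemma \ref{L:RN-lines-bd} as described above (with the condition $m > c_k t$ exactly calibrated to kill the $\sqrt{t}\, m^{5/2}$ correction) completes the proof.
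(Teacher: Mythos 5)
Your proposal follows essentially the same route as the paper's proof: discretize $[-\la,\la]$, union-bound the one-point bound (Lemma \ref{L:one-point-lower-bound}) over mesh points, control the oscillation between consecutive mesh points using the two-point bound (Lemma \ref{L:two-point-estimate-weak}) combined with chaining (Lemma \ref{L:mod-cont}), and transfer from $\fA$ to $\fL$ via Lemma \ref{L:RN-lines-bd}. The only cosmetic difference is the mesh: the paper takes spacing $\ep = m^{-3}$ and asks that the oscillation on each $\ep$-interval be at most $1$, while you take spacing $1/m$ and oscillation threshold $m/2$; both parametrizations yield the same $m^3$ rate. One small bookkeeping imprecision in your chaining step: with amplitudes $a_K \sim m\,2^{-K/2}$ the quantity $a_K^2/(4 r_K)$ is constant in $K$, so the naive sum $\sum_K 2^K e^{-\Theta(m^3)}$ diverges rather than contributing merely a ``logarithmic factor''; as in the actual proof of Lemma \ref{L:mod-cont}, one must fold a $\sqrt{K/\beta}$-type shift into the amplitude allocation (or simply invoke Lemma \ref{L:mod-cont} as a black box, which is what the paper does) to make the dyadic sum convergent. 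This is a minor fix and does not affect the correctness of your overall argument.
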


\begin{proof}
Let $\tilde \fA(r) = \fA(r) + r^2$. First, by Lemma \ref{L:one-point-lower-bound} and a union bound we have that 
$$
\P(\min_{r \in [-\la - \ep, \la], r \in \ep \Z} \tilde \fA_{k+1}(r) \le - m + 1)  \le 2 \la \ep^{-1} e^{-d_k m^3},
$$
and by Lemma \ref{L:two-point-estimate-weak}, Lemma \ref{L:mod-cont}, and a union bound
$$
\P(\min_{r \in [-\la - \ep, \la], r \in \ep \Z, \de \in [0, \ep]} |\tilde \fA_{k+1}(r + \de) - \tilde \fA_{k+1}(r)| \le 1)  \le c_k \la \ep^{-1} e^{-d_k/\ep}.
$$
Combining these bounds with $\ep = m^{-3}$ gives that
$$
\P(\min_{r \in [-\la, \la]} \fA_{k+1}(r) + r^2 \le - m) \le c_k \la m^6 e^{-d_k m^3} \le \la e^{-d_k' m^3}.
$$
Applying Lemma \ref{L:RN-lines-bd} with $\ep = \la e^{-d_k' m^3}$ and simplifying yields the result.
\end{proof}

Lemma \ref{L:L-bound} and Corollary \ref{C:RN-lines-mod-cont} give the two bounds in \eqref{E:one-pt-intro}.

One of the main technical ingredients we need to achieve a two-point bound is a probability estimate for a Brownian bridge avoiding a parabola. 

\begin{lemma}
	\label{L:parabola-lemma-A}
	Fix  $\la \ge 1$, and for $x, y \ge 0$ with $\sqrt{x} < \la/2, \sqrt{y} < \la/2$, define 
	\begin{align*}
E(x, y, \la) &= \frac{2}{3} x^{3/2} + (\la - \sqrt{y})^2 \sqrt{y} + \frac{1}{3} (\la - \sqrt{y})^3, \quad \mathand\\
J(x, y, \la) &= E(x, y, \la) - \frac{(x - y + \la^2)^2}{4\la}.
	\end{align*}
	
	Next let $\al \in \R$, and let $f(s) = - s^2 + \al s$.
	Then we have the following bounds.
	\begin{enumerate}
		\item For $\la > 1$, any $x, y > 0$ with $\sqrt{x} < \la/2, \sqrt{y} < \la/2$, and any $t \in [0, \sqrt{x} + 1]$, we have
		$$
		\P_{0, \la}(x, y + f(\la), f, [t, \la]) \le \exp(-J(x, y, \la) +c\la \log (1 + \la)).
		$$
		\item Let $\la > 1, x, y > 0$ with $\sqrt{x} < \la/2, \sqrt{y} < \la/2$, and let $t \in (0, \sqrt{x} + 1], a \in (0, x]$.  If $B$ is a Brownian bridge from $(0, x)$ to $(\la, f(\la) + y)$, then
		\begin{align*}
		&\P(B(s) > f(s) \text{ for all } s \in [t, \la], B(t) \le x - a) \le \\
		&\exp \Big(-J(x, y, \la) -\frac{a^2}{4t} + \frac{2}{3} x^{3/2} - \frac{2}{3}(x - a)^{3/2} + c(t^3 + tx + \la \log(1 + \la)) \Big).
		\end{align*}
		\item Let $\bx, \by \in [1,\infty)^k_>$ with $x_i - x_{i+1} \ge 1$ and $y_i - y_{i+1} \ge 1$ for all $i \in \II{1, k-1}$, and suppose that $\sqrt{x_1} < \la/2, \sqrt{y_1} < \la/2$. Then
		$$
		\P_{0, \la}(\bx, \by + f(\la), f) \ge \exp\Big(-\sum_{i=1}^k J(x_i, y_i, \la) -c k^2\la\Big).
		$$
	\end{enumerate}
\end{lemma}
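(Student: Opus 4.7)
All three parts rest on the following Dirichlet-energy identity, which I would verify first by a direct three-piece calculation: the ``tangent--parabola--tangent'' path $g^*$ from $(0,x)$ to $(\lambda, f(\lambda)+y)$ --- the path that leaves $(0,x)$ along the line tangent to $f$ at $(\sqrt x, f(\sqrt x))$, runs along $f$ until $(\lambda-\sqrt y, f(\lambda-\sqrt y))$, and then rises tangentially to $(\lambda,f(\lambda)+y)$ --- has Dirichlet energy exceeding that of the straight-line interpolant $L$ between the same endpoints by exactly $4J(x,y,\lambda)$. Because any linear perturbation of $f$ shifts both $g^*$ and $L$ by the same linear function, the energy difference and the three probabilities in question are all $\alpha$-independent, so I will assume $\alpha=0$ throughout.

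For Parts 1 and 2 (upper bounds), my plan is a Cameron--Martin tilt by the function $h = g^* - L$, which vanishes at both endpoints. Under the tilted law the bridge has the same covariance as $B$ but mean $L+h=g^*$, and the Radon--Nikodym density contributes the deterministic factor $\exp(-\tfrac14\int_0^\lambda h'(s)^2\,ds - \tfrac12\int_0^\lambda h'(s)\,dZ(s))$, where $Z=B-L$ is the centered bridge. The first term is exactly $-J(x,y,\lambda)$ by the energy identity; the second is a mean-zero Gaussian with variance $O(\lambda)$ after accounting for the bridge covariance, contributing only $e^{O(\sqrt\lambda)}$ fluctuations after integrating against the event. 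The hypothesis $t \le \sqrt x + 1$ ensures the left tangent point $\sqrt x$ lies in or just past the constrained region $[t,\lambda]$, so $g^*$ is genuinely the entropic minimizer for this event. The remaining task is to bound the probability that the tilted bridge stays above $f$ on $[t,\lambda]$ by $e^{O(\lambda\log(1+\lambda))}$; this follows from concentration of the Brownian bridge inside a tube around its mean via a union bound over $O(\lambda)$ unit-spaced sample points. For Part 2 I decompose the bridge at time $t$: the piece on $[0,t]$ is an unconstrained bridge from $x$ to some $v\le x-a$, contributing the Gaussian factor $e^{-(x-v)^2/(4t)}\le e^{-a^2/(4t)}$ from its endpoint density, while the piece on $[t,\lambda]$ is controlled by Part 1 with start height $v$. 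Optimizing over $v\le x-a$ and computing $J(x-a,y,\lambda)-J(x,y,\lambda)$ directly produces the $\tfrac23 x^{3/2}-\tfrac23(x-a)^{3/2}$ correction, with the $O(t^3+tx)$ error absorbing the shift of the tangent point from $\sqrt x$ to $\sqrt{x-a}+O(t)$ and of $\lambda$ to $\lambda-t$ in the reduction.

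For Part 3 (a matching lower bound in the multi-line case), I would construct an explicit favorable event: for each $i\in\II{1,k}$, force $B_i$ to stay within a narrow $\delta$-tube around its own tangent--parabola--tangent path $g^*_i$, where $\delta=\delta(k)$ is a small constant. The spacing hypotheses $x_i-x_{i+1}\ge 1$ and $y_i-y_{i+1}\ge 1$ together with the concavity of $\sqrt{\,\cdot\,}$ give $g^*_i(s) - g^*_{i+1}(s) \ge c(k)>0$ pointwise on $[0,\lambda]$ for a $k$-dependent constant $c(k)$, so choosing $\delta$ small enough guarantees that both non-intersection and being above $f$ hold on the tube event. Lower-bounding the probability of this joint tube event by a product of explicit Gaussian densities at $O(\lambda)$ equally spaced sample points and invoking the energy identity once more gives each line a factor $\exp(-J(x_i,y_i,\lambda) - O(\lambda))$; multiplying over $i$ yields the claim, with the $ck^2\lambda$ error absorbing the $k$-fold accumulation of tube-concentration costs.

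The main obstacle is bringing the error in Parts 1 and 2 down to $O(\lambda\log(1+\lambda))$ rather than $O(\lambda^{3/2})$ or worse. A naive uniform-width tube around $g^*$ would force the bridge into a region of Brownian probability $e^{-c\lambda/\delta^2}$, which is too costly; the fix is to allow an $s$-dependent tube width scaling like $\sqrt{s\wedge(\lambda-s)}$, matching the bridge's natural fluctuation scale, so that the tube event costs only a union bound over integer-spaced sample times --- this is what produces the $\log$ factor. A secondary technical nuisance is controlling the Laplace transform of the stochastic integral appearing in the Cameron--Martin tilt; here the crucial input is again $\int h'^2=O(\lambda)$, which is the point at which the tangent structure of the parabolic boundary enters the analysis quantitatively.
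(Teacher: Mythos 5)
Your Part~3 plan is sound and essentially the paper's: both force each bridge into a narrow tube around the tangent--parabola--tangent path and read off the cost from the energy identity. (The paper implements this via Cameron--Martin plus the observation that on the tube event, $\bigl|\int h_i' \, dW - \int (h_i')^2\bigr| = \bigl|\int h_i'' (W-h_i)\bigr| \le 2\cdot\tfrac14\cdot\lambda$ since $|h_i''|\le 2$; a uniform-width tube already gives error $O(k^2\lambda)$, and the adaptive-width tube you propose is unnecessary --- the $\log$ factor in Part~1 comes from a different place, as explained below.)

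For Parts~1 and~2, however, your Cameron--Martin tilt contains a fatal gap. You claim ``$\int h'^2 = O(\lambda)$'' and that the stochastic integral $\tfrac12\int h'\,dZ$ is ``mean-zero Gaussian with variance $O(\lambda)$.'' Neither is true. With $h=g^*-L$, one computes $\tfrac14\int (h')^2 = E(x,y,\lambda) - \tfrac{(x-y+\lambda^2)^2}{4\lambda} = J(x,y,\lambda)$, which is the quantity you are trying to extract in the exponent --- and $J$ can be as large as $\Theta(\lambda^3)$ (e.g.\ $x=y=0$ gives $J=\lambda^3/12$). Consequently $\operatorname{Var}\bigl(\tfrac12\int h'\,dZ\bigr) = 2J = \Theta(\lambda^3)$, not $O(\lambda)$. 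Crucially, the event $\{B>f \text{ on } [t,\lambda]\}$ is a one-sided constraint and does \emph{not} pin $Z$ near $h$ the way the tube event does, so the integration-by-parts trick that rescues Part~3 has no analogue here: on the avoidance event $\int h'\,dZ = -\int h'' Z = 2\int_{[\sqrt x,\lambda-\sqrt y]} Z$ has uncontrolled fluctuations, and the Radon--Nikodym factor is not a bounded perturbation of $e^{-J}$. This is why the paper does not attempt a tilt for the upper bound: it discretizes to a unit mesh, writes the vector of increments as a constrained Gaussian, observes that the minimal Dirichlet energy over the mesh-constraint is $E+O(\lambda)$, and then uses rotational invariance to reduce the avoidance probability to a $\chi^2_{m-1}$ tail. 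That tail bound is exactly where the $c\lambda\log(1+\lambda)$ error arises (from the polynomial prefactor $(I/2+9)^{m/2}$ against $e^{-I/2}$, with $m=O(\lambda)$ and $I=O(\lambda^3)$). If you want to keep a tilt-based argument you would have to split the avoidance event according to the value of $\int Z$, which effectively reproduces the discretize-and-minimize-energy computation and loses the apparent economy of the approach.
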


The proof of Lemma \ref{L:parabola-lemma-A} is essentially a standard Dirichlet energy computation. We sketch the idea here, and leave the details to the appendix. 

First, we can set $\al = 0$, as Brownian bridge law commutes with affine shifts. This simplifies the computations. Next, the cost for a Brownian bridge $B$ from $(0,x)$ to $(\la, - \la^2 + y)$ to roughly follow a path $g:[0, \la] \to \R$ with $g(0) = x, g(\la) = - \la^2 + y$ is the exponentiated Dirichlet energy 
$$
\exp \lf(-E(g) + \frac{(x - y + \la^2)^2}{4\la} \rg), \qquad \text{ where } \quad E(g) = \frac{1}{4} \int_0^\la |g'(s)|^2 dt.
$$
This follows, for example, from the Cameron-Martin theorem.
The factor of $1/4$ here (as opposed to the usual $1/2$) is due to the fact that we are working with variance $2$ bridges, and the added factor of $(x - y + \la^2)^2/(4\la)$ comes from conditioning a Brownian motion to end at the point $y - \la^2$ to give rise to a Brownian bridge. Hence we should look for a path $g$ of minimal Dirichlet energy that stays above the function $f(s) = -s^2$. This is the minimal concave function $g = g_{x, y}$ from $(0,x)$ to $(\la, - \la^2 + y)$ that dominates $f$. Since $f$ is just a parabola, the function $g$ is easily computed, and is determined by the following constraints:
 \begin{itemize}[nosep]
		\item $g(0) = 0, g(\la) = -\la^2 + y$, and there are values $0 \le a < b \le \la$ such that $g$ is linear on the intervals $[0, a]$ and $[b, \la]$. 
		\item $g(z) = f(z)$ for all $z \in (a, b)$.
		\item $g$ is differentiable at $a$ and $b$. This follows from forcing $g$ to be both concave and to dominate $f$.
	\end{itemize}
We can compute that $a = \sqrt{x}, b = \la - \sqrt{y}$, and that $E(g) = E(x, y, \la)$. This gives rise to the bound in Lemma \ref{L:parabola-lemma-A}.1, and the bound in part $3$; the cost of having the $k$ bridges stay ordered is a lower order term. The bound in part $2$ follows similarly to part $1$, by computing the path of minimal Dirichlet energy from $(0, x)$ to $(\la, - \la^2 + y)$ that dips below $x-a$ at time $t$.

We can now state the first version of the two-time bound for $\fL$.

\begin{prop}
	\label{P:two-point-bound-A}
	Let $\bm \in \R^k_\ge$, let $\ba \in [0, \infty)^k$ and let $\sig \in \{0, t\}^k$. Define $\hat \sig \in \{0, t\}^k$ to be the unique vector with $\hat \sig_i \ne \sig_i$ for all $i$. Next, define
	$$
	A(\bm, \ba, \sig) = \{f \in \sC^k(\R) : f(\sig_i) \ge m_i, \quad f_i(\sig_i) - f_i(\hat \sig_i) \ge a_i \text{ for all } i \in \II{1, k} \}.
	$$
	Then
	\begin{equation}
	\label{E:fLKT}
	\P(\fL^k \in A(\bm, \ba, \sig)) \le \exp \lf( -\sum_{i=1}^k \lf( \frac{a_i^2}{4t} + \frac{2}{3} (m_i^+)^{3/2} + \frac{2}{3} [(m_i - a_i)^+]^{3/2} \rg) + E(\ba, \bm, t) \rg),
	\end{equation}
	where
	$$
	E(\ba, \bm, t) = O_k(t^3 + \sqrt{t}\|\bm\|_2^{5/4} + t \|\ba\|_2 + t^{-1/6}\|\ba\|^{2/3}_2 \|\bm\|_2^{1/2}). 
	$$

\end{prop}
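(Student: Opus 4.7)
The plan is to extend $\fL^k$ from $[0, t]$ to a larger symmetric interval $[-\la, \la]$ (with $\la$ to be optimized, of order $\sqrt{\|\bm\|_\infty}$) and apply the Brownian Gibbs property from Theorem \ref{T:resampling-candidate}.2. On $[-\la, \la]$ the top $k$ lines must be non-intersecting and lie above $\fL_{k+1}$ on the skirt regions $[-\la, 0] \cup [t, \la]$, while on $[0, t]$ they are independent Brownian bridges whose endpoints are exactly the quantities we wish to control. By Corollary \ref{C:RN-lines-mod-cont}, $\fL_{k+1}(r)$ stays within a $o(\|\bm\|_\infty)$-neighborhood of the parabola $-r^2$ on $[-\la, \la]$, except on an event of probability $\la e^{-d_k m^3}$ which we can absorb by choosing $m$ comparable to a small multiple of $\|\bm\|_\infty^{1/3}$. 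Hence on the good event we may treat the lower barrier as essentially a parabola, reducing the whole estimate to a parabola-avoidance computation of the type addressed by Lemma \ref{L:parabola-lemma-A}.

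For the base case $k=1$, Lemma \ref{L:CH-monotonicity} lets us dominate $\fL_1$ on $[-\la, \la]$ by a single variance-$2$ Brownian bridge from $(-\la, \fL_1(-\la))$ to $(\la, \fL_1(\la))$ conditioned to stay above the parabola on the skirts. The event $A(m_1, a_1, \sig_1)$ forces this bridge to reach height $\ge m_1$ at time $\sig_1$ and to have dropped by $\ge a_1$ at $\hat\sig_1$. Applying Lemma \ref{L:parabola-lemma-A}.2 (with $x \approx m_1$, $y \approx m_1 - a_1$, and the ``drop by $a$ at the intermediate point'' interpretation adapted to our endpoint configuration) yields exactly the single-line version of \eqref{E:fLKT}, after choosing $\la \asymp \sqrt{m_1}$ to balance the Gaussian cost $a_1^2/(4t)$ against the parabola-avoidance contribution. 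The contributions of the free endpoints $\fL_1(\pm \la)$ are bounded by integrating against the one-time tail Lemma \ref{L:L-bound}.

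For the induction step, assume the bound is known for $k - 1$. Conditional on $\fL_1, \dots, \fL_{k-1}$ and on the parabolic lower barrier, the line $\fL_k$ on $[-\la, \la]$ is (by monotonicity) dominated by a Brownian bridge constrained above by $\fL_{k-1}$ and below by $\fL_{k+1}$ on the skirts. Because $\fL_{k-1}$ itself hugs a downward-shifted parabola on the skirts, the effective two-sided barrier for the $k$th bridge is again essentially parabolic, and Lemma \ref{L:parabola-lemma-A}.2 applies once more to produce the $k$th summand $\tfrac{a_k^2}{4t} + \tfrac{2}{3}m_k^{3/2} + \tfrac{2}{3}(m_k-a_k)^{3/2}$. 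Multiplying the conditional probabilities (using the product structure of independent Brownian bridges on $[0, t]$ given the endpoints) and applying Lemma \ref{L:parabola-lemma-A}.3 to lower-bound the denominator coming from non-intersection yields the full sum.

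The main obstacle is accounting for the four distinct pieces of $E(\ba, \bm, t)$. The $O_k(t^3)$ term absorbs both the Gibbs-extension cost from Corollary \ref{C:bounded-expectation} and the degenerate regime where $t$ is comparable to $\sqrt{\|\bm\|_2}$, which violates the constraint $t \le \sqrt{x}+1$ in Lemma \ref{L:parabola-lemma-A}.2 and forces a separate direct argument. The $\sqrt{t}\,\|\bm\|_2^{5/4}$ term comes from applying Lemma \ref{L:L-bound} to the free endpoints $\fL^k(\pm\la)$. The $t\|\ba\|_2$ correction arises from the sub-leading terms in the parabola-avoidance Dirichlet energy when the bridge is pushed down by $\ba$ on a length-$t$ interval, and the cross term $t^{-1/6}\|\ba\|_2^{2/3}\|\bm\|_2^{1/2}$ emerges from optimizing $\la$ jointly against $\bm$ and $\ba$. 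Careful bookkeeping across the induction—in particular ensuring that shifting the barrier to accommodate the constraints $x_i - x_{i+1} \ge 1$ in Lemma \ref{L:parabola-lemma-A}.3 produces only lower-order errors—is the most delicate part of the argument.
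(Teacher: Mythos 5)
Your high-level picture is right --- Gibbs-extend to a larger interval, note that the $(k{+}1)$st line hugs a parabola, then invoke the parabola-avoidance energy estimates of Lemma \ref{L:parabola-lemma-A}.  But the induction you propose does not close, and the paper uses a genuinely different and more intricate induction to get around exactly the obstacle your argument glosses over.

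The paper's key device is Lemma \ref{L:inductive-step}, which introduces \emph{hybrid} sets $A_\ell(\bm,\ba,\sig)\cap D^r_{\ell-1}(\bm)$: two-point constraints on lines $\ell,\dots,k$ and \emph{one-point} constraints at a single time $r\in\{0,t\}$ on lines $1,\dots,\ell-1$.  It then performs a \emph{downward} induction on $\ell$ (from $k{+}1$ to $1$), peeling off the two-point constraint on line $\ell$ and converting it, via a Gibbs resample on $\II{1,\ell}\times[0,\ga^{2/3}]$ together with a dyadic decomposition $D_n$ of $\fL_\ell(0)$, into a one-point constraint that joins the pool $D^r_\ell$.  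This conversion is where the $\tfrac{4}{3}m_\ell^{3/2}$ one-point cost of Lemma \ref{L:L-bound} morphs into the $\tfrac{2}{3}m_\ell^{3/2}+\tfrac{2}{3}(m_\ell - a_\ell)^{3/2}+\tfrac{a_\ell^2}{4t}$ two-point cost.  Your proposal instead conditions on $\fL_1,\dots,\fL_{k-1}$ and tries to multiply conditional probabilities.  The problem is that the event you condition on --- the top $k-1$ lines satisfying their own two-point constraints --- shifts the conditional law of $\fL_k$ in a way that is not uniformly controlled: a high $\fL_{k-1}(\sig_k)$ makes the constraint $\fL_k(\sig_k)\ge m_k$ nearly free, while a low one makes it impossible, and when $\sig_{k-1}\neq\sig_k$ the drop constraints on adjacent lines genuinely interact.  ``Multiplying conditional probabilities'' does not give an additive cost $\sum_i(\cdots)$ across lines without a statement like Lemma \ref{L:inductive-step} that keeps the constraints on the upper lines decoupled at a single time.

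Two further gaps.  First, you miss the subcase $r=t$ entirely: after reducing to $\sig_\ell=0$, the inductive hypothesis may still carry a one-point constraint at time $t$ rather than at time $0$, and the paper needs a separate bootstrap (Step 4 of the proof of Lemma \ref{L:inductive-step}, with a new $\sig$-algebra $\cG$ and a monotonicity argument) to transfer a time-$t$ one-point bound to a time-$0$ one.  This is not bookkeeping; it is a distinct argument.  Second, the application of Lemma \ref{L:parabola-lemma-A}.2 in the paper is to a conditional probability $\P_\cF(\fL_\ell(0)-\fL_\ell(t)>a_\ell)$ with the endpoint $\fL_\ell(0)$ localised to a unit window by the $D_n$ decomposition; without localising the random starting height, you cannot directly match the $x$ in the lemma to $m_\ell$ and extract the $\tfrac{2}{3}m_\ell^{3/2}-\tfrac{2}{3}(m_\ell-a_\ell)^{3/2}$ difference.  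The choice of a one-sided domain $[0,\ga^{2/3}]$ (rather than your symmetric $[-\la,\la]$) is a smaller point, but it is what lets the resampled region contain $[0,t]$ and have its single skirt on the $\sig_\ell$-free side, which is needed both for Lemma \ref{L:parabola-lemma-A}.2 and for the $r=t$ bootstrap.
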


The proof of Proposition \ref{P:two-point-bound-A} is the most technical proof in the paper, and it is based on a backwards induction on $\ell \in \II{1, k}$. The next lemma gives the key inductive step.

\begin{lemma}
	\label{L:inductive-step}
In the setup of Proposition \ref{P:two-point-bound-A}, for $\ell \in \II{1, k + 1}$ define the sets
\begin{align*}
	A_\ell(\bm, \ba, \sig) = \{f \in \sC^k(\R) : f(\sig_i) \ge m_i, \quad f_i(\sig_i) - f_i(\hat \sig_i) \ge a_i \text{ for all } i \in \II{\ell, k} \}.
\end{align*}
and for $r = 0, t$ define
\begin{align*}
D^r_\ell(\bm) &= \{f \in \sC^k(\R) : f_i(r) \ge m_i \text{ for all } i \in \II{1, \ell} \}.
\end{align*}
Note that with these definitions, $A_{k+1}(\bm, \ba, \sig), D^0_0(\bm),$ and $D^t_0(\bm)$ are the whole space.
Then for all $\ell \in \II{1, k + 1}$ and $r = 0, t$ we have
\begin{equation}
\label{E:split-ma}
\begin{split}
&\P(\fL^k \in A_\ell(\bm, \ba, \sig) \cap D_{\ell - 1}^r(\bm)) \\
&\le \exp \lf( -\sum_{i=\ell}^k \lf( \frac{a_i^2}{4t} + \frac{2}{3} (m_i^+)^{3/2} + \frac{2}{3} [(m_i - a_i)^+]^{3/2} \rg) -\sum_{i=1}^{\ell - 1} \frac{4}{3} (m_i^+)^{3/2} + E(\ba, \bm, t) \rg).
\end{split}
\end{equation}
\end{lemma}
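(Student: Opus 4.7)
I proceed by backward induction on $\ell$, from $\ell = k+1$ down to $\ell = 1$; the case $\ell = 1$ is precisely Proposition \ref{P:two-point-bound-A}. For the base case $\ell = k+1$, the set $A_{k+1}(\bm, \ba, \sig)$ is vacuous so the desired inequality reduces to the one-time estimate
\[
\P(\fL^k \in D_k^r(\bm)) \le \exp\Bigl(-\sum_{i=1}^k \tfrac{4}{3} m_i^{3/2} + E(\ba, \bm, t)\Bigr),
\]
which is Lemma \ref{L:L-bound} at $r \in \{0, t\}$. Although Lemma \ref{L:L-bound} is stated for $r \notin [0, t]$, its proof routes through Lemma \ref{L:one-point-bound-fB}, whose statement already admits $r \notin (0, t)$, so the endpoint times $r = 0$ and $r = t$ are covered.

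For the inductive step, assume the bound for $\ell + 1$ and pass to $\ell$. The only new conditions in $A_\ell \cap D_{\ell-1}^r$ relative to $A_{\ell+1} \cap D_{\ell-1}^r$ are the pair on line $\fL_\ell$:
\[
\fL_\ell(\sig_\ell) \ge m_\ell \quad\text{and}\quad \fL_\ell(\sig_\ell) - \fL_\ell(\hat\sig_\ell) \ge a_\ell.
\]
By the Brownian Gibbs property of $\fL$ on $\II{1, k} \X [0, t]$, the restriction $\fL_\ell|_{[0, t]}$ is a Brownian bridge given its two endpoints, independent of the other lines on $(0, t)$; hence the event $A_\ell \cap D_{\ell-1}^r$ is a function of the endpoint vector $(\fL_i(0), \fL_i(t))_{i=1}^k$ alone. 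I decompose over unit-width slices $y \in \Z$ of $\fL_\ell(\hat\sig_\ell)$ and bound each slice as a product of three contributions: (i) the inductive bound on $A_{\ell+1} \cap D_{\ell-1}^r$, which does not mention $\fL_\ell$; (ii) a half-parabola cost at $\hat\sig_\ell$ for $\fL_\ell(\hat\sig_\ell) \approx y$, of size $\exp(-\tfrac{2}{3}(y + \hat\sig_\ell^2)_+^{3/2})$, obtained from the one-time bound applied to $\fL_\ell$; and (iii) a conditional Brownian bridge cost from $(\hat\sig_\ell, y)$ up to $(\sig_\ell, \ge m_\ell \vee (y + a_\ell))$ that, when combined with Lemma \ref{L:parabola-lemma-A}(2), contributes $\exp\bigl(-\tfrac{2}{3} m_\ell^{3/2} - (m_\ell - y)_+^2/(4t)\bigr)$ up to lower-order corrections.

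Summing over $y$, the dominant term is at $y \approx m_\ell - a_\ell$, where the three factors combine to give the extra exponent $-\tfrac{a_\ell^2}{4t} - \tfrac{2}{3} m_\ell^{3/2} - \tfrac{2}{3}(m_\ell - a_\ell)^{3/2}$. After the cancellation of the $-\tfrac{4}{3} m_\ell^{3/2}$ term that moves out of the $D$-sum, this exactly produces the target change in the exponent when stepping from $\ell + 1$ to $\ell$; the discrete $y$-sum contributes only a polynomial prefactor absorbed into $\exp(E(\ba, \bm, t))$.

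The main obstacle is bookkeeping. The parabola-avoidance Lemma \ref{L:parabola-lemma-A} must be applied so that the two ``halves'' of the parabolic cost (one at $\hat\sig_\ell$, one at $\sig_\ell$) emerge exactly once each and neither is double-counted nor lost; using parts (2)--(3) of that lemma on a single bridge spanning $[\hat\sig_\ell, \sig_\ell]$ is the natural way to do this. A secondary issue is that $\fL_\ell$ couples to the lines $\fL_i$ with $i < \ell$ through the Gibbs boundary outside $[0, t]$, but monotonicity (Lemma \ref{L:CH-monotonicity}) lets us treat $\fL_\ell$ in isolation for the required upper bounds. Finally, $E(\ba, \bm, t)$ must remain of the stated $O_k$-form after $k$ iterations, which forces the use of estimates whose error terms are polynomial in $t$, $\|\bm\|_2$, and $\|\ba\|_2$ at every step of the induction.
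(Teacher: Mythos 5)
Your base case and the overall backward-induction structure match the paper's proof, but the inductive step as sketched has a serious gap that the paper's argument is specifically designed to overcome.

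First, the product decomposition into three ``contributions'' is not justified. You correctly note that, given the Gibbs property on $\II{1,k}\times[0,t]$, the event $A_\ell\cap D_{\ell-1}^r$ is a function of the endpoint vector $(\fL_i(0),\fL_i(t))_{i=1}^k$ alone; but the endpoint vectors for different lines are highly correlated, so $\P(A_{\ell+1}\cap D_{\ell-1}^r)$ and the probability involving $\fL_\ell$ do not multiply. The paper circumvents this by choosing a $\sig$-algebra $\cF$ generated by $\fL$ outside $\II{1,\ell}\times[0,\ga^{2/3}]$ (a window whose length $\ga^{2/3}$ is \emph{adaptive}, of order $t+t^{-1/3}\|\ba\|_2^{2/3}+\|\bm\|_2^{1/2}$) so that $A_{\ell+1}(\bm,\ba,\sig)\cap D_\ell^0(\bm)$ becomes $\cF$-measurable, and then it conditions on $\cF$ and controls $\P_\cF(\fL_\ell(0)-\fL_\ell(t)>a_\ell)$ uniformly on a good event and on each slice $D_n$. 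The inductive hypothesis is then applied to $A_{\ell+1}\cap D^0_\ell(\bm_{n-1})$ with a \emph{modified} $\bm$; there is no independence claim anywhere.

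Second, your plan of using Lemma \ref{L:parabola-lemma-A}(2)--(3) ``on a single bridge spanning $[\hat\sig_\ell,\sig_\ell]$'' cannot produce the $\tfrac23 m^{3/2}$ costs, because $[\hat\sig_\ell,\sig_\ell]$ has length $t$ and the parabola-avoidance cost of height $m$ only accrues over a window of length $\sim\sqrt m$. Lemma \ref{L:parabola-lemma-A} is stated for a bridge over $[0,\la]$ with $\la$ large; in the paper it is applied to bridges on $[0,\ga^{2/3}]$, which is exactly why the choice of resampling window $\ga^{2/3}$ is forced. A Gibbs resampling restricted to $[0,t]$ simply never ``sees'' the parabola.

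Third, you omit the second half of the case analysis: when $\sig_\ell=0$ and $r=t$ (or the symmetric case), the set $D^r_{\ell-1}$ is \emph{not} $\cF$-measurable for the natural one-sided window, and the paper needs an extra bootstrap step (its Step 4) using the Gibbs property on $\II{1,\ell-1}\times[-\ga^{2/3},t]$ and Lemma \ref{L:CH-monotonicity} to reduce the $r=t$ case to the $r=0$ case at the cost of a lower-order shift in $\bm$. Your sketch would need an analogue of this.

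To fix the approach: replace the product decomposition by the conditional-expectation scheme $\P(A_\ell\cap D^0_{\ell-1})=\E[\indic(A_{\ell+1}\cap D^0_\ell)\,\P_\cF(\fL_\ell(0)-\fL_\ell(t)>a_\ell)]$ with $\cF$ as above, introduce a good $\cF$-measurable event $E$ on which the boundary data for the Gibbs resampling behaves parabolically, decompose $D^0_\ell$ into $\fL_\ell(0)$-slices rather than $\fL_\ell(\hat\sig_\ell)$-slices so the inductive hypothesis applies cleanly, and handle $r=t$ by a separate monotonicity step.
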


Proposition \ref{P:two-point-bound-A} is immediate from Lemma \ref{L:inductive-step} since 
$$
A_1(\bm, \ba, \sig) \cap D_{0}^r(\bm) = A(\bm, \ba, \sig).
$$
\begin{proof}
The proof is based on a downwards induction on $\ell$. In the base case when $\ell = k + 1$, since $A_{k+1}(\bm, \ba, \sig)$ is the whole space, the bound is equivalent to the claim that 
$$
\P(D^r_k(\bm)) \le \exp \lf( -\sum_{i=1}^{k} \frac{4}{3} (m_i^+)^{3/2} + E(\ba, \bm, t) \rg)
$$
for $r = 0, t$. This follows from Lemma \ref{L:L-bound}. Now assume $\ell \le k$, and that the lemma holds for $\ell +1$ for any choice of $\ba, \bm, \sig, r$. Without loss of generality, we assume that $\sig_\ell = 0$; the case when $\sig_\ell = t$ follows from the distributional equality
$$
\fL(r) + r^2 \eqd \fL(t -r) + (t-r)^2
$$ 
as functions in $\sC^\N(\R)$. This equality follows from the same result for $\fA$. There are two cases here, depending on whether $r = 0$ or $r = t$. We start with the simpler case when $r = 0$.

	\textbf{Step 1: Setting up a Gibbs resampling.} \qquad
	Let $\be_k > 0$ be a large $k$-dependent constant and set
	$$
	\ga = \ga(\ba, \bm, t) = \be_k (t^{3/2} + t^{-1/2} \|\ba\|_2 + \|\bm\|^{3/4}_2).
	$$
	How large we need to take $\be_k$ will be made clear in the proof.
		Let $\cF$ denote the $\sig$-algebra generated by $\fL$ outside of the set $S = \II{1, \ell} \X [0, \ga^{2/3}]$. The basic idea of the induction is a Gibbs resampling on the set $S$. To see how the structure will work, observe that
	\begin{align*}
	A_\ell(\bm, \ba, \sig) \cap D_{\ell - 1}^0(\bm) = A_{\ell + 1}(\bm, \ba, \sig) \cap D_{\ell}^0(\bm) \cap \{\fL_\ell(0) - \fL_\ell(t) > a_\ell\}.
	\end{align*}
	The set
	$
	A_{\ell + 1}(\bm, \ba, \sig) \cap D_{\ell}^0(\bm)
	$
	is $\cF$-measurable, so for any set $E$ we have
	\begin{align}
	\nonumber
\P_\cF(A_\ell&(\bm, \ba, \sig) \cap D_{\ell - 1}^0(\bm)) \\
	\nonumber
&= \indic(A_{\ell + 1}(\bm, \ba, \sig) \cap D_{\ell}^0(\bm)) \P_\cF(\fL_\ell(0) - \fL_\ell(t) > a_\ell) \\
\label{E:indic-A}
&\le \indic(A_{\ell + 1}(\bm, \ba, \sig) \cap D_{\ell}^0(\bm) \cap E) \P_\cF(\fL_\ell(0) - \fL_\ell(t) > a_\ell) + \indic(E^c).
	\end{align}
Now, for every $n = 0, 1, \dots$, let 
$$
\bm_n = (m_1 \vee (m_\ell + n), \dots, m_{\ell - 1} \vee (m_\ell + n), m_\ell + n, m_{\ell + 1}, \dots, m_k),
$$
and let $D_n = D_{\ell}^0(\bm_{n-1}) \smin D_{\ell}^0(\bm_n)$ so that $D_\ell^0(\bm) = \bigcup_{n=1}^\infty D_n$.
We can write
\begin{equation*}
\begin{split}
&\indic(A_{\ell + 1}(\bm, \ba, \sig) \cap D_{\ell}^0(\bm) \cap E) \P_\cF(\fL_\ell(0) - \fL_\ell(t) > a_\ell) \\
&\le \sum_{n=1}^\infty \indic(A_{\ell + 1}(\bm, \ba, \sig) \cap D_n) \| \P_\cF(\fL_\ell(0) - \fL_\ell(t) > a_\ell) \indic(E \cap D_n)\|_\infty,
\end{split}
\end{equation*}
Taking expectations in \eqref{E:indic-A} and applying the inductive hypothesis we get
\begin{equation}
	\label{E:Al-big}
	\begin{split}
&\P(A_\ell(\bm, \ba, \sig) \cap D_{\ell - 1}^0(\bm)) \le \P(E^c) + \sum_{n=1}^\infty \|\P_\cF(\fL_\ell(0) - \fL_\ell(t) > a_\ell) \indic(E \cap D_n)\|_\infty \X \\
&\X \exp \lf( -\sum_{i=\ell + 1}^k \lf( \frac{a_i^2}{4t} + \frac{2}{3} (m_i^+)^{3/2} + \frac{2}{3} [(m_i - a_i)^+]^{3/2} \rg) - \frac{4}{3} [(m_\ell + n - 1)^+]^{3/2} -\sum_{i=1}^{\ell - 1} \frac{4}{3} (m_i^+)^{3/2} +  E(\ba, \bm_n, t) \rg).
\end{split}
\end{equation}
To complete the proof we will find a set $E$ such that $\P(E^c)$ is bounded above by the right-hand side of \eqref{E:split-ma}, such that
\begin{equation}
	\label{E:Pfell-0-times}
\|\P_\cF(\fL_\ell(0) - \fL_\ell(t) > a_\ell) \indic(E \cap D_n)\|_\infty = 0, \qquad \text{for} \quad n \notin [a_\ell - m_\ell - 1 - \sqrt{\beta_k} \ga^{2/3}, \ga^{4/3}],
\end{equation}
and 
\begin{equation}
\label{E:PfLell-infty}
\begin{split}
\|\P_\cF&(\fL_\ell(0) - \fL_\ell(t) > a_\ell) \indic(E \cap D_n)\|_\infty \\
&\le \exp \Big( -\frac{a_\ell^2}{4t} + \frac{2}{3} [(m_\ell + n-1)^+]^{3/2} - \frac{2}{3} [(m_\ell - a_\ell)^+]^{3/2} + E(\ba, \bm_n, t)\Big).
\end{split}
\end{equation}
To see why these bounds complete the proof, observe that by \eqref{E:Pfell-0-times}, the sum in \eqref{E:Al-big} has at most $\ga^{4/3}$ non-zero terms, so it suffices to bound each of these terms by the right-hand side of \eqref{E:split-ma} since $\log \ga^{4/3} \le E(\ba, \bm, t)$. Second, if either $n \le c_k m_\ell$ or $n \le c_k t^2$, then $E(\ba, \bm_n, t) \le E(\ba, \bm, t)$ and so by \eqref{E:PfLell-infty} the $n$th summand in \eqref{E:Al-big} is bounded by the right-hand side of \eqref{E:split-ma}. If both $n \ge c_k |m_\ell|$ and $n \ge c_k t^2$, then by the lower bound on $n$ in \eqref{E:Pfell-0-times} we also have that $n \ge d_k a_\ell$ for some small constant $d_k$. These constraints together imply that $E(\ba, \bm_n, t) = E(\ba, \bm, t) + O_k(\sqrt{t}n^{5/4})$. On the other hand, using again that $n \ge c_k |m_\ell|$, we have that 
$$
- \frac{2}{3} [(m_\ell + n - 1)^+]^{3/2} \le - \frac{2}{3} (m_\ell^+)^{3/2} - \frac{1}{2}n^{3/2}.
$$
Since $O_k(\sqrt{t}n^{5/4}) - \tfrac{1}2 n^{3/2} \le c_k' t^3 \le E(\ba, \bm, t)$, the $n$th summand in \eqref{E:Al-big} is bounded by the right-hand side of \eqref{E:split-ma} in this case as well.

\textbf{Step 2: Defining $E$ and comparing to a simpler ensemble.} \qquad Let $E_r$ be the set where:
	\begin{itemize}[nosep]
		\item For all $s \in [-\ga^{2/3}, \ga^{2/3}]$, we have 
		$
		\fL_{\ell+1}(s) + s^2 > - \sqrt{\be_k} \ga^{2/3}.
		$
		\item We have the inequalities
		\begin{align*}
		- 0.9 \sqrt{\be_k} \ga^{2/3} &\le \fL_\ell(r) + r^2 \le \fL_1(r) + r^2 \le \frac{1}{10}\ga^{4/3} \qquad \text{ and } \\
		- 0.9 \sqrt{\be_k} \ga^{2/3} &\le \fL_\ell(\pm \ga^{2/3}) + \ga^{4/3} \le \fL_1(\pm \ga^{-2/3}) + \ga^{4/3} \le \frac{1}{10}\ga^{4/3},
		\end{align*}
	\end{itemize}
and set $E = E_0$ (in the final step of the proof we will also use the set $E_t$). 
	By Lemma \ref{L:L-bound}, Corollary \ref{C:RN-lines-mod-cont} and a union bound, as long as the constant $\be_k$ is large enough we have that 
	\begin{equation}
	\label{E:PEC}
		\P(E^c) \le e^{- \ga^2},
	\end{equation}
	which is bounded above by the right-hand side of \eqref{E:split-ma} for $\be_k$ large enough. Moreover, \eqref{E:Pfell-0-times} follows from the constraints imposed on $\fL_{\ell + 1}, \fL_\ell$ on $E$.
	
	Now let $\bar \iota = (0, 1, \dots, \ell-1)$, and let 
	$B = (B_1, \dots, B_\ell)$ be a $k$-tuple of independent Brownian bridges from $(0, \fL^\ell(0) - \bar \iota)$ to $(\ga^{2/3}, \fL^\ell(\ga^{2/3}) - \bar \iota)$. Let $f:[0, \ga^{2/3}] \to \R$ be given by $f(s) = -s^2 - \sqrt{\be_k} \ga^{2/3}$, and let $\tilde B$ denote $B$ conditioned on the non-intersection event $\NI(f; [t, \ga^{2/3}])$.
	
	By Lemma \ref{L:CH-monotonicity} and the Gibbs property for $\fL$ on the set $S$, on the event $E$ the bottom line $\fL_\ell|_{[0, \ga^{2/3}]}$ stochastically dominates $\tilde B_\ell$. Therefore on $E$: 
	\begin{align*}
	\P_\cF(\fL_\ell(0) - \fL_\ell(t) > a_\ell) &\le \P_\cF(\fL_\ell(0) - \tilde B_\ell(t) > a_\ell) \\
	&=\P_\cF(\tilde B_\ell(0) - \tilde B_\ell(t) > a_\ell - (\ell - 1)).
	\end{align*}
	Now,
	\begin{equation}
	\label{E:B-frac}
	\P_\cF(\tilde B_\ell(0) - \tilde B_\ell(t) > a_\ell - (\ell - 1)) = \frac{\P_\cF(B \in \NI(f; [t, \ga^{2/3}]), B_\ell(t) < B_\ell(0) - a_\ell + (\ell - 1))}{\P_{0, \ga^{2/3}}(B(0),  B(\ga^{2/3}), f; [t, \ga^{2/3}])},
	\end{equation}
	and we can bound the numerator above and denominator below on the event $E_0$ using Lemma \ref{L:parabola-lemma-A}.\\
	
	\textbf{Step 3: Bounding \eqref{E:B-frac}.} \qquad Starting with the denominator in \eqref{E:B-frac}, we have that
	\begin{align*}
\P_{0, \ga^{2/3}}(B(0),  B(\ga^{2/3}), f; [t, \ga^{2/3}]) &\ge \P_{0, \ga^{2/3}}(B(0), B(\ga^{2/3}), f) \\
&=\P_{0, \ga^{2/3}}(B(0) + \sqrt{\be_k} \ga^{2/3}, B(\ga^{2/3}) + \sqrt{\be_k} \ga^{2/3},  g),
	\end{align*}
	where $g(s) = -s^2$.
Moreover, on $E$ we have that 
$$
\max (\fL_1(0) + \sqrt{\be_k} \ga^{2/3}, \fL_1(\ga^{2/3}) + \sqrt{\be_k} \ga^{2/3} - g(\ga^{2/3})) \le \frac{1}{5} \ga^{4/3}
$$
as long as $\be_k$ was chosen large enough. Similarly, 
$$
\fL_\ell(0) + \sqrt{\be_k} \ga^{2/3} \ge 2k, \quad \fL_\ell(\ga^{2/3}) + \sqrt{\be_k} \ga^{2/3} - g(\ga^{2/3})) \ge 2k.
$$ 
Therefore we are in the setting of Lemma \ref{L:parabola-lemma-A}.3, and so
\begin{align*}
&\P_{0, \ga^{2/3}}(B(0) + \sqrt{\be_k} \ga^{2/3}, B(\ga^{2/3}) + \sqrt{\be_k} \ga^{2/3},  g) \\
&\ge \exp\Big(-\sum_{i=1}^\ell J(B_i(0) + \sqrt{\be_k} \ga^{2/3}, B_i(\ga^{2/3}) + \sqrt{\be_k} \ga^{2/3} + \ga^{4/3}, \ga^{2/3}) -c \ell^2 \ga^{2/3} \Big).
\end{align*}
Turning to the numerator, we have that
\begin{align*}
\P_\cF(B \in \NI(f; [t, \ga^{2/3}]), &B_\ell(t) < B_\ell(0) - a_\ell + (\ell - 1)) \\
&\le \P_\cF(B_\ell \in \NI(f; [t, \ga^{2/3}]), B_\ell(t) < B_\ell(0) - a_\ell + (\ell - 1))
\\
&\X\prod_{i=1}^{\ell - 1} \P_{0, \ga^{2/3}}(B_i(0) + \sqrt{\be_k} \ga^{2/3}, B_i(\ga^{2/3}) + \sqrt{\be_k} \ga^{2/3},  g; [t, \ga^{2/3}])
\end{align*}
The product of $\ell - 1$ terms above can be bounded above by Lemma \ref{L:parabola-lemma-A}.1. Note that $t \le 0.1 \sqrt{\be_k} \ga^{2/3} \le B_i(0) + \sqrt{\be_k} \ga^{2/3}$ by the definition of $\ga$ as long as $\be_k$ is large enough. The remaining $\ell$-term can be bounded using Lemma \ref{L:parabola-lemma-A}.2. Together we get that the right-hand side above is bounded above by
\begin{align*}
\exp\Big(-&\sum_{i=1}^\ell J(B_i(0) + \sqrt{\be_k} \ga^{2/3}, B_i(\ga^{2/3}) + \sqrt{\be_k} \ga^{2/3} + \ga^{4/3}, \ga^{2/3}) +c_k (t\ga^{2/3} \log(\ga) + t^3 + t |B_\ell(0)|)\\
& - \frac{(a_\ell - \ell + 1)^2}{4t} + \frac{2}{3}(B_\ell(0) + \sqrt{\be_k}\ga^{2/3})^{3/2} - \frac{2}{3}(B_\ell(0) + \sqrt{\be_k}\ga^{2/3} - a_\ell + \ell - 1)^{3/2}
\Big).
\end{align*}
Putting together the two bounds in this section, and using that $B_\ell(0) = \fL_\ell(0) - (\ell -1)$ and that $\ga \gg k$ gives that \eqref{E:B-frac} is bounded above by
\begin{equation}
\label{E:halfway-house}
\begin{split}
\exp \Big(- \frac{a_\ell^2}{4t}& + \frac{2}{3}[\fL_\ell(0)^+]^{3/2} - \frac{2}{3}[(\fL_\ell(0) - a_\ell)^+]^{3/2} \\
+O_k&(t\ga^{2/3} \log (\ga) + t^3 + t |\fL_\ell(0)| + \frac{a_\ell}{t} + \ga^{2/3} \sqrt{|\fL_\ell(0)|} + \ga)\Big).
\end{split}
\end{equation}
Now, on the set $E \cap D_n$, we know that $\fL_\ell(0) \in [m_\ell + n - 1, m_\ell + n]$. Applying this, and simplifying the error, we get that \eqref{E:halfway-house} is bounded above by
\begin{align*}
\exp &\Big(- \frac{a_\ell^2}{4t} + \frac{2}{3}[(m_\ell + n-1)^+]^{3/2} - \frac{2}{3}[(m_\ell + n - a_\ell)^+]^{3/2}
+E(\ba, \bm_n, t)\Big),
\end{align*}
implying the desired bound in \eqref{E:PfLell-infty}.

\textbf{Step 4: Completing the proof when $\sig_\ell = 0, r = t$.}
First, with $E_t$ as defined in Step 2 above, by noting that $\P (E_t^c)$ satisfies the same bound as $\P(E^c)$ in \eqref{E:PEC}, it is enough to bound
$$
\P (A_\ell(\bm, \ba, \sig) \cap D_{\ell - 1}^t(\bm) \cap E_t).
$$
Our strategy here is to bootstrap from the previous case, by using that the Gibbs property implies that it is unlikely for the lines in $\fL$ to be large at time $t$ without being large at time $0$.  Let $\cG$ be the $\sig$-algebra generated by $\fL$ outside of the set $\II{1, \ell - 1} \X [-\ga^{2/3}, t]$, and suppose that we can find $\bm' \le \bm$ with $\P_\cG(D_{\ell - 1}^0(\bm')) \ge 1/2$ almost surely on the set $D_{\ell - 1}^t(\bm) \cap E_t$. Then
\begin{align*}
\P(A_\ell(\bm, \ba, \sig) \cap D_{\ell - 1}^t(\bm) \cap E_t) &\le 2 \E[\P_\cG(D_{\ell - 1}^0(\bm')) \indic(A_\ell(\bm, \ba, \sig) \cap D_{\ell - 1}^t(\bm) \cap E_t)] \\
&\le 2 \E[\P_\cG(D_{\ell - 1}^0(\bm')) \indic(A_\ell(\bm', \ba, \sig)] \\
&= 2 \P(D_{\ell - 1}^0(\bm') \cap A_\ell(\bm', \ba, \sig)).
\end{align*}
Here the second inequality uses that $A_\ell(\bm, \ba, \sig)  \subset A_\ell(\bm', \ba, \sig)$ since $\bm' \le \bm$, and the final equality uses that $A_\ell(\bm', \ba, \sig)$ is $\cG$-measurable.

Now, on $D_{\ell - 1}^t(\bm) \cap E_t$ and conditional on $\cG$, by Lemma \ref{L:CH-monotonicity} the vector $\fL^k(0)$ stochastically dominates an $(\ell - 1)$-tuple of Brownian bridges $B$ from $(- 2\ga^{4/3} - 1, \dots, - 2 \ga^{4/3} - \ell)$ at time $-\ga^{2/3}$ to $(m_1 -1, \dots, m_\ell - \ell)$ at time $t$, conditioned not to intersect each other. The probability of non-intersection is bounded below by
$
\exp(- ck \log \ga)
$
by Lemma \ref{L:nonint-probability},
so
$
\P(\fL^k(0) \ge \bm') \ge 1/2
$
for any $\bm'$ for which $
\P(B(0) \ge \bm') \ge 1 - e^{-c k \log \ga}/2.
$
This holds with $\bm' = \bm - c_k \sqrt{t} \ga^{2/3}$, and so
\begin{align*}
\P(D_{\ell - 1}^t(t) \cap A_\ell(\bm, \ba, \sig) \cap E_t) \le 2 \P(D_{\ell - 1}^0(\bm - c_k \sqrt{t} \ga^{2/3}) \cap A_\ell(\bm - c_k \sqrt{t} \ga^{2/3}, \ba, \sig)),
\end{align*}
which satisfies the same upper bound as $\P(D_{\ell - 1}^0(\bm) \cap A_\ell(\bm, \ba, \sig))$, up to a term that gets folded into the error.
\end{proof}

Proposition \ref{P:two-point-bound-A} implies the following simpler two-point bound, of which inequality \eqref{E:two-pt-intro} is a special case.

\begin{corollary}
	\label{C:two-point-bound}
	Let $\ba \in [0, \infty)^k$ with $\|\ba\|_2 \ge c_k t^2$ and let $\sig \in \{0, t\}^k$. Let $\hat \sig \in \{0, t\}^k$ be the unique vector with $\hat \sig_i \ne \sig_i$ for all $i$. Next, let $f(\ba, \sig) \in \sC^k_0([0, t])$ be the unique $k$-tuple of linear functions satisfying $f_i(\sig_i) - f_i(\hat \sig_i) = a_i$, and recall the definition of the map $\S$ introduced for Theorem \ref{T:tetris-theorem}. Then 
\begin{equation}
\label{E:fLKTT}
\begin{split}
&\P(\fL_i(\sig_i) - \fL_i(\hat \sig_i) \ge a_i \text{ for all } i \in \II{1, k}) \\
&\le \exp \lf( - \S (f(\ba, \sig)) -\sum_{i=1}^k \frac{a_i^2}{4t} + O_k(\sqrt{t}\|\ba\|_2^{5/4})\rg)
\end{split}
\end{equation}
\end{corollary}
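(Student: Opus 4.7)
The plan is to apply Proposition~\ref{P:two-point-bound-A} with the endpoint vector $\bm^{\star} := (\T f_1(\sigma_1),\ldots,\T f_k(\sigma_k))$, where $f := f(\ba,\sigma)$, and then pass from the resulting bound on $A(\bm^{\star},\ba,\sigma)$ to the target event $B := \{\fL_i(\sigma_i) - \fL_i(\hat\sigma_i) \ge a_i \; \forall i\}$.  Writing $b_i := -a_i$ when $\sigma_i = 0$ and $b_i := a_i$ when $\sigma_i = t$, so that $f_i(0) = 0$ and $f_i(t) = b_i$, the per-time non-intersection constraints in the tetris map collapse to their endpoints because each $f_i$ is linear, yielding the explicit recursion $\T f_k(0) = (-b_k)^{+}$, $\T f_i(0) = \T f_{i+1}(0) + (b_{i+1} - b_i)^{+}$, and $\T f_i(t) = \T f_i(0) + b_i$.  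From this I would extract three facts: first, $\T f_i(\sigma_i) = \T f_i(\hat\sigma_i) + a_i$, so $m^{\star}_i \ge a_i$ and $m^{\star}_i - a_i = \T f_i(\hat\sigma_i) \ge 0$; second, a short case split on $(\sigma_i,\sigma_{i+1}) \in \{0,t\}^2$ (using the signs $b_i = \pm a_i$ and the recursion) gives monotonicity $m^{\star}_i \ge m^{\star}_{i+1}$, so $\bm^{\star} \in [0,\infty)^{k}_{\ge}$; third, reindexing the sum yields
$$
\tfrac{2}{3}\sum_i\bigl((m^{\star}_i)^{3/2} + (m^{\star}_i - a_i)^{3/2}\bigr) = \tfrac{2}{3}\sum_i\bigl([\T f_i(0)]^{3/2} + [\T f_i(t)]^{3/2}\bigr) = \S(f(\ba,\sigma)).
$$

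For the error, the recursion forces $\T f_i(\sigma_i) \le 4\sum_j a_j$, so $\|\bm^{\star}\|_2 = O_k(\|\ba\|_2)$.  Combined with the hypothesis $\|\ba\|_2 \ge c_k t^2$, each of the four pieces of $E(\ba,\bm^{\star},t)$ in Proposition~\ref{P:two-point-bound-A} is $O_k(\sqrt{t}\,\|\ba\|_2^{5/4})$: the estimates $t^3 \le \sqrt{t}\,\|\ba\|_2^{5/4}$ and $t\,\|\ba\|_2 \le \sqrt{t}\,\|\ba\|_2^{5/4}$ both reduce to $t^2 \le \|\ba\|_2$, while $\sqrt{t}\,\|\bm^{\star}\|_2^{5/4}$ and $t^{-1/6}\|\ba\|_2^{2/3}\|\bm^{\star}\|_2^{1/2}$ collapse using $\|\bm^{\star}\|_2 = O_k(\|\ba\|_2)$ together with $t \ge 1$.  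Thus Proposition~\ref{P:two-point-bound-A} produces the right-hand side of \eqref{E:fLKTT} verbatim, but only on the subset $A(\bm^{\star},\ba,\sigma) \subseteq B$.

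To extend the bound from $A(\bm^{\star},\ba,\sigma)$ to $B$, I would partition $B$ into cells $B_{\bn}$ indexed by $n_i := \lfloor \fL_i(\sigma_i) - \T f_i(\sigma_i)\rfloor \in \Z$, bound each cell via Proposition~\ref{P:two-point-bound-A} applied to the monotone running-min truncation $\bm^{(\bn)}_i := \max\bigl(0,\, \min_{j \le i}(\T f_j(\sigma_j) + n_j)\bigr)$ of the shifted endpoint vector, and sum.  For upward shifts ($\bn \ge \mathbf 0$) the truncation is trivial and one obtains the convergent series $\sum_{\bn \ge \mathbf{0}}\exp(-\tfrac{2}{3}\sum_i n_i^{3/2})$, whose contribution folds into the error.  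The main obstacle is the downward regime ($n_j < 0$, where $\fL_j$ dips strictly below its tetris height): here the truncation forces $\bm^{(\bn)}$ strictly below $\bm^{\star}$, losing a controllable amount in the exponent.  I expect to close this case by combining the Brownian Gibbs property for $\fL$ with the lower-tail bound of Corollary~\ref{C:RN-lines-mod-cont} on $\fL_{k+1}$, in the spirit of the argument inside the proof of Lemma~\ref{L:inductive-step}, so that the downward cells also contribute within $O_k(\sqrt{t}\,\|\ba\|_2^{5/4})$.
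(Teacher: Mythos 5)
The paper's own proof is very short: it claims that on the event
\[
\{\fL_i(\sigma_i) - \fL_i(\hat\sigma_i) \ge a_i\ \forall i\} \cap \{\fL_k(t)\ge -\alpha\},
\]
the ordering of $\fL^k(0),\fL^k(t)$ forces $\fL_i(\sigma_i) \ge \T f_i(\sigma_i) - \alpha$, so that the target event is contained in $A(\T f(0)\vee\T f(t)-\alpha,\ba,\sigma) \cup \{\fL_k(t)<-\alpha\}$; it then invokes Proposition~\ref{P:two-point-bound-A} once and Corollary~\ref{C:RN-lines-mod-cont} once. Your proposal correctly observes that this inclusion does not actually hold: $A(\bm^\star,\ba,\sigma)$ is a \emph{strict} subset of $B$, and one must do something to bridge the gap. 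This is a genuine and nontrivial observation. The Tetris heights $\T f_i(\sigma_i)$ are computed assuming the increments are \emph{exactly} $a_i$; the target event only requires increments $\ge a_i$, and when a lower line has a much larger increment it drags the whole stack down at the far time, letting an upper line sit strictly below its Tetris height at time $\sigma_i$. A concrete failure: with $k=3$, $t=1$, $\sigma=(0,0,0)$, $\ba=(10,1,100)$, one computes $\T f_1(0)=109$, yet the configuration $\fL^k(0)=(100.02,100.01,100)-\alpha$, $\fL^k(1)=(0.02,0.01,0)-\alpha$ satisfies all the increment and ordering constraints and $\fL_3(1)\ge -\alpha$ but has $\fL_1(0)=100.02-\alpha<109-\alpha$. (The chain argument the paper invokes in fact only yields $\fL_i(\sigma_i) > \max_{j\ge i}a_j - \alpha$, which is weaker than $\T f_i(\sigma_i)-\alpha$ whenever the Tetris truly ``stacks''; this first bites at $k=3$.) So your decision to decompose $B$ rather than accept the inclusion is the sound route.

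However, the mechanism you propose for the downward cells does not address this failure mode. You suggest controlling cells with $n_j<0$ via the lower-tail bound on $\fL_{k+1}$ (Corollary~\ref{C:RN-lines-mod-cont}). But in the counterexample above, $\fL_{k+1}$ can be entirely unremarkable (say $\approx -\alpha-1$); the deficit $\fL_i(\sigma_i)<\T f_i(\sigma_i)-\alpha$ has nothing to do with the $(k+1)$st line being low. The correct compensating mechanism is the Brownian increment cost: when the actual increments $b_i:=\fL_i(\sigma_i)-\fL_i(\hat\sigma_i)$ exceed $a_i$, the bound from Proposition~\ref{P:two-point-bound-A} applied with $\ba$ replaced by $\bb$ carries an extra $\sum (b_i^2-a_i^2)/(4t)$ in the exponent, and this dominates the loss of Tetris height (one can check the net effect is $O_k(\sqrt{t}\|\ba\|_2^{5/4})$ given $\|\ba\|_2\ge c_k t^2$). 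So the right decomposition is over the actual increment vector $\bb\ge\ba$, on each cell of which the ``exact increment'' Tetris argument gives the endpoint bound $\fL_i(\sigma_i)\ge \T f_i^{(\bb)}(\sigma_i)-\alpha$, after which Proposition~\ref{P:two-point-bound-A} is applied with $(\bm,\ba)$ replaced by $(\T f^{(\bb)}(\sigma)-\alpha,\bb)$; indexing by $n_i:=\lfloor\fL_i(\sigma_i)-\T f_i(\sigma_i)\rfloor$ alone, without also recording the increments, loses the quadratic compensation that makes the downward cells summable. In short: your reformulation of the problem (decomposition rather than inclusion) is an improvement over the paper's written argument, your treatment of the upward cells is fine, and the identification of $\bm^\star$ and the reduction $\frac{2}{3}\sum((m^\star_i)^{3/2}+(m^\star_i-a_i)^{3/2})=\S(f)$ are correct, but the downward regime needs the Brownian cost of the enlarged increments, not the $\fL_{k+1}$ lower tail.
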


\begin{proof}
Without loss of generality, assume $\sig_k = 0$. Then using the notation $\T$ for the tetris map introduced after Theorem \ref{T:tetris-theorem} we have that
$$
\fL_k(t) \ge - \al \qquad \implies \qquad \fL(0) \ge \T f(0) - \al, \qquad \fL(t) \ge \T f(t) - \al,
$$
since the vectors $\fL^k(0), \fL^k(t)$ are ordered.
Therefore for any $\al > 0$, with notation as in Proposition \ref{P:two-point-bound-A} we have
$$
\P(\fL_i(\sig_i) - \fL_i(\hat \sig_i) \ge a_i \forall i \in \II{1, k}) \le \P(\fL \in A( \T f(0) \vee \T f(t) - \al, \ba, \sig)) + \P(\fL_k(t) \le - \al).
$$
Setting $\al = c_k(t^2 + t^{-1/3} \|\ba\|_2^{2/3})$, by Corollary \ref{C:RN-lines-mod-cont}, we have that $\P(\fL_k(t) \le - \al)$ is bounded above by the right-hand side of \eqref{E:fLKTT} for large enough $c_k$. This uses that $T f_1 \le c_k \|\ba \|_2$ and that $\|\ba\|_2 \ge t^2$. For this choice of $\al$, the left-hand side is similarly bounded via Proposition \ref{P:two-point-bound-A}. We have simplified the error by using that $T f_1 \le c_k \|\ba \|_2$ and that $\|\ba\|_2 \ge t^2$.
\end{proof}

	\section{The upper bound in Theorem \ref{T:tetris-theorem}}
	\label{S:upper-bound}
	In this section we prove the upper bound in Theorem \ref{T:tetris-theorem}. Throughout the section we fix $k \in \N, t \ge 1$. For this, it is enough to prove that the upper bound holds for $\fA^k|_{[1, t + 1]} - \fA^k(1)$, since in law $\fA^k|_{[1, t + 1]} - \fA^k(1)$ and $\fA^k|_{[0, t]} - \fA^k(0)$ only differ by a linear shift which can be wrapped into the error. Moreover, we can work with $\fL = \fL^{t + 2, k}$ rather than $\fA$, since the conditional probability that $\fL$ is non-intersecting--$e^{-O_k(t^3)}$--can again be wrapped into the error term in Theorem \ref{T:tetris-theorem}. The key step is to convert Proposition \ref{P:two-point-bound-A} into a density estimate.
	
	For this lemma, we define a $\sig$-finite measure $\mu$ on $X = \R^k \X \R^k \X \sC^k([1, t + 1])$ as follows. First let $\mu'$ denote the product measure on $X$ given by Lebesgue measure on $\R^k \X \R^k$ and the law of $k$ independent Brownian bridges from $(1, 0)$ to $(t + 1, 0)$ on $\sC^k([1, t + 1])$. Let $\mu$ denote the pushforward of $\mu'$ under the map
	$$
	(\bx, \by, f) \mapsto (\bx, \by, f + L_{\bx, \by}),
	$$
	where $L_{\bx, \by}$ is the function which is linear in each coordinate and satisfies $L(1) = \bx, L(t + 1) = \by)$. Then the law of $(\fA^k(1), \fA^k(t + 1), \fA^k|_{[1, t + 1]})$ is absolutely continuous with respect to $\mu$. The next lemma bounds its density.
	\begin{lemma}
		\label{L:density-estimate}	
	Let $Y = Y(\bx, \by, g)$ denote the density on $\R^k \X \R^k \X \sC^k([1, t+1])$ of $(\fA^k(1), \fA^k(t + 1), \fA^k|_{[1, t + 1]})$ against the measure $\mu$. Then we have the following bounds:
	\begin{enumerate}
		\item For all $(\bx, \by, g) \in \R^k \X \R^k \X \sC^k([1, t +1])$ we have 
		$$
		Y(\bx, \by, g) \le \exp(-d_k |\inf g_k|^3 \indic(\inf g_k \le -c_k t^2) + c_k t^3).
		$$
		\item For all $(\bx, \by, g) \in \R^k \X \R^k \X \sC^k([1, t+ 1])$ we have
		$$
		Y(\bx, \by, g) \le \exp\lf(- \lf(\frac{\|\bx - \by\|^2_2}{4t} +\sum_{i=1}^k \frac{2}{3} [(x_i^+)^{3/2} + (y_i^+)^{3/2}] \rg) + O_k(t^3 + \sqrt{t}(\|\bx\|_2^{5/4} + \|\by\|_2^{5/4}) )\rg).
		$$
	\end{enumerate}
	\end{lemma}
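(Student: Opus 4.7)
The plan is to use Theorem~\ref{T:resampling-candidate} with parameter $t+2$ to replace $\fA$ by the line ensemble $\fL = \fL^{t+2, k}$, whose law on $\II{1, k} \X [0, t+2]$ is a system of $k$ independent variance-$2$ Brownian bridges given the endpoints $(\fL^k(0), \fL^k(t+2))$. Since $\fA$ equals $\fL$ conditioned on the non-intersection event $N = \{\fL_1 > \fL_2 > \dots\}$ and $\P(N) \ge e^{-O_k(t^3)}$, one has
\begin{equation*}
Y(\bx, \by, g) \le e^{O_k(t^3)} \, \P\!\bigl( N \bigm| (\fL^k(1), \fL^k(t+1), \fL^k|_{[1, t+1]}) = (\bx, \by, g) \bigr) \, Y^{\fL}(\bx, \by, g),
\end{equation*}
where $Y^{\fL}$ denotes the analogous density for $\fL$ against $\mu$.

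Next, by the Gibbs property of $\fL$ together with the Brownian-bridge Markov property on the subinterval $[1, t+1] \subset [0, t+2]$, the conditional law of $\fL^k|_{[1, t+1]}$ given $(\fL^k(1), \fL^k(t+1))$ is exactly $k$ independent Brownian bridges between these endpoints. Since the reference measure $\mu$ has the same bridge structure, $Y^{\fL}(\bx, \by, g)$ does not depend on $g$ and equals the joint Lebesgue density $\rho(\bx, \by)$ of $(\fL^k(1), \fL^k(t+1))$. Moreover $\rho$ admits the Gaussian representation
\begin{equation*}
\rho(\bx, \by) = \E\!\left[ \prod_{i=1}^k \phi\bigl( x_i, y_i \bigm| \fL_i(0), \fL_i(t+2) \bigr) \right],
\end{equation*}
where $\phi(x, y \mid a, b)$ is the joint density at times $1, t+1$ of a variance-$2$ Brownian bridge from $(0, a)$ to $(t+2, b)$. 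An explicit computation shows that $\phi$ is uniformly bounded by an absolute constant and its exponent has the form $-(x - y - (\mu_1 - \mu_2))^2/(4t) - [(x - \mu_1)^2 + (y - \mu_2)^2]/4$, where $\mu_1, \mu_2$ are the bridge means at times $1, t+1$.

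For bound~(1), the event $N$ combined with $\fL^k|_{[1, t+1]} = g$ forces the $(k+1)$-st line $\fL_{k+1}$ to dip below $\inf g_k$ somewhere on $[1, t+1]$; by Corollary~\ref{C:RN-lines-mod-cont} this has probability at most $e^{-d_k |\inf g_k|^3}$ once $|\inf g_k| \ge c_k t^2$. Combined with the uniform $\rho = O_k(1)$ bound obtained above, this yields bound~(1). For bound~(2), I bound the expectation defining $\rho(\bx, \by)$ by a saddle-point analysis. The quadratic $-[(x_i - \mu_1)^2 + (y_i - \mu_2)^2]/4$ forces $(\fL_i(0), \fL_i(t+2))$ to cluster near $(x_i, y_i)$; for this to hold when $x_i, y_i$ are large positive, Proposition~\ref{P:two-point-bound-A} applied to $\fL^{t+2, k}$ gives cost $\exp(-\tfrac{4}{3}\sum[(x_i^+)^{3/2} + (y_i^+)^{3/2}] + E)$, and combining with the Gaussian weight from the allowed $(a, b)$-neighborhood cancels half of this exponent, leaving the target $-\tfrac{2}{3}\sum[(x_i^+)^{3/2} + (y_i^+)^{3/2}]$. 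The $-\|\bx - \by\|_2^2/(4t)$ term comes directly from the first factor in the exponent of $\phi$, with the correction $\mu_1 - \mu_2$ absorbed by the Gaussian tail on $\fL_i(t+2) - \fL_i(0)$.

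The main obstacle is the saddle-point computation in bound~(2): the Gaussian factor in $\phi$ interacts nontrivially with the $3/2$-power tail from Proposition~\ref{P:two-point-bound-A}, and careful bookkeeping is needed to verify that the residual error fits within $O_k(t^3 + \sqrt{t}(\|\bx\|_2^{5/4} + \|\by\|_2^{5/4}))$; in particular, I must check that the $t \|\ba\|_2$ and $t^{-1/6}\|\ba\|_2^{2/3}\|\bm\|_2^{1/2}$ terms in Proposition~\ref{P:two-point-bound-A} (with $\ba \sim |\bx - \by|$ and $\bm \sim \bx^+ \vee \by^+$ at the saddle) are absorbed into the stated error term.
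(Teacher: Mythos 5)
Your overall strategy is the same as the paper's: pass to $\fL = \fL^{t+2,k}$ via Theorem~\ref{T:resampling-candidate}, absorb the cost $\P(N)^{-1} \le e^{O_k(t^3)}$ of the non-intersection conditioning, and observe that the density of $(\fL^k(1), \fL^k(t+1), \fL^k|_{[1,t+1]})$ against $\mu$ is a function of $(\bx,\by)$ alone (the bridge law on $[1,t+1]$ matches $\mu$). However, both bounds have issues in the details.

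For bound~(1), you write $Y \le e^{O_k(t^3)}\, \P(N \mid W = w)\, Y^{\fL}(w)$ with $W$ the triple, then say the event $N \cap \{W = w\}$ forces $\fL_{k+1}$ to dip below $\inf g_k$, and invoke Corollary~\ref{C:RN-lines-mod-cont}. But that corollary gives an \emph{unconditional} tail bound, while you need to control $\P(\fL_{k+1}\text{ dips} \mid W = w)$. Conditioning on $W=w$ with $g_k$ low does change the law of $\fL_{k+1}$ (through its coupling with $\fL^k(0), \fL^k(t+2)$), and if anything pushes it downward, so the step ``conditional $\le$ unconditional'' is unjustified as stated. The paper avoids this by decomposing the (defective) density of $W$ over the event $A_\al = \{\inf \fL_{k+1}|_{[0,t+2]} \le -\al\}$: since $A_\al$ is measurable w.r.t.\ $\fL$ outside $\II{1,k}\X[0,t+2]$, the bridge structure gives $f_{W, A_\al}(w) \le c_k \P(A_\al)$ directly, and on $A_\al^c$ the restricted density vanishes when $\inf g_k < -\al$. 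Your conclusion is right, but the justification needs to be routed through the $\cF_{\text{out}}$-measurability of the dipping event rather than a conditional-vs.-unconditional comparison.

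For bound~(2) there is a concrete error in the exponent bookkeeping. You assert that Proposition~\ref{P:two-point-bound-A} gives a joint cost $\exp(-\tfrac{4}{3}\sum[(x_i^+)^{3/2} + (y_i^+)^{3/2}])$ for $(\fL_i(0), \fL_i(t+2))\approx(x_i, y_i)$, and that the Gaussian bridge weight ``cancels half of this exponent'' to produce the target $-\tfrac{2}{3}\sum[\cdot]$. This is not what happens: the proposition already gives $\tfrac{2}{3}$ per endpoint (the terms $\tfrac{2}{3}m_i^{3/2} + \tfrac{2}{3}(m_i - a_i)^{3/2}$), and there is no mechanism by which a Gaussian factor halves a $3/2$-power exponent. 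The actual cancellation in the paper's proof is between the $+\|\bm - \bn\|^2/(4(t+2))$ term in the conditional bridge density and the $-\|\bm - \bn\|^2/(4(t+2))$ term from Proposition~\ref{P:two-point-bound-A}; the $3/2$-power tails enter with coefficient $\tfrac{2}{3}$ throughout. Relatedly, the paper discretizes $(\fL^k(0), \fL^k(t+2))$ to integer shells $C(\bm,\bn)$ and sums, rather than performing a formal saddle-point computation; this is what makes the error bookkeeping you flag at the end tractable. As written, your proposed route would not produce the stated exponent.
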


\begin{proof}
	Throughout, we work with $\fL$ conditional on the event $\fL_1 > \dots > \fL_k > \fL_{k+1}$, which is equal in law to $\fA$. Note that this event has probability $e^{-O_k(t^3)}$.
	
	\textbf{Bound 1:} \qquad	First, since $\fL^k|_{[0, t + 2]}$ given $\fL^k(0), \fL^k(t+2)$ is simply given by $k$ independent Brownian bridges, the Lebesgue density of $(\fL^k(1), \fL^k(t + 1))$ is bounded above by a constant $c_k > 0$. Moreover, $\fL^k|_{[1, t + 1]}$ has the law of $k$ independent Brownian bridges from $(1, \fL^k(1))$ to $(t + 1, \fL^k(t + 1))$ given $(\fL^k(1), \fL^k(t + 1))$. Therefore the density of $(\fL^k(1), \fL^k(t + 1), \fL^k|_{[1, t + 1]})$ against $\mu$ is uniformly bounded by a constant $c_k$, and so $Y$ is a.s.\ by $e^{O_k(t^3)}$. 
	
	Note that this density bound also holds a.s.\ if we condition on any information about $\fL$ outside of $\II{1, k} \X [0, t + 2]$. In particular, if we let $Y_\al$ be the conditional density of $(\fA^k(1), \fA^k(t + 1), \fA^k|_{[1, t + 1]})$ against $\mu$ on the event $A_\al = \{\inf \fL_{k+1}|_{[0, t + 2]} \le - \al\}$ and $Z_\al$ the conditional density on $A_\al^c$ we have
\begin{align*}
Y(\bx, \by, g) &= Y_\al(\bx, \by, g)\P(A_\al) + Z_\al(\bx, \by, g)\P(A^c_\al) \\
&\le e^{c_k t^3} \P(A_\al) + Z_\al(\bx, \by, g).
\end{align*}
If $-\al > \inf g_k$, then $Z_\al(\bx, \by, g) = 0$ by the ordering of the lines in $\fA$. Moreover, $\P(A_\al) \le (t + 2) e^{-d_k \al^3}$ by Corollary \ref{C:RN-lines-mod-cont} for $\al \ge c_k t^2$, which yields the desired bound when $\inf g_k \le-c_k t^2$.

	\textbf{Bound 2:} \qquad Let $f$ be the Lebesgue density of $(\fL^k(1), \fL^k(t + 1))$. It is enough to prove the second bound with $Y(\bx, \by, g)$ replaced by $f(\bx, \by)$, since the $e^{O(t^3)}$ cost of conditioning $\fL$ to be non-intersecting folds into the error, and $\fL^k|_{[1, t + 1]}$ has the law of $k$ independent Brownian bridges from $(1, \fL^k(1))$ to $(t + 1, \fL^k(t + 1))$ given $(\fL^k(1), \fL^k(t + 1))$.	
	
	For $\bm, \bn \in \Z^k_\ge$, let $f(\bx, \by, \bm, \bn)$ be the conditional Lebesgue density of $(\fL^k(1), \fL^k(t + 1))$ at $(\bx, \by)$ on the event
$$
C(\bm, \bn) = \{\fl{\fL^k(0)} = \bm, \fl{\fL^k(t + 2)} = \bn\}.
$$
Since $\fL^k|_{[0, t + 2]}$ is simply a $k$-tuple of Brownian bridges from $\fL^k(0)$ to $\fL^k(t + 2)$, we have that  $f(\bx, \by, \bm, \bn)$ is bounded above by
$$
c_k \exp \lf(-\frac{\|\bx - \by\|^2_2}{4t} - \frac{\|\bx - \bm\|^2_2}{4} - \frac{\|\by - \bn\|^2_2}{4} + \frac{\|\bm - \bn\|^2_2}{4(t + 2)} + \frac{\sum_{i=1}^k|x_i - m_i| + |y_i - n_i| + |n_i - m_i|}{4} \rg).
$$
The final error term comes from the fact that $\fL^k(0) \in [\bm, \bm + 1]$, and $\fL^k(t) \in [\bn, \bn + 1]$. Next, from Proposition \ref{P:two-point-bound-A}, we have the bound
\begin{align*}
\P(C(\bm, \bn)) \le \exp \lf( - \frac{\|\bm - \bn\|^2_2}{4(t + 2)} -\sum_{i=1}^k \lf(\frac{2}{3} [(m_i^+)^{3/2} + (n_i^+)^{3/2}] \rg) + O_k(\sqrt{t}(\|\bm\|_2^{5/4} + \|\bn\|_2^{5/4}) + t^3) \rg)
\end{align*}
and by Corollary \ref{C:RN-lines-mod-cont} we have
$
\P(\bigcup_{m_k \wedge n_k \le -\al} C(\bm, \bn)) \le 2\exp (-d_k \al^3)
$
for $\al > c_k' t^2$. 
Therefore for $\al > c_k' t^2$, we have
\begin{align}
\label{E:exp-frack}
f(\bx, \by) \le c_k \exp (-d_k \al^3) +&\sum_{\substack{\bm, \bn \in \Z^k_\ge, \\ n_k \wedge m_k \ge -\al}} \exp(-D(\bn, \bm))
\end{align}
where 
$$
D(\bn, \bm) = \frac{\|\bx - \by\|^2}{4t} + O_k(t^3 + \|\bx\|_2 + \|\by\|_2) + \sum_{i=1}^k D_{x_i}(m_i) + D_{y_i}(n_i)
$$
and
\begin{align*}
D_{z}(\ell) = \frac{(z-\ell)^2}{4} + \frac{2}{3}(\ell^+)^{3/2} - c_k\sqrt{t}|\ell|^{5/4}.
\end{align*}
We set $\al = c_k'(t^2 + \|\bx\|_2^{3/4} + \|\by\|_2^{3/4})$ so that
\begin{equation}
\label{E:dkal-bd}
2\exp (-d_k \al^3) \le \exp\lf(-100 D(\bx, \by) \rg).
\end{equation}
Now,
\begin{align}
\label{E:Dxynm}
-D(\bn, \bm) \le - \frac{1}{3} (m_1 \vee n_1)^{3/2} - 100 D(\bx, \by)
\end{align} 
whenever $m_1 \vee n_1 > (\|\bx|_2 + \|\by\|_2 + 2)^{c_k'}$. All but $(2t^2 + \|\bx\|_2 + \|\by\|_2)^{c_k'}$ terms in the sum in \eqref{E:exp-frack} satisfy this bound. Therefore by \eqref{E:exp-frack}, \eqref{E:dkal-bd}, and \eqref{E:Dxynm} we have
\begin{align*}
f(\bx, \by) &\le \exp\lf(-100 D(\bx, \by) + c_k' \log(\|\bx|_2 + \|\by\|_2 + 2)\rg) \\
&+ \max_{\bm, \bn \in \Z^k_\ge, n_k \wedge m_k \ge -\al} \exp(-D(\bn, \bm) + c_k' \log(\|\bx|_2 + \|\by\|_2 + 2)) \\
&\le \max_{\bm, \bn \in \Z^k_\ge, n_k \wedge m_k \ge -\al} \exp(-D(\bn, \bm) + O_k(\log (\|\bx|_2 + \|\by\|_2 + 2)).
\end{align*}
The $O_k(\log (\|\bx|_2 + \|\by\|_2 + 2)$-term above is lower order compared with the $O_k(t^3 + \|\bx\|_2 + \|\by\|_2)$-term appearing in $D$, so we may drop it from the above expression. It just remains to bound $\max_{\bm, \bn \in \Z^k_\ge, n_k \wedge m_k \ge -\al} D(\bm, \bn)$ below. Indeed, the bound in the lemma will follow if we can show that for every $z \in \R$ we have
\begin{equation}
\label{E:42-Dz}
D_z(\ell) \ge \frac{2}{3}(z^+)^{3/2} + O_k(t^3 + \|\bx\|_2 + \|\by\|_2 + \sqrt{t}|z|^{5/4})
\end{equation}
whenever $\ell \ge - \al$. First, for $\ell \in [-\al, 0]$ we have
\begin{equation}
\label{E:bound-1-Dz}
D_z(\ell) \ge \frac{(z^+)^2}{4} + c_k \sqrt{t}|\al|^{5/4} = \frac{(z^+)^2}{4} + O_k(t^3 + \|\bx\|_2 + \|\by\|_2),
\end{equation}
which is bounded below by the right-hand side of \eqref{E:42-Dz} for any $z$.
Next, for $\ell \ge 0$ we always have the bound
\begin{equation*}
\label{E:ge-0}
D_z(\ell) \ge \frac{2}{3}\ell^{3/2} - c_k\sqrt{t}\ell^{5/4} = O_k(t^3) 
\end{equation*}
which is bounded below by the right-hand side of \eqref{E:42-Dz} when $z \le c_k'' t^2$. Finally, for $z \ge c_k'' t^2$ and $\ell \ge 0$, we can differentiate $D_z$ to get
\begin{align*}
D_z'(\ell) = \frac{2(z -\ell)}{t} - \sqrt{\ell} + \frac{5}{4} c_k \sqrt{t} \ell^{1/4}
\end{align*}
As long as $c_k''$ was chosen large enough relative to $c_k$, $D_z'$ has all its roots in the region where $|\ell - z| \le 20 t \sqrt{z}$, and hence the minimum value of $D_z$ over $\ell \ge 0$ is taken in this interval. Finally, for $|\ell - z| \le 20 t \sqrt{z}$ and $z \ge c_k'' t^2$ we have the estimate
$$
D_z(\ell) \ge \frac{2}{3}z^{3/2} - 20 t z - 2c_k \sqrt{t} |z|^{5/4},
$$
which completes the proof of \eqref{E:42-Dz} when $z \ge c_k'' t^2$.
\end{proof}
	
	We are now ready to prove the upper bound in Theorem \ref{T:tetris-theorem}.
\begin{corollary}
	\label{C:Theorem1-upper}
	With notation as in Theorem \ref{T:tetris-theorem},
	for $\mu_{k, t}$-a.e.\ $f \in \sC^k_0([0, t])$ we have that
	\begin{equation}
	\label{E:Xktf-body}
		X_{k, t}(f) = \exp (-\S(f) + O_k(t^3 + \sqrt{t} \S(f)^{5/6})).
	\end{equation}
\end{corollary}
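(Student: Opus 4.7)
The plan is to derive the upper bound by integrating the density estimate of Lemma \ref{L:density-estimate} over the unobserved left endpoint $\fA^k(1)$. Following the reduction at the start of Section \ref{S:upper-bound}, it suffices to bound the density of $\fA^k|_{[1,t+1]}-\fA^k(1)$ against $\mu_{k,t}$. Decomposing Wiener measure of variance $2$ on $\sC^k_0([0,t])$ as Lebesgue on the right endpoint times a pinned Brownian bridge yields
$$
X_{k,t}(h)\;=\;(4\pi t)^{k/2}\exp\bigl(\|h(t)\|_2^2/(4t)\bigr)\int_{\R^k_>}Y\bigl(\bx,\,\bx+h(t),\,h(\cdot-1)+\bx\bigr)\,d\bx,
$$
restricted to the support of $Y$, namely $\bx \in \R^k_>$ with $\bx+h(r)$ strictly decreasing on $[0,t]$ (expressing the ordering of the Airy lines). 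Substituting $\bu=\bx-\T h(0)$, the support becomes $\bu\in\R^k_>$; the Wiener factor $\exp(\|h(t)\|^2/(4t))$ exactly cancels the $-\|\bx-\by\|^2/(4t)$ in bound $2$ of Lemma \ref{L:density-estimate}, so the integrand is bounded by $\exp\bigl(-\Psi(\bu)+O_k(t^3+\sqrt{t}(\|\bx\|_2^{5/4}+\|\by\|_2^{5/4}))\bigr)$ with $\Psi(\bu)=\tfrac{2}{3}\sum_{i=1}^k\bigl[((\T h_i(0)+u_i)^+)^{3/2}+((\T h_i(t)+u_i)^+)^{3/2}\bigr]$ satisfying $\Psi(0)=\S(h)$ by the very definition of the tetris map.

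I then split the integral according to $u_k$. For $u_k<-c_k t^2$ one has $\inf_s g_k(s)=u_k$, so bound $1$ of Lemma \ref{L:density-estimate} applies with decay $\exp(-d_k|u_k|^3+c_k t^3)$; combined with bound $2$ in the remaining coordinates, this tail contributes at most $\exp(-\Omega_k(t^6))$, which is negligible. On the main region $u_k\ge-c_k t^2$ the core task is to show $\Psi(\bu)\ge\S(h)-O_k(t^3+\sqrt{t}\,\S(h)^{5/6})$ uniformly. I split the indices according to whether the truncation is active at $(i,\sig)\in\II{1,k}\times\{0,t\}$. An active truncation forces $\T h_i(\sig)\le|u_i|\le c_k t^2$, so dropping the corresponding terms from $\S(h)$ costs at most $\tfrac{2}{3}\sum\T h_i(\sig)^{3/2}=O_k(t^3)$. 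For inactive indices, the tangent inequality $(\T h_i(\sig)+u_i)^{3/2}\ge\T h_i(\sig)^{3/2}+\tfrac{3}{2}\T h_i(\sig)^{1/2}u_i$ combined with Cauchy--Schwarz and $\max_i\T h_i(\sig)\le(\tfrac{3}{2}\S(h))^{2/3}$ yields the claimed $\sqrt{t}\,\S(h)^{5/6}$ slack, restricted to $\|\bu\|_2=O_k(\S(h)^{1/3}+t)$; this restriction is justified because outside this region bound $2$ already delivers exponential decay far stronger than $e^{-\S(h)}$.

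The main obstacle will be handling the ``plateau'' regime where many truncations are simultaneously active, making $\Psi(\bu)$ much smaller than $\S(h)$ (possibly zero). The key observation is that the plateau is non-empty only when every $\T h_i(\sig)\le c_k t^2$, which in turn forces $\S(h)\le O_k(t^3)$ outright; the polynomial volume of the plateau (at most $O_k(t^{2k})$) together with the prefactor $(4\pi t)^{k/2}$ is then absorbed into the allowed error $e^{O_k(t^3)}$, completing the proof by recombining the contributions.
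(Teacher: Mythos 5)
Your proposal follows essentially the same route as the paper's proof: write $X_{k,t}$ as an integral of the density $Y$ from Lemma \ref{L:density-estimate} over the left endpoint, note the cancellation of the Wiener quadratic with the $\|\bx-\by\|_2^2/(4t)$ term in bound 2, substitute $\bu = \bx - \T h(0)$ so that $\Psi(0) = \S(h)$, control the $u_k \to -\infty$ tail with bound 1, and on the remaining region exploit that $\Psi$ is increasing so its infimum (up to the lower cutoff on $u_k$) is $\S(h)$ minus an acceptable error. The active/inactive-truncation split and the tangent inequality are equivalent to the paper's observation that the minimum of the main term is attained at $\bx = \T f(0) - c_k'(t^2 + \S(f)^{1/3})$; your ``plateau'' discussion is subsumed in the active-index case (each active truncation individually forces $\T h_i(\sig) \le c_k t^2$ and so contributes at most $O_k(t^3)$), so it does not need separate treatment.

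The one concrete slip is the cutoff radius. You restrict to $\|\bu\|_2 = O_k(\S(h)^{1/3} + t)$ and claim that outside this ball bound 2 already gives decay ``far stronger than $e^{-\S(h)}$.'' That is false for positive coordinates: if, say, $u_1 \in (c_k\S^{1/3}, c_k\S^{2/3})$, bound 2 only produces a factor of order $\exp(-c\,u_1^{3/2})$, which for $u_1 \sim \S^{1/3}$ is merely $\exp(-c\,\S^{1/2})$, much larger than $e^{-\S}$. The correct upper cutoff is $u_1 \le c_k(t^2 + \S(h)^{2/3})$ (as in the paper), precisely the scale at which the main term $\tfrac23 x_1^{3/2}$ first exceeds $\S$; this also keeps the error term $\sqrt{t}\|\bx\|_2^{5/4}$ of the right order $\sqrt{t}\,\S^{5/6}$. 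In fact no upper cutoff is needed for the estimate $\Psi(\bu) \ge \S(h) - O_k(\cdot)$ itself, since $\Psi$ is monotone increasing and the tangent-error only involves the negative $u_i$, which are bounded by $c_k t^2$ on the main region; the cutoff is needed only to control the error term in bound 2 and to give the polynomial volume factor. With that repair, and being slightly more careful about how bounds 1 and 2 are combined (one should take a minimum or geometric mean rather than literally applying ``bound 1 in one coordinate and bound 2 in the others''), the argument matches the paper's.
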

\begin{proof}
With notation as in Lemma \ref{L:density-estimate} we can write
\begin{equation}
\label{E:Xkt-integrand}
X_{k, t}(f) = \int Y(\bx, \bx + f(t), f) \sqrt{4 \pi t}\exp \lf(\frac{\|f(t)\|_2^2}{4t} \rg) d \bx.
\end{equation}
Observe that $Y(\bx, \bx + f(t), f) = 0$ unless $f_1 + x_1 > f_2 + x_2 > \dots > f_k + x_k$. Equivalently, we require
\begin{equation}
\label{E:xk-condition}
\bx - x_k{\bf 1}\ge \T f(0) - \T f_k(0) {\bf 1}
\end{equation}
where ${\bf 1} = (1, \dots, 1)$. here the functional $\T$ is the Tetris map from Theorem \ref{T:tetris-theorem}.
Next, by Lemma \ref{L:density-estimate}.2 we can see that the integrand in \eqref{E:Xkt-integrand} is bounded above by
\begin{equation}
\label{E:simple-integrand}
\exp\lf(- \frac{2}{3} \sum_{i=1}^k [(x_i^+)^{3/2} + ([x_i + f_i(t)]^+)^{3/2}]+ O_k(t^3 + \sqrt{t}(\|\bx\|_2^{5/4} + \|f(t)\|_2^{5/4})) \rg).
\end{equation}
Also, by Lemma \ref{L:density-estimate}.1, for $x_k - \T f_k(0) \le - c_k t^2$, the integrand is bounded above by
$$
\exp\Big(- d_k|x_k - \T f_k(0)|^3 + \|f(t)\|_2^2 + c_k t^3\Big),
$$
Together with \eqref{E:simple-integrand}, this bounds implies that the contribution to the integral from the region where either $x_1 \ge c_k(t^2 + \S(f)^{2/3})$ or $x_k \le \T f_k(0) - c_k' (t^2 + \S(f)^{1/3})$ is at most $e^{-\S(f)}$ which is bounded above by the right-hand side of \eqref{E:Xktf-body}.

The size of the remaining region is $\exp(O_k(\log (\S(f)+2))$, which folds in to the error, so it enough to simply maximize \eqref{E:simple-integrand} subject to the constraint \eqref{E:xk-condition} on the set where $x_1 \ge c_k(t^2 + \S(f)^{2/3})$ or $x_k \le \T f_k(0) - c_k' (t^2 + \S(f)^{1/3})$. Observe that on this set, the error term in \eqref{E:simple-integrand} is at most $O_k(t^3 + \sqrt{t}\S(f)^{5/6})$ so it suffices to understand the main term. As the main term $\frac{2}{3} \sum_{i=1}^k [(x_i^+)^{3/2} + ([x_i + f_i(t)]^+)^{3/2}]$ is decreasing in each $x_i$, the minimal value occurs at $\bx = \T f(0) - c_k' (t^2 + \S(f)^{1/3})$. Plugging in this value for $\bx$ yields 
$$
\S(f) - O_k((t^2 + \S(f)^{1/3})\sqrt{t^2 + \S(f)^{2/3}}) = \S(f) - O_k(t^3 + \sqrt{t}\S(f)^{5/6}),
$$
as desired.	
\end{proof}
		\section{The lower bound in Theorem \ref{T:tetris-theorem}}
		\label{S:lower-bound}

		In this section we complete the proof of Theorem \ref{T:tetris-theorem}. For this section $k \in \N, t \ge 1,$ and $f \in \sC^k([0, t])$ are fixed throughout and we use the notation $\S, \T$ from Theorem \ref{T:tetris-theorem}.
		The proof of the lower bound for Theorem \ref{T:tetris-theorem} involves a Brownian Gibbs resampling on a staircase-shaped region of the form
		$$
		U = \bigcup_{i=1}^k \{i\} \X [x_i^-, x_i^+],
		$$
		where $\bx^-, \bx^+ \in \R^k$, and as in Figure \ref{fig:parabola-avoidance-multiple} we roughly take $x_i^- \sim -\sqrt{\T f_i(0)}$ and $x_i^+ \sim t + \sqrt{\T f_i(t) + t^2}$. For technical reasons, we need to space the values $x_i^\pm$ out slightly. Define
		\begin{equation}
		\label{E:xi-defn}
		x^-_i = -\sqrt{\T f_i(0)} - (k + 2 - i), \qquad x^+_i = \sqrt{\T f_i(t) + t^2} + t + (k + 2 - i),
		\end{equation}
		With these definitions, we have the separation
		$$
		x^-_i + 1 \le x^-_{i+1}, \qquad x^+_{i+1} + 1 \le x^+_{i},
		$$
		and $x_i^- \le -2$, $x_i^+ \ge 2 t$ for all $i$.
		
		One major simplification when applying a Gibbs resampling argument for the lower bound is that we can work only with a part of space where $\fA$ behaves nicely on the outer boundary of $U$. The next lemma sets up this nice boundary behaviour for $\fA$. For this lemma and throughout this section, let 
		$$
		E_{t, f} = 2t^2 + \T f_1(0) + \T f_1(t).
		$$
	The errors in the section will be expressed in terms of $E_{t, f}$.
	
		\begin{figure}
		\centering
		\includegraphics[width=10cm]{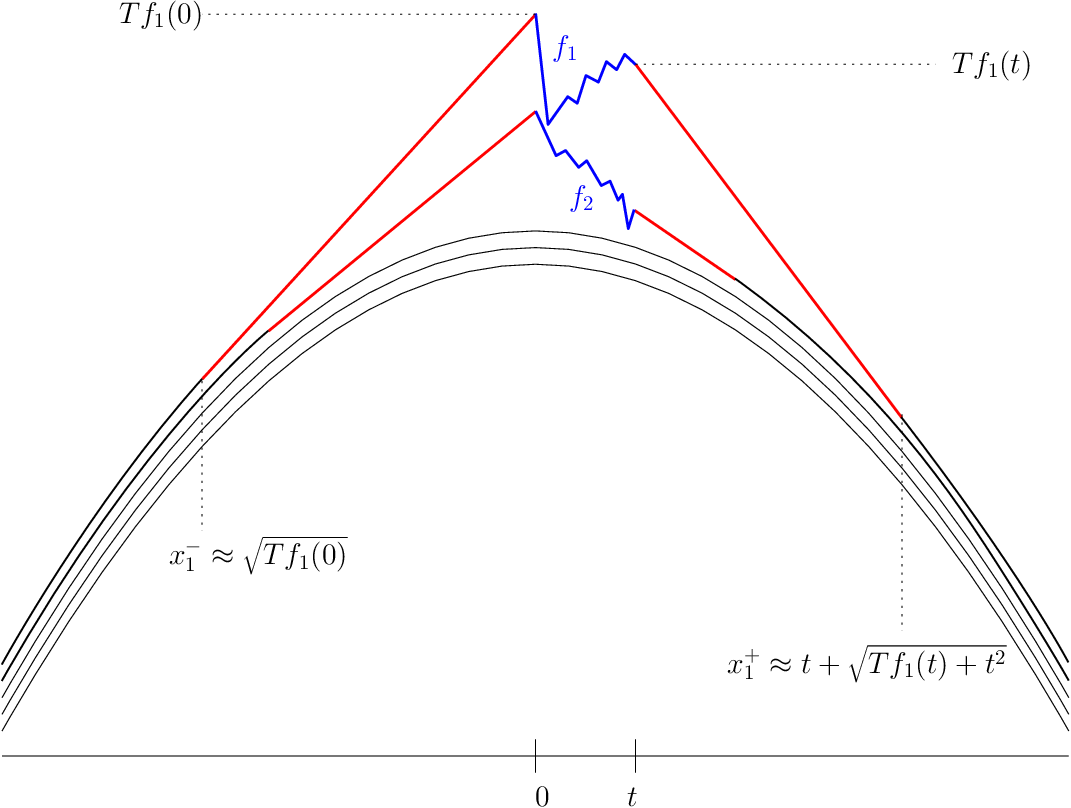}
		\caption{The setup for the lower bound in Theorem \ref{T:tetris-theorem}. The goal is to apply a Brownian Gibbs resampling to create the above configuration, where the red lines are tangent to the parabola. The start and end locations of the red lines for the top curve are indicated. The cost $\S (f)$ is (up to a lower order term) the Dirichlet energy difference between the red lines and the corresponding parabolic segments.}
		\label{fig:parabola-avoidance-multiple}
	\end{figure}

\begin{lemma}
\label{L:good-event-lemma}
Let $A_m$ be the event where
$$
|\fA_{i}(y) + y^2| \le m \log^{2/3}(3 k E_{t, f}) \quad \text{ for all } \quad y \in [- 3 k E_{t, f}, 3 k E_{t, f}], i \in \II{1, k + 1}.
$$
Also, for each $i \in \II{1, k}$, let $C_\ep^{i, -}$ be the set where 
$$
\P_{x_i^-, x_i^- + 1}(\fA_i(x_i^-), \fA_i(x_i^- + 1), \fA_{i+1}) \ge \ep.
$$
In words, $C_\ep^{i, -}$ is the set where the probability that an independent Brownian bridge $B$ from $(x_i^-, \fA_i(x_i^-))$ to $(x_i^- + 1, \fA_i(x_i^- + 1))$ stays above $\fA_{i+1}$ is at least $\ep$. Similarly let $C_\ep^{i, +}$ be the set where
$$
\P_{x_i^+ - 1, x_i^-}(\fA_i(x_i^+ - 1), \fA_i(x_i^+), \fA_{i+1}) \ge \ep.
$$
Then there exists a $k$-dependent constant $m_k > 0$ such that for $m \ge m_k$ we have
$$
\P(D_m
) \ge 1/2, \qquad \text{ where } \qquad D_m = A_m \cap \bigcap_{i=1}^k C_{1/m}^{i, -} \cap C_{1/m}^{i, +}
$$
\end{lemma}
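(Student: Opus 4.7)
The plan is a union bound: since $\P(D_m^c) \le \P(A_m^c) + \sum_{i=1}^k [\P((C_{1/m}^{i,-})^c) + \P((C_{1/m}^{i,+})^c)]$, it suffices to make each of the $2k+1$ terms at most $1/(4k+2)$ for $m \ge m_k$. For $A_m$, I combine the one-point upper and lower tail bounds (Lemmas \ref{L:one-point-upper-bound} and \ref{L:one-point-lower-bound}) with the two-point estimate (Lemma \ref{L:two-point-estimate-weak}) and the chaining lemma (Lemma \ref{L:mod-cont}), applied to the stationary process $y \mapsto \fA_i(y) + y^2 = \cA_i(y)$, to obtain, for each $i \in \II{1, k}$ and $M \ge 1$,
\[
\P\Big(\sup_{y \in [-L, L]} |\fA_i(y) + y^2| > M \Big) \le c_k(L+1) e^{-d_k M^{3/2}}.
\]
Substituting $L = 3kE_{t,f}$ and $M = m \log^{2/3}(3kE_{t,f})$ and noting $3kE_{t,f} \ge 6k$ (since $E_{t,f} \ge 2t^2 \ge 2$), the right-hand side becomes $c_k(3kE_{t,f})^{1 - d_k m^{3/2}}$, which is at most $1/(4k+2)$ for $m$ sufficiently large depending only on $k$.

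For $C_{1/m}^{i,-}$ (the $+$ case is symmetric), the crucial observation is that the distribution of the acceptance probability
\[
Z_i^- := \P_{x_i^-, x_i^- + 1}(\fA_i(x_i^-), \fA_i(x_i^- + 1), \fA_{i+1})
\]
does not depend on $x_i^-$, and hence not on $t$ or $f$. Writing a variance-$2$ Brownian bridge from $(x_i^-, \fA_i(x_i^-))$ to $(x_i^- + 1, \fA_i(x_i^- + 1))$ as $L(s) + W(s)$ for $s \in [0, 1]$, with $L(s) = (1-s)\fA_i(x_i^-) + s\fA_i(x_i^- + 1)$ and $W$ a centered bridge on $[0, 1]$ independent of $\fA$, the acceptance event becomes $\{W(s) > H(s) \text{ for all } s \in [0, 1]\}$, where $H(s) = \fA_{i+1}(x_i^- + s) - L(s)$. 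A direct calculation using $\fA_j = \cA_j - (\cdot)^2$ shows the parabolic contributions to $H$ collapse to $s(1-s)$, giving
\[
H(s) = \cA_{i+1}(x_i^- + s) - (1-s)\cA_i(x_i^-) - s\cA_i(x_i^- + 1) + s(1-s).
\]
By the stationarity of $\cA$ recorded in \eqref{E:cAs}, $H \eqd \tilde H$ with $\tilde H$ defined identically but with $x_i^-$ replaced by $0$. Therefore $Z_i^- = \E_W[\indic(W > H)]$ equals in law an $x_i^-$-independent random variable $\tilde Z_i$, which is positive almost surely because $\cA_i > \cA_{i+1}$ strictly on $[0, 1]$. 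Monotone convergence then gives $\P(\tilde Z_i < 1/m) \to 0$ as $m \to \infty$, so $\P((C_{1/m}^{i,-})^c) \le 1/(4k+2)$ for $m$ beyond a $k$-dependent threshold. Setting $m_k$ equal to the maximum of the two thresholds obtained closes the union bound.

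The main obstacle is the parabolic cancellation identity that makes the law of $Z_i^\pm$ independent of $x_i^\pm$; without it, one would only obtain a threshold depending on $t$ and $f$ through the possibly large parabolic drift on the unit intervals $[x_i^-, x_i^- + 1]$ and $[x_i^+ - 1, x_i^+]$. Verifying this identity and the accompanying monotone-convergence upgrade to a uniform-in-$(t,f)$ quantitative bound is the only nonroutine step.
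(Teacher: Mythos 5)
Your proof is correct and follows essentially the same route as the paper: a union bound, the one-point and two-point tail estimates plus chaining for $A_m$, and shift-invariance plus strict ordering for $C_{1/m}^{i,\pm}$. The one genuine addition is that you spell out the parabolic-cancellation computation ($H(s) = \cA_{i+1}(x_i^- + s) - (1-s)\cA_i(x_i^-) - s\cA_i(x_i^- + 1) + s(1-s)$) verifying that the law of the acceptance probability is indeed independent of $x_i^\pm$, which the paper asserts in one line by citing stationarity of $\cA$; this is a worthwhile detail to have made explicit.
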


Note the implicit dependence on $t, f$ in the events in the lemma; the claim is that we can take $m_k$ independent of $t, f$.

\begin{proof}
By a union bound, it is enough to show that
\begin{equation*}
\lim_{m \to \infty} \inf_{t \ge 1, f} \P A_m = 1, \qquad \lim_{\ep \to 0} \inf_{t \ge 1, f} \P C_\ep^{i, *} = 1 \quad \text{ for all } i \in \II{1, k}, * \in \{+, -\}.
\end{equation*}
By Lemmas \ref{L:one-point-upper-bound} and \ref{L:one-point-lower-bound} and a union bound, we have
$$
|\fA_{j}(i) + i^2| \le m \log^{2/3}(3 k E_{t, f})
$$
for all integers $i \in \Z\cap [- 3 k E_{t, f} - 1, 3 k E_{t, f} + 1]$ and all $j \in \II{1, k+1}$ with probability tending to $1$ with $m$, uniformly in $t$ and $f$. 
 Next, for all $i \in \Z\cap  [- 3 k E_{t, f} - 1, 3 k E_{t, f}]$ and $j \in \II{1, k}$, by Lemma \ref{L:two-point-estimate-weak} each of the processes $\fA_j(s) + s^2, s \in [i, i + 1]$ satisfies the conditions of the modulus of continuity Lemma \ref{L:mod-cont} with common $\be = 1/5$ and $\al = c_k$. Therefore by another union bound, $\P A_m \to 1$ as $m \to \infty$ uniformly in $t$ and $f$.

Next, since $t \mapsto \fA(t) + t^2$ is stationary, the probabilities $\P C^{i, \pm}_\ep$ do not depend on the exact values of $x_i^\pm$, and hence they do not depend on $f, t$. Moreover, the strict ordering of the Airy line ensemble implies that for every $i, \ep$, we have $\P C^{i, *}_\ep \to 1$ as $\ep \to 0$, as desired. 
\end{proof}

One of the main difficulties in applying a Gibbs resampling to make rigorous the picture in Figure \ref{fig:parabola-avoidance-multiple} is dealing with non-intersection costs that come from an $O(1)$ region near the boundary of $U$. We can handle this on the event $D_m$ with the following lemma.

Setting some notation,  let $\bx \in \R^k_\ge, \by \in \R^k_\le$ and define $I_k = [x_k, y_k], I_j = [x_j, y_j] \smin I_{j+1}$ for $j < k$. Consider a $k$-tuple of functions $h = (h_2, \dots, h_{k+1})$ such that the domain of $f_i$ contains $I_{i-1}$ for all $i$. Let 
$$
\NI(\bx, \by, h) = \{g \in \prod_{j=1}^k \sC([x_j, y_j]) : g_1(r) > \dots > g_j(r) > h_{j+1}(r) \text{ for all } r \in I_j, j \in \II{1, k} \},
$$ 
and let $\P_{\bx, \by}(\bu, \bv, h)$ be the probability that $k$ independent Brownian bridges from $(x_i, u_i)$ to $(y_i, v_i)$ lie in the set $\NI(\bx, \by, f)$.
Finally we set
$$
\zeta = \zeta_m = m \log^{2/3}(3 k E_{t, f}).
$$
\begin{lemma}
	\label{L:avoidance-lines-lemma}
	Define $\fA^k(\bx^\pm) \in \R^k_>$ by
	$$
	\fA^k(\bx^\pm) = (\fA_1(x_1^\pm), \dots, \fA_k(x_k^\pm)), 
	$$
	and let $\iota = (k, k-1, \dots, 1)$. Then there exists a $k$-dependent constant $m_k' \ge 0$ such that for $m \ge m_k'$, on $D_m$ we have the following almost sure lower bounds on non-intersection probabilities:
	\begin{equation}
	\label{E:0-event}
	\P_{\bx^-, 0}(\fA^k(\bx^-), \T f(0) + 2\zeta_m\iota, \fA|_{U^c}) \ge e^{-c_k m^2 \sqrt{E_{t, f}}}.
	\end{equation}
	and
	\begin{equation}
	\label{E:t-event}
	\P_{t, \bx^+}(\T f(t) + 2\zeta_m\iota, \fA^k(\bx^+), \fA|_{U^c}) \ge e^{-c_k m^2 \sqrt{E_{t, f}}}.
	\end{equation}
\end{lemma}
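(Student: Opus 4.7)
The two bounds \eqref{E:0-event} and \eqref{E:t-event} are symmetric: applying \eqref{E:0-event} to the reflected function $\tilde f(r) := f(t - r)$ (which exchanges $\T f(0) \leftrightarrow \T f(t)$ and $\bx^- \leftrightarrow -\bx^+$) together with the distributional symmetry of $\fA$ under $r \mapsto -r$ yields \eqref{E:t-event}. I therefore focus on \eqref{E:0-event}. The plan is to exhibit, on $D_m$, an explicit sub-event $E$ of the $k$-tuple of independent Brownian bridges (each $B_i$ from $(x_i^-, \fA_i(x_i^-))$ to $(0, \T f_i(0) + 2\zeta_m(k+1-i))$) that sits inside the staircase non-intersection event and satisfies $\P(E) \ge e^{-c_k m^2 \sqrt{E_{t, f}}}$. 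The first step is a monotonicity reduction: on $D_m \subset A_m$ we have $\fA_j(r) \le -r^2 + \zeta_m$ uniformly in $j \in \II{1, k+1}$ and $r$ in the relevant window, so by Lemma \ref{L:CH-monotonicity} it is enough to bound the non-intersection probability against the simpler lower boundary $g^\ast(r) := -r^2 + \zeta_m$ in place of $\fA|_{U^c}$.

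Next, set $s_i := x_i^- + 1$ and pick target values $p_1 > p_2 > \dots > p_k$ at the respective $s_i$ satisfying (i) $p_i \ge -s_i^2 + 2\zeta_m$, (ii) $p_i \ge \fA_i(s_i)$ (possible on $A_m$), and (iii) the deterministic linear paths $L_i^\ast$ on $[s_i, 0]$ connecting $p_i$ to $\T f_i(0) + 2\zeta_m(k+1-i)$ are pairwise separated by at least $\zeta_m$ on the overlapping intervals and satisfy $L_i^\ast(r) \ge g^\ast(r) + \zeta_m$ throughout (this last point is checked by noting that $r \mapsto L_i^\ast(r) + r^2$ is convex and attains its minimum at an endpoint where it is $\ge 2\zeta_m$). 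Define $E = E_1 \cap E_2 \cap E_3$, where $E_1 = \{B_i(s_i) \in [p_i, p_i + 1] \text{ for all } i\}$, $E_2$ says that each $B_i$ stays above $\fA_{i+1}$ on $[x_i^-, s_i]$, and $E_3$ says that each $B_i$ on $[s_i, 0]$ stays within $\zeta_m/4$ of its own linear interpolation. Direct verification shows that $E$ lies inside the non-intersection event against $g^\ast$ on all the relevant sub-intervals $[x_j^-, x_{j+1}^-]$.

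The probability of $E$ is bounded by a telescoping argument. For $\P(E_1)$: a product of one-dimensional Gaussian density estimates using $\text{Var}(B_i(s_i)) = \Theta(1)$ and $|p_i - L_i(s_i)| = O_k(\zeta_m)$ (where $L_i$ is the bridge mean) gives $\P(E_1) \ge e^{-c_k \zeta_m^2}$. For $\P(E_2 \mid E_1)$: conditional on $B_i(s_i) = y \in [p_i, p_i + 1]$, the restriction of $B_i$ to $[x_i^-, s_i]$ is a Brownian bridge whose right endpoint satisfies $y \ge \fA_i(s_i)$, so by Lemma \ref{L:CH-monotonicity} (right-endpoint monotonicity) and the bound $C_{1/m}^{i,-} \subset D_m$, it stays above $\fA_{i+1}$ with probability at least $1/m$; the product over $i$ gives $m^{-k}$. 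For $\P(E_3 \mid E_1, E_2)$: by the Markov property at $s_i$, conditional on $E_1$ the bulk restriction of $B_i$ is itself a Brownian bridge independent across $i$, and a standard Brownian-bridge tube bound (which scales as $\Theta\big((r_i/\zeta_m^2)^{1/2}\big)\exp(-c r_i/\zeta_m^2)$ when $\zeta_m^2 \ll r_i$) yields $\P(E_3 \mid E_1, E_2) \ge e^{-c_k r_1/\zeta_m^2}$. Multiplying the three factors and using $r_1 \le c_k \sqrt{E_{t, f}}$, $\zeta_m^2 \le c_k m^2 \sqrt{E_{t, f}}$ (for $E_{t, f} \ge 2$, which holds automatically since $t \ge 1$), and $k\log m \le c_k m^2$, all exponents fit under $c_k m^2 \sqrt{E_{t, f}}$ for $m \ge m_k'$ sufficiently large.

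The main obstacle is the coordinated construction of the targets $p_i$ compatible with (i)-(iii): the starting heights $\fA_i(x_i^-) \approx -r_i^2$ are pinned near the parabola, the gaps $r_i - r_{i+1}$ can be very non-uniform, and one must simultaneously meet the three conditions while keeping $|p_i - L_i(s_i)| = O_k(\zeta_m)$ so that the Gaussian cost $\P(E_1)$ remains manageable. A secondary technical point is verifying that the tube event $E_3$ of width $\zeta_m/4$ really implies staircase non-intersection against $g^\ast$ on each of the overlapping sub-intervals $[x_j^-, x_{j+1}^-]$ where different subsets of bridges are active; this reduces to checking pairwise separations of the interpolation paths $L_i^\ast$ at the transition times, which is ensured by building the staircase spacing into the choice of $p_i$ at the outset.
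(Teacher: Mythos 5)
Your construction mirrors the paper's almost event-by-event: pin each bridge at time $s_i = x_i^- + 1$ near a tangent-line value $p_i$, control the bulk $[s_i, 0]$ by a tube around a linear interpolation, handle the first unit interval $[x_i^-, s_i]$ via $C_{1/m}^{i,-}$, and multiply. The paper's $G_{1,i}, G_{2,i}, G_{3,i}$ are your $E_1, E_3, E_2$ in different order, and the explicit targets are $\ell_i(s) = 2\sqrt{\T f_i(0)}\,s + \T f_i(0) + 2(k-i+1)\zeta_m$, which are the tangent lines your conditions (i)--(iii) describe. So the route is the same. Two issues, one substantive.

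The substantive gap: your $E_2$ is one-sided. On $[x_i^-, s_i]$ you only require $B_i > \fA_{i+1}$; nothing caps $B_i$ from above. But you also need $B_{i-1} > B_i$ on that interval, and $B_i$ there is a unit-length bridge with endpoints roughly at $g^\ast(x_i^-)$ and $g^\ast(s_i) + \zeta_m$, which can spike upward well past where $B_{i-1}$ is guaranteed to be. Your $E_3$ only keeps $B_{i-1}$ within $\zeta_m/4$ of $L_{i-1}^\ast$, and the $\zeta_m$-separation of $L_{i-1}^\ast$ from $L_i^\ast$ is irrelevant on $[x_i^-, s_i]$ because $L_i^\ast$ isn't even defined there. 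So $E = E_1 \cap E_2 \cap E_3$ is not actually a subset of the non-intersection event, and the claimed ``direct verification'' fails precisely on the first unit interval. The paper's $G_{3,i}$ is two-sided for exactly this reason: $\fA_{i+1}(s) < B_i(s) < \ell_{i-1}(s) - \zeta_m$, which together with $B_{i-1}$ staying within $2$ of $\ell_{i-1}$ forces the ordering. The extra upper constraint holds with conditional probability at least $1 - (2m)^{-1}$, so the fix is cheap but necessary; your proof must include it.

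A secondary issue: the opening ``monotonicity reduction'' to $g^\ast(r) = -r^2 + \zeta_m$ as a global replacement for $\fA|_{U^c}$ is not usable. On $A_m$ one has $\fA_i(x_i^-) \le -(x_i^-)^2 + \zeta_m = g^\ast(x_i^-)$, so the bridge $B_i$ starts at or below $g^\ast$ and the event ``$B_i > g^\ast$ on $[x_i^-, \ldots]$'' has probability zero. You quietly abandon this reduction in $E_2$ by retaining $\fA_{i+1}$ on the first unit interval (correctly), but the reduction should not be stated as a first step; use $g^\ast$ only on the bulk and keep $\fA_{i+1}$ near the left endpoint. Relatedly, the coordinated construction of the $p_i$ is only described by conditions (i)--(iii) and flagged as ``the main obstacle''; it needs to actually be exhibited (the tangent-line formula above does the job, with the staircase spacing $2(k-i+1)\zeta_m$ baked in and the $x_i^-$ separations guaranteed by \eqref{E:xi-defn}).
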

\begin{proof}	
\textbf{Step 1: Defining events for the proof of \eqref{E:0-event}.} \qquad Define
$$
\ell_i(s) = 2\sqrt{\T f_i(0)}s + \T f_i(0) + 2 (k-i + 1)\zeta_m.
$$ 	
Observe that for all $i \in \II{1, k}, s \in \R$ we have 
\begin{equation}
\label{E:first-li-constraint}
\ell_i(s) \ge - s^2 + 2 \zeta_m.
\end{equation}
Moreover, for $i \in \II{1, k-1}$ we have
\begin{equation}
\label{E:second-li-constraint}
\ell_i(s) \ge \ell_{i+1}(s) + 2\zeta_m, \qquad s \in [x_{i+1}^- + 1, 0].
\end{equation}
The event $D_m$ implies that for all $(i, s) \in \II{1, k + 1} \X [x_1^-, x_1^+]$ we have $\fA_i(s) \le - s^2 + \zeta_m$. Therefore if $B$ is a $k$-tuple independent Brownian bridges from $(x_i^-, \fA^k(x_i^-))$ to $(0, \T f(0) + 2 \zeta_m \iota)$, then $B \in \NI(\bx^-, 0, \fA|_{U^c})$ as long as the following three events hold for all $i \in \II{1, k}$ and $m$ is sufficiently large.
\begin{enumerate}
	\item $G_{1, i}$: \quad We have $B_i(x_i^- + 1) \in [\ell_i(x_i^- + 1) - 1, \ell_i(x_i^- + 1)]$.
	\item $G_{2, i}$: \quad For all $s \in [x_i^- + 1, 0]$ we have $|B_i(s) - \ell_i(s)| \le 2$.
	\item $G_{3, i}$: \quad For all $s \in [x_i^-, x_i^- + 1]$ we have $\fA_{i+1}(s) < B_i(s) < \ell_{i-1}(s) - \zeta_m$. Here the upper inequality is dropped if $i=1$.
\end{enumerate}
Therefore conditional on $\fA|_U$, we have that
\begin{align*}
\P_{\bx^-, 0}(\fA^k(\bx^-), \T f(0) + 2\zeta_m\iota, \fA|_{U^c}) &\ge \P(\bigcap_{i=1}^k G_{1, i} \cap G_{2, i} \cap G_{3, i}) \\
&\ge \prod_{i=1}^k  \P(G_{1, i}) \P(G_{2, i} \mid G_{1, i}) \P(G_{3, i} \mid G_{1, i}).
\end{align*}
Here the final inequality uses independence properties of the Brownian bridges $B_i$.

\textbf{Step 2: Bounding $\P(G_{1, i}), \P(G_{2, i} \mid G_{1, i})$, and $\P(G_{3, i} \mid G_{1, i})$ on $D_m$.} \qquad We have
\begin{equation*}
\P(G_{1, i}) = \frac{1}{\sqrt{4\pi(|x_i^-| - 1)/|x_i^-|}} \int_{\ell_i(x_i^- + 1) - 1}^{\ell_i(x_i^- + 1)} \exp\lf(- \frac{(u - \mu_i)^2}{4(|x_i^-| - 1)/|x_i^-|}\rg) du,
\end{equation*}
where 
$$
\mu_i = \frac{|x_i^-| - 1}{|x_i^-|}\fA_i(x_i^-) + \frac{1}{|x_i^-|}[\T f_i(0) + 2 (k-i + 1)\zeta_m].
$$
Now, on $D_m$ we have $|\fA_i(x_i^-) + (x_i^-)^2| \le \zeta_m$, so using that $x_i^-= -\sqrt{\T f_i(0)} - (k + 2 - i)$ and $x_i^- \le -2$ we can check that
$$
|\mu_i + (x_i^- + 1)^2| \le c_k \zeta_m
$$
for a $k$-dependent constant $c_k > 0$. The same bound holds with $\ell_i(x_i^- + 1)$ in place of $\mu_i$. Therefore
\begin{equation}
\label{E:G1-bd}
\P(G_{1, i}) \ge e^{-c_k \zeta_m^2} = e^{-c_k m^2 \log^{4/3} (3k E_{t, f})}.
\end{equation}
Next, $\P(G_{2, i} \mid G_{1, i})$ is bounded below by the probability that a Brownian bridge from $(x_i^- + 1,0)$ to $(0, 0)$ has maximum absolute value at most $1$. Therefore
\begin{equation}
\label{E:G2-bd}
\P(G_{2, i} \mid G_{1, i}) \ge e^{- c x_i^-}
\end{equation}
for an absolute constant $c > 0$. We turn our attention to $G_{3, i}$. We will condition on the value of $u = B_i(x_i^- + 1)$ given $u \in [\ell_i(x_i^- + 1) - 1, \ell_i(x_i^- + 1)]$; the bounds on $u$ are implied by the event $G_{1, i}$. Let $L_u$ be the linear function with $L_u(x_i^-) = \fA_i(x_i^-)$ and $L_u(x_i^- + 1) = u$. Since $\fA_i(x_i^-) \le - (x_i^-)^2 + \zeta_m$, by \eqref{E:first-li-constraint} and \eqref{E:second-li-constraint} we have $\ell_{i-1}(s) - \zeta_m \ge L_u(s)$. Therefore for all large enough $m$, we have
$$
\P(B_i(s) < \ell_{i-1}(s) - \zeta_m \text{ for all } s \in [x_i^-, x_i^- + 1]\mid u = B_i(x_i^- + 1) ) \ge 1 - (2m)^{-1}
$$
for all $u \in [\ell_i(x_i^- + 1) - 1, \ell_i(x_i^- + 1)]$. Moreover, $\ell_i(x_i^- + 1) - 1 \ge \fA_i(x_i^- + 1)$ by \eqref{E:first-li-constraint} on the event $D_m$. Therefore on $D_m$, 
\begin{align*}
\P(B_i(s) > \fA_{i+1}(s) \text{ for all } s \in [x_i^-, x_i^- + 1] &\mid u = B_i(x_i^- + 1) ) \\
&\ge \P_{x_i^-, x_i^- + 1}(\fA_i(x_i^-), \fA_i(x_i^- + 1), \fA_{i+1}) \ge 1/m,
\end{align*}
where the final inequality follows since $D_m$ implies the event $C^{i, -}_{1/m}$.
Combining these two bounds by a union bound gives that
\begin{equation}
\label{E:G3i-bd}
\P(G_{3, i} \mid G_{1, i}) \ge 1/(2m).
\end{equation}
The bounds \eqref{E:G1-bd}, \eqref{E:G2-bd}, and \eqref{E:G3i-bd} then yield \eqref{E:0-event}.

\textbf{Step 3: The proof of \eqref{E:t-event}.} \qquad Define
$$
\tilde \fA_i(s) = \fA_i(t - s) + t^2 - 2st, \qquad \tilde U = \bigcup_{i=1}^k \{i\} \X [-x_i^+ + t, -x_i^- + t],
$$
and note that $\tilde \fA$ is another parabolic Airy line ensemble. Now, since Brownian bridge law commutes with affine shifts,
\begin{align*}
\P_{t, \bx^+}(\T f(t) + 2\zeta_m\iota, \fA^k(\bx^+), \fA|_{U^c}) = \P_{-\bx^+ + t, 0}(\tilde \fA^k(-\bx^+ + t), Z(f) + 2\zeta_m\iota, \tilde \fA^k|_{\tilde U^c}),
\end{align*}
where $Z(f) = \T f(t) + t^2$.
Via this transformation, the proof of \eqref{E:t-event} follows in the same way as the proof of \eqref{E:0-event} with $Z$ in place of $Y$, and $-x_i^+ + t$ in place of $x_i^-$. Indeed, the only inputs we required from $\fA$ also hold for $\tilde \fA$ on $D$. These are parabolic shape bounds implied by the event $A_m$, and the bound $\P_{x_i^-, x_i^- + 1}(\fA_i(x_i^-), \fA_i(x_i^- + 1), \fA_{i+1}) \ge 1/m$; the analogue 
$$
\P_{-x_i^+ + t, -x_i^- + t + 1}(\tilde \fA_i(-x_i^+ + t), \tilde \fA_i(-x_i^+ + t + 1), \tilde \fA_{i+1}) \ge 1/m
$$
is implied by the event $C^{i, +}_{1/m}$.
\end{proof}

We can now prove the upper bound in Theorem \ref{T:tetris-theorem}. In fact, we get a slightly stronger estimate on the error than what is claimed in the introduction.
\begin{prop}
	\label{P:tetris-lower}
We have
	\begin{equation*}
	X_{k, t}(f) \ge \exp (-\S(f) + O_k(t E_{t, f})) = \exp(-\S(f) + O_k(t^3 + t \S(f)^{2/3})).
	\end{equation*}
\end{prop}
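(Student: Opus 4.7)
I would prove the lower bound by making rigorous the tangent heuristic of Section~\ref{SS:proof-heuristic}. On the event $D_m$ from Lemma~\ref{L:good-event-lemma} (which has probability at least $1/2$), the plan is to apply the Brownian Gibbs property to the staircase region $U$: conditional on $\cF := \sigma(\fA|_{U^c})$, the restriction $\fA|_U$ is distributed as $k$ independent Brownian bridges $B_i$ from $(x_i^-, \fA_i(x_i^-))$ to $(x_i^+, \fA_i(x_i^+))$ conditioned on the non-intersection event $N$. I would then reduce the desired lower bound on $X_{k,t}(f)$ to a lower bound on the joint Lebesgue density $\rho(\bu^*, \bv^*)$ of $(\fA^k(0), \fA^k(t))$ at the ``tangent-lift targets'' $\bu^* = \T f(0) + 2\zeta\iota$ and $\bv^* = \T f(t) + 2\zeta\iota$ (with $\iota = (k, \dots, 1)$), using that on $D_m$ the conditional density of $\fA^k|_{[0,t]}$ at $\bu^* + f$ given these endpoints is bounded below by the standard Brownian-bridge-to-Wiener conversion factor $(4\pi t)^{k/2}\exp(\|f(t)\|_2^2/(4t))$ (the non-intersection constraint with $\fA_{k+1}$ being essentially automatic at the lifted heights).

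For the density $\rho(\bu^*, \bv^*)$, the Gibbs decomposition yields
\begin{equation*}
\rho(\bu^*, \bv^*) \ge \E\biggl[\indic_{D_m} \cdot \frac{\phi_\cF(\bu^*, \bv^*) \cdot \P(N \mid B(0) = \bu^*, B(t) = \bv^*, \cF)}{\P(N \mid \cF)}\biggr],
\end{equation*}
where $\phi_\cF$ is the joint Gaussian density of $(B(0), B(t))$ under the unconditioned bridges. I would control the three factors as follows. First, using $\fA_i(x_i^\pm) \approx -|x_i^\pm|^2$ on $D_m$, the explicit Gaussian formula expands $\log\phi_\cF(\bu^*, \bv^*)$ as minus the piecewise-linear Dirichlet energy through the four nodes $(x_i^-, \fA_i(x_i^-)), (0, u_i^*), (t, v_i^*), (x_i^+, \fA_i(x_i^+))$, plus the Dirichlet energy of the straight-line bridge mean. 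Second, the conditional non-intersection probability at the lifted endpoints is bounded below by Lemma~\ref{L:avoidance-lines-lemma} on each wing $[x_i^-, 0]$ and $[t, x_i^+]$, together with Lemma~\ref{L:nonint-probability} on the middle $[0, t]$, giving a factor of $\exp(-O_k(m^2\sqrt{E_{t,f}} + t))$. Third, an upper bound on $\P(N \mid \cF)$ comes from Lemma~\ref{L:parabola-lemma-A}.1 applied line-by-line to each bridge's probability of staying above the parabola (which on $D_m$ approximates the lower boundary $\fA_{k+1}$), yielding $\log\P(N \mid \cF) \le -(E_{\text{parabola}} - E_{\text{straight}}) + O$, where $E_{\text{parabola}}$ and $E_{\text{straight}}$ are the Dirichlet energies of the parabola and of the straight-line bridge mean over the staircase.

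Combining the three estimates gives $\log\rho(\bu^*, \bv^*) \ge -(E_{\text{tangent}} - E_{\text{parabola}}) + O_k(tE_{t,f})$, and a direct Dirichlet-energy calculation (matching the heuristic in Section~\ref{SS:proof-heuristic}) confirms that $E_{\text{tangent}} - E_{\text{parabola}} = \S(f) + \int_0^t \tfrac14 |f'|_2^2\,ds + O_k(t^3 + tE_{t,f})$, where the wing contributions sum to $\S(f)$ and the middle contribution equals the Dirichlet energy of $f$ minus $kt^3/3$. The $\int |f'|_2^2/4$ term cancels against the middle bridge-to-Wiener conversion (appearing on both sides of the ratio via Cameron-Martin), producing $X_{k,t}(f) \ge \exp(-\S(f) + O_k(tE_{t,f}))$. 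The main technical obstacle is the Dirichlet-energy bookkeeping: one must verify that the asymmetry correction $(\beta_i - \alpha_i)^2/(4\la_i)$ appearing in $\log\phi_\cF$ exactly compensates the shortfall between $\la_i^3/12$ (from Lemma~\ref{L:parabola-lemma-A}.1) and the per-line contribution $\tfrac13(\T f_i(0)^{3/2} + \T f_i(t)^{3/2})$ to $E_{\text{parabola}} - E_{\text{straight}}$ whenever $\T f_i(0) \ne \T f_i(t)$, and that all error terms coming from the $2\zeta\iota$ shifts, the $O(1)$ offsets in $x_i^\pm$, and the $O(\zeta)$-slack on $D_m$ are absorbed into $O_k(tE_{t,f})$.
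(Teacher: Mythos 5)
Your proposal follows essentially the same route as the paper's proof: condition on $\cF = \sigma(\fA|_{U^c})$ over the tangent staircase $U$, pass to the lifted endpoints $\T f(\cdot) + O(\zeta\iota)$ on the good event $D_m$, lower-bound the wing avoidance probabilities with Lemma~\ref{L:avoidance-lines-lemma}, upper-bound the full non-intersection normalizer $\P_{\bx^-,\bx^+}(\cdot)$ line-by-line with Lemma~\ref{L:parabola-lemma-A}.1, and track Dirichlet energies. The one small inefficiency is invoking Lemma~\ref{L:nonint-probability} for the middle $[0,t]$: in the paper's density formula (\ref{E:dtilde-eta}), the middle-interval non-intersection appears only as an \emph{indicator} (the path $f$ is pinned, not resampled), and on $\tilde D_m$ with $\by\in W$ this indicator is automatically $1$ once the endpoints are lifted above $\T f$ and $\fA_{k+1}$, so no bridge-avoidance estimate is needed there; also note the Dirichlet energy of the parabola on $[0,\la]$ is $\la^3/3$ (variance-2 normalization), not $\la^3/12$, which matters for the bookkeeping you flag as the remaining obstacle.
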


\begin{proof} 		
	\textbf{Step 1: A conditional density formula.} \qquad
	Let $\cF$ be the $\sig$-algebra generated by $\fA$ on the set $U^c$. 	We first use the Brownian Gibbs property to give a formula for the conditional expectation
	\begin{equation}
	\label{E:rnd}
		\E_\cF(X_{k, t}(f)).
	\end{equation}
	Let $\nu_{\bu^-, \bu^+}$ be the law of $k$ independent Brownian bridges $B_1, \dots, B_k$, where $B_i$ goes from $(x_i^-, u^-_i)$ to $(x_i^+, u^+_i)$, and let $\tilde \nu_{\bu^-, \bu^+}$ be the pushforward of $\nu_{\bu^-, \bu^+}$ onto $\R^k \X \sC^k_0([0, t])$ under the map
	\begin{equation}
	\label{E:pushing-map}
	(g_1, \dots, g_k) \mapsto [(g_1(0), \dots, g_k(0)); (g_1|_{[0, t]} - g_1(0), \dots, g_k|_{[0, t]} - g_k(0))].
	\end{equation}
	Then
	\begin{equation}
	\label{E:dtilde-nu}
	\begin{split}
	\frac{d \tilde \nu_{\bu^-, \bu^+}}{d\by d \mu_{k, t}}(\by, f) &= \lf(\frac{(x_i^+ + |x_i^-|)}{(x_i^+ - t) |x_i^-|}\rg)^{k/2} \\ 
	&\X \prod_{i=1}^k  \exp \lf(- \frac{(y_i - u_i^-)^2}{4|x_i^-|} - \frac{(u_i^+ - y_i - f_i(t))^2}{4(x_i^+ - t)}  + \frac{(u_i^+ - u_i^-)^2}{4(x_i^+ + |x_i^-|)}\rg).
	\end{split}
	\end{equation}
	Now, for a function $h:\II{1, k+1} \X [x_1^-, x_1^+] \smin U \to \R$, define the conditional measure $\eta_{\bu^-, \bu^+, h}$ on $\sC(U)$ by
	$$
	\eta_{\bu^-, \bu^+, h}(A) = \frac{\nu_{\bu^-, \bu^+}(A \cap \NI(\bx^-, \bx^+, h))}{\P_{\bx^-, \bx^+}(\bu^-, \bu^+, h)},
	$$
	and let $\tilde \eta_{\bu^-, \bu^+, h}$ be the pushforward of $\eta_{\bu^-, \bu^+, h}$ under the map \eqref{E:pushing-map}.
 Then
	\begin{equation}
	\label{E:dtilde-eta}
	\frac{d \tilde \eta_{\bu^-, \bu^+, h}}{d \tilde \nu_{\bu^-, \bu^+}}(\by, f) = \frac{\P_{\bx^-, 0}(\bu^-, \by, h) \P_{t, \bx^+}(\by + f(t), \bu^+, h)\indic(f_1 + y_1 > \dots > f_k + y_k > h_{k+1})}{\P_{\bx^-, \bx^+}(\bu^-, \bu^+, h)}.
	\end{equation}
	Now, the conditional expectation \eqref{E:rnd} equals
	\begin{equation}
	\label{E:rn-integrated}
	\int_{\R^k}  \frac{d \tilde \eta_{\fA^k(\bx^-), \fA^k(\bx^+), \fA|_{U^c}}}{d \by d\mu^k}(\by, f) d \by = \int_{\R^k}  \frac{d \tilde \eta_{\fA^k(\bx^-), \fA^k(\bx^+), \fA|_{U^c}}}{d \tilde \nu_{\fA^k(\bx^-), \fA^k(\bx^+)}} \frac{d \tilde \nu_{\fA^k(\bx^-), \fA^k(\bx^+)}}{d \by d\mu^k}(\by, f) d \by
	\end{equation}
Let $m > 0$ be large enough so that Lemmas \ref{L:good-event-lemma} and \ref{L:avoidance-lines-lemma} hold, and let $\tilde D_m$ be the $\cF$-measurable event where $\P_\cF(D_m) > 0$. 
We will use \eqref{E:dtilde-eta} and \eqref{E:dtilde-nu} to bound \eqref{E:rn-integrated} below on the event $\tilde D_m$.

\textbf{Step 2: Bounding \eqref{E:dtilde-eta} on $\tilde D_m$.} \qquad 
Let $\zeta_m = m \log^{2/3}(3k E_{t, f})$ be as in Lemma \ref{L:avoidance-lines-lemma}, and let $W$ be the set of all $\by \in \R^k$ satisfying
\begin{equation}
\label{E:ykk}
\begin{split}
y_i &\in (\T f_i(0) + (k - i + 1)\zeta_m,  \T f_i(0) + (k - i + 1)\zeta_m+ 1).
\end{split}
\end{equation}
On the set $\tilde D_m$ (which implies $\fA_{k+1} \le \zeta_m$) if $\by \in W$ we have
\begin{equation}
\label{E:fyfy}
\indic(f_1 + y_1 > f_2 + y_2 > \dots > f_k + y_k > \fA_{k+1}) = 1.
\end{equation}
Next, for any $\bu^-, \bu^+, h$, the avoidance probabilities
$
\P_{\bx^-, 0}(\bu^-, \by, h)$ and $\P_{t, \bx^+}(\by + f(t), \bu^+, h)
$
are monotone increasing if we add $\bz \in [0, \infty)_>$ to the coordinate $\by$. Therefore if we let $\by \in W$, and set $\iota = (k, k-1, \dots, 1)$, then
\begin{align}
\nonumber
&\P_{\bx^-, 0}(\fA^k(\bx^-), \by, \fA|_{U^c}) \P_{t, \bx^+}(\by + f(t), \fA^k(\bx^+), \fA|_{U^c}) \\
\nonumber
\ge\; &\P_{\bx^-, 0}(\fA^k(\bx^-), \T f(0) + 2\zeta_m \iota, \fA|_{U^c}) \P_{y, \bx^+}(\T f(0) + 2\zeta_m \iota, \fA^k(\bx^+), \fA|_{U^c})\\
\label{E:eckm2}
\ge\; &e^{-c_k m^2 \sqrt{E_{t, f}}}.
\end{align}
where the second inequality holds on $\tilde D_m$ by Lemma \ref{L:avoidance-lines-lemma}.

Moreover, $\fA_{k+1}(x) \ge - x^2 - \zeta_m$ for $x \in [x_1^-, x_1^+]$ on $\tilde D_m$. Therefore for any $\bu^\pm$, letting $p(x) = - x^2$ we have
\begin{align}
\nonumber
\P_{\bx^-, \bx^+}(\bu^-, \bu^+, \fA|_{U^c}) &\le \prod_{i=1}^k \P_{x^-_i, x_i^+}(u_i^-, u_i^+, \fA_{k+1})
\le \prod_{i=1}^k \P_{x^-_i, x_i^+}(u_i^-, u_i^+, -p - \zeta_m).
\end{align}
We bound each of the terms in this product when $|u_i^\pm + (x_i^\pm)^2| \le \zeta_m$. First suppose that $x_i^+ - x_i^- \ge 2 \sqrt{2 \zeta_m}$. Then we can apply Lemma \ref{L:parabola-lemma-A}.1 to get that the $i$th term in the product is bounded above by 
$$
\exp \lf(c(x_i^+ - x_i^-) \log (x_i^+ - x_i^-) -J(u_i^- + (x_i^-)^2 + \zeta_m, u_i^+ + (x_i^+)^2 + \zeta_m, x_i^+ - x_i^-)\rg).
$$
Since $|u_i^\pm - (x_i^\pm)^2| \le \zeta_m$, simple algebra shows that this is bounded above by
\begin{align*}
&\exp \lf(c (x_i^+ - x_i^-) \log (x_i^+ - x_i^-) + \frac{(u_i^- - u_i^+)^2}{4(x_i^+ - x_i^-)} - \frac{(x_i^+)^3 + |x_i^-|^3}{3} - 2 \zeta_m (x_i^+ - x_i^-)\  \rg) \\
&\le \exp \lf(\frac{(u_i^- - u_i^+)^2}{4(x_i^+ - x_i^-)} - \frac{1}{3} \lf([\T f_i(0)]^{3/2} + [\T f_i(t)]^{3/2}\rg) + c_{k, m} E_{t, f} \rg)
&\end{align*}
where $c_{k, m}$ is a $k, m$-dependent constant.
On the other hand, if $x_i^+ - x_i^- \le 2 \sqrt{2 \zeta_m}$, then the same bound holds trivially as long as $c_{k, m}$ is large enough, since $\T f_i(0) \le |x_i^-|^2$ and $\T f_i(t) \le |x_i^+|^2$.

Putting this computation together with \eqref{E:dtilde-eta}, \eqref{E:fyfy}, and \eqref{E:eckm2} implies that for $\by \in W$ we have
\begin{equation}
\label{E:first-goal-bd}
\begin{split}
&\frac{d \tilde \eta_{\fA^k(\bx^-), \fA^k(\bx^+), \fA|_{U^c}}}{d \tilde \nu_{\fA^k(\bx^-), \fA^k(\bx^+)}} \ge \\
& \exp\lf(  \sum_{i=1}^k \lf[\frac{1}{3}\lf([\T f_i(0)]^{3/2} + [\T f_i(t)]^{3/2}\rg)  - \frac{(\fA_i(x_i^+) - \fA_i(x_i^-))^2}{4(x_i^+ + |x_i^-|)} \rg] - c_{k, m} E_{t, f}\rg).
\end{split}
\end{equation}
\textbf{Step 3: Bounding \eqref{E:dtilde-nu} below on $\tilde D_m$.} \qquad We turn our attention to bounding \eqref{E:dtilde-nu} below on $\tilde D_m$ for $\by \in W,$ and $\bu^\pm = \fA^k(\bx^\pm)$. 

First, noting that $|x_i^-|, x_i^+ \le \sqrt{E_{t, f}}$ and $x_i^+ \ge 2t$, the factor prior to the exponential in \eqref{E:dtilde-nu} is bounded below by
$$
\exp(-c k \log(E_{t, f}))
$$
for an absolute constant $c > 0$. Next, on $\tilde D_m$ we have that $u_i^\pm + (x_i^\pm)^2 \in [-\zeta_m, \zeta_m]$ and for $\by \in W$ we have $y_i \ge \zeta_m$ for all $i$, so $y_i - u_i^+ \ge 0$. Moreover, using that $x_i^+ =  \sqrt{\T f_i(t) + t^2} + t + (k + 2 - i)$ and that $y_i + f_i(t) - \T f_i(t) \in [2 \zeta_m, 3k \zeta_m]$ for $i \in \II{1, k}, \by \in W$
we have
\begin{align*}
\frac{(u_i^+ - y_i - f_i(t))^2}{4(x_i^+ - t)} 
&\le \frac{(2 \T f_i(t) + c_k t\sqrt{E_{t, f}})^2}{4\sqrt{\T f_i(t) + t^2}} \\
&\le [ \T f_i(t)]^{3/2} + c_k t E_{t, f}.
\end{align*}
Similarly, 
$$
\frac{(y_i - u_i^-)^2}{4 |x_i^-|} \le [ \T f_i(0)]^{3/2} + c_k t E_{t, f}
$$
Combining all three of these bounds, on $\tilde D_m$ for $\by \in W,$ and $\bu^\pm = \fA^k(\bx^\pm)$, \eqref{E:dtilde-nu} is bounded below by
$$
\prod_{i=1}^k \exp \lf(- [\T f_i(0)]^{3/2} - [\T f_i(t)]^{3/2} + \frac{(\fA_i(x_i^+) - \fA_i(x_i^-))^2}{4(x_i^+ + |x_i^-|)} - c_k t E_{t, f}\rg).
$$
Putting this together with \eqref{E:first-goal-bd} and using that the Lebesgue measure of $W$ is $1$ gives that on $\tilde D_m$, \eqref{E:rnd} is bounded below by 
$$
\exp \lf(- \sum_{i=1}^k \frac{2}{3}\lf([\T f_i(0)]^{3/2} + [\T f_i(t)]^{3/2}\rg) - c_k t E_{t, f}\rg).
$$
The proposition follows since $\P(\tilde D_m) \ge \P(D_m) \ge 1/2$ by Lemma \ref{L:good-event-lemma}.
\end{proof}

\section{Proofs of Corollaries and Remark \ref{R:RN-context}}
\label{S:proofs-corollaries}

Corollaries \ref{C:bounded-above} and \ref{C:schilder-Airy} have easy proofs.

\begin{proof}[Proof of Corollary \ref{C:bounded-above}]
By Theorem \ref{T:tetris-theorem}, letting $u_f = \T f_1(0) + \T f_1(t)$ we have
$$
X_{k, t}(f) \le \exp \lf(c_k t^3 + c_k \sqrt{t}u_f^{5/4} - \frac{1}{3}u_f^{3/2} \rg),
$$
which is bounded above by $e^{O_k(t^3)}$ for all $f$. 
	\end{proof}

\begin{proof}[Proof of Corollary \ref{C:schilder-Airy}]
The usual Schilder's theorem is Corollary \ref{C:schilder-Airy} with $\nu_{k, t}$ replaced by the Brownian measure $\mu_{k, t}$. The version for $\nu_{k, t}$ follows since for any Borel set $A$ and any $\ep > 0$, we have
$$
\log \mu^\ep_{k, t}(A) - c_k t^3 - d_k \log^{3/4} [\mu^\ep_{k, t}(A)^{-1}] \le \log \nu^\ep_{k, t}(A) \le \log \mu^\ep_{k, t}(A) + c_k t^3
$$
by Corollaries \ref{C:bounded-above} and \ref{C:X-bounded-below}.
\end{proof}

We need to work harder to prove Corollary \ref{C:X-bounded-below}. We will need the following elementary lemma about Brownian motion.

\begin{lemma}
	\label{L:Brownian-sup-inf-bound}
	Let $B \in \sC_0([0, t])$ be a Brownian motion, and let
	$$
	S = S(B) = \sup_{s \in [0, t]} B(s), \qquad I = I(B) = - \inf_{s \in [0, t]} B(s).
	$$
	Then $(S, I, B(t))$ has a Lebesgue density on the set $\{(a, b, x) \in [0, \infty)^2 \X \R : - b \le x \le a\}$ given by $h(a, b, x) + h(a, b, -x)$, where
	$$
	h(a, b, x) = \lf[ \frac{(2a + 2b - x)^2}{8\sqrt{\pi} t^{5/2}} - \frac{1}{2 \sqrt{\pi} t^{3/2}}\rg]\exp \lf( - \frac{(2a + 2b - x)^2}{4t}\rg)
	$$
\end{lemma}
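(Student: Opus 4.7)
The plan is to compute the joint density via the reflection principle. Setting
\[
F(a,b,x) \;=\; \P\bigl(S \le a,\; I \le b,\; B(t) \le x\bigr),
\]
the joint density of $(S,I,B(t))$ at an interior point $(a,b,x)$ of the support equals $f(a,b,x) = \partial_a\partial_b\partial_x F(a,b,x)$.

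The first step is to compute $\partial_x F$. This is precisely the density at $x$ of a variance-$2$ Brownian motion started at $0$ and absorbed upon leaving the strip $[-b,a]$, i.e.\ the Dirichlet heat kernel on $[-b,a]$ with initial datum $\delta_0$. The method of images (reflecting the source at $0$ iteratively through the two walls) yields
\[
\partial_x F(a,b,x) \;=\; \sum_{n\in\Z}\bigl[p_t(x+2nL) \;-\; p_t(x-2a+2nL)\bigr],
\]
where $L := a+b$ and $p_t(y) := (4\pi t)^{-1/2}e^{-y^2/(4t)}$ is the variance-$2$ transition density. Validity of the image expansion (including the vanishing of the sum at $x=a$ and $x=-b$) is a standard check using the evenness $p_t(-y)=p_t(y)$.

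Next I would differentiate termwise in $a$ and in $b$, using $\partial_a L = \partial_b L = 1$. A routine chain-rule calculation expresses $f(a,b,x)$ as a lattice sum of second derivatives $p_t''$ evaluated at shifted arguments. The elementary identity
\[
p_t''(y) \;=\; \Bigl[\tfrac{y^{2}}{8\sqrt{\pi}\,t^{5/2}} \;-\; \tfrac{1}{4\sqrt{\pi}\,t^{3/2}}\Bigr]\,e^{-y^{2}/(4t)}
\]
matches the Gaussian-times-polynomial structure of $h$, and in particular the contributions whose argument is $\pm(2a+2b\mp x)$ produce the two terms $h(a,b,x)$ and $h(a,b,-x)$ of the claimed formula after adjusting the constant prefactor.

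The main obstacle is the combinatorial bookkeeping required to show that the full lattice sum collapses to precisely the stated two-term expression: one must pair the higher-order image contributions, exploit the symmetry $p_t(-y)=p_t(y)$, and reindex the sum so that the remaining coefficients telescope. Once this identity is verified, substituting the explicit expression for $p_t''$ gives $f(a,b,x) = h(a,b,x)+h(a,b,-x)$ on the interior of the support, and the density on the boundary of $\{-b \le x \le a\}$ extends by continuity, completing the proof.
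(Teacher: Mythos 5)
Your method-of-images setup for $\partial_x F$ is correct, and the trivariate density is indeed $\partial_a\partial_b$ of that expansion. The genuine gap is the final step, where you assert that ``the full lattice sum collapses to precisely the stated two-term expression.'' It does not, and no pairing or reindexing can make it so. Differentiating $\sum_{n\in\Z}\bigl[p_t(x-2nL)-p_t(x-2a-2nL)\bigr]$ (with $L=a+b$) termwise in $a$ and $b$ gives
\[
\sum_{n\in\Z}\Bigl[4n^2\,p_t''(x-2nL)-4n(n+1)\,p_t''(x-2a-2nL)\Bigr];
\]
for generic $(a,b,x)$ the shifted arguments are pairwise distinct (up to sign) and the translates of $p_t''$ are linearly independent, while the coefficients $4n^2$ and $4n(n+1)$ are nonzero for $|n|\ge 2$, so nothing telescopes. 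The exact law of $(\sup,-\inf,B(t))$ is the classical infinite alternating series, not a two-Gaussian expression. In fact the displayed closed form cannot be an exact density: $h(a,b,x)+h(a,b,-x)$ integrates over its support to $1/6$, not $1$ (for fixed $v=2a+2b\mp x$ the $(a,b)$-cross-section of the support has Lebesgue area $v^2/6$, and $\int_0^\infty g(v)v^2\,dv=1/2$ for the Gaussian-polynomial factor $g$ appearing in $h$). So the Lemma as stated has a slip; only the qualitative upper bound $f(a,b,x)\le C\,u^2\,t^{-5/2}e^{-u^2/(4t)}$ is used downstream in Corollary 1.4, and that bound does hold because the leading $n=\pm 1$ image terms dominate.

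The paper's own argument is genuinely different and much shorter than what you propose: a single global path transformation that reflects the trajectory on the interval between the first passage times of $a$ and of $-b$, sending $\{S\ge a,\,I\ge b,\,B(t)=x\}$ into $\{\tilde B(t)=2a+2b+x\}\cup\{-\tilde B(t)=2a+2b-x\}$, and then differentiating a two-Gaussian expression in $a$ and $b$. This sidesteps the image lattice entirely. Note, however, that the asserted equality of events there is also only an inclusion (the reflected path $\tilde B$ can cross $-b$ before reaching $a$, breaking the inverse construction), so that route likewise produces the correct \emph{upper bound} rather than an exact two-term density; a fully rigorous exact formula would indeed need the infinite series you set up, or the Lemma should be restated as the bound it is actually used as.
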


\begin{proof}
	This follows from a reflection principle argument. Fix $a \ge 0, b \ge 0$ and let 
	$$
	\tau_a = \inf \{s \in [0, t] : B(s) = a \}, \qquad \tau_b = \inf \{s \in [0, t] : B(s) = -b\},
	$$
	and let $\tau_1 = \tau_a \wedge \tau_b, \tau_2 = \tau_a \vee \tau_b$. Let $\tilde B$ be the unique continuous process satisfying 
	$$
	\tilde B|_{[0, \tau_1]} = B|_{[0, \tau_1]}, \quad \tilde B|_{[\tau_1, \tau_2]} - \tilde B(\tau_1) = -B|_{[\tau_1, \tau_2]} + B(\tau_1), \quad B|_{[\tau_2, t]} - \tilde B(\tau_2) = B|_{[\tau_2, t]} - B(\tau_2).
	$$
	Then by the strong Markov property, $\tilde B$ is another Brownian motion, and for $x \in [-b, a]$ we have
	$$
	\{S \ge a, I \ge b, B(t) = x \} = \{\tilde B(t) = 2a + 2b + x \} \cup \{- \tilde B(t) = 2a + 2b - x \}.
	$$
	Therefore
	$$
	\P(S \ge a, I \ge b, B(t) \in dx) = \frac{1}{\sqrt{4 \pi t}} \lf[\exp \lf( - \frac{(2a + 2b - x)^2}{4t}\rg) + \exp \lf( - \frac{(2a + 2b + x)^2}{4t}\rg)\rg] dx.
	$$
	Differentiating in both $a$ and $b$ gives the joint density.
\end{proof}

We also require a straightforward optimization lemma.
\begin{lemma}
\label{L:opt-lemma}
Define $W^k = \{\bu = (\ba, \bb, \bx) \in [0, \infty)^{2k} \X \R^k : - b_i \le x_i \le a_i \text{ for all } i\}$, and define $g:W^k \to \R$ by
\begin{align*}
g(\bu) &= \frac{2}{3} \sum_{i=1}^k \lf(Y_i(\ba, \bb)^{3/2} +  (Y_i(\ba, \bb) + x_i)^{3/2} \rg), \qquad Y_i(\ba, \bb) = \sum_{j=i+1}^k a_j + \sum_{j=i}^k b_j.
\end{align*}
Now, let
$$
D_\beta =\{\bu = (\ba, \bb, \bx) \in W^k : \sum_{i=1}^k(2a_i + 2b_i - |x_i|)^2 \le \beta^2\}. 
$$
Then
$$
\max \{g(\bu) : \bu \in D_\beta\} = \beta^{3/2} \max \{\Theta(\ba) : \ba \in \R^k, \|\ba\|_2^2 = 1\} = \beta^{3/2} \al_k
$$
where $\Theta:[0, \infty)^k \to \R, \al_k$ are as in Corollary \ref{C:X-bounded-below}.
\end{lemma}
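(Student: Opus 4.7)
The plan is to reduce the optimization over $\bu \in D_\beta$ to a maximization of $\Theta$ over a Euclidean ball in $\R^k$, and then invoke the $3/2$-homogeneity of $\Theta$. Set $v_i := 2a_i + 2b_i - |x_i|$; the constraints $-b_i \le x_i \le a_i$ give $a_i + b_i \ge |x_i|$, hence $v_i \ge |x_i| \ge 0$. Write $Z'_j := \sum_{\ell \ge j} v_\ell$, and interpret $Y_i = \sum_{j > i} a_j + \sum_{j \ge i} b_j$, so that
\begin{equation*}
Y_i + (Y_i + x_i) \;=\; (2b_i + x_i) + \sum_{j > i}(2a_j + 2b_j).
\end{equation*}
A case split on the sign of $x_i$ shows $|x_i| \le 2b_i + x_i \le v_i$ (the upper bound uses $a_i \ge x_i^+$). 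Together with $2a_j + 2b_j = v_j + |x_j| \le 2 v_j$ for $j > i$, this yields the sum bound $Y_i + (Y_i + x_i) \le v_i + 2\sum_{j > i} v_j = Z'_i + Z'_{i+1}$, while trivially $|(Y_i + x_i) - Y_i| = |x_i| \le v_i = Z'_i - Z'_{i+1}$.

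The next step is a one-variable convexity lemma: if $p, q, r, s \ge 0$ satisfy $p + q \le r + s$ and $|p - q| \le |r - s|$, then $p^{3/2} + q^{3/2} \le r^{3/2} + s^{3/2}$. Proof sketch: first translate $(p, q)$ up by $\delta = ((r+s) - (p+q))/2 \ge 0$ to equalize sums, which weakly increases both $3/2$-powers by monotonicity; then, keeping the sum fixed at $r + s$, widen the gap from $|p - q|$ to $|r - s|$. The gap-widening keeps both entries in $[0, \infty)$ precisely because $|p - q| \le |r - s|$ forces $p + \delta \ge \min(r, s)$, and it weakly increases the sum of $3/2$-powers by convexity of $x \mapsto x^{3/2}$ (differentiating $((S-g)/2)^{3/2} + ((S+g)/2)^{3/2}$ in $g$ gives $\tfrac{1}{2}(f'((S+g)/2) - f'((S-g)/2)) \ge 0$). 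Applying this lemma with $(p, q) = (Y_i, Y_i + x_i)$ and $(r, s) = (Z'_i, Z'_{i+1})$, then summing over $i$ and telescoping using $Z'_{k+1} = 0$, produces
\begin{equation*}
\sum_{i=1}^k \lf[ Y_i^{3/2} + (Y_i + x_i)^{3/2} \rg] \;\le\; (Z'_1)^{3/2} + 2 \sum_{j=2}^k (Z'_j)^{3/2}.
\end{equation*}
Choosing any $\bx' \in \R^k$ with $|x'_i| = v_i$, the right-hand side equals $\tfrac{3}{2} \Theta(\bx')$, so $g(\bu) \le \Theta(\bx')$. Since $\|\bx'\|_2^2 = \sum_i v_i^2 \le \beta^2$, the $3/2$-homogeneity of $\Theta$ gives $g(\bu) \le \beta^{3/2} \al_k$.

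For the matching lower bound I exhibit a maximizer: let $\bx^*$ attain $\al_k$ on the unit sphere, set $\bx = \beta \bx^*$, and take $\ba = \bx^+$, $\bb = \bx^-$. Then $a_i + b_i = |x_i|$, so $v_i = |x_i| = \beta |x^*_i|$, giving $\sum v_i^2 = \beta^2$ and $\bu \in D_\beta$. In this configuration, $2b_i + x_i = |x_i| = v_i$ and $2a_j + 2b_j = 2|x_j| = v_j + |x_j|$, so every inequality above is an equality; a direct substitution then yields $g(\bu) = \Theta(\bx) = \beta^{3/2} \al_k$, so the supremum is attained. The main delicate point is verifying that the gap-widening in the convexity lemma preserves nonnegativity of both arguments — that is precisely where the hypothesis $|p - q| \le |r - s|$ is needed, and it is exactly the ingredient our bounds on $Y_i + (Y_i + x_i)$ and $|x_i|$ versus $v_i$ are designed to supply.
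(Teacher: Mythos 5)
Your proof is correct, and it takes a genuinely different route from the paper. The paper argues variationally: it first uses the involution $(a_i,b_i,x_i)\mapsto(a_i-x_i,b_i+x_i,-x_i)$, which fixes both $g$ and $D_\beta$, to reduce to $x_i\ge 0$, and then shows via directional-derivative inequalities $[2\del_{a_i}-\del_{b_i}+2\del_{x_i}]g\ge 0$ and $[\del_{b_i}-\del_{a_i}]g\ge 0$ — combined with the fact that these directions are tangent to $\del D_\beta$ — that the constrained maximum must occur at $b_i=0,\ a_i=x_i$, i.e.\ at a point $(\bx,0,\bx)$, where $g(\bx,0,\bx)=\Theta(\bx)$. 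Your argument sidesteps the perturbation analysis entirely: by isolating the slack variables $v_i=2a_i+2b_i-|x_i|$, you get the componentwise bounds $2Y_i+x_i\le Z'_i+Z'_{i+1}$ and $|x_i|\le Z'_i-Z'_{i+1}$, and then the two-variable majorization lemma (if $p+q\le r+s$, $|p-q|\le|r-s|$, all nonnegative, then $p^{3/2}+q^{3/2}\le r^{3/2}+s^{3/2}$) plus a telescoping sum give the uniform bound $g(\bu)\le\Theta(\bx')$ with $|x'_i|=v_i$; the constraint and $3/2$-homogeneity of $\Theta$ then finish. Both routes exhibit the explicit maximizer $\ba=\bx^+,\bb=\bx^-,\bx=\beta\bx^*$. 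The paper's method characterizes the optimum directly and is shorter to state given the involution; yours is a self-contained ``soft'' majorization argument that avoids having to check first-order conditions or invoke the symmetry reduction, and the equality analysis in the convexity lemma immediately certifies tightness at the proposed maximizer. I verified every step, including that $v_i\ge|x_i|\ge 0$ from $a_i+b_i\ge|x_i|$, the identity $2Y_i+x_i=(2b_i+x_i)+\sum_{j>i}(2a_j+2b_j)$, the two needed inequalities, the telescoping with $Z'_{k+1}=0$, and that $\sum_i v_i^2\le\beta^2$ gives $\|\bx'\|_2\le\beta$; the convexity lemma is correct (the gap-widening keeps both arguments nonnegative because the smaller one stays $\ge\min(r,s)$).
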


\begin{proof}
First, observe that 
\begin{align*}
\beta^{3/2} \max \{\Theta(\ba) : \bx \in \R^k, \|\ba\|^2 = 1\} &=  \max \{\Theta(\ba) : \ba \in [0, \infty)^k, \|\bx\|_2^2 = \beta\} \\
&= \max \{g(\ba, 0, \ba) : \ba \in [0, \infty)^k, \|\ba\|_2^2 = \beta\} \\
&= \max \{g(\ba, 0, \ba) : (\ba, 0, \ba) \in D_\beta\}, 
\end{align*}
so it suffices to prove that the maximum of $g$ is obtained at a point of the form $(\ba, 0, \ba)$. 
For $i \in \II{1, k}$, observe that the map from $W^k \to W^k$ sending
$$
(a_i, b_i, x_i) \mapsto (a_i - x_i, b_i + x_i, - x_i)
$$
and fixing all other coordinates of $\bu = (\ba, \bb, \bx)$
is an involution on $D_\beta$ that fixes $g$. Therefore the maximum of $g$ on $D_\beta$ is attained at a point where all $x_i \ge 0$. Next, when $x_i \ge 0$, we have
$
[2\del_{a_i} - \del_{b_i} + 2\del_{x_i}]g \ge 0.
$
Therefore letting $\bfe_i = (0, \dots, 1, \dots 0) \in \R^k$ be the $i$th coordinate vector, the maximum of $g$ on $D_\beta$ is attained at a point $(\ba, \bb, \bu)$ where all $x_i \ge 0$ and 
\begin{equation}
\label{E:baep}
(\ba + 2 \ep \bfe_i, \bb - \ep \bfe_i, \bx + 2 \ep \bfe_i) \notin D_\beta
\end{equation}
for all $\ep > 0, i \in \II{1, k}$. Now, if $x_i \ge 0$, then $[2\del_{a_i} - \del_{b_i} + 2\del_{x_i}^+](2a_i + 2b_i - |x_i|) = 0$, where $\del_{x_i}^+$ denotes the right-hand derivative. Therefore the only way that \eqref{E:baep} can hold is if 
$
(\ba + 2 \ep \bfe_i, \bb - \ep \bfe_i, \bx + 2 \ep \bfe_i) \notin W^k
$
for all $\ep > 0, i \in \II{1, k}$, which implies that $\bb = 0$. Finally, if $(\ba, 0, \bx) \in D_\beta$, then $(\ba, 0, \ba)$ is also in $D_\beta$, and furthermore $g(\ba, 0, \bx) \le g(\ba, 0, \ba)$. Hence the maximum of $g$ is obtained at a point of the form $(\ba, 0, \ba)$.
\end{proof}

\begin{proof}[Proof of Corollary \ref{C:X-bounded-below}] See Figure \ref{fig:optimal} for a sketch of the types of functions that maximize $\S(f)$ relative to the probability that Brownian motion is close to $f$ (which is controlled by the Dirichlet energy of $f$). Throughout we will use the notation from the statement of Lemma \ref{L:opt-lemma}. Let $B$ be a $k$-dimensional Brownian motion in $\sC_0^k([0, t])$, and define $S = (S(B_1), \dots, S(B_k)), I = (I(B_1), \dots, I(B_k))$ with $S(B_i), I(B_i)$ as in Lemma \ref{L:Brownian-sup-inf-bound}. By Lemma \ref{L:Brownian-sup-inf-bound}, $(S, I, B(t))$ has a joint density $f(\ba, \bb, \bx)$ on $W^k$ which is bounded above by
$$
c_k u^{2k} t^{-5k/2}e^{-u^2/4t}, \qquad \text{ where } u = u(\ba, \bb, \bx) = \sqrt{\sum_{i=1}^k(2a_i + 2b_i - |x_i|)^2}.
$$
Now, shells in $W^k$ of the form $D_{v +\ep} \smin D_v$ with $\ep \le 1$ have volume bounded above by $c_k \ep v^{k-1}$, so
$$
\int_{W^k \smin D_v} f(\ba, \bb, \bx) d \ba d\bb d\bx \le c_k t^{-5k/2} \int_v^{\infty} u^{3k+1} e^{-u^2/4t} du \le c_k v^{3k} e^{-v^2/4t}.
$$
In particular, if $A$ is any set with $\P(B \in A) \ge \ep$, then on a set $A' \sset A$ with $\P(B \in A') \ge \ep/2$, we have
$$
(S, I, B(t)) \in D_{h(\ep)}, \qquad \text{ where } \quad h(\ep) = [4t \log (\ep^{-1})]^{1/2} + c_k \sqrt{t}.
$$
To lower bound $\nu_{k, t}(A)$, using the notation of Lemma \ref{L:opt-lemma} we have
$$
\T B_i(0) \le Y_i(S, I), \qquad T B_i(t) \le Y_i(S, I) + B_i(t).
$$
Therefore by Theorem \ref{T:tetris-theorem}, we have
$$
\nu_k(A) \ge \ep \exp \lf(- \sup\{ g(\ba, \bb, \bx) + e(\ba, \bb, \bx) : (\ba, \bb, \bx) \in D_{h(\ep)} \}\rg), 
$$
where $g$ is as in Lemma \ref{L:opt-lemma} and
\begin{align*}
e(\ba, \bb, \bx) &= c_k (t^3 + \sqrt{t}(Y_1(\ba, \bb) + |\bx_1(t)|)^{5/4}).
\end{align*}
First, since $|x_i| \le a_i \vee b_i$ for all $i$, we have 
$$
\sqrt{\sum_{i=1}^k(2a_i + 2b_i - |x_i|)^2} \ge \sqrt{\sum_{i=1}^k(a_i + b_i)^2} \ge \frac{1}{k} \sum_{i=1}^k (a_i + b_i). 
$$
Therefore on $D_{h(\ep)}$,
$$
e(\ba, \bb, \bx) \le c_k (t^3 + \sqrt{t} [h(\ep)]^{5/4}).
$$
Now, we maximized $g$ over $D_{h(\ep)}$ in Lemma \ref{L:opt-lemma}. This gives that
$$
\nu_k(A) \ge \ep \exp \lf(- [h(\ep)]^{3/2} \al_k - c_k (t^3 + \sqrt{t} [h(\ep)]^{5/4})\rg),
$$
which gives the desired lower bound in the corollary after simplification.

For the upper bound, let $\by \in [0, \infty)^k$ be any argmax of $\Theta$ on the unit sphere. For $\beta > 0$, define let $A_\beta \sset \sC^k_0([0, t])$ be the set of functions where
$$
\{(-1)^{i} f_i(t) \ge \beta y_i, i \in \II{1, k} \}.
$$
We can find $\be(\ep)$ such that $\P(B \in A_{\be(\ep)}) = \ep$. To bound the value of $\be(\ep)$ observe that for $\be > 2\sqrt{t}$ we have
$$
\P(B \in A_\be) \ge \exp\lf( -c_k \log (\beta^2/(4t)) - \frac{\be^2}{4t} \rg).
$$
Hence $\be(\ep) \ge [4t \log (\ep^{-1})]^{1/2} - c_k \sqrt{t}$. Now, for $f \in A_\be$ we have
\begin{align*}
\T f(0) \ge \be \Big(y_i\indic(i \notin 2 \Z) + \sum_{j = i + 1}^k y_j \Big), \qquad \T f(t) \ge \be \Big(y_i\indic(i \in 2 \Z) + \sum_{j = i + 1}^k y_j\Big).
\end{align*}
Hence for $f \in A_\be$, by Theorem \ref{T:tetris-theorem} we have
$$
X_{t, k}(f) \le \exp \lf(-\be^{3/2} \Theta(\by) - O_k(t^3 + \sqrt{t} \be^{5/4}) \rg),
$$
and so using that $\Theta(\by) = \al_k$ we get
$$
\nu_{k, t}(A_{\be(\ep)}) \le \ep \exp \lf(-\be(\ep)^{3/2} \al_k - O_k(t^3 + \sqrt{t} \be(\ep)^{5/4} \rg),
$$
which gives the desired upper bound after simplifying the expression and grouping errors.
\end{proof}
\begin{figure}
	\centering
	\includegraphics[width=6cm]{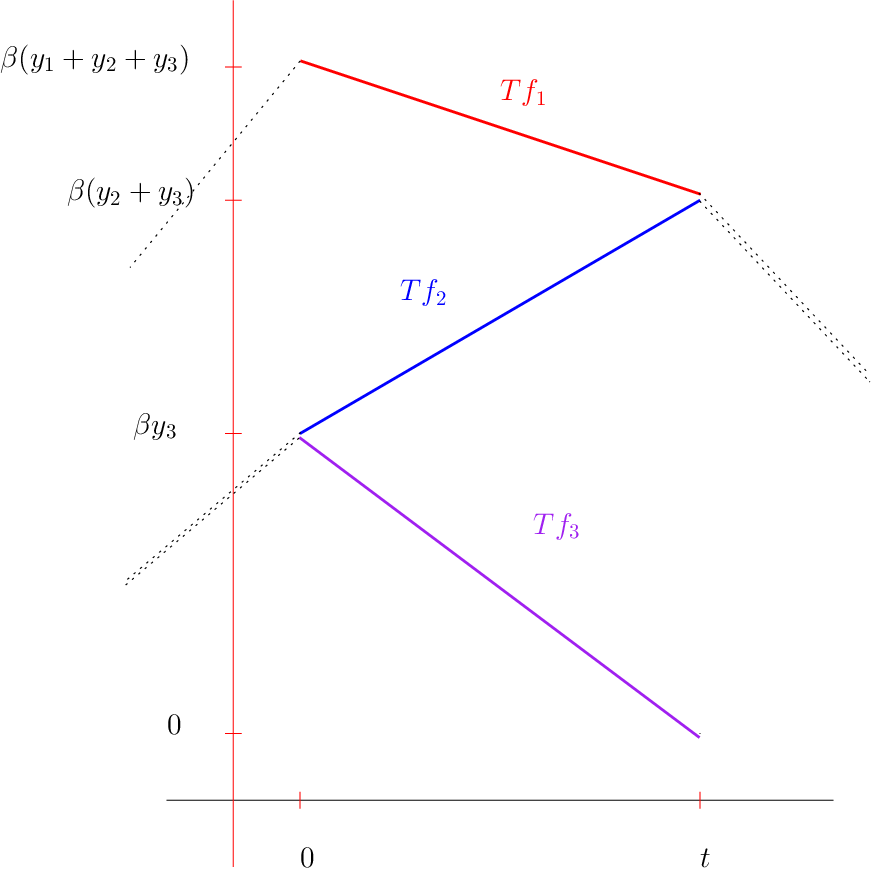}
	\caption{A sketch of $\T f$ for a function $f \in \sC^k_0([0, t])$ that maximizes $\S(f)$ relative to the probability that Brownian motion is close to $f$. Here $(y_1, y_2, y_3)$ is a nonnegative maximizer of the function $\Theta$ on the sphere and $\be > 0$.}
	\label{fig:optimal}
\end{figure}

Finally, we prove Remark \ref{R:RN-context}.

\begin{prop}
	\label{P:RN-context-example}
Let $\hat \cA_1 = \cA_1-\cA_1(0)$, the recentered Airy line ensemble. Then we can find a sequence of sets $D_n \sset \sC_0([0, 1])$ with $\mu_{1, 1}(D_n) \to 0$ such that
$$
\P(\hat \cA_1 \in D_n) \ge \mu_{1, 1}(D_n) e^{ c \log^{1/2}(\mu_{1, 1}(D_n)^{-1})}
$$
for all $n \in \N$.
\end{prop}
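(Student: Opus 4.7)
The plan is to exploit the Cameron–Martin shift that relates $\hat\cA_1$ to $\fA_1-\fA_1(0)$. Since $\hat\cA_1 = (\fA_1 - \fA_1(0)) + r^2$, an application of Cameron–Martin for the variance-$2$ Wiener measure gives the Radon–Nikodym derivative
\[
\frac{d\,\text{law}(\hat\cA_1)}{d\mu_{1,1}}(g) \;=\; X_{1,1}(g - r^2)\,\exp\!\Big(g(1) - \int_0^1 g(r)\,dr - \tfrac13\Big),
\]
so the ratio $\P(\hat\cA_1 \in D)/\mu_{1,1}(D)$ is controlled by the $\mu_{1,1}|_D$-conditional expectation of this product. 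The Cameron–Martin factor can be large on paths $g$ with $g(1) - \int g$ large, whereas the Airy density $X_{1,1}(g-r^2)$ is controlled by Theorem \ref{T:tetris-theorem}. The strategy is to find $D_n$ where this product integrates against $\mu_{1,1}$ to much more than $\mu_{1,1}(D_n)$.

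Concretely, I would take $D_n = \{g \in \sC_0([0,1]) : g(1) \in [m_n, m_n+1]\}$ for a sequence $m_n \to \infty$ to be chosen. First, a standard Gaussian tail gives $\mu_{1,1}(D_n) \asymp m_n^{-1}\exp(-m_n^2/4)$, so $\sqrt{\log \mu_{1,1}(D_n)^{-1}} \asymp m_n/2$. Second, using $\hat\cA_1(1) = \fA_1(1) - \fA_1(0) + 1$, I rewrite
\[
\P(\hat\cA_1 \in D_n) \;=\; \nu_{1,1}\big(\{f : f(1) \in [m_n - 1, m_n]\}\big)
\;=\; \int_{\{f(1)\in [m_n-1, m_n]\}} X_{1,1}(f)\,d\mu_{1,1}(f),
\]
and I would lower-bound this integral using the lower bound from Theorem \ref{T:tetris-theorem} applied at $f^* = L_{m_n-1}$ (for which $\T f^* = f^*$ and $\S(f^*) = \tfrac23(m_n-1)^{3/2}$), together with a Cameron–Martin small-ball estimate for $\mu_{1,1}$ around $f^*$.

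The main obstacle is the competition between the Cameron–Martin gain $e^{m_n/2}$ and the Airy density cost $e^{-\frac{2}{3}m_n^{3/2}}$ from $X_{1,1}(g - r^2) \approx X_{1,1}(L_{m_n-1})$. A naive pointwise comparison along the linear candidate gives a ratio $\exp(m_n/2 - \tfrac23 m_n^{3/2})$, which has the wrong sign for large $m_n$. To obtain the required bound of $\exp(c\sqrt{\log\mu^{-1}})$, one must refine $D_n$ so that the mass is concentrated on a sub-region where $g - r^2$ has a small Tetris profile (keeping $X_{1,1}(g-r^2)$ of order $1$) while still enjoying a large Cameron–Martin factor. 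The natural refinement is to add shape constraints ensuring $g \ge r^2 - O(1)$ so that $\S(g - r^2)$ stays bounded, and to parametrize $D_n$ by the sub-region of $\sC_0([0,1])$ where both the CM factor and the Airy density contribute constructively; this optimization is the crucial technical step and I expect it to mirror the argument behind Lemma \ref{L:opt-lemma} and Corollary \ref{C:X-bounded-below}, but applied to the CM-shifted functional $g(1) - \int g - \S(g - r^2)$ on the Wiener small-ball geometry around suitable candidate functions.
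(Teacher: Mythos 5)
Your Cameron--Martin reformulation (that the density of $\hat\cA_1$ against $\mu_{1,1}$ at $g$ is $X_{1,1}(g-r^2)\exp(g(1)-\int g-\tfrac13)$) is the right starting observation, and you correctly see that your first choice $D_n=\{g(1)\in[m_n,m_n+1]\}$ cannot work because the Tetris cost $\S(g-r^2)\sim \tfrac23 m_n^{3/2}$ swamps the Cameron--Martin gain. But the proposal then stalls: you do not actually produce a workable $D_n$, and the refinement you sketch does not close the gap.

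Two specific issues. First, the suggested refinement ``add the constraint $g\ge r^2 - O(1)$'' does not by itself make $\S(g-r^2)$ bounded. $\S$ depends on both $\T(g-r^2)(0)$ and $\T(g-r^2)(1)$; the lower-envelope constraint controls only the former, while $\T(g-r^2)(1)=(g(1)-1)-\inf(g-r^2)$ is still $\Theta(m_n)$ as long as you keep $g(1)\in[m_n,m_n+1]$. To get $\S(g-r^2)=O(1)$ you must also force $g(1)=O(1)$, at which point your original $D_n$ constraint has to be dropped entirely, and you are left with no concrete construction. Second, the pointer to Lemma \ref{L:opt-lemma} and Corollary \ref{C:X-bounded-below} as a model for the remaining optimization is misplaced: those concern the tetris variational problem for the parabolic ensemble's density lower bound; they do not address how the affine shift by $r^2$ interacts with a large-deviation event, which is the actual engine of this proposition.

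The idea you are missing is that $\S$ is an \emph{endpoint-only} functional: it depends on $g-r^2$ through $\T(g-r^2)(0)$ and $\T(g-r^2)(1)$, and so is completely insensitive to how high $g$ climbs in the interior of $[0,1]$. The paper therefore chooses $D_n$ to be a set where $g$ has a large \emph{midpoint} excursion ($g(1/2)\ge m$) while returning near $0$ at both endpoints and staying bounded below (the sets $A_{m,s}$ with $f(0)=0$, $f(1)\in[-1,s]$, $\inf f\ge -1$, $f(1/2)\ge m$). On such a set $\S(g-r^2)=O(1)$, so $X_{1,1}(g-r^2)$ is of order one; the midpoint constraint makes $\mu_{1,1}(D_n)$ decay like $e^{-\Theta(m^2)}$; and since $\hat\fA_1\in A_{m,1}$ implies $\hat\cA_1=\hat\fA_1+r^2\in A_{m+1/4,2}$, one compares the Gaussian probabilities of $A_{m,1}$ and $A_{m+1/4,2}$ directly to extract the $e^{cm}=e^{c\log^{1/2}(\mu^{-1})}$ gain. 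This is an elementary set inclusion plus two-point Gaussian estimate, not an optimization of the shifted functional $g(1)-\int g-\S(g-r^2)$ over Wiener small balls.
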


It is not difficult to use Theorem \ref{T:tetris-theorem} to find the optimal value of $c$ and show that the $\log^{1/2}$-decay in the exponent is sharp, but we do not pursue this as it is a rather peripheral point.

\begin{proof}
Let $\hat \fA_1(r) = \fA_1(r)-\fA_1(0) = \fA_1(r) - r^2 - \cA_1(0)$ and let $B:[0, t] \to \R$ be a Brownian motion. Fix $m > 0$ and consider the class of functions $A_{m, s} \sset \sC_0([0, 1])$ such that
$$
f(0) = 0, \qquad f(1) \in [-1, s], \qquad \inf f \ge - 1, \qquad f(1/2) \ge m.
$$
Equation \eqref{E:Xktf} implies that $X_{1, 1}(f)$ is bounded below by an absolute constant on $A_{m, 1}$, and $\hat \fA_1 \in A_{m, 1}$ implies $\hat \cA_1 \in A_{m + 1/4, 2}$. Combining this observation with standard estimates on Brownian motion, we get that
\begin{align*}
\P(\hat \cA_1 \in A_{m + 1/4, 2}) &\ge c \P(B \in A_{m, 1}) \ge c'e^{m/4} \P(B \in A_{m + 1/4, 2}).
\end{align*}
where $c, c',$ are absolute constants. Since $\P(B \in A_{m + 1/4, 2}) \ge d e^{- c'' m^2}$, taking $D_n = A_{n + 1/4, 2}$ gives the result.
\end{proof}

\section{Appendix: Proof of Lemma \ref{L:parabola-lemma-A}}

\label{S:appendix}

As discussed after the statement of Lemma \ref{L:parabola-lemma-A}, we may assume that $\al = 0$, since Brownian bridges commute with linear shifts.
\begin{proof}[Proof of Lemma \ref{L:parabola-lemma-A}.1]
	First, the bound is trivial for small $\la$ if $c > 0$ is large enough. Therefore throughout the proof we may assume $\la > 1000$.
	
	\textbf{Step 1: A Brownian bridge estimate.} \qquad 
	Let $B$ be a Brownian bridge from $(0, x)$ to $(\la,  y - \la^2)$. Now, let $\zeta = \la/\fl{\la}, m = \fl{\la}$ and define the mesh
	$$
	\Pi = \{\pi_0, \dots, \pi_{m}\} := \{0, i \zeta, \dots, \fl{\la} \zeta\}.
	$$
	Since $t \le \sqrt{x} + 1$ we have
	\begin{align}
	\label{E:P0la}
\P_{0, \la}(x, y + f(\la), f, [t, \la]) \le \P(B(n) > f(n) \text{ for all } n \in \Pi, n \ge \sqrt{x} + 1).
	\end{align}
	Now, letting $\pi_j$ be the smallest element of $\Pi \cap [ \sqrt{x} + 1, \la]$ we have that 
	$$
	\P(B(n) > f(n) \quad \text{ for all } n \in \Pi, n < \sqrt{x} + 1 \mid B(\pi_j) > f(\pi_j)) \ge e^{-c \la}
	$$ 
	for an absolute constant $c > 0$. To see this, observe that if $L:\R\to \R$ is the linear function satisfying $L(0) = x$ and $L(\pi_j) = B(\pi_j)$ for some $B(\pi_j) > f(\pi_j)$ then $L(s) + c' >  -s^2$ for an absolute constant $c' > 0$, and the probability that $B(n) \ge L(n) + c'$ for $n \in \Pi, 0 < n < \sqrt{x} + 1$ is easily bounded below by $e^{-c \la}$. Noting also that $B(0) = x > f(0) = 0$ yields the above display. Therefore \eqref{E:P0la} is bounded above by
	\begin{equation}
	\label{E:ecla}
e^{c \la} \P(B(n) > f(n) \text{ for all } n \in \Pi).
	\end{equation}
	\textbf{Step 2: Converting to a Dirichlet energy bound.} \qquad Consider the vector $\Delta B \in \R^{m}$ given by
	$
	\Delta B_i = B(\pi_i) - B(\pi_{i-1}),
	$
	and let $E$ be the minimal value of $\frac{1}{4 \zeta} \|\Delta B\|_2^2$ among all possible choices of $B$ with $B(n) > f(n)$ for all $n \in \Pi$. The vector $\Delta B = \sqrt{2 \zeta} N$, where $N$ is a standard normal vector on $\R^m$ conditioned on the event $\sum_{i=1}^m N_i = (y - \la^2 - x)/ \sqrt{2 \zeta}$, and so \eqref{E:ecla} is bounded above by
	$
	e^{c \la} \P(\|N\|^2_2 \ge 2E).
	$
	Next, by rotational invariance of the standard normal distribution, $\|N\|_2^2$ has the same distribution as if we conditioned $N$ to lie in the hyperplane $\{\bx \in \R^m : x_1 = (y - \la^2 - x)/ \sqrt{2 m \zeta} \}$. Therefore 
	$$
	\|N\|_2^2 = (y - \la^2 - x)^2/ (2 m \zeta) + \chi_{m-1},
	$$
	where $\chi_{m-1}$ is a $\chi$-squared random variable with $m-1$ degrees of freedom. Therefore letting $I = 2E - (y - \la^2 - x)^2/ (2 m \zeta)$, we have
	$$
	\P(\|N\|^2_2 > 2E) = \P(\chi_{m-1} \ge I) = \frac{1}{2^{(m-1)/2} \Ga((m-1)/2)} \int_{I}^\infty x^{(m-1)/2-1} e^{-x/2} dx.
	$$
	Making the substitution $y = x/2$, the right hand side equals
	\begin{align*}
	\frac{1}{\Ga((m-1)/2)} \int_{I/2}^\infty y^{(m-1)/2-1} e^{-y} dy.
	\end{align*}
	If $I \ge 2$, then this is bounded above by
	\begin{align*}
	\frac{1}{\Ga((m-1)/2)} \int_{I/2}^\infty y^{\cl{(m-1)/2}-1} e^{-y} dy
	&= \frac{(\cl{(m-1)/2}-1)!}{\Ga((m-1)/2)} e^{-I/2} \sum_{k=0}^{\cl{(m-1)/2} - 1} \frac{(I/2)^k}{k!} \\
	&\le e^{-I/2} (I/2 + 9)^{m/2}.
	\end{align*}
	If $I \le 2$, then $\P(\chi_{n-1} \ge I) \le e^{-I/2} (I/2 + 9)^{m/2}$ trivially. Putting everything together and using that $m \zeta = \la$ we get that 
	\begin{align}
	\label{E:almost-bd}
	\P_{0, \la}(x, y + f(\la), f, [t, \la]) \le \exp \lf(c \la - E + \frac{(y - \la^2 - x)^2}{4 \la} + c\la \log (E + 9) \rg).
	\end{align}
	\textbf{Step 2: Estimating the Dirichlet energy $E$.} \qquad
At this point, the problem is purely deterministic. Let $f_\Pi:[0, \la] \to \R$ be the function given by linearly interpolating between the values of $f$ on the mesh $\Pi$. We can then equivalently write $E$ as the minimum value of the Dirichlet energy
$$
E(g) = \frac{1}{4} \int_0^\la |g'(s)|^2 ds
$$
among all functions $g:[0, \la] \to \R$ such that $g \ge f_\Pi$ and $g(0) = x, g(\la) = y - \la^2$. The Dirichlet energy minimizer $g_\Pi$ is the smallest concave function with these properties. Since $f_\Pi$ itself is concave by the concavity of $f$, $g_\Pi$ has the following form:
\begin{itemize}[nosep]
	\item There are values $a \le b \in \Pi$ such that $g_\Pi$ is linear on $[0, a]$ and $[b, \la]$. 
	\item $g_\Pi(z) = f(z)$ for all $z \in (a, b)$.
\end{itemize}
The fact that $g_\Pi$ dominates $f_\Pi$ forces $a \le \sqrt{x}$.  Also, the fact that $g_\Pi$ is concave forces
$$
\frac{(-a^2 - x)}{a} \ge \frac{-(a+\zeta)^2 + a^2}{\zeta},
$$
which implies that $a > \sqrt{x} - \zeta$. Hence $a = \fl{\sqrt{x}}_\Pi$, the largest element of $\Pi$ that is less than or equal to $\sqrt{x}$. Similarly $b = \cl{\la - \sqrt{y}}_\Pi$, the smallest element of $\Pi$ that is greater than or equal to $\la - \sqrt{y}$. The function $g_\Pi$ can be thought of as a discretized version of the function $g$ introduced after the statement of Lemma \ref{L:parabola-lemma-A} whose energy equals $E(x, y, \la)$. Moreover, a computation shows that $|E(x, y, \la) - E(g_\Pi)| \le c \la$. Finally, since $\sqrt{x} < \la/2, \sqrt{y} < \la/2$, we have that $E(x, y, \la) \le 2 \la^3$, so \eqref{E:almost-bd} is bounded above by
$$
\exp \Big(c \la \log (1 + \la) - E(x, y, \la) + \frac{(y - \la^2 - x)^2}{4 \la} \Big),
$$
as desired.
\end{proof}

\begin{proof}[Proof of Lemma \ref{L:parabola-lemma-A}.2] First, the bound holds trivially for small $\la$, so we may assume $\la > 1000$. Also, to simplify notation we write $u = x -a$ in the proof. We have
	\begin{align}
	\nonumber
	\P(&B(s) > f(s) \text{ for all } s \in [t, \la], B(t) \le u)\\
	\nonumber
	&= \P(B(s) > f(s) \text{ for all } s \in [t, \la] \mid B(t) \le u)\P(B(t) \le u) \\
	\nonumber
	&\le \P_{t, \la}(u, f(\la) + y, f)\P(B(t) \le u) \\
	\label{E:Ptla}
	&\le c\P_{t, \la}(u, f(\la) + y, f)\exp \lf(-\frac{a^2}{4t} - \frac{(u - y - \la^2)^2}{4(\la - t)} + \frac{(x - y - \la^2)^2}{4\la} \rg).
	\end{align}
	Now, since Brownian bridge law commutes with affine shifts,
	$$
	\P_{t, \la}(u, f(\la) + y, f) = \P_{0, \la - t}(u + t^2, f(\la - t) + y, f).
	$$
	Bounding this with Lemma \ref{L:parabola-lemma-A}.1, we get that \eqref{E:Ptla} is bounded above by
	$$
	\exp \lf(-\frac{a^2}{4t} - E(u + t^2, y, \la - t) + \frac{(u+ t^2 - y - (\la-t)^2)^2}{4(\la - t)} - \frac{(u - y - \la^2)^2}{4(\la - t)} + \frac{(x - y - \la^2)^2}{4\la} + c\la \log(1 + \la) \rg).
	$$
	Now,
	$$
	\frac{(u+ t^2 - y - (\la-t)^2)^2}{4(\la - t)} - \frac{(u- y - \la^2)^2}{4(\la - t)} \le - \lambda^2 t + O(t^3 + t x),
	$$
	where we have bounded the error term by using that $t \le \sqrt{x} + 1 \le 2\la/3$ and $\sqrt{x}, \sqrt{y} \le \la/2$. We also have that
	$$
	E(x, y, \la) - E(u + t^2, y, \la - t) \le \frac{2}{3} x^{3/2} - \frac{2}{3}u^{3/2} + \la^2 t + \frac{t^3}{3}
	$$
	which combined with the previous computations gives the desired result.
\end{proof}

\begin{proof}[Proof of Lemma \ref{L:parabola-lemma-A}.3]
For $x, y > 0$ with $\sqrt{x} < \la/2, \sqrt{y} < \la/2$, let $g_{x, y}$ be the function defined immediately after Lemma \ref{L:parabola-lemma-A}, and for every $i$, define $h_i:[0, \la] \to \R$ by
$$
h_i(s) = g_{x_i - (k + 1-i)/2, y_i - (k + 1-i)/2}(s) + (k + 1-i)/2.
$$
Then $h_i(0) = x_i, h_i(\la) = y_i - \la^2$. Moreover, since $g_{x', y'} \ge g_{x, y} \ge f$ for any $x' \ge x$ and $y' \ge y$ and
$$
x_k, y_k \ge 1, \quad x_i - x_{i+1} \ge 1, \quad y_i - y_{i+1} \le 1,
$$
we have that $h_k - f \ge 1/2$ and $h_i - h_{i+1} \ge 1/2$ for all $i \le k-1$. Therefore letting $B$ denote a $k$-tuple of independent Brownian bridges from $(0, \bx)$ to $(\la, \by - \la^2)$, we have that
\begin{equation}
\label{E:P-product}
\begin{split}
\P_{0, \la}(\bx, \by + f(\la), f) &\ge \P(\|B_i - h_i\|_\infty < 1/4 \text{ for all } i \in \II{1, k}) \\
&= \prod_{i=1}^k \P(\|B_i - h_i\|_\infty < 1/4).
\end{split}
\end{equation}
It just remains to bound $\P(\|B_i - h_i\|_\infty < 1/4)$ below for each $i$. We use the Cameron-Martin theorem for Brownian bridges. Indeed, letting $\mu_i$ denote the law of $B_i$ and $\nu_i$ denote the law of $B_i + h_i$, the Cameron-Martin theorem yields that
$$
\frac{d \mu_i}{d \nu_i}(W) = \exp \lf( \frac{1}{4} \int_0^\la [h_i'(s)]^2 ds - \frac{1}{2} \int_0^\la h_i' dW + \frac{(x_i + \la^2 - y_i)^2}{4\la}\rg).
$$
Note that the first term in the exponent above equals $E(x_i - (k+1 - i)/2, y_i - (k + 1-i)/2, \la)$.
Now, standard estimates on the maximum absolute value of a Brownian bridge imply that 
$$
\nu_i(w \in \sC([0, \la]): \|w - h_i\|_\infty < 1/4) \ge e^{-c \la}.
$$
On the other hand, for a Brownian sample path $W$ with $W(0)= h_i(0), W(\la) = h_i(\la)$, by integration by parts and stochastic integration by parts we have that
\begin{align*}
\lf| \int_0^\la h_i^{\prime} dW - \int_0^\la (h_i^{\prime}(s))^2 ds \rg| &= \lf| \int_0^\la h_i''(s) W(s) ds - \int_0^\la h_i''(s) h_i(s) ds \rg| \\
&\le \int_0^\la |h_i''(s)| |W(s) - h_i(s)| ds 
\end{align*}
This is bounded above by $\la/2$ when $\|W - h_i\|_\infty < 1/4$ since $|h_i''| \le 2$. Therefore $\P(\|B_i - h_i\|_\infty < 1/4)$ is bounded below by
$$
\exp \lf(- c \la - E(x_i - (k+1 - i)/2, y_i - (k + 1-i)/2, \la) + \frac{(x_i + \la^2 - y_i)^2}{4\la}  \rg).
$$
We can complete the proof by appealing to \eqref{E:P-product} and recognizing that 
\[
E(x_i - (k+1 - i)/2, y_i - (k + 1-i)/2, \la) \le E(x_i, y_i, \la) + c k (\sqrt{x} + \sqrt{y}) \le E(x_i, y_i, \la) + c k \la. \qedhere
\]
\end{proof}

	\bibliographystyle{dcu}
	\bibliography{bibliography}
	
\end{document}